\tikzset{%
  ->-/.style={decoration={markings, mark=at position 0.5 with {\arrow{>}}},
              postaction={decorate}},
  ->>-/.style={decoration={markings, mark=at position 0.5 with {\arrow{>>}}},
               postaction={decorate}},
}
\title[Generalised Flatness Constants: A Framework Applied in Dimension $2$]{Generalised Flatness Constants:\\A Framework Applied in Dimension $2$}
\author[G.~Codenotti]{Giulia Codenotti}
\address[G.~Codenotti]{Goethe-Universit\"at, FB 12 -- Institut f\"ur Mathematik, Postfach 11 19 32, D--60054 Frankfurt am Main, Germany}
\email{codenotti@math.uni-frankfurt.de}
\author[T.~Hall]{Thomas Hall}
\author[J.~Hofscheier]{Johannes Hofscheier}
\address[T.~Hall, J.~Hofscheier]{School of Mathematical Sciences\\University of Nottingham\\Nottingham, NG7 2RD\\UK}
\email{\{pmyth1,johannes.hofscheier\}@nottingham.ac.uk}
\subjclass[2010]{Primary: 52C07, 52B20; Secondary: 53D05}
\keywords{Lattice polytopes, lattice width, flatness constant, hollow bodies, Gromov width, symplectic toric manifolds}
\DeclarePairedDelimiter{\ceil}{\lceil}{\rceil}
\DeclarePairedDelimiter{\floor}{\lfloor}{\rfloor}
\newcommand{\NN}{\mathbb{N}}
\newcommand{\ZZ}{\mathbb{Z}}
\newcommand{\QQ}{\mathbb{Q}}
\newcommand{\RR}{{\mathbb{R}}}
\newcommand{\TT}{\mathcal{T}}
\newcommand{\novel}[1]{{\emph{#1}}}
\newcommand{\closure}[1]{\overline{#1}}
\newcommand{\aff}{{\mathrm{aff}}}
\newcommand{\cone}{{\mathrm{cone}}}
\newcommand{\conv}{\mathrm{conv}}
\newcommand{\GL}{\mathrm{GL}}
\newcommand{\strint}{\mathrm{int}}
\newcommand{\relint}{\mathrm{relint}}
\newcommand{\width}{\mathrm{width}}
\newcommand{\flatness}{\mathrm{Flt}}
\newcommand{\vol}{{\mathrm{vol}}}
\newcommand{\Hom}{{\mathrm{Hom}}}
\newcommand{\lspan}{{\mathrm{span}}}
\newcommand{\set}[1]{\mleft\{{#1}\mright\}}
\newcommand{\setcond}[2]{\set{{#1}\colon{#2}}}
\theoremstyle{plain}
\newtheorem{theorem}{Theorem}[section]
\newtheorem{corollary}[theorem]{Corollary}
\newtheorem{lemma}[theorem]{Lemma}
\newtheorem{claim}{Claim}
\newtheorem{proposition}[theorem]{Proposition}
\theoremstyle{definition}
\newtheorem{example}[theorem]{Example}
\newtheorem{definition}[theorem]{Definition}
\newtheorem{remark}[theorem]{Remark}
\newtheorem{notation}[theorem]{Notation}
\newtheorem{question}[theorem]{Question}
\begin{document}

\begin{abstract}
    Let $A \in \{ \ZZ, \RR \}$ and $X \subset \RR^d$ be a bounded set.
    Affine transformations given by an automorphism of $\ZZ^d$ and a translation in $A^d$ are called (affine) $A$-unimodular transformations.
    The image of $X$ under such a transformation is called an $A$-unimodular copy of $X$.
    It was shown in~\cite{AHN} that every convex body whose width is ``big enough'' contains an $A$-unimodular copy of $X$.
    The threshold when this happens is called the generalised flatness constant $\flatness_d^A(X)$.
    It resembles the classical flatness constant if $A=\ZZ$ and $X$ is a lattice point.
    In this work, we introduce a general framework for the explicit computation of these numerical constants.
    The approach relies on the study of $A$-$X$-free convex bodies generalising lattice-free (also known as hollow) convex bodies.
    We then focus on the case that $X=P$ is a full-dimensional polytope and show that inclusion-maximal $A$-$P$-free convex bodies are polytopes.
    The study of those inclusion-maximal polytopes provide us with the means to explicitly determine generalised flatness constants.
    We apply our approach to the case $X=\Delta_2$ the standard simplex in $\RR^2$ of normalised volume $1$ and compute $\flatness^{\RR}_2(\Delta_2)=2$ and $\flatness^{\ZZ}_2(\Delta_2)=\frac{10}3$.
\end{abstract}

\maketitle


\section{Introduction}\label{sec:intro}

Let $\RR^d$ be the Euclidean space equipped with the Euclidean norm $| \cdot |$.
The space of convex bodies, i.e., non-empty compact convex sets in $\RR^d$ (note that some authors define convex bodies to be open, see, for instance,~\cite[Section 2]{Siegel}), is denoted by $\mathcal{K}^d$.
Examples of convex bodies are polytopes, convex hulls of finitely many points in $\RR^d$.
A polytope $P \subset \RR^d$ is a lattice polytope if it is the convex hull of finitely many points in the integer lattice $\ZZ^d$.
(Lattice) polytopes are studied in a variety of mathematical areas such as algebraic geometry, commutative algebra, geometry of numbers, combinatorics and statistics.

Here we study the lattice width of convex bodies motivated by questions on lattice polytopes and symplectic manifolds.
For a convex body $K \subset \RR^d$, and a linear form $u \in \Hom(\ZZ^d,\ZZ) = (\ZZ^d)^*$, the \novel{width of $K$ along $u$} (or \novel{with respect to $u$}) is given by
\[
  \width_u(K) \coloneqq \sup_{x,y\in K} |u(x) - u(y)| \text{.}
\]
The \novel{lattice width of $K$} (or simply \novel{width of $K$}), $\width(K)$, is the minimum of its widths $\width_u(K)$ along all $u \in (\ZZ^d)^*\setminus\set{0}$.
Khinchin's celebrated flatness theorem~\cite{Khinchine} guarantees that for every dimension $d$ there exists a constant which bounds the width of convex bodies which are disjoint from the integer lattice $\ZZ^d$.
This gives rise to the \novel{classical flatness constant}
\[
	\flatness_d = \sup \setcond{\width(K)}{K \in \mathcal{K}^d, K \cap \ZZ^d = \emptyset} \text{.}
\]
It is conjectured that $\flatness_d$ is roughly proportional to $d$ (\cite[last paragraph in Section 8]{Barvinok}).
To the authors' knowledge, the best known upper bound at the time of writing is $\flatness_d\le O(d^{4/3}\log^ad)$, where $a$ is a constant~\cite{FlatnessUpperBound}.
Explicit values for $\flatness_d$ for low dimensions are scarce: clearly, $\flatness_1=1$, and Hurkens has shown that $\flatness_2=1+\frac2{\sqrt{3}}$~\cite{Hurkens}.
However, already $\flatness_3$ is not known: in~\cite{Flatness3DConjecture, Flatness3DUpperBound} the bounds $2 + \sqrt{2} \le \flatness_3 \le 3.972$ are shown and it is conjectured that $\flatness_3 = 2 + \sqrt{2}$.

In~\cite{AHN}, Averkov, Hofscheier, and Nill introduced \novel{generalised flatness constants} that provide a unifying approach to several questions on lattice polytopes and symplectic manifolds.
Generalised flatness constants $\flatness_d^A(X)$ depend on the choice of a ring $A \in \set{\ZZ, \RR}$, and the choice of a fixed bounded subset $X \subset \RR^d$ and its $A$-unimodular copies.
An \novel{$A$-unimodular transformation} $T \colon \RR^d \to \RR^d$ maps an $x \in \RR^d$ to $Mx+b$, for some $M \in \GL_d(\ZZ)$ and $b \in A^d$.
We say $Y \subset \RR^d$ is an \novel{$A$-unimodular copy of $X \subset \RR^d$} if $Y = T(X)$ for some $A$-unimodular transformation $T\colon \RR^d \to \RR^d$.
Then $\flatness_d^A(X)$ is the supremum over the widths of convex bodies in $\RR^d$ that don't contain an $A$-unimodular copy of $X$:
\[
    \flatness_d^A(X) \coloneqq \sup \setcond{\width(K)}{K \in \mathcal{K}^d, K \; \text{contains no $A$-unimodular copy of $X$}} \text{.}
\]
By~\cite[Theorem 2.1]{AHN}, $\flatness_d^A(X)$ is a well-defined real number. By taking $A=\ZZ$ and $X$ a lattice point, the usual flatness constant is recovered, i.e., $\flatness_d^\ZZ(\{\mathbf{0}\}) = \flatness_d$, justifying the definition of \emph{generalised} flatness constants.
A main result of this work is the computation of generalised flatness constants in dimension $2$ when $X$ is the $2$-dimensional standard simplex $\Delta_2 = \conv( \mathbf{0}, e_1, e_2)$, where $e_1,e_2$ denotes the standard basis of $\ZZ^2$.
\begin{theorem}\label{thm:main}
    We have $\flatness_2^\RR(\Delta_2) = 2$ and $\flatness_2^\ZZ(\Delta_2) = \frac{10}3$.
\end{theorem}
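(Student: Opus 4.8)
The plan is to apply the framework developed above. By the structural results quoted there, $\flatness_2^A(\Delta_2)$ equals the maximum lattice width of an inclusion-maximal $A$-$\Delta_2$-free convex body, and every such body is a polytope. Hence it suffices to classify the inclusion-maximal $A$-$\Delta_2$-free polytopes in $\RR^2$ up to the action of the ($A$-)unimodular group and then to compute their widths.

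First I would put $A$-$\Delta_2$-freeness into a workable form. For $A=\ZZ$: a $\ZZ$-unimodular copy of $\Delta_2$ is a lattice triangle whose two edge vectors from one vertex form a $\ZZ$-basis, and since every full-dimensional lattice polygon admits a triangulation into such triangles using only its own lattice points, the interior $\strint(K)$ of a convex body $K$ contains a $\ZZ$-unimodular copy of $\Delta_2$ if and only if it contains three non-collinear lattice points; thus $K$ is $\ZZ$-$\Delta_2$-free exactly when the lattice points in $\strint(K)$ are collinear. For $A=\RR$: since $\width$ is invariant under $\GL_2(\ZZ)$-transformations and translations and $\width(\Delta_2)=1$, every $\RR$-unimodular copy of $\Delta_2$ is a triangle of lattice width $1$, and $\RR$-$\Delta_2$-freeness says that no translate of any $M\Delta_2$ with $M\in\GL_2(\ZZ)$ lies in $\strint(K)$; as a fixed bounded $K$ can contain translates of $M\Delta_2$ for only finitely many $M$, this is a finite list of conditions, each expressible via support functions as the statement that $K$ shrinks to the empty set when its facets are moved inward by the appropriate amounts.

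The heart of the argument is the classification. As in the theory of maximal lattice-free bodies, a maximal $A$-$\Delta_2$-free polytope $P$ must, at each of its facets, carry a ``blocking'' $A$-unimodular copy of $\Delta_2$ that is contained in $P$ and meets that facet --- otherwise one could translate the facet's supporting hyperplane outward and remain free, contradicting maximality. This constrains the combinatorial type of $P$ (triangle, quadrilateral, pentagon or hexagon) and, within each type, forces the vertices into finitely many explicit families. For $A=\ZZ$ I would stratify the analysis according to the set of lattice points in $\strint(P)$ --- empty (recovering the classical hollow polygons, of width at most $\flatness_2$), a single point, or a chain of collinear points --- together with the positions of the blocking simplices. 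For $A=\RR$ the list is shorter, since only finitely many ``shapes'' of $M\Delta_2$ are relevant and translation freedom collapses many cases.

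Finally, for each extremal candidate I would compute the lattice width directly, checking the handful of relevant lattice functionals, and take the maximum; this should yield $\flatness_2^\RR(\Delta_2)=2$ and $\flatness_2^\ZZ(\Delta_2)=\frac{10}{3}$. The main obstacle is the classification step, and specifically the $A=\ZZ$ case: making the case analysis of blocking configurations exhaustive and verifying that in each case the width-maximising polytope has been correctly identified. I expect the value for $A=\RR$ to fall out as an easier, essentially limiting, sub-case of the same analysis.
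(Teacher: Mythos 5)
Your outline follows the paper's general framework (reduce to inclusion-maximal $A$-$\Delta_2$-free bodies, which are polytopes, and use blocking/locking copies of $\Delta_2$ at each facet), but as it stands it has genuine gaps. First, a small one: the upper bound coming from inclusion-maximality runs over inclusion-maximal $A$-$\Delta_2$-free \emph{closed convex sets}, which may be unbounded, so one must separately dispose of the unbounded full-dimensional maximal sets (they are strips of width $2$ for $A=\ZZ$ and $1$ for $A=\RR$); your reduction to bodies silently assumes this. The serious gap is that the entire content of the theorem lies in the classification and width-bounding step, which you only announce. For $A=\ZZ$, your claim that blocking simplices force the combinatorial type into triangle through hexagon ``with vertices in finitely many explicit families'' is not justified; the actual argument needs the reduction to collinear interior lattice points, a normalisation showing that the possible locking points are ten specific lattice points at heights $\pm1$, a lemma that width $>3$ allows at most two locking points at height $-1$ (so only triangles and quadrilaterals survive), and then a parametrised, Hurkens-style optimisation over each locking configuration (partly computer-assisted in the paper) to obtain the bound $\frac{10}{3}$ and identify the unique maximiser $\conv\mleft(\frac13 e_1+\frac53 e_2,\,-\frac43 e_1-\frac53 e_2,\,2e_1\mright)$, which also furnishes the matching lower bound. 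None of this is supplied in your proposal.

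For $A=\RR$ your proposed route is the wrong one: you plan to classify all maximal $\RR$-$\Delta_2$-free polygons and expect this case to be an ``easier limiting sub-case'' of the $\ZZ$-analysis, but such a classification is not available — there are infinite families of maximal $\RR$-$\Delta_2$-free polygons (for instance every parallelogram circumscribed around a unit square), the paper explicitly does not classify them, and even whether their number of facets is bounded is left as an open question. The paper's proof of $\flatness_2^\RR(\Delta_2)\le 2$ therefore avoids classification entirely: it assumes a polygon of width $>2$, studies its rational diameter $l$, proves $\width(P)\le 2l$ and that $\strint(P)$ must avoid certain horizontal segments at heights $\pm1$, and then reaches a contradiction via an explicit one-parameter family of quadrilaterals each containing a translate of a unimodular triangle; the lower bound comes from checking that the cross-polygon $\conv(\pm e_1,\pm e_2)$ is $\RR$-$\Delta_2$-free of width $2$. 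Without this (or some substitute) direct argument, your plan does not produce the upper bound $2$.
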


A direct implication of the main theorem is that $2$-dimensional convex bodies $K \subset \RR^2$ whose width is larger than $2$ contain an $\RR$-unimodular copy of $\Delta_2$.
This bound is sharp in the sense that a convex body with lattice width $2$ contains an $\RR$-unimodular copy of $\Delta_2$ (see \Cref{prop:equal-flt-case}). 
Similarly, any convex body whose width is larger than $\frac{10}3$ contains a $\ZZ$-unimodular copy of $\Delta_2$.
Again this bound is sharp (in the same sense as above).
In particular, a convex body whose width is at least $\frac{10}3$ is spanning, i.e., the lattice points contained in the convex body affinely generate the ambient lattice.
Spanning lattice polytopes turn out to have strong Ehrhart theoretical properties equivalent to ones of IDP polytopes, a much stronger combinatorial assumption on the polytope (see~\cite{EhrhartTheorySpanning, SpanningUPP}).
The search for an effective and sufficient spanning test for lattice polytopes was one of the main motivations for the introduction of generalised flatness constants.

The proof of \Cref{thm:main} relies on the study of $A$-$X$-free convex sets: if $X \subset \RR^d$ is a fixed bounded set, then a convex set $K \subset \RR^d$ is called \novel{$A$-$X$-free} if the relative interior of $K$ contains no $A$-unimodular copy of $X$.
Here, we follow the convention that the relative interior of a point is the point itself.
In \Cref{sec:prelim}, we show that the flatness constant $\flatness_d^A(X)$ is equal to the supremum over widths of $A$-$X$-free convex bodies.
A key result in the study of generalised flatness constants is the following statement.
\begin{theorem}\label{thm_incl_max}
    If $X \subset \RR^d$ is a full-dimensional polytope, then every inclusion-maximal $A$-$X$-free convex body $K \subset \RR^d$ is a polytope.
\end{theorem}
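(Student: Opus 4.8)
The plan is to follow the classical argument showing that inclusion-maximal lattice-free convex bodies are polyhedra, with two new ingredients: a compactness statement for the $A$-unimodular copies of $X$ that fit inside a bounded region, and an argument that extracts flat faces of $K$ from inscribed copies of the polytope $X$. Write $B$ for the Euclidean unit ball and $\delta := \min_{\lvert v\rvert = 1}\width_v(X)$, which is positive precisely because $X$ is full-dimensional.

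First one reduces to the case that $K$ is full-dimensional. If $\dim K < d$, the claim is that $K_\varepsilon := K + \varepsilon B$ is still $A$-$X$-free for small $\varepsilon > 0$, contradicting maximality: any $A$-unimodular copy $MX + b \subseteq \strint(K_\varepsilon)$ has diameter at most $\operatorname{diam}(K) + 2$, so $M$ has bounded operator norm and hence ranges over a finite set $\mathcal M \subseteq \GL_d(\ZZ)$; picking a unit vector $u \perp \aff(K)$ gives $\width_{M^\top u}(X) = \width_u(MX) \le \width_u(K_\varepsilon) = 2\varepsilon$, whereas $\width_{M^\top u}(X) \ge \delta\,\lVert M^\top u\rVert \ge \delta\min_{M \in \mathcal M}\sigma_{\min}(M) > 0$, which is impossible once $\varepsilon$ is small.

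Now assume $K$ is full-dimensional and inclusion-maximal. The same diameter bound shows that only finitely many $M \in \GL_d(\ZZ)$ admit a translate inside $K$, so the family $\mathcal B$ of $A$-unimodular copies $Y$ of $X$ with $Y \subseteq K$ and $Y \cap \partial K \neq \emptyset$ is compact in the Hausdorff metric (and finite when $A = \ZZ$). Next, observe that maximality is equivalent to requiring that $\conv(K \cup \{q\})$ fail to be $A$-$X$-free for every $q \notin K$ in a fixed small outer neighbourhood of $K$ (a larger enlargement contains a smaller one). For such $q$ there is a copy $Y_q \subseteq \strint\conv(K \cup \{q\})$, necessarily with $Y_q \not\subseteq \strint(K)$ since $K$ is $A$-$X$-free; letting $q$ tend to a boundary point $p$ and passing to a Hausdorff-limit $Y_p$ of the (uniformly bounded) $Y_q$ yields a member $Y_p \in \mathcal B$, and, unless $\partial K$ is already flat near $p$, one can arrange $p \in Y_p$. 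The structural point is then: if $Y \in \mathcal B$ meets $\partial K$ at a point $z$ in the relative interior of a positive-dimensional face $G$ of $Y$, then $G \subseteq \partial K$; indeed, if $G$ met $\strint(K)$ one could write $z$ as a convex combination with positive weights of a point of $\strint(K)$ and a point of $G$, forcing $z \in \strint(K)$. Hence $\partial K$ contains the flat polytope $G$, and when $G$ is a facet of $Y$ its affine hull is a supporting hyperplane of $K$ and $\aff(G) \cap K$ an honest flat facet of $K$. Since $Y$ is a polytope — this is where the hypothesis on $X$ enters — each $Y \in \mathcal B$ contributes only finitely many such flat pieces.

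The final step is globalisation. Combining the above, every boundary point of $K$ should lie on some face, contained in $\partial K$, of some $Y \in \mathcal B$; using compactness of $\mathcal B$ and the finitely-many-faces property of its members, one extracts finitely many $Y_1, \dots, Y_N \in \mathcal B$ with $\partial K$ equal to the union of those faces of $Y_1, \dots, Y_N$ that lie in $\partial K$. As these faces are polytopes, $\partial K$, and therefore $K = \conv(\partial K)$, is a polytope. The hard part will be exactly this globalisation — upgrading ``$K$ is locally polyhedral near every boundary point'' (and correctly handling the flat-face case, where the witnessing copy may touch $\partial K$ only at a vertex) into a genuine finiteness of faces — and it is precisely here that $X$ being a full-dimensional polytope is indispensable, since it is what makes the inscribed copies polytopes and hence forces $\partial K$ to be assembled from finitely many flat pieces.
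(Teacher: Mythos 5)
There is a genuine gap, and it is not merely the ``globalisation'' you flag as hard: the core claim that $\partial K$ can be assembled from positive-dimensional faces of inscribed copies of $X$ lying in $\partial K$ is false. The paper itself supplies a counterexample in the case $A=\RR$, $X=\Delta_2$: any parallelogram $P$ circumscribed around a unit square (each vertex of the square in the relative interior of a facet of $P$) is an inclusion-maximal $\RR$-$\Delta_2$-free body, yet every $\RR$-unimodular copy of $\Delta_2$ contained in $P$ is a translate of a triangle inside the square and meets $\partial P$ only at vertices of the square, i.e.\ only at $0$-dimensional faces of the copy. So your family $\mathcal B$ contributes no flat pieces at all, your structural lemma (which is correct as far as it goes: a face $G$ of $Y$ whose relative interior meets $\partial K$ lies in $\partial K$) is vacuous here, and the proposed covering of $\partial K$ by such faces cannot get off the ground. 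Likewise the step ``unless $\partial K$ is already flat near $p$, one can arrange $p\in Y_p$'' is unsubstantiated: the Hausdorff limit $Y_p$ does lie in $K$ and touches $\partial K$, but possibly only at a vertex and possibly far from $p$. The real mechanism by which maximality forces flatness is the interaction of the supporting hyperplanes of $K$ at the \emph{several} contact vertices of a single inscribed copy, not flat faces of that copy lying in the boundary.

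This is exactly where the paper's proof does genuinely different work. For $A=\ZZ$ it bypasses inscribed copies entirely: one separates each lattice point of a bounding box not in $\relint(K)$ from $K$ by a half-space through that point and intersects; the resulting polytope is $\ZZ$-$X$-free and contains $K$, so equals $K$ by maximality. For $A=\RR$ it characterises $\RR$-$X$-freeness by the Minkowski differences, namely $\dim\mleft(K\text{\textdiv}S\mright)<d$ for every $\ZZ$-unimodular copy $S$ of $X$, approximates $K$ by a circumscribed polytope with the same translation classes of inscribed copies, and then, for each offending copy $Q+\delta$ with vertices $v_1,\dots,v_n$ on $\partial K$, cuts by finitely many half-spaces supporting $K$ at the $v_i$ so that the Minkowski difference of the cut polytope with $Q$ stays lower-dimensional. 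That this choice of finitely many supporting half-spaces is possible is the content of \Cref{thm_finite_supp_hyper}, whose proof needs normal cones, duality, and the Waksman--Epelman theorem on non-closed sums of convex cones; your outline contains no substitute for this step, and it is the actual crux of the theorem. Your reduction to full dimension and the finiteness of the relevant $\ZZ$-unimodular matrices are fine and parallel the paper's \Cref{lem:full-dim} and \Cref{lem_TT_K_X_sim_finite}, but the main argument as proposed would fail.
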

\Cref{lem:incl-max} shows that, for any bounded $X$, to determine generalised flatness constants we can restrict our study to \emph{inclusion-maximal} $A$-$X$-free convex sets. Precisely, we show that 

\[  
    \sup \left\{ \width(K) \;\middle|\;  \parbox{3.1cm}{\small$K$ inclusion-maximal\\
        $A$-$X$-free convex body} \right\} \leq \flatness_d^A(X) \leq \sup\left\{\width(K) \;\middle|\;
        \parbox{3cm}{\small $K$ inclusion-maximal \\
        $A$-$X$-free convex set}\right\} \text{.}
\]

It is straightforward to verify that in two dimensions, $A$-$\Delta_2$-free, inclusion-maximal $2$-dimensional convex sets which are unbounded are strips with rational slopes of sufficiently small width (see \Cref{prop:unbounded}).
\Cref{thm:main} then follows from studying the inclusion-maximal $A$-$\Delta_2$-free polygons.
If $A=\RR$, a theoretical argument shows that the width of inclusion-maximal $\RR$-$\Delta_2$-free convex polygons is bounded by $2$.
We further show that there are infinite families of inclusion-maximal $\RR$-$\Delta_2$-free convex polygons (see \Cref{sec_incl_max_dim_2}).
Examples include the cross-polygon $\conv(\pm e_1,\pm e_2)$ and the triangle $\conv(e_1, e_2, -e_1 - e_2)$ which are both lattice polygons of width exactly $2$.
If $A=\ZZ$, we show that there is a unique $\ZZ$-$\Delta_2$-free polygon which maximises the width, namely the triangle $\conv(-2e_1 + 2e_2,\frac43 e_1 + \frac13 e_2,-\frac13 e_1 - \frac43 e_2)$, from which the result follows.
Note that this unique maximiser isn't a lattice polygon.

For both $A=\ZZ$ and $A=\RR$, we show that at least one of the $A$-$\Delta_2$-free polygons with width equal to $\flatness_2^A(\Delta_2)$ is a triangle. The same is true for the usual flatness constant in the plane, which is uniquely achieved at a triangle~\cite{Hurkens}.
Further, the conjectured maximiser from~\cite{Flatness3DConjecture} in three dimensions is a tetrahedron.
It is thus natural to ask the following question.

\begin{question}
    Is there always at least one $A$-$X$-free simplex among width maximisers of an $A$-generalised flatness constant? If $A=\ZZ$, are all maximisers simplices?
\end{question}

A positive answer to these questions would simplify the calculation of explicit values of the flatness constant greatly, since it would then no longer be necessary to check the width of the many inclusion-maximal $A$-$X$-free convex bodies that are not simplices.

We conclude the introduction by relating our results to the computation of the Gromov width of symplectic manifolds.
Let $(M, \omega)$ be a $2d$-dimensional symplectic manifold.
The \novel{Gromov width} of $M$ (denoted by $c_G(M)$) is the supremum over capacities $\pi r^2$ of balls with radius $r$ that can be symplectically embedded in $M$ (see~\cite{Gromov}).
We use the identification $S^1=\RR/\ZZ$ following the convention in~\cite{LMS13,Lu06}.
We are particularly interested in the case when $M$ is a symplectic toric manifold with moment polytope $\Delta$, i.e., a compact connected $2d$-dimensional symplectic manifold $(M,\omega)$ equipped with an effective Hamiltonian action of a torus $T \cong (S^1)^d$ and with a choice of a corresponding moment map $\mu \colon M \to \mathfrak{t}^*$ where $\mathfrak{t}$ denotes the lie algebra of $T$ (so from now on we assume $M$ to be toric).
The explicit computation of  Gromov width is still wide open; for example, even for symplectic toric manifolds it is not known how to read off the Gromov width from the moment polytope.
Therefore there is a huge interest in finding effective upper and lower bounds for the Gromov width~\cite{ALZ,SimplicesInNOBodies,KT05,Kav19,Lu06,LMS13,MP15,MP18,Sch05}.
In particular, the result in~\cite[Corollary~11.4]{Kav19} (see also~\cite{LMS13, Lu06, SimplicesInNOBodies}) can be restated in terms of generalised flatness constants as follows: the Gromov width of a symplectic toric manifold with moment polytope $P \subset \RR^d$ is at least $\width(P)\cdot \flatness_d^\RR(\Delta_d)^{-1}$. 
Here $\Delta_d\subset\RR^d$ denotes the $d$-dimensional standard simplex.
Combining this with \Cref{thm:main} implies a lower bound on the Gromov width of $4$-dimensional symplectic toric manifolds.

\begin{theorem}\label{thm:symplectic}
    Let $(M,\omega)$ be a $4$-dimensional symplectic toric manifold with moment polygon $\Delta$.
    Then the Gromov width $c_G(M)$ of $M$ accepts the following upper and lower bound:
    \[
        \frac{\width(\Delta)}2 \le c_G(M) \le \width(\Delta)\text{.}
    \]
\end{theorem}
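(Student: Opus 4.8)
The plan is to treat the two inequalities separately, and only the upper bound requires genuine work. For the lower bound $c_G(M)\ge\width(\Delta)/2$ I would simply invoke the reformulation of~\cite[Corollary~11.4]{Kav19} recalled in the introduction, namely $c_G(M)\ge\width(\Delta)\cdot\flatness_2^\RR(\Delta_2)^{-1}$, and substitute $\flatness_2^\RR(\Delta_2)=2$ from \Cref{thm:main}. So the content of the theorem is the upper bound $c_G(M)\le\width(\Delta)$, which I would prove by applying Gromov's non-squeezing theorem to the open dense toric chart of $M$.

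Concretely: first I would recall the action--angle (Duistermaat) description of a symplectic toric manifold, which gives that $U\coloneqq\mu^{-1}(\strint\Delta)$ is $T$-equivariantly symplectomorphic to $\strint(\Delta)\times(\RR/\ZZ)^2$ with the standard form $dx_1\wedge d\theta_1+dx_2\wedge d\theta_2$, where $x=(x_1,x_2)$ are the linear coordinates on $\mathfrak t^*=\RR^2$ (in which $\Delta$ sits) and $\theta=(\theta_1,\theta_2)$ are the angle coordinates. Since $M\setminus U$, the union of the toric divisors, is a finite union of symplectic submanifolds of real codimension $\ge 2$, I would use that $c_G(M)=c_G(U)$, reducing the problem to bounding $c_G(U)$. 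Next, for an arbitrary primitive $u\in(\ZZ^2)^*\setminus\{0\}$ I would complete $u$ to a lattice basis, choosing $\Phi\in\GL_2(\ZZ)$ with first row $u$; then $(x,\theta)\mapsto(\Phi x,(\Phi^{-1})^\top\theta)$ is a symplectomorphism of $\RR^2\times(\RR/\ZZ)^2$ (its torus component being a genuine automorphism of $(\RR/\ZZ)^2$ since $(\Phi^{-1})^\top\in\GL_2(\ZZ)$), and it carries $U$ onto $\Phi(\strint\Delta)\times(\RR/\ZZ)^2$, on which the first coordinate is $\langle u,x\rangle$. After a translation this first coordinate ranges over $(0,w_u)$ with $w_u\coloneqq\width_u(\Delta)$, and as $\Phi(\strint\Delta)$ is bounded the second coordinate ranges over a bounded interval $I$. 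Thus $U$ symplectically embeds into $\bigl((0,w_u)\times(\RR/\ZZ)\bigr)\times\bigl(I\times(\RR/\ZZ)\bigr)$; each factor is a cylinder of finite symplectic area, and a cylinder of area $a$ is symplectomorphic to the open $2$-disc of area $a$ with its centre removed, so the first factor embeds into an open disc $B^2(\rho)$ with $\pi\rho^2=w_u$ and the second into $\RR^2$. Hence $U$ symplectically embeds into the cylinder $B^2(\rho)\times\RR^2$ with $\pi\rho^2=w_u$, and Gromov non-squeezing (together with monotonicity of $c_G$) gives $c_G(U)\le\pi\rho^2=w_u$. Since $c_G(M)=c_G(U)$ and $u$ was arbitrary, minimising over all primitive $u$ yields $c_G(M)\le\width(\Delta)$.

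The step I expect to be the main obstacle is the identity $c_G(M)=c_G(U)$: the inclusion $U\subseteq M$ gives $c_G(U)\le c_G(M)$ for free, but the reverse inequality requires checking that a symplectically embedded ball in $M$ can, at the cost of shrinking its radius by an arbitrarily small amount, be made disjoint from the toric divisors — this is where the codimension-$\ge 2$ hypothesis enters. This is a standard fact but it deserves to be stated precisely. The remaining ingredients — action--angle coordinates, the symplectomorphism between a cylinder and a punctured disc of equal area, and Gromov non-squeezing — are classical, so I do not anticipate difficulties there.
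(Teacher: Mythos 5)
Your lower bound coincides with what the paper does: quote \cite[Corollary~11.4]{Kav19} in the form $c_G(M)\ge \width(\Delta)\cdot\flatness_2^\RR(\Delta_2)^{-1}$ and substitute $\flatness_2^\RR(\Delta_2)=2$ from \Cref{thm:main}. For the upper bound, however, the paper offers no argument of its own: it cites Chaidez--Wormleighton \cite[Corollary~4.19]{CW20}, who verified the conjecture \cite[Conjecture~3.12]{AHN} for $4$-dimensional toric manifolds using embedded contact homology obstructions. Your proposal tries to replace that citation by an elementary non-squeezing argument, and it contains a genuine gap at exactly the step you flag.

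The identity $c_G(M)=c_G(U)$ with $U=\mu^{-1}(\strint(\Delta))$ is not a standard fact, and the codimension-$\ge 2$ heuristic does not deliver it. A symplectically embedded open $4$-ball meets a toric divisor in a $2$-dimensional set; making the ball disjoint from the divisors is a global displacement/re-embedding problem, not a transversality statement, and shrinking the capacity by an arbitrarily small $\varepsilon$ does not help. There is no general principle that deleting a closed symplectic submanifold of codimension $2$ leaves the Gromov width unchanged, and in toric examples the optimal balls typically do cross the divisors: the capacity-$1$ ball in $\mathbb{CP}^2$ is the affine chart, which contains two of the three toric divisors, so any proof of $c_G(M)\le c_G(U)$ must construct new embeddings rather than perturb given ones. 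Moreover, the rest of your argument (action--angle coordinates, the $\GL_2(\ZZ)$ change of basis, embedding $U$ into a product of cylinders, non-squeezing) is correct but completely dimension-independent; if the reduction $c_G(M)=c_G(U)$ were available, the same reasoning would prove \cite[Conjecture~3.12]{AHN} in all dimensions, whereas even the $4$-dimensional case required the ECH machinery of \cite{CW20}. So what your argument actually establishes is the known bound $c_G\mleft(\mu^{-1}(\strint(\Delta))\mright)\le\width(\Delta)$ (in the spirit of \cite{Lu06,LMS13}), which bounds only balls contained in the open toric chart and does not bound $c_G(M)$.
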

The lower bound is a straightforward implication of~\cite[Corollary~11.4]{Kav19} and \Cref{thm:main}.
The upper bound was conjectured in~\cite[Conjecture 3.12]{AHN} and subsequently verified for $4$-dimensional symplectic toric manifolds by Chaidez and Wormleighton~\cite[Corollary 4.19]{CW20}.
It is known that the upper bound is tight in the sense that there exist $4$-dimensional symplectic toric manifolds whose Gromov width coincides with the lattice width of their moment polytopes (see, for instance,~\cite[Lemma~3.16]{AHN}).
There are also examples known where the Gromov width is strictly less than the width of the corresponding moment polygon (see, for instance,~\cite[Example 5.6]{HLS21}).
However, to the authors' knowledge it is not known if the lower bound of \Cref{thm:symplectic} is also tight.
That is, is there a $4$-dimensional toric symplectic manifold whose Gromov width coincides with half of the width of its moment polygon?

The paper is organised as follows.
In \Cref{sec:prelim}, we show how to reduce the calculation of flatness constants to that of inclusion-maximal $A$-$X$-free bodies, and show some key properties of these bodies.
\Cref{sec:1-2-dim} concerns the study of $\flatness_d^A(\Delta_d)$ in one dimension and begins its study in two dimensions by analysing the unbounded case.
\Cref{sec:Z_flatness_dim2} characterises inclusion-maximal $\ZZ$-$\Delta_d$-free polytopes, leading to a proof of the case $A=\ZZ$ of \Cref{thm:main}.
\Cref{sec:r_flatness_d2} proves the case $A=\RR$ of \Cref{thm:main}.

Computations were carried out using Magma \cite{Magma}, polymake \cite{polymake}, Mathematica \cite{Mathematica}, and SymPy \cite{SymPy}.
The code can be found at \href{https://github.com/jhofscheier/gen-flat-const-dim2}{https://github.com/jhofscheier/gen-flat-const-dim2}.


\subsection*{Acknowledgements}
The third author is supported by a Nottingham Research Fellowship from the University of Nottingham.


\section{A general strategy to compute generalised flatness constants}\label{sec:prelim}

In this section, we prove foundational observations on generalised flatness constants.
We hope the approach introduced here provides an efficient framework for the study of generalised flatness constants in any dimensions and for various choices of $X$.
At the end of the section we will outline a general strategy for the computation of generalised flatness constants which we will follow to determine $\flatness_2^A(\Delta_2)$ for both $A =\ZZ$ and $A = \RR$.

In our study, we will need the notion of Minkowski addition:
recall for two (arbitrary) subsets $A, B \subset \RR^d$ the \emph{Minkowski sum} (or \emph{Minkowski addition}) is defined as
\[
  A + B = \{ a + b \colon a \in A, b \in B\}\text{.}
\]
It is well-known that if $A$ and $B$ are convex, compact, or polytopes, then the Minkowski sum will have the same properties.
Furthermore, Minkowski addition is cancellative on the set of convex bodies, i.e., if $A, B, C \subset \RR^d$ are convex bodies, then $A+C = B+C$ implies $A=B$.
We will write $B^d \subset \RR^d$ for the usual $d$-dimensional ball in Euclidean space with radius $1$, i.e., the set of all points $x$ in $\RR^d$ whose Euclidean norm is bounded by $1$, i.e., $|x|\le1$.
Finally, let $B_\infty^d \subset \RR^d$ be the (closed) unit ball with respect to the maximum norm $|\cdot|_\infty$.
Note $B_\infty^d = [-1,1]^d$ is a polytope.

\begin{remark}\label{rem:basic-props}
    The following are elementary but important observations.
    Their proofs are straightforward and are left to the reader.
    Let $X \subset \RR^d$ be a bounded set.
    Then:
    \begin{itemize}
    \item
        $\flatness_d^A(X) = \flatness_d^A(\conv(X))$;
    \item
        $\flatness_d^A(X) = \flatness_d^A(\closure{X})$, where $\closure{X}$ denotes the closure of $X$ with respect to the Euclidean topology.
    \end{itemize}
\end{remark}

\subsection{\texorpdfstring{$A$-$X$-free convex bodies}{A-X-free convex bodies}}
We begin by noticing that generalised flatness constants $\flatness_d^A(X)$ can equivalently be described via $A$-$X$-free convex bodies.
\begin{lemma}\label{lem:flt_coincides_with_free}
    For a bounded set $X \subset \RR^d$, we have:
    \[
        \flatness_d^A(X) = \sup\setcond{\width(K)}{K \in \mathcal{K}^d \, \text{is an $A$-$X$-free convex body}} \text{.}
    \]
\end{lemma}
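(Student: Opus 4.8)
The plan is to prove the two inequalities separately. Write $\flatness_d^A(X)$ for the generalised flatness constant and $F$ for the supremum of widths of $A$-$X$-free convex bodies. For the inequality $F \le \flatness_d^A(X)$, I would argue that every $A$-$X$-free convex body is in particular a convex body containing no $A$-unimodular copy of $X$ in its \emph{relative interior}; but the definition of $\flatness_d^A(X)$ uses convex bodies containing no $A$-unimodular copy of $X$ \emph{at all}. So this direction is not immediate, and in fact it is the harder one. The trick I would use is a shrinking argument: given an $A$-$X$-free convex body $K$ and any $\varepsilon > 0$, consider a slightly shrunk copy $K_\varepsilon \coloneqq (1-\varepsilon)K + \varepsilon p$ for a suitable interior point $p$ (or, more robustly, $K_\varepsilon$ obtained by eroding $K$ by a small ball, i.e.\ $K_\varepsilon = \setcond{x}{x + \varepsilon B^d \subseteq K}$). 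Since $\relint K$ contains no $A$-unimodular copy of $X$ and $K_\varepsilon \subseteq \relint K$ for $\varepsilon$ small, $K_\varepsilon$ contains no $A$-unimodular copy of $X$ anywhere, hence $\width(K_\varepsilon) \le \flatness_d^A(X)$. Letting $\varepsilon \to 0$ and using continuity of width under this shrinking (widths $\width_u$ are linear-algebraic and vary continuously, and the minimum over finitely many relevant $u$ on a compact set is continuous), we get $\width(K) \le \flatness_d^A(X)$, and taking the supremum over all such $K$ gives $F \le \flatness_d^A(X)$.

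For the reverse inequality $\flatness_d^A(X) \le F$, I would start from a convex body $K$ containing no $A$-unimodular copy of $X$ and produce from it an $A$-$X$-free convex body of width at least $\width(K)$. The natural candidate is a slightly enlarged body: set $K_\varepsilon \coloneqq K + \varepsilon B^d$. If some $A$-unimodular copy $Y = T(X)$ of $X$ were contained in $\relint K_\varepsilon$, I would want to conclude that a nearby $A$-unimodular copy of $X$ is contained in $K$ itself, contradicting the hypothesis. Here is where I would invoke \Cref{rem:basic-props}: replacing $X$ by $\conv(\closure X)$ changes nothing, so I may assume $X$ is a compact convex body; then a copy $Y \subseteq \relint K_\varepsilon$ is a compact convex set sitting strictly inside $K + \varepsilon B^d$, and one translates $Y$ by a small vector (or shrinks it slightly and recenters) to land inside $K$ — but some care is needed because an arbitrary translation need not be $A$-unimodular when $A = \ZZ$. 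For $A = \RR$ this is automatic. For $A = \ZZ$ the cleanest route is instead to enlarge in a lattice-compatible way or, better, to argue directly: $\width(K_\varepsilon) \le F$ is false to assert unless $K_\varepsilon$ is $A$-$X$-free, so I would rather take the supremum definition and, for any $\delta > 0$, pick $K$ with $\width(K) > \flatness_d^A(X) - \delta$, then exhibit an $A$-$X$-free body of width $> \flatness_d^A(X) - \delta$; the body $K$ itself works if $K$ happens to be $A$-$X$-free, and if not, $\relint K$ contains an $A$-unimodular copy of $X$, but then $K$ \emph{does} contain an $A$-unimodular copy of $X$, contradicting the choice of $K$. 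So in fact every competitor $K$ in the definition of $\flatness_d^A(X)$ is automatically $A$-$X$-free, giving $\flatness_d^A(X) \le F$ directly and cleanly.

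Reflecting on the two directions, the reverse inequality is essentially trivial once one observes that "contains no $A$-unimodular copy of $X$" implies "$A$-$X$-free" (the relative interior is a subset of the body). The genuine content is the forward inequality $F \le \flatness_d^A(X)$, where one must pass from "no copy in the relative interior" to "no copy at all" at the cost of an infinitesimal loss of width, recovered in the supremum. The main obstacle I anticipate is making the erosion/shrinking argument uniform and the continuity of width rigorous: one must ensure that $K_\varepsilon$ is still a genuine ($d$-dimensional, or at least appropriately dimensioned) convex body for small $\varepsilon$, handle the case where $K$ is lower-dimensional (where $\relint$ and the erosion must be taken within the affine hull), and confirm that only finitely many linear forms $u$ are relevant to computing $\width$ on the family $\{K_\varepsilon\}$ so that the minimum defining lattice width is continuous in $\varepsilon$. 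These are standard facts in the geometry of numbers but deserve a careful sentence or two each.
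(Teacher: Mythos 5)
Your proposal is correct, and its skeleton is the paper's: the inequality $\flatness_d^A(X)\le F$ is the trivial containment of competitor classes (a body with no $A$-unimodular copy of $X$ is in particular $A$-$X$-free), and the real content is showing $\width(K)\le\flatness_d^A(X)$ for an $A$-$X$-free body $K$ by shrinking $K$ slightly into its relative interior. Where you differ is the shrinking device. The paper approximates $K$ from inside by a polytope $P$ with $P\subset K\subset P+\varepsilon B^d$ (Schneider's approximation theorem) and then moves the facets of $P$ inwards within $\aff(P)$, which needs some care about dimensions; you instead take the homothety $K_\varepsilon=(1-\varepsilon)K+\varepsilon p$ with $p\in\relint(K)$, which lies in $\relint(K)$ by the standard fact that $(1-\lambda)x+\lambda p\in\relint(K)$ for $x\in K$, $p\in\relint(K)$, $0<\lambda\le1$; hence $K_\varepsilon$ contains no $A$-unimodular copy of $X$ at all, and $\width(K_\varepsilon)=(1-\varepsilon)\width(K)$ exactly, since each $\width_u$ is translation-invariant and homogeneous. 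So the continuity worries in your closing paragraph evaporate, and lower-dimensional $K$ needs no separate treatment (unlike the erosion variant, which must indeed be performed inside $\aff(K)$). This is, if anything, more elementary than the paper's route, which buys nothing extra here beyond producing a polytopal competitor. Two minor points: the detour in your second paragraph about enlarging $K+\varepsilon B^d$ and translating copies is unnecessary and you rightly abandon it in favour of the one-line argument; and the paper dispatches $X=\emptyset$ separately, whereas in your set-up that case is vacuous on both sides (no body is $A$-$\emptyset$-free and none avoids a copy of $\emptyset$, so both suprema run over the empty family), which deserves a sentence.
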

\begin{proof}
	The case $X=\emptyset$ can be straightforwardly verified, so suppose $X$ is non-empty.
	
    Since every convex body $K \subset \RR^d$ that contains no $A$-unimodular copy of $X$ is $A$-$X$-free, the inequality ``$\le$'' straightforwardly follows.
    
    For the reverse inequality, let $K \subset \RR^d$ be an $A$-$X$-free convex body that contains an $A$-unimodular copy of $X$.
    If $K$ were just a point, it would follow that $X$ is $A$-unimodularly equivalent to $K$, so that, by our convention of relative interior of points, we would get $\mathrm{relint}(K)=K$ contained an $A$-unimodular copy of $X$, i.e., $K$ is not $A$-$X$-free, a contradiction.
    Hence, the dimension of $K$ is positive.
    By~\cite[Theorem~1.8.16]{Schneider}, for any $\varepsilon>0$, there exists a polytope $P \in \mathcal{K}^d$ such that $P \subset K \subset P+\varepsilon B^d$.
    Since $\dim(K)\ge1$, it follows for sufficiently small $\varepsilon>0$ that $\dim(P)\ge1$ too.
    By moving the facets of $P$ in by another $\varepsilon'>0$ (clearly this is done inside the affine span of $P$), we obtain a polytope $P' \in \mathcal{K}^d$ whose width can be chosen to be arbitrarily close to the width of $K$ and which doesn't contain an $A$-unimodular copy of $X$.
    So, $\flatness_d^A(X) \ge \width(K)$.
    The statement follows.
\end{proof}

The following lemma seems to be decisive for the explicit computation of generalised flatness constants in that it reduces the determination of an upper bound for $\flatness_d^A(X)$ to studying the width of inclusion-maximal $A$-$X$-free closed convex sets.
\begin{lemma}\label{lem:incl-max}
    Let $X \subset \RR^d$ be a non-empty bounded subset.
    Then every $A$-$X$-free convex body is contained in an inclusion-maximal $A$-$X$-free closed convex set. 
\end{lemma}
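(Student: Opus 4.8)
The natural approach is a Zorn's lemma argument. Fix a non-empty bounded $X$ and an $A$-$X$-free convex body $K_0$. Consider the poset $\mathcal{P}$ consisting of all closed convex sets $C \subseteq \RR^d$ with $K_0 \subseteq C$ and $C$ being $A$-$X$-free, ordered by inclusion. This poset is non-empty since $K_0 \in \mathcal{P}$. To apply Zorn's lemma, I would take an arbitrary chain $\{C_i\}_{i \in I}$ in $\mathcal{P}$ and show it has an upper bound in $\mathcal{P}$. The obvious candidate is $C \coloneqq \overline{\bigcup_{i} C_i}$, the closure of the union; this is closed by construction, it contains each $C_i$ and hence $K_0$, and it is convex because a nested union of convex sets is convex and the closure of a convex set is convex. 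So the only thing that requires work is verifying that $C$ is again $A$-$X$-free.

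The key step, then, is: \emph{if no $C_i$ has a relative interior containing an $A$-unimodular copy of $X$, then neither does $\overline{\bigcup_i C_i}$.} I would argue by contradiction. Suppose some $A$-unimodular copy $Y = T(X)$ satisfies $Y \subseteq \relint(C)$. Because $X$ is bounded, $Y$ is bounded; pick finitely many points $y_1,\dots,y_m \in Y$ whose convex hull contains $Y$ (e.g. take $Y \subseteq \conv(Y) \subseteq \overline{B}$ for a large ball, but more usefully: by \Cref{rem:basic-props} we may as well replace $X$ by its closed convex hull, so assume $X$, hence $Y$, is a convex body with finitely many points controlling it — or simply work directly with a point of $\relint Y$ together with enough nearby points). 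The cleanest route: since $Y \subseteq \relint(C)$ and $Y$ is compact, there is $\varepsilon > 0$ with $Y + \varepsilon B^d \cap \aff(C) \subseteq C$, and in fact $Y + \varepsilon B^d_{\aff(C)} \subseteq \relint(C)$. Now each point of $Y$ lies in $\overline{\bigcup_i C_i}$, so can be approximated within $\varepsilon/2$ by points of the union; using that the $C_i$ form a chain and $Y$ is compact, finitely many of these approximating points all lie in a single $C_{i_0}$. One then checks $\aff(C_{i_0})$ can be taken equal to $\aff(C)$ for $i_0$ large in the chain (the affine hulls are nested and stabilise once they reach $\aff(C)$, since $\dim$ is bounded by $d$), and concludes that a slightly shrunk copy $Y' = T'(X)$ — obtained by composing $T$ with a small translation in $A^d$, or by scaling toward an interior point — satisfies $Y' \subseteq \relint(C_{i_0})$, contradicting that $C_{i_0}$ is $A$-$X$-free. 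With Zorn's lemma, a maximal element $K \supseteq K_0$ exists, which is the desired inclusion-maximal $A$-$X$-free closed convex set.

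The main obstacle is the handling of relative interiors under the closure operation: relative interior does not behave continuously along increasing unions, and an $A$-unimodular copy of $X$ that is ``just barely'' inside $\relint(C)$ might only be inside $C_i$ in the limit, not inside any single $\relint(C_i)$. The device that resolves this is producing a genuinely shrunk copy $Y'$ that is strictly interior, using the $\varepsilon$-neighbourhood of $Y$ inside $\relint(C)$ together with compactness of $Y$ to land finitely many approximating points in one chain member; one must be slightly careful that the perturbation taking $Y$ to $Y'$ is itself an $A$-unimodular transformation (a translation by a vector in $A^d$ suffices when $A = \RR$; when $A = \ZZ$ one instead scales $Y$ toward one of its own points, which stays an $A$-unimodular copy only if $X$ is scale-invariant — so in general the correct perturbation is: keep the linear part $M$ fixed, translate the copy inward within $\relint(C)$, which is legitimate since translations by arbitrary vectors of $A^d$ are allowed and a sufficiently fine lattice translation, or for $A=\ZZ$ the observation that we only need \emph{some} $A$-unimodular copy in $\relint(C_{i_0})$ and the original copy already sits in $C$ whose relative interior is exhausted by the $\relint(C_i)$ once affine hulls stabilise). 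I would write this last point carefully, as it is the crux.
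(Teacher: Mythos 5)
Your overall architecture is the same as the paper's: Zorn's lemma on the poset of closed convex $A$-$X$-free sets containing the given body, with the closure of the union of a chain as candidate upper bound, so that the entire content sits in showing this closure $C$ is again $A$-$X$-free. The gap is exactly at the point you yourself flag as the crux, and it is not closed by what you write. Your primary route --- approximate finitely many points of $Y=T(X)$ within $\varepsilon/2$ by points of $\bigcup_i C_i$, land them in a single $C_{i_0}$, then replace $Y$ by a ``slightly shrunk'' copy $Y'\subset\relint(C_{i_0})$ --- fails for $A=\ZZ$: the only $\ZZ$-unimodular perturbations available are translations by integer vectors, so no small inward translation exists, and scaling toward a point is not unimodular; your parenthetical fixes (``translations by arbitrary vectors of $A^d$ are allowed'', ``a sufficiently fine lattice translation'') are simply not available when $A=\ZZ$. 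Your fallback --- that the original, unperturbed copy already lies in some $\relint(C_{i_0})$ because $\relint(C)$ is ``exhausted'' by the $\relint(C_i)$ --- is the right statement, but it is asserted rather than proved, and it is precisely the nontrivial content of the lemma; in addition one needs the whole compact set $Y$ in a \emph{single} $\relint(C_{i_0})$, not just each point of $\relint(C)$ in some $\relint(C_i)$, which requires a further compactness argument together with the nestedness of relative interiors once the affine hulls have stabilised.

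The paper closes this step without ever moving $Y$. Since $\relint(\closure{D})=\relint(D)\subset D$ for the convex set $D=\bigcup_{C\in S}C$, one has $Y\subset\relint(C_0)\subset\bigcup_{C\in S}C$; the positive distance from $Y$ to $\partial C_0$ gives $Y+\bigl(\varepsilon B^d_\infty\cap\aff(C_0)\bigr)\subset\relint(C_0)$, the compact thickened set is covered by finitely many small cubes inside $\relint(C_0)$, and the polytope spanned by these cubes has finitely many vertices, each lying in the union and hence --- by the total order --- all in one chain member $C$. Thus $C$ contains a whole relative neighbourhood of $Y$, so $Y\subset\relint(C)$, contradicting $A$-$X$-freeness of $C$ uniformly for $A=\ZZ$ and $A=\RR$. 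Alternatively, your fallback can be made rigorous: every point of $\relint\bigl(\bigcup_i C_i\bigr)$ is interior (relative to the stabilised affine hull) to a simplex with vertices in the union, and those vertices lie in one $C_i$; then cover the compact set $Y$ by the nested relatively open sets $\relint(C_i)$ and extract one index. Either way, the missing idea is to place a neighbourhood of the unperturbed copy $Y$ (or at least all of $Y$) inside a single chain member, rather than to perturb $Y$ --- as written, your plan does not yield the contradiction in the case $A=\ZZ$.
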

\begin{proof}
	By \Cref{rem:basic-props}, we can assume that $X$ is closed and convex.
	In particular, $X$ is compact since it is bounded.
	Let $K \in \mathcal{K}^d$ be an $A$-$X$-free convex body.
	
	Let $\mathcal{M}$ be the set of all $A$-$X$-free closed convex sets $C \subset \RR^d$ that contain $K$ ($C$ not necessarily bounded).
	Note $\mathcal{M}$ is partially ordered with respect to inclusion.
	Our goal is to apply Zorn's Lemma.
	Therefore, we need to show that every totally ordered subset $S \subset \mathcal{M}$ has an upper bound in $\mathcal{M}$.
	We set $C_0 \coloneqq \closure{\bigcup_{C \in S} C}$ which is a closed convex set in $\RR^d$.
	It remains to verify that $C_0$ is also $A$-$X$-free.
	Assume towards a contradiction that $C_0$ contains an $A$-unimodular copy of $X$ in its relative interior, say $Y \subset \mathrm{relint}(C_0)$ where $Y$ is an $A$-unimodular copy of $X$.
	Since $\mathrm{relint}(C_0) \subset \bigcup_{C \in S}C$, it follows $Y \subset \bigcup_{C \in S}C$.
	Consider the ``distance'' between the boundary $\partial C_0$ of $C_0$ (considered as subset in the affine span $\aff(C_0)$) and $Y$:
	\[
		d(\partial C_0, Y) \coloneqq \inf \setcond{ |x - y| }{x \in \partial C_0, y \in Y} \text{.}
	\]
	Since $\partial C_0 \cap Y = \emptyset$, $\partial C_0$ is closed, and $Y$ is compact, a classical result from point set topology implies $d(\partial C_0, Y)>0$.
	There exists an $\varepsilon>0$ such that the Minkowski sum $Y + (\varepsilon B_\infty^d \cap \aff(C_0))$ is contained in the interior $\mathrm{int}(C_0)$.
	Then clearly
	\[
    	Y \subset Y+\mleft(\frac\varepsilon2 B_\infty^d\cap\aff(C_0)\mright) \subset Y+\mleft(\varepsilon B_\infty^d\cap\aff(C_0)\mright) \subset \mathrm{relint}(C_0) \subset \bigcup_{C \in S} C \text{.}
	\]
	Since $Y+(\frac\varepsilon2 B^d\cap\aff(C_0))$ is compact, finitely many translations of $\varepsilon B^d_\infty\cap\aff(C_0)$ suffice to cover $Y+(\varepsilon B^d\cap\aff(C_0))$.
	Note the convex hull of these finitely many translates yield a polytope with vertices, say $v_1, \ldots, v_n$.
	There exist $C_i \in S$ with $v_i \in C_i$.
	Since $S$ is totally ordered, we conclude that there exists a $C \in S$ with $v_1, \ldots, v_n \in C$, and thus $Y+(\frac\varepsilon2B^d\cap \aff(C_0)) \subset C$.
	Hence, $Y \subset \mathrm{relint}(C)$.
	A contradiction.
	
	By construction $C_0 \in \mathcal{M}$ is an upper bound of $S$.
	The statement follows by Zorn's Lemma.
\end{proof}
\begin{remark}
    The inclusion-maximal set from \Cref{lem:incl-max} might be unbounded: consider for example the rectangle with vertices $(\pm a, 1), (\pm a, 0)$, for any large $a \in \RR$. This is a $\RR$-$\Delta_2$-free convex body and the only inclusion-maximal $\RR$-$\Delta_2$-free convex set containing it is the horizontal strip between height $0$ and $1$.
\end{remark}

By \Cref{lem:incl-max}, every $A$-$X$-free convex body is contained in an inclusion-maximal closed convex $A$-$X$-free set.
Since the width is monotone with respect to inclusion, we have the following.
\begin{equation}\label{ineq:upper_lower_bounds}
        \sup \left\{ \width(K) \;\middle|\;  \parbox{3.1cm}{\small$K$ inclusion-maximal\\
        $A$-$X$-free convex body} \right\} \leq \flatness_d^A(X) \leq \sup\left\{\width(K) \;\middle|\;
        \parbox{3cm}{\small $K$ inclusion-maximal \\
        $A$-$X$-free convex set}\right\} \text{.}
\end{equation}

That is, an upper bound on the width of inclusion-maximal  $A$-$X$-free closed convex sets $C \subset \RR^d$ (including the unbounded ones) gives an upper bound for $\flatness_d^A(X)$, while the width of any inclusion-maximal bounded $A$-$X$-free convex set yields a lower bound. 
A strategy to determine the exact value of generalised flatness constants is to compute these upper and lower bounds and show they agree by studying the explicit values of the width in the two subcases: 1) $C$ is unbounded; 2) $C$ is a convex body.

In this work and with regard to the applications of generalised flatness constants to symplectic geometry, the case when the convex hull $\conv(X)$ is full-dimensional plays a crucial role.
In this case, it suffices to consider full-dimensional $A$-$X$-free convex bodies, as the next lemma shows.

\begin{lemma}\label{lem:full-dim}
    Let $X \subset \RR^d$ be a bounded subset whose convex hull is full-dimensional.
    Then every $A$-$X$-free convex body is contained in a full-dimensional $A$-$X$-free convex body.
\end{lemma}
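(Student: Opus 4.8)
The plan is to thicken a lower‑dimensional $A$-$X$-free convex body slightly in the directions orthogonal to its affine hull, and to observe that for a sufficiently small thickening no $A$-unimodular copy of $X$ can appear, purely for volume reasons. Full‑dimensionality of $\conv(X)$ enters precisely here: $A$-unimodular copies of $X$ have a fixed positive $d$-dimensional volume, whereas thin neighbourhoods of a lower‑dimensional convex body have arbitrarily small volume.

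So suppose $K$ is an $A$-$X$-free convex body. If $\dim(K) = d$ there is nothing to do, so assume $k \coloneqq \dim(K) < d$. Put $v \coloneqq \vol_d(\conv(X))$, which is positive since $\conv(X)$ is full-dimensional. Since an $A$-unimodular transformation $x \mapsto Mx + b$ has $\lvert \det M \rvert = 1$ and convex hulls commute with affine maps, every $A$-unimodular copy $\conv(Y)$ of $\conv(X)$ satisfies $\vol_d(\conv(Y)) = v$. For $\varepsilon > 0$ set $K_\varepsilon \coloneqq K + \varepsilon B^d$, a full-dimensional convex body containing $K$ with $\relint(K_\varepsilon) = \strint(K_\varepsilon)$.

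The key — and essentially only nontrivial — step is the estimate $\vol_d(K_\varepsilon) \to 0$ as $\varepsilon \to 0^+$. Since volumes are translation invariant we may assume $W \coloneqq \aff(K)$ is a $k$-dimensional linear subspace, so $K \subseteq W$. Decomposing $\RR^d = W \oplus W^\perp$ orthogonally, one checks that $K_\varepsilon \subseteq \bigl(K + \varepsilon(B^d \cap W)\bigr) + \varepsilon(B^d \cap W^\perp)$, whence $\vol_d(K_\varepsilon) \le \vol_k\bigl(K + (B^d\cap W)\bigr)\cdot \omega_{d-k}\,\varepsilon^{d-k}$ for $\varepsilon \le 1$, where $\omega_{d-k}$ denotes the volume of the unit ball in $\RR^{d-k}$. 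As $d - k \ge 1$, the right-hand side tends to $0$.

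Now pick $\varepsilon > 0$ with $\vol_d(K_\varepsilon) < v$. If $K_\varepsilon$ were not $A$-$X$-free, some $A$-unimodular copy $Y$ of $X$ would satisfy $Y \subseteq \strint(K_\varepsilon) = \relint(K_\varepsilon)$; then $\conv(Y) \subseteq \strint(K_\varepsilon) \subseteq K_\varepsilon$ by convexity of the interior, giving $v = \vol_d(\conv(Y)) \le \vol_d(K_\varepsilon) < v$, a contradiction. Hence $K_\varepsilon$ is a full-dimensional $A$-$X$-free convex body containing $K$, which is what we wanted.
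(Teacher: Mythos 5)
Your proof is correct and rests on essentially the same idea as the paper's: enclose the lower-dimensional body in a full-dimensional convex body whose $d$-volume is smaller than $\vol(\conv(X))>0$, so that no $A$-unimodular copy of $X$ (whose convex hull always has exactly that volume, as unimodular transformations preserve volume) can fit inside it. The only difference is cosmetic: the paper first reduces to $\dim(K)=d-1$ and thickens a parallelepiped containing $K$ into a thin prism of small volume, whereas you take $K+\varepsilon B^d$ directly and verify $\vol(K+\varepsilon B^d)\to0$, which avoids the case distinction.
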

\begin{proof}
    By \Cref{rem:basic-props}, we may assume that $X$ is a full-dimensional convex body.
    Let $K \subset \RR^d$ be an $A$-$X$-free convex body of dimension $<d$.
    Then $K$ is contained in an affine hyperplane $H \subset \RR^d$.
    Let $U_H \subset \RR^d$ be the unique linear subspace parallel to $H$.
    We first show that it suffices to consider the case $\dim(K) = d-1$.
    
    Suppose $\dim(K) < d-1$.
    Let $B^{d-1} \subset U_H$ be the $d-1$-dimensional unit ball containing the origin.
    Then the Minkowski sum $K+B^{d-1}$ has dimension $d-1$ and is contained in $H$.
    Since $X$ is full-dimensional, $K+B^{d-1}$ is $A$-$X$-free (it cannot contain an $A$-unimodular copy of $X$).
    
    Hence, we may assume that $K$ is $d-1$-dimensional contained in an affine hyperplane $H \subset \RR^d$.
    Clearly, there exists a parallelepiped $\Pi^{d-1} \subset H$ which contains $K$.
    Let $v \in \RR^d$ such that $\RR v + U_H = \RR^d$.
    Take $\varepsilon >0$ such that the volume of the parallelepiped $\Pi^{d-1}+\varepsilon v$ is strictly smaller than the volume of $X$.
    Then $K \subset \Pi^{d-1}+\varepsilon v$ is full-dimensional and $A$-$X$-free (it cannot contain an $A$-unimodular copy of $X$).
\end{proof}

This guarantees that, whenever we work with a set $X$ whose convex hull is full dimensional, the supremums on the left and right hand side of~\ref{ineq:upper_lower_bounds} can be taken over just the \emph{full-dimensional} sets. 
A positive answer to the following question would confirm our suspicion that, once we restrict to full-dimensional sets, the width of the unbounded  sets will always be strictly less than the widths of the bounded ones and thus that the inequalities in~\ref{ineq:upper_lower_bounds} above are in fact equalities.

\begin{question}
Do full-dimensional unbounded inclusion-maximal $A$-$X$-free sets always have ``small'' width, that is, they are never maximisers of width among all $A$-$X$-free bodies?
\end{question}

\begin{remark}\label{rem:full-dim}
    Let us provide more details why the restriction to full-dimensional inclusion-maximal $A$-$X$-free closed convex sets $C \subset \RR^d$ is crucial.
    Consider the case that $X=\Delta_2$.
    Let $m \in \RR\setminus \QQ$ be an irrational real number.
    Then, for any closed interval $I \subset \RR$, we have an  $A$-$\Delta_2$-free convex body $\setcond{(x,m \cdot x)}{x \in I} \subset \RR^2$.
    These convex bodies are contained in the inclusion-maximal $A$-$\Delta_2$-free closed convex set $C = \setcond{(x, m \cdot x)}{x \in \RR} \subset \RR^2$ (note $C$ is inclusion-maximal $A$-$X$-free as otherwise it would contain a point outside the line $\{ y = m \cdot x \}$, and thus would contain a $2$-dimensional strip with irrational slope; now apply the argument from the proof of \Cref{prop:unbounded}).
    
    Since $m$ is irrational, any $u \in (\ZZ^2)^* \setminus \{ 0 \}$ induces a non-trivial linear form on the line $C = \{ y = m \cdot x \}$, and thus $\width_u(C) = \infty$.
    It follows that $\width(C) = \infty$, yielding the rather unhelpful upper bound $\flatness_2^A(\Delta_2) \le \infty$.
    However, we shall see that considering full-dimensional inclusion-maximal $A$-$\Delta_2$-free closed convex sets will provide exactly the bounds that we need to compute $\flatness_2^A(\Delta_2)$.
\end{remark}

In $2$ dimensions the unbounded case is straightforward and will be done in \Cref{sec:1-2-dim}.
The study of inclusion-maximal $A$-$X$-free convex bodies is more intricate and the later sections of the paper will be concerned with this investigation in $2$ dimensions.

The following lemma is another important ingredient in the explicit computation of generalised flatness constants.
Its significance lies in \Cref{cor:incl-max-contain-X}, which shows that an inclusion-maximal $A$-$X$-free convex body contains an $A$-unimodular copy of $X$.
That will provide us with tools necessary to study those bodies yielding the exact numerical value of the respective generalised flatness constant.

\begin{lemma}\label{lem:enlarge}
  Let $K, X \subseteq \RR^d$ be $d$-dimensional convex bodies.
  If $K$ does not contain an $A$-unimodular copy of $X$, there exists $\varepsilon>0$ such that $K+\varepsilon B^d$ does not contain an $A$-unimodular copy of $X$.
\end{lemma}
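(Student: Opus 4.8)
The plan is to argue by contradiction, using a compactness argument on the space of $A$-unimodular transformations that witness an inclusion of a copy of $X$ inside $K + \varepsilon B^d$. Suppose the statement fails. Then for every $n \in \NN$ there is an $A$-unimodular transformation $T_n \colon x \mapsto M_n x + b_n$, with $M_n \in \GL_d(\ZZ)$ and $b_n \in A^d$, such that $T_n(X) \subseteq K + \tfrac1n B^d$. The first step is to observe that, since $X$ is full-dimensional, the matrices $M_n$ are forced to lie in a finite set: the volume of $T_n(X)$ equals $|\det M_n| \cdot \vol(X)$, while $T_n(X) \subseteq K + \tfrac1n B^d \subseteq K + B^d$ bounds this volume from above, so $|\det M_n| = 1$ gives no new information, but boundedness of $T_n(X)$ inside a fixed compact set together with the fact that $M_n$ maps a fixed basis-simplex of $X$ into a set of bounded diameter pins down the entries of $M_n$ to a finite set. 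Hence, after passing to a subsequence, we may assume $M_n = M$ is constant.

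Next I would control the translation parts $b_n$. Fix a point $x_0$ in the interior of $X$; then $M x_0 + b_n = T_n(x_0) \in K + \tfrac1n B^d \subseteq K + B^d$, which is compact, so the sequence $(b_n)$ is bounded and, after a further subsequence, converges to some $b \in \RR^d$. (If $A = \ZZ$ then the $b_n$ lie in a bounded subset of $\ZZ^d$, hence are eventually constant and $b \in \ZZ^d$; if $A = \RR$ we simply get $b \in \RR^d = A^d$.) In either case, setting $T \colon x \mapsto Mx + b$ yields an $A$-unimodular transformation, and $T_n \to T$ uniformly on the compact set $X$.

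The final step is to pass to the limit in the inclusion. For each fixed $x \in X$ we have $T_n(x) \in K + \tfrac1n B^d$, i.e. there is $k_n \in K$ with $|T_n(x) - k_n| \le \tfrac1n$. Since $K$ is compact, $(k_n)$ has a convergent subsequence with limit $k \in K$, and then $T(x) = \lim T_n(x) = \lim k_n = k \in K$. As $x \in X$ was arbitrary, $T(X) \subseteq K$, so $K$ contains an $A$-unimodular copy of $X$, contradicting the hypothesis. Therefore some $\varepsilon > 0$ works.

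The main obstacle is the discreteness/finiteness step for the linear parts $M_n$: one must make precise why a lattice automorphism carrying a fixed full-dimensional body $X$ into a bounded region can only do so in finitely many ways. The cleanest way is to pick an affine basis $x_0, x_1, \dots, x_d$ of $\RR^d$ among the vertices (or interior points) of $X$; the differences $M_n(x_i - x_0) = T_n(x_i) - T_n(x_0)$ all have norm bounded by the diameter of $K + B^d$, and since $x_1 - x_0, \dots, x_d - x_0$ is a basis of $\RR^d$, this bounds $\|M_n\|$ in operator norm; as $M_n$ has integer entries, only finitely many such matrices exist. Full-dimensionality of $X$ is exactly what makes this work, and it is also why the lemma is stated for $d$-dimensional $K$ and $X$.
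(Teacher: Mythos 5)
Your proposal is correct and follows essentially the same route as the paper's proof: take a sequence of witnesses $T_n$ with $T_n(X)\subseteq K+\tfrac1n B^d$, use full-dimensionality of $X$ to force the linear parts into a finite set (the paper does this via a scaled lattice affine basis inside $X$ and discreteness of $\eta\ZZ^d$, you via an affine basis of $\RR^d$ in $X$, an operator-norm bound, and integrality of the entries), pass to a subsequence with constant linear part and convergent translations, and conclude $T(X)\subseteq K$ by compactness of $K$, contradicting the hypothesis. The handling of $A=\ZZ$ versus $A=\RR$ also matches the paper's treatment.
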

\begin{proof}
    We split the proof into two cases.
	Suppose $A=\RR$.
    We show the contrapositive, i.e., if for all (large enough) $n \in \NN$, there exists an $\RR$-unimodular transformation $T_n$ with $T_n(X) \subseteq K+\frac1n B^d$, then $K$ contains an $\RR$-unimodular copy of $X$.
    We can represent $T_n$ as a composition of a $\ZZ$-unimodular transformation $A_n$ and a translation $\delta_n \in [0,1]^d$, i.e. $T(x) = A_n (x) - \delta_n$.
    Since $T_n(X) = A_n(X) - \delta_n \subseteq K+\frac1n B^d$, we get
    \[
        A_n(X) \subseteq K + \frac1n B^d + \delta_n \subseteq K + B^d + [0,1]^d \text{.}
    \]
    Since $X$ is full-dimensional, there exists an affine $\ZZ$-basis $\mathcal{B} = \set{b_0, \ldots, b_d}$ of $\ZZ^d$ and a scaling factor $\eta > 0$ such that $\eta \mathcal{B} \subseteq X$.
    As a $\ZZ$-unimodular transformation, $A_n$ is uniquely determined by the images of the $b_i$.
    We require that $A_n$ maps $\eta\mathcal{B}$ onto an affine linearly independent subset of $\eta \ZZ^d \cap \left( K + B^d + [0,1]^d \right)$.
    Since $K + B^d + [0,1]^d$ is bounded and $\eta \ZZ^d$ is discrete, there are only finitely many choices for $A_n$.
    In particular, by restricting to an appropriate subsequence, we may assume that $A = A_n$ for all $n \in \NN$.
    
    Since $[0,1]^d$ is compact, there exists a convergent subsequence of $(\delta_n)_{n \in \NN}$ with limit $\delta \in [0,1]^d$.
    To keep notation simple, we use the same symbol $(\delta_n)_{n \in \NN}$ for this subsequence, i.e. $\delta_n$ converges to $\delta \in [0,1]^d$.
    Let $T$ be the $\RR$-unimodular transformation obtained by composing the $\ZZ$-unimodular transformation $A$ with the translation $\delta$.
    We show that $T(X) \subseteq K$.
    
    Suppose $x \in X$.
    Since $A(x) - \delta_n \in K + \frac1n B^d$, there exists $z_n \in \frac1n B^d$ such that
    \[
        y_n \coloneqq A(x) -\delta_n - z_n \in K \text{.}
    \]
    As $n$ goes to infinity, we get that $y_n \rightarrow A(x) - \delta = T(x)$.
    Since $K$ is compact, it follows that $T(x) \in K$.
    
    If $A=\ZZ$, we repeat the argument above, however now there are no translations $\delta_n \in [0,1]^d$.
\end{proof}

The proof of \Cref{lem:enlarge} shows the following result which will be needed below.

\begin{corollary}\label{cor_enlarge_translation}
  Let $K, X \subseteq \RR^d$ be $d$-dimensional convex bodies.
  If $K$ does not contain an $A$-translate of $X$, then there exists an $\varepsilon>0$ such that $K+\varepsilon B^d$ does not contain an $A$-translate of $X$.
\end{corollary}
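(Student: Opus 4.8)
The plan is to rerun the argument from the proof of \Cref{lem:enlarge} essentially verbatim, specialising to the case in which the $A$-unimodular transformations involved have trivial linear part. First I would argue by contraposition: suppose that for every (large) $n \in \NN$ there is a vector $c_n \in A^d$ with $X + c_n \subseteq K + \frac1n B^d$; the goal is to produce a single $c \in A^d$ with $X + c \subseteq K$, which contradicts the hypothesis and yields the claim with, say, $\varepsilon = \frac1n$ for $n$ large enough.

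The first step is to bound the sequence $(c_n)_{n \in \NN}$. Since $X$ is full-dimensional, fix a point $p$ in its interior; then $p + c_n \in X + c_n \subseteq K + \frac1n B^d \subseteq K + B^d$, and $K + B^d$ is a fixed bounded set, so $(c_n)_{n \in \NN}$ is bounded. This is the exact analogue of the affine-basis argument in \Cref{lem:enlarge}, which forced the linear parts into a finite set; here, without a linear part, full-dimensionality of $X$ is what pins down the translations. If $A = \RR$, compactness of a large closed ball gives a subsequence with $c_n \to c \in \RR^d$; if $A = \ZZ$, a bounded subset of $\ZZ^d$ is finite, so after passing to a subsequence we may assume $c_n = c$ for a fixed $c \in \ZZ^d$. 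In either case $c \in A^d$.

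It then remains to verify $X + c \subseteq K$. Given $x \in X$, we have $x + c_n \in K + \frac1n B^d$, so there exist $y_n \in K$ and $z_n \in \frac1n B^d$ with $x + c_n = y_n + z_n$; as $n \to \infty$ we get $y_n = x + c_n - z_n \to x + c$, and since $K$ is closed (being a convex body) it follows that $x + c \in K$. Hence $K$ contains the $A$-translate $X + c$ of $X$, the desired contradiction.

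I do not anticipate a genuine obstacle: the only two points needing a word of care are the boundedness of $(c_n)_{n \in \NN}$ — handled above using full-dimensionality of $X$ — and the separate treatment of $A = \ZZ$, where the compactness step for the translation part is replaced by the observation that a bounded set of integer vectors is finite, precisely the simplification already noted at the end of the proof of \Cref{lem:enlarge}.
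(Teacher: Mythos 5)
Your proof is correct and is essentially the paper's own argument: the paper derives this corollary by observing that the proof of \Cref{lem:enlarge} goes through verbatim with the linear part fixed, which is precisely what you carry out (with the natural simplification that a single point of $X$ bounds the translations $c_n$, replacing the affine-basis step). No gap to report.
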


Furthermore, the following result follows from \Cref{lem:enlarge}.
\begin{corollary}\label{cor:incl-max-contain-X}
    Let $K,X \subset \RR^d$ be $d$-dimensional convex bodies.
    If $K$ is inclusion-maximal $A$-$X$-free, then $K$ contains an $A$-unimodular copy of $X$.
\end{corollary}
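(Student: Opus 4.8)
The plan is to prove the contrapositive: if $K$ is $A$-$X$-free and does \emph{not} contain an $A$-unimodular copy of $X$, then $K$ is not inclusion-maximal $A$-$X$-free, i.e.\ it can be properly enlarged while remaining $A$-$X$-free. So suppose $K$ is a $d$-dimensional convex body that is $A$-$X$-free but contains no $A$-unimodular copy of $X$. By \Cref{lem:enlarge}, there is an $\varepsilon > 0$ such that $K + \varepsilon B^d$ still contains no $A$-unimodular copy of $X$; in particular $K + \varepsilon B^d$ is $A$-$X$-free (having no $A$-unimodular copy of $X$ at all, it certainly has none in its relative interior). Since $K$ is full-dimensional, $K + \varepsilon B^d \supsetneq K$ properly, so $K$ was not inclusion-maximal.

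It remains to handle the case where $K$ \emph{is} inclusion-maximal $A$-$X$-free. By the contrapositive just established, $K$ cannot fail to contain an $A$-unimodular copy of $X$: if it did, the argument above would contradict inclusion-maximality. Hence $K$ contains an $A$-unimodular copy of $X$, which is exactly the assertion.

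I do not anticipate a real obstacle here, since the corollary is essentially a repackaging of \Cref{lem:enlarge}. The one point that needs a sentence of care is the implication ``contains no $A$-unimodular copy of $X$ at all $\Rightarrow$ is $A$-$X$-free'': this is immediate from the definition, because being $A$-$X$-free only requires the \emph{relative interior} to avoid $A$-unimodular copies, and here the whole body avoids them. A second small point is that both $K$ and $K + \varepsilon B^d$ are genuinely $d$-dimensional convex bodies, so \Cref{lem:enlarge} applies verbatim and the enlargement is proper; this is guaranteed by the hypothesis that $K$ is a $d$-dimensional convex body.
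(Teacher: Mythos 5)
Your proposal is correct and is essentially the paper's own argument: both invoke \Cref{lem:enlarge} to enlarge $K$ to $K+\varepsilon B^d$, observe that the enlarged body contains no $A$-unimodular copy of $X$ and is therefore $A$-$X$-free, and conclude that $K$ could not have been inclusion-maximal (the paper phrases this as a contradiction rather than a contrapositive, which is an immaterial difference). The two small points you flag are exactly the ones implicit in the paper's proof, so nothing is missing.
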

\begin{proof}
    Assume towards a contradiction $K$ didn't contain an $A$-unimodular copy of $X$.
    By \Cref{lem:enlarge}, $K$ is contained in a strictly bigger convex body $K \subsetneq K' \subset \RR^d$ with the same property.
    Since $K'$ doesn't contain an $A$-unimodular copy of $X$, it is $A$-$X$-free.
    A contradiction to $K$ being inclusion-maximal.
\end{proof}

By the definition of generalised flatness constants, given a convex body $K \subset \RR^d$ with $\width(K)>\flatness_d^A(X)$, there exists an $A$-unimodular copy of $X$ that is contained in $K$.
However, a priori the case of equality $\width(K) = \flatness_d^A(X)$ is unclear, i.e., there could be such $K$ that do not contain an $A$-unimodular copy or there could be such $K$ that do.
\Cref{lem:enlarge} implies the following complete answer to this question.
\begin{proposition}\label{prop:equal-flt-case}
    Let $K, X \subset \RR^d$ be $d$-dimensional convex bodies with $\width(K) = \flatness_d^A(X)$.
    Then $K$ contains an $A$-unimodular copy of $X$.
\end{proposition}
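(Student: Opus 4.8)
The plan is to argue by contradiction, using \Cref{lem:enlarge} together with the elementary fact that thickening a convex body by a Euclidean ball strictly increases its lattice width. So suppose $K$ does not contain an $A$-unimodular copy of $X$. Since $K$ and $X$ are both $d$-dimensional convex bodies, \Cref{lem:enlarge} supplies an $\varepsilon>0$ such that the enlarged body $K+\varepsilon B^d$ still contains no $A$-unimodular copy of $X$; in particular $K+\varepsilon B^d$ is one of the convex bodies over which the supremum defining $\flatness_d^A(X)$ is taken, so $\width(K+\varepsilon B^d)\le\flatness_d^A(X)$.

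The key step is to show $\width(K+\varepsilon B^d)\ge\width(K)+2\varepsilon$. Here I would invoke additivity of support functions under Minkowski addition: for every $u\in(\ZZ^d)^*\setminus\set0$ one has $\width_u(K+\varepsilon B^d)=\width_u(K)+\varepsilon\,\width_u(B^d)=\width_u(K)+2\varepsilon|u|$, where $|u|$ denotes the Euclidean norm of $u$ regarded as an integer vector. Since a nonzero integer vector has a coordinate of absolute value at least $1$, we get $|u|\ge1$, and $\width_u(K)\ge\width(K)$ by definition of the lattice width; hence every directional width of the enlarged body is at least $\width(K)+2\varepsilon$. Taking the minimum over all $u$ gives the claimed inequality. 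This is the one place where integrality of the lattice of directions is used: it is exactly what makes the gain uniform in $u$ and therefore survives passing to the minimum.

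Combining the two steps yields $\flatness_d^A(X)=\width(K)<\width(K)+2\varepsilon\le\width(K+\varepsilon B^d)\le\flatness_d^A(X)$, a contradiction. Hence $K$ must contain an $A$-unimodular copy of $X$.

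I do not expect a real obstacle here: the proof is short, and both ingredients are at hand (\Cref{lem:enlarge} is proved above, and additivity of support functions under Minkowski sums is standard, e.g.\ from~\cite{Schneider}). The only point needing a line of care is the uniform bound $|u|\ge1$ over nonzero integer forms—without it the width increment could degenerate to $0$ in the infimum—but this is immediate.
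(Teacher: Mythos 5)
Your proposal is correct and follows essentially the same route as the paper: assume no $A$-unimodular copy of $X$ lies in $K$, invoke \Cref{lem:enlarge} to enlarge $K$ to $K+\varepsilon B^d$ without picking up such a copy, and derive a contradiction from the strict width increase. The paper simply asserts $\width(K+\varepsilon B^d)>\width(K)$, whereas you spell out the quantitative reason via support-function additivity and the bound $|u|\ge1$ for nonzero integer forms; that extra detail is sound and does not change the argument.
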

\begin{proof}    
    Assume towards a contradiction $K$ doesn't contain an $A$-unimodular copy of $X$.
    By \Cref{lem:enlarge}, there exists $\varepsilon>0$ such that $K+\varepsilon B^d$ also doesn't contain an $A$-unimodular copy of $X$.
    A contradiction since $\width(K+\varepsilon B^d)>\width(K)$.
\end{proof}

We conclude the section by proving \Cref{thm_incl_max} for the case $A=\ZZ$.
Indeed, we show a more general version of \Cref{thm_incl_max} in that $X$ just needs to be a bounded set.
The case $A=\RR$ is more involved and will be treated in Sections~\ref{sec_incl_max_conv_body} and~\ref{sec_intersec_conv_bodies}.

\begin{proposition}[Case $A=\ZZ$ of \Cref{thm_incl_max}]\label{prop:A_Z_X_poly_K_poly}
    Let $X \subset \RR^d$ be a bounded set.
    Then every inclusion-maximal $\ZZ$-$X$-free convex body is a polytope.
\end{proposition}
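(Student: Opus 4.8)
The plan is to exploit the fact that a $\ZZ$-unimodular copy of $X$ involves a translation vector in $\ZZ^d$ rather than a continuous parameter, so that "avoiding a $\ZZ$-unimodular copy of $X$" is a *finitely generated* constraint on a convex body of bounded diameter. First I would observe that an inclusion-maximal $\ZZ$-$X$-free convex body $K$ is bounded in a controlled way: by \Cref{rem:basic-props} we may replace $X$ by $\conv(\closure X)$, a compact convex set, and by \Cref{lem:incl-max} together with the description of inclusion-maximal bodies we know $K$ contains a (translated) copy of $X$ in its relative interior's closure — more usefully, a maximal $K$ cannot be too large in any lattice direction $u \in (\ZZ^d)^*$, since if $\width_u(K)$ were large enough then every sufficiently fine lattice-direction slab inside $K$ would force a $\ZZ$-translate of $X$. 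I would make this precise to conclude $K$ lies in a fixed bounded region $R$ (e.g.\ a large cube), so that only finitely many lattice translates $X + v$, $v \in \ZZ^d$, together with the finitely many automorphisms $M \in \GL_d(\ZZ)$ that keep $M(X)$ inside $R$ up to translation, are relevant. Call the resulting finite set of "forbidden bodies" $\{ Y_1, \dots, Y_N\}$, each a compact convex set.

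Next, the condition that $K$ is $\ZZ$-$X$-free becomes: $\relint(K)$ contains no $Y_i$. For each $i$, "$Y_i \not\subseteq \relint(K)$" means there is a point of $Y_i$ lying on $\partial K$ or outside $K$; equivalently, by a separation/supporting-hyperplane argument, either $Y_i$ is not contained in $K$ at all (an open condition, easy), or $Y_i \subseteq K$ but some vertex — more precisely, some point — of $Y_i$ lies on a supporting hyperplane of $K$. The key step is then to show that maximality forces $K$ to be "pinned down" on all sides by the $Y_i$: if some boundary region of $K$ is not touched by any $Y_i$, one could push that part of the boundary outward (enlarging $K$) without creating a new inclusion $Y_j \subseteq \relint(K)$, contradicting maximality. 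I would formalize "push outward" exactly as in the proof of \Cref{lem:incl-max}/\Cref{cor:incl-max-contain-X}: enlarge $K$ slightly in a given outer normal direction; the only obstruction is some $Y_j$ that already lies in $K$ with a point on that facet-to-be. Hence every supporting hyperplane of $K$ that defines a "movable" facet must be forced by contact with some $Y_j$.

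To finish, I would argue that $K$ has only finitely many facets. The normal fan / boundary structure of $K$ is determined by the finitely many contact conditions "a point $p \in Y_j$ lies on a supporting hyperplane $H$ of $K$ and $Y_j \subseteq H^-$." Concretely: let $H$ be a supporting hyperplane of $K$ with outer normal $u$ touching $K$ along a face $F = K \cap H$. If $F$ contained no point of any $Y_j$, then (after possibly also using that no $Y_j$ lies just inside along $F$ — otherwise shrink $\varepsilon$) we could translate $H$ outward by a small amount and take the convex hull with $K$, strictly enlarging $K$ while keeping it $\ZZ$-$X$-free, contradicting maximality. Therefore every facet of $K$ contains a point of some $Y_j$; since each $Y_j$ is compact and there are finitely many of them, and distinct facets have distinct (exposed) supporting hyperplanes, a limiting/accumulation argument shows the set of facet normals cannot have an accumulation point — any accumulating sequence of facets would have contact points accumulating on some $Y_j$, and at the limit the supporting hyperplane would expose a face of $K$ containing that limit point, collapsing infinitely many facets into one. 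Hence $K$ has finitely many facets, so $K$, being a bounded intersection of finitely many half-spaces, is a polytope.

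The main obstacle I anticipate is the boundedness step — showing that an inclusion-maximal $\ZZ$-$X$-free $K$ automatically lies in a fixed bounded region. The subtlety is that $K$ need not be full-dimensional (here $X$ is merely bounded, not full-dimensional), so slicing arguments must be carried out inside $\aff(K)$, and if $\aff(K)$ is not rational the lattice-width directions may be uninformative (cf.\ \Cref{rem:full-dim}); one must check that maximality nonetheless confines $K$, or handle the low-dimensional/irrational-affine-hull cases separately (in those cases $K$ is typically already a polytope or easily seen to be one). The rest — reducing to finitely many forbidden translates and running the "push the boundary out" maximality argument — is routine once boundedness is in hand.
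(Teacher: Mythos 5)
Your plan has a genuine gap at its foundation: the reduction to a finite list of forbidden bodies $Y_1,\dots,Y_N$. The proposition is stated for an arbitrary bounded $X$, and in that generality there need not be finitely many $\ZZ$-unimodular copies of $X$ inside a bounded region. Take $d=2$ and $X=\{(\sqrt2,0)\}$: for every $a\in\ZZ$ the vector $(a,1)$ is the first column of some $M\in\GL_2(\ZZ)$, so each point $\sqrt2\,(a,1)+b$ with $b\in\ZZ^2$ is a $\ZZ$-unimodular copy of $X$, and infinitely many pairwise distinct ones lie in $[0,1]^2$ (the fractional parts of $\sqrt2 a$ are dense). Everything downstream of the finiteness claim --- choosing one $\varepsilon$ for the outward push that no copy can exploit, and the accumulation argument over ``the finitely many compact $Y_j$'' --- collapses with it; such finiteness is only available under extra hypotheses (e.g.\ $\conv(X)$ full-dimensional), which are not assumed. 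By contrast, the obstacle you single out as the main one, boundedness of $K$, is vacuous: in this paper a convex body is compact by definition, so a box $[-N,N]^d\supseteq K$ is immediate.

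Even granting finiteness, the endgame does not close. Showing that every facet of $K$ carries a contact point and that there are finitely many facets would not prove $K$ is a polytope: a general convex body need not have any facets at all (a smooth body has none), and you never actually derive that $K$ is an intersection of finitely many half-spaces --- that is asserted at the end, not proved. The accumulation step is also unsubstantiated: nothing excludes infinitely many exposed faces touching the same compact $Y_j$ at contact points accumulating on $\partial K$. Finally, the local push-out needs a uniform argument (in the spirit of \Cref{lem:enlarge}, which the paper proves only for full-dimensional convex $X$) that a sufficiently small enlargement captures no \emph{new} copy. The paper's proof sidesteps all of this by never enumerating copies of $X$: it takes the finitely many lattice points of $[-N,N]^d$ not in $\relint(K)$, chooses by separation a half-space through each of them containing $K$, and intersects these to obtain a polyhedron $\Delta\supseteq K$ whose lattice points all lie in $\relint(K)$ or on $\partial\Delta$, hence $\Delta$ is still $\ZZ$-$X$-free; inclusion-maximality then forces $K=\Delta$ in one stroke, with no facet-by-facet or limiting analysis.
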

\begin{proof}
    Let $K \subset \RR^d$ be an inclusion-maximal $\ZZ$-$X$-free convex body.
    There is a positive integer $N \in \ZZ_{>0}$ such that $K$ is contained in $[-N,N]^d$. Set
    \[
        A \coloneqq \setcond{a \in [-N,N]^d \cap \ZZ^d}{a \not\in \mathrm{relint}(K)} \text{,}
    \]
    and
    \[
        \Delta \coloneqq \bigcap_{a\in A} \set{H_a \, \text{half-spaces such that} \, a \in\partial H_a,\, K \subset H_a} \text{.}
    \]
    Such choices of half-spaces exist due to the Separation Theorem.
    Since $\Delta$ is a finite intersection of half-spaces (note $|A|<\infty$), it's a polytope.
    Notice all lattice points of $\Delta$ are either in $\mathrm{relint}(K)$ or in $\partial\Delta$.
    Thus $\Delta$ is $\ZZ$-$X$-free.
    Since $K \subset \Delta$, we have that $K = \Delta$ is a polytope.
\end{proof}


\subsection{\texorpdfstring{Inclusion-maximal $\RR$-$X$-free convex bodies}{Inclusion-maximal RR-X-free convex bodies}}\label{sec_incl_max_conv_body}

In this section we are going to prove \Cref{thm_incl_max} for the case $A=\RR$.

\begin{proposition}[Case $A=\RR$ of \Cref{thm_incl_max}]\label{prop_R_incl_max}
    Let $X \subset \RR^d$ be a polytope.
    Then every inclusion-maximal $\RR$-$X$-free convex body is a polytope.
\end{proposition}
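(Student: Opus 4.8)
Let $K$ be an inclusion-maximal $\RR$-$X$-free convex body; the plan is to show that $K$ has finitely many facets and equals the intersection of the half-spaces bounded by them. We concentrate on the case where $X$ is full-dimensional — this is what is needed for \Cref{thm_incl_max}, and then \Cref{lem:full-dim} forces $K$ to be full-dimensional as well. By \Cref{cor:incl-max-contain-X} the family $\mathcal{F}$ of all $\RR$-unimodular copies of $X$ contained in $K$ is non-empty. The first ingredient is the observation already made in the proof of \Cref{lem:enlarge}: since $K$ is bounded and $X$ is full-dimensional, every $Y = MX + b \in \mathcal{F}$ has its linear part $M$ in a \emph{finite} set $\mathcal{M} \subseteq \GL_d(\ZZ)$ and its translation part $b$ in a bounded set, and $\mathcal{F}$ is compact in the Hausdorff metric. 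In particular $Z \coloneqq \bigcup_{Y \in \mathcal{F}} Y$ is compact and the facets of the members of $\mathcal{F}$ have normals in the finite set $\setcond{(M^\top)^{-1}u}{M \in \mathcal{M},\ u\ \text{a facet normal of}\ X}$; since for each direction there is exactly one supporting hyperplane of $K$, only finitely many hyperplanes can be at once a supporting hyperplane of $K$ and the affine hull of a facet of some member of $\mathcal{F}$.

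The second step uses inclusion-maximality to pin $\partial K$ to these copies, i.e.\ to show $\partial K \subseteq Z$. Fix $p \in \partial K$, pick an outward normal $u$ of $K$ at $p$, and for small $t > 0$ set $q_t \coloneqq p + tu \notin K$. As $K$ is inclusion-maximal and $K \subsetneq \conv(K \cup \{q_t\})$, the larger body is not $\RR$-$X$-free, so its relative interior contains an $\RR$-unimodular copy $Y_t$ of $X$; since $K$ itself is $\RR$-$X$-free, $Y_t \not\subseteq \strint(K)$, hence $Y_t$ has a vertex lying in $\conv(K \cup \{q_t\}) \setminus \strint(K)$. Letting $t \to 0$, this region is squeezed towards $p$; after passing to a subsequence and using the finiteness of the first step to stabilise the orientation of $Y_t$ and the index of its distinguished vertex, one extracts in the limit a copy $Y_p \in \mathcal{F}$ with $p \in Y_p$. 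Because $Y_p \subseteq K$ and both sets are full-dimensional, $\strint(Y_p) \subseteq \strint(K)$, and therefore $p \in \partial Y_p \subseteq Z$.

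The third step converts this covering of $\partial K$ into a finite description of $K$, and is the heart of the argument: one has to upgrade the second step to the statement that for every $p \in \partial K$ there is a copy $Y_p \in \mathcal{F}$ and a supporting hyperplane $H_p$ of $K$ at $p$ for which $Y_p \cap H_p$ is a \emph{facet} of $Y_p$, not just a lower-dimensional face. Granting this, $H_p = \aff(Y_p \cap H_p)$ is one of the finitely many hyperplanes identified in the first step, so $\{H_p : p \in \partial K\}$ is finite; one then checks that the finitely many half-spaces bounded by these hyperplanes already cut out $K$, so $K$ is a polytope. This upgrade and the verification are carried out via the study of intersections of convex bodies in \Cref{sec_intersec_conv_bodies}.

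The main obstacle is precisely this upgrade. The enlargement argument of the second step only forces a copy $Y_p$ to meet $\partial K$ at the point $p$, and a priori the contact can occur at a low-dimensional face of $Y_p$ — for instance at a single vertex poking into $\partial K$ — which determines no supporting hyperplane. Ruling this out must exploit inclusion-maximality: roughly, if all certifying copies near some boundary face met $\partial K$ only along low-dimensional faces, then $K$ would still have room in the transverse directions there and could be enlarged, contradicting maximality. Making this precise requires controlling the intersections of $K$ with translated and $\GL_d(\ZZ)$-transformed copies of $X$; the relevant object is the erosion $K \ominus (Y - v)$ for a vertex $v$ of a copy $Y = MX + b \in \mathcal{F}$ (equivalently, writing $v = Mw + b$ with $w$ a vertex of $X$, the set of $c$ with $M(X - w) + c \subseteq K$), which by $\RR$-$X$-freeness must be lower-dimensional, yet which has to contain every point where such a copy touches $\partial K$ extremally. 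This tension — a lower-dimensional set forced to carry all the boundary-contact data — is what is analysed in \Cref{sec_intersec_conv_bodies}.
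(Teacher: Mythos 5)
There is a genuine gap, and it sits exactly where you place the ``heart of the argument''. First, your second step does not prove what you claim. From $Y_t\subseteq\strint(\conv(K\cup\{q_t\}))$ and $Y_t\not\subseteq\strint(K)$ you get a vertex of $Y_t$ in $\strint(\conv(K\cup\{q_t\}))\setminus\strint(K)$, but this region is \emph{not} squeezed towards $p$ as $t\to0$: it always contains every boundary point of $K$ that is ``visible'' from $q_t$ (for $K=[0,1]^2$, $p$ the midpoint of the top edge and $q_t=p+te_2$, it contains the whole open top edge for every $t$). So the limiting copy $Y_p$ only touches $\partial K$ \emph{somewhere}, not at $p$, and the inclusion $\partial K\subseteq Z$ is not established by this argument.

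More seriously, the upgrade your third step needs --- for every $p\in\partial K$ a copy $Y_p\subseteq K$ and a supporting hyperplane $H_p$ of $K$ at $p$ with $Y_p\cap H_p$ a \emph{facet} of $Y_p$, so that the facet hyperplanes of $K$ lie among the finitely many hyperplanes spanned by facets of copies --- is false, and the paper's own examples refute it. Take $X=\Delta_2$ and $K$ a parallelogram circumscribed around the unit square $[0,1]^2$ with facet slopes, say, $-\tfrac12$ and $2$ (each facet containing one vertex of the square in its relative interior); this is shown in \Cref{sec_incl_max_dim_2} to be an inclusion-maximal $\RR$-$\Delta_2$-free body. Every unimodular copy inside $K$ has edge directions among $\pm e_1,\pm e_2,\pm(e_1+e_2),\pm(e_1-e_2)$, whereas the facets of $K$ have irrational-for-the-lattice directions such as $(1,2)$; hence no inscribed copy has an edge on any facet line of $K$, so no supporting hyperplane of $K$ at a relative-interior point of such a facet contains a facet of a copy. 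Thus $K$ is a polytope whose facet hyperplanes are \emph{not} of the form $\aff(Y\cap H)$ for an inscribed copy $Y$, and your finiteness mechanism cannot cut out $K$. This is also not what \Cref{sec_intersec_conv_bodies} provides: \Cref{thm_finite_supp_hyper} says that if convex bodies $K-v_1,\ldots,K-v_n$ (here $v_i$ the vertices of a copy $Q+\delta$ with $\delta\in K\text{\textdiv}Q$) have lower-dimensional intersection and a common boundary point, then finitely many supporting half-spaces \emph{at that point} already have lower-dimensional intersection. The paper uses it quite differently from your plan: it first builds (\Cref{lem_1st_approx}) a polytope $P'\supseteq K$ admitting the same translation classes of copies, and then, for each copy $Q$ with $\dim(P'\text{\textdiv}Q)=d$, cuts $P'$ by the translated half-spaces $H^+(i,j)+v_i$; by \Cref{lem_translations} this forces $\dim(P''\text{\textdiv}Q)<d$, and after finitely many rounds \Cref{lem_R_X_free_with_Tran} gives an $\RR$-$X$-free polytope containing $K$, whence $K$ equals it by maximality. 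The key point is that supporting hyperplanes of $K$ at the \emph{vertices} of a copy jointly kill the full-dimensionality of the Minkowski difference; no single hyperplane needs to contain a facet of a copy, and in general none does.
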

Throughout $X \subset \RR^d$ will be a fixed full-dimensional polytope and $K \subset \RR^d$ a convex body.
Notice that, by~\Cref{lem:full-dim}, we may assume that $K$ is full-dimensional (a crucial assumption as explained in~\Cref{rem:full-dim}).
Then in the definition of $\RR$-$X$-free ``relative interior'' can be replaced by just ``interior''.

We start by investigating the set of $\RR$-unimodular copies $S$ of $X$ which are contained in $K$.
In general, this is an infinite set (see \Cref{fig_copies_not_finite}).
\begin{figure}[!ht]
    \begin{tikzpicture}
        \draw[-latex] (-.5,0) -- (2,0);
        \draw[-latex] (0,-.5) -- (0,1.5);
        \fill[fill=gray!50,opacity=.5,very thin,dashed] (.25,-.25) -- (1.25,.75) -- (.25,.75) -- cycle;
        \draw[very thin,dashed] (.25,-.25) -- (1.25,.75) -- (.25,.75) -- cycle;
        \draw[very thin,dashed] (0,0) -- (1,1) -- (0,1) -- cycle;
        \draw[very thin,dashed] (.5,-.5) -- (1.5,.5) -- (.5,.5) -- cycle;
        \draw[-latex,thin] (0,1) -- (.5,.5);
        \draw[thick] (0,1) -- (0,0) -- (.5,-.5) -- (1.5,.5) -- (1,1) -- cycle;
    \end{tikzpicture}
    \caption{A polygon containing $\RR$-unimodular copies of $\Delta_2$ translated along a line segment.\label{fig_copies_not_finite}
}
\end{figure}
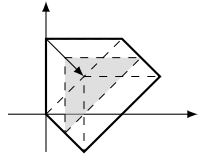
We will identify two polytopes in this set if one is the translation of the other.
The resulting set of equivalence classes will be denoted by $\TT_K(X)$.
\begin{lemma}\label{lem_TT_K_X_sim_finite}
    The set $\TT_K(X)$ of $\RR$-translation equivalence classes of $\RR$-unimodular copies of $X$ contained in $K$ is finite.
\end{lemma}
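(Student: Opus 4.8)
The plan is to show that the number of $\RR$-unimodular copies of $X$ inside $K$, up to translation, is finite by fixing a canonical representative for each equivalence class and arguing that only finitely many linear parts $M \in \GL_d(\ZZ)$ can occur. First I would observe that a $\ZZ$-unimodular copy of $X$ contained in $K$ is of the form $M(X) + b$ for some $M \in \GL_d(\ZZ)$ and $b \in \RR^d$, and that its translation class is determined by $M$ alone (since changing $b$ gives a translate). So $\TT_K(X)$ is in bijection with the set of those $M \in \GL_d(\ZZ)$ for which $M(X)$ admits a translate into $K$. Hence it suffices to bound the number of such $M$.

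Next I would exploit full-dimensionality of $X$ exactly as in the proof of \Cref{lem:enlarge}: pick an affine $\ZZ$-basis $\mathcal{B} = \{b_0, \dots, b_d\}$ of $\ZZ^d$ and a scaling $\eta > 0$ with $\eta \mathcal{B} \subseteq X$. If $M(X) + b \subseteq K$, then in particular $M(\eta b_i) + b \in K$ for all $i$, so the $d+1$ points $M(\eta b_i) + b$ lie in $K$. Since translating all of them by $-b$ changes nothing about which $M$ is possible, we may assume $0 \in M(X) + b$, say $b = -M(\eta b_0)$; then the points $\eta M(b_i - b_0)$ for $i = 1, \dots, d$ all lie in $K - K$ (the difference body), which is a fixed bounded set. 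The vectors $M(b_i - b_0)$ form a $\ZZ$-basis of $\ZZ^d$, so each lies in $\eta^{-1}(K - K) \cap \ZZ^d$, a finite set. There are therefore only finitely many possible tuples $(M(b_1 - b_0), \dots, M(b_d - b_0))$, and since $\{b_1 - b_0, \dots, b_d - b_0\}$ is a basis of $\ZZ^d$ the linear map $M$ is determined by this tuple. Hence only finitely many $M$ occur, and $\TT_K(X)$ is finite.

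The only mildly delicate point — and what I would call the main obstacle — is making the reduction ``we may assume $0 \in M(X) + b$'' precise and compatible with the translation equivalence: one must check that the property ``$M(X)$ has a translate inside $K$'' is what we are really counting, and that choosing the normalisation $b = -M(\eta b_0)$ does not lose any $M$ (it does not, because if $M(X) + b \subseteq K$ then $M(X) + b - M(\eta b_0) \subseteq K - M(\eta b_0) \subseteq (K - K)$ after a further harmless translation, and all the relevant lattice vectors land in the fixed finite set $\eta^{-1}(K-K) \cap \ZZ^d$). Everything else is bookkeeping: boundedness of $K$ gives boundedness of $K - K$, discreteness of $\ZZ^d$ gives finiteness, and the basis property gives injectivity of $M \mapsto (M(b_i - b_0))_i$. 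I would also remark that the same argument, mutatis mutandis, reproves finiteness when $X$ is merely a bounded set with full-dimensional convex hull, by replacing $X$ with $\conv(X)$ via \Cref{rem:basic-props}.
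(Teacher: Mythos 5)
Your argument is correct. The paper proceeds slightly differently: it replaces each $\RR$-unimodular copy $S \subset K$ by a $\ZZ$-unimodular copy $S' \subset K+[-1,0]^d$ (absorbing the fractional part of the translation vector) and then asserts that the bounded set $K+[-1,0]^d$ can contain only finitely many such $S'$, appealing to the finiteness of its lattice points. You instead quotient by translations at the outset and bound the possible linear parts: from $\eta\mathcal{B} \subseteq X$ you deduce $\eta M(b_i-b_0) \in K-K$, so each $M(b_i-b_0)$ lies in the finite set $\eta^{-1}(K-K)\cap\ZZ^d$, and $M$ is determined by its values on the basis $b_1-b_0,\ldots,b_d-b_0$. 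This buys a little extra rigour: since $X$ need not contain any lattice points, the paper's count of lattice points of $K+[-1,0]^d$ leaves the discreteness step implicit, whereas your difference-body argument (the same device used in the paper's proof of \Cref{lem:enlarge}) spells it out; conversely, the paper's reduction is shorter. Two small points. First, the set of admissible $M\in\GL_d(\ZZ)$ maps only \emph{onto} $\TT_K(X)$ in general (distinct $M$ can yield translation-equivalent copies when $X$ admits unimodular symmetries up to translation), but surjectivity is all the finiteness argument needs. Second, the inclusion $K-M(\eta b_0)\subseteq K-K$ in your closing paragraph is not literally correct; what you need, and what your main computation already provides, is that the differences $\bigl(M(\eta b_i)+b\bigr)-\bigl(M(\eta b_0)+b\bigr)$ of two points of $K$ lie in $K-K$, so no normalisation of $b$ is required at all.
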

\begin{proof}
    Clearly for every $\RR$-unimodular copy $S$ of $X$ which is contained in $K$, there exists a $\ZZ$-unimodular copy $S'$ of $X$ such that $S' \subset K+[-1,0]^d$.
    Since $K+[-1,0]^d$ contains only finitely many lattice points, it follows that there can only be finitely many such $S'$ contained in $K+[-1,0]^d$.
    The statement follows.
\end{proof}

In what follows, we will use a complementary operation to Minkowski addition usually referred to as Minkowski difference.
We recall its definition and refer to~\cite{Schneider} for further details and references.
For subsets $A, B \subset \RR^d$ the \emph{Minkowski difference} $A\text{\textdiv}B$ is the set of translation vectors that move $B$ into $A$:
\[
    A\text{\textdiv} B = \{ x \in \RR^d \colon B+x\subset A \} = \bigcap_{b\in B} (A-b) \text{.}
\]
From the last equality, it is clear that the Minkowski difference $A\text{\textdiv}B$ of two convex bodies $A, B$ in $\RR^d$ is either empty or a  are convex body too.
Here is a justification why Minkowski difference can be regarded as a complementary operation to Minkowski addition.
\begin{lemma}[{\cite[Lemma 3.1.11]{Schneider}}]
    Let $A, B \subset \RR^d$ be two convex bodies.
    Then
    \[
        (A+B)\text{\textdiv}B = A \text{.}
    \]
    Furthermore, we have
    \[
        (A\text{\textdiv}B)+B=A
    \]
    if and only if there exists a convex body $C\subset \RR^d$ such that $A=B+C$.
\end{lemma}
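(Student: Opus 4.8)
The plan is to reduce both assertions to standard facts about support functions. For a convex body $K\subset\RR^d$ let $h_K(u)=\sup_{x\in K}\langle x,u\rangle$ be its support function, and recall three elementary properties: (i) for convex bodies, $K_1\subseteq K_2$ if and only if $h_{K_1}\le h_{K_2}$ pointwise; (ii) $h_{K_1+K_2}=h_{K_1}+h_{K_2}$; and (iii) a convex body is the intersection of the closed half-spaces containing it, so $A=\setcond{x\in\RR^d}{\langle x,u\rangle\le h_A(u)\ \text{for all }u}$.

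For the first identity $(A+B)\text{\textdiv}B=A$, I would unwind the definition of Minkowski difference: a vector $x$ lies in $(A+B)\text{\textdiv}B$ exactly when $B+x\subseteq A+B$. By (i) and (ii) this is equivalent to $h_B(u)+\langle x,u\rangle\le h_A(u)+h_B(u)$ for every $u$; since $B$ is bounded, $h_B(u)$ is finite and may be cancelled, leaving $\langle x,u\rangle\le h_A(u)$ for every $u$, which by (iii) says precisely $x\in A$. The inclusion $A\subseteq(A+B)\text{\textdiv}B$ is in any case immediate, since $a\in A$ gives $B+a\subseteq A+B$.

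For the second statement, the ``if'' direction follows at once from the first: if $A=B+C$ for a convex body $C$, then $A\text{\textdiv}B=(C+B)\text{\textdiv}B=C$, hence $(A\text{\textdiv}B)+B=C+B=A$. For the ``only if'' direction I would simply set $C\coloneqq A\text{\textdiv}B$. As the intersection $\bigcap_{b\in B}(A-b)$ of closed convex translates of the bounded set $A$, this $C$ is closed, convex and bounded, and it is nonempty because $(A\text{\textdiv}B)+B=A\neq\emptyset$ forces the left-hand factor to be nonempty (a Minkowski sum involving $\emptyset$ is $\emptyset$). Thus $C$ is a convex body with $A=C+B$, as required.

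The only step needing genuine care is the chain of equivalences in the first identity: one must check that passing from the inclusion $B+x\subseteq A+B$ to the cancellable inequality $\langle x,\cdot\rangle\le h_A$ is legitimate, which is exactly where finiteness of $h_B$ (boundedness of $B$) is used; everything else is bookkeeping. A support-function-free alternative would try to invoke the Minkowski cancellation law on convex bodies, but since one has only an inclusion $B+x\subseteq A+B$ rather than an equality, that route is less direct, so I would stick with the support-function argument.
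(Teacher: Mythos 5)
Your proof is correct: the support-function argument for $(A+B)\text{\textdiv}B=A$ is sound (finiteness of $h_B$ justifies the cancellation, and compactness of the bodies makes the inclusion/support-function equivalence legitimate), and taking $C\coloneqq A\text{\textdiv}B$ in the second part, with the nonemptiness and boundedness checks you give, settles the equivalence. The paper itself offers no proof — it simply cites Schneider — and your argument is essentially the standard textbook one, so there is nothing further to compare.
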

From now on, let $S \subset \RR^d$ be a fixed $\ZZ$-unimodular copy of $X$.
We note that for any $\delta\in\RR^d$, we have
\[
    K\text{\textdiv}(S+\delta) = (K\text{\textdiv}S)-\delta\text{.}
\]
Furthermore, $K\text{\textdiv}S$ is not empty if and only if $K$ contains a translate of $S$.
Studying the Minkowski difference $K\text{\textdiv}S$ (also called \emph{inner parallel body relative to $S$}) will be key to proving \Cref{prop_R_incl_max}.
\begin{figure}[!ht]
    \begin{tikzpicture}
        \begin{scope}
            \clip (-1,0) circle (1cm);
            \clip (0,-1) circle (1cm);
            \clip (0,0) circle (1cm);
            \fill[fill=gray!50,opacity=.5] (0,0) circle (1cm);
            \draw[very thin] (-1,0) circle (1cm);
            \draw[very thin] (0,-1) circle (1cm);
        \end{scope}
        \draw (0,0) circle (1cm);
        \fill[fill=gray!35,opacity=.25] (0,0) -- (1,0) -- (0,1) -- cycle;
        \draw[very thin] (0,0) -- (1,0) -- (0,1) -- cycle;
        \node at (.25,.25) {$S$};
        \draw (-1.5,-.5) node[left] {$K\text{\textdiv}S$} edge[-latex,bend right] (-.5,-.5);
        \foreach \x in {-2,-1,0,1,2} \foreach \y in {-1,0,1} \fill (\x,\y) circle (.75pt);
        \fill (0,0) circle (1pt) node[below right] {$0$};
    \end{tikzpicture}
    \caption{$K\text{\textdiv}S$ for the unit disc $K$ with centre at the origin and $S = \conv(\mathbf{0}, e_1, e_2) \subset \RR^2$.\label{fig_Tran_disc_triang}
}
\end{figure}
In general, $K\text{\textdiv}X$ will not be a polytope (see for instance \Cref{fig_Tran_disc_triang}).
However, since $X$ is a polytope there is an efficient way to determine the Minkowski difference $K\text{\textdiv}X$:

\begin{lemma}\label{lem_translations}
    Suppose $K\text{\textdiv}S\neq\emptyset$, i.e., there exists $\delta \in \RR^d$ such that $S + \delta \subset K$.
    Let $v_1, \ldots, v_n$ be the vertices of $S+\delta$.
    Then
    \[
		K\text{\textdiv}(S + \delta) = \bigcap_{i=1}^n (K-v_i) \text{.}
    \]
\end{lemma}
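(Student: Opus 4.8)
The plan is to prove the identity by double inclusion, exploiting the description of Minkowski difference as an intersection of translates, namely $K\text{\textdiv}B=\bigcap_{b\in B}(K-b)$, together with the fact that $S+\delta$ is a polytope with vertices $v_1,\dots,v_n$. First I would observe that it suffices to treat the case $\delta=0$, since we already know $K\text{\textdiv}(S+\delta)=(K\text{\textdiv}S)-\delta$ and, writing $S'\coloneqq S+\delta$ with vertices $v_i$, the claimed formula is simply $K\text{\textdiv}S'=\bigcap_i(K-v_i)$; so really the content is: the Minkowski difference of $K$ by a polytope is determined by the vertices of that polytope.

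The inclusion ``$\subseteq$'' is immediate: if $x\in K\text{\textdiv}S'$ then $S'+x\subset K$, and in particular $v_i+x\in K$ for each vertex $v_i$, i.e.\ $x\in K-v_i$ for all $i$, so $x\in\bigcap_{i=1}^n(K-v_i)$. For the reverse inclusion, suppose $x\in\bigcap_{i=1}^n(K-v_i)$, so that $v_i+x\in K$ for every $i$. I want to conclude $S'+x\subset K$. Any point of $S'+x$ has the form $p+x$ with $p\in S'=\conv(v_1,\dots,v_n)$, so $p=\sum_{i=1}^n\lambda_i v_i$ for some convex combination coefficients $\lambda_i\ge 0$, $\sum_i\lambda_i=1$. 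Then $p+x=\sum_i\lambda_i(v_i+x)$ is a convex combination of the points $v_i+x\in K$, and since $K$ is convex it follows that $p+x\in K$. Hence $S'+x\subset K$, i.e.\ $x\in K\text{\textdiv}S'$, completing the proof. Finally, translating back by $-\delta$ using $K\text{\textdiv}(S+\delta)=(K\text{\textdiv}S)-\delta$ and $K-v_i=(K-(v_i-\delta))-\delta$ recovers the stated formula; alternatively one applies the $\delta=0$ argument directly to the polytope $S+\delta$ and its vertices $v_i$, which is cleaner and avoids the bookkeeping.

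There is really no serious obstacle here: the only point requiring care is making sure the vertex-based argument is legitimate, i.e.\ that membership of all vertices of a polytope in a convex set $K$ forces the whole polytope into $K$ — but this is exactly convexity together with the fact that a polytope is the convex hull of its vertices. I would also note explicitly that the hypothesis $K\text{\textdiv}S\neq\emptyset$ is used only to guarantee $S+\delta\subset K$ actually occurs for some $\delta$, so that both sides are non-empty and the identity is not a vacuous statement about the empty set; the computation above is valid regardless. One small sanity remark worth including is that this lemma explains why $K\text{\textdiv}X$ can fail to be a polytope even though the formula looks ``polytopal'': each $K-v_i$ is just a translate of $K$, so the intersection is a polytope precisely when $K$ itself is, which matches the disc example in \Cref{fig_Tran_disc_triang}.
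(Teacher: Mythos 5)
Your proof is correct and follows essentially the same route as the paper: the forward inclusion by evaluating at vertices, and the reverse inclusion by writing any point of the translated polytope as a convex combination of the translated vertices and invoking convexity of $K$. The reduction to $\delta=0$ is harmless bookkeeping that the paper simply avoids by working with $S+\delta$ directly.
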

\begin{proof}
  Let $v \in K\text{\textdiv}(S+\delta)$.
  Then for all $i$, we have $v_i + v \in K$, and thus $v \in K-v_i$.
  
  Suppose $v \in K-v_i$, i.e. $v_i + v \in K$, for all $i$.
  Since $K$ is convex, it follows
  \[
    \conv(v_1 + v, \ldots, v_n + v) = \conv(v_1, \ldots, v_n) + v = S +\delta + v \subset K \text{.}
  \]
  Thus $v \in K\text{\textdiv}(S+\delta)$ and the statement follows.
\end{proof}

The collection of inner parallel bodies $K\text{\textdiv}S$ for $\ZZ$-unimodular copies $S$ of $X$ allows to characterise when exactly $K$ is $\RR$-$X$-free (see the next two statements).

\begin{lemma}\label{lem:int-full-dim}
	An $\RR$-translation of a $\ZZ$-unimodular copy $S$ of $X$ is contained in the interior of $K$ if and only if $\dim(K\text{\textdiv}S) = d$.
\end{lemma}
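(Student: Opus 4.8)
The plan is to work directly from the identity $K\textdiv(S+\delta) = (K\textdiv S)-\delta$ together with the characterisation of membership in a Minkowski difference. First I would observe that an $\RR$-translation $S+\delta$ of $S$ lies in the interior of $K$ precisely when $\delta$ lies in the ``interior version'' of the Minkowski difference, namely $\delta \in \bigcap_{i} (\strint(K) - v_i)$ where $v_1,\dots,v_n$ are the vertices of $S$; this is the same argument as in \Cref{lem_translations}, using that $K$ is closed and convex and that $S+\delta \subset \strint(K)$ iff every vertex $v_i+\delta$ lies in $\strint(K)$ (one direction is trivial; the other uses that the interior of a convex body is convex and that $\strint(K)$ contains the convex hull of interior points). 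Hence some $\RR$-translation of $S$ is contained in $\strint(K)$ if and only if the set $\bigcap_i(\strint(K)-v_i)$ is non-empty.

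Next I would relate this non-emptiness to the dimension of $K\textdiv S = \bigcap_i(K-v_i)$. The key point is that $\strint(K)-v_i = \strint(K-v_i)$, and for a convex body $C$ the set $\bigcap_i \strint(C_i)$ equals $\strint\bigl(\bigcap_i C_i\bigr)$ whenever the latter has non-empty interior, while $\bigcap_i \strint(C_i)$ is empty if $\bigcap_i C_i$ is lower-dimensional (or empty). Concretely: if $\dim(K\textdiv S)=d$, then $K\textdiv S$ has non-empty interior; a point in that interior is, for each $i$, an interior point of $K-v_i$ (since $K\textdiv S \subset K - v_i$ and having full-dimensional neighbourhood inside the intersection forces a full-dimensional neighbourhood inside each $K-v_i$), giving a point in $\bigcap_i\strint(K-v_i)$, hence an $\RR$-translation of $S$ inside $\strint(K)$. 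Conversely, if such a translation exists, then $\bigcap_i\strint(K-v_i)\neq\emptyset$, and any point there has an open neighbourhood contained in every $K-v_i$, hence in $K\textdiv S$, so $K\textdiv S$ is full-dimensional.

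For the converse direction it is cleanest to argue as follows: if $\dim(K\textdiv S)<d$, then $K\textdiv S$ lies in some affine hyperplane $H$. Since $S$ is a $\ZZ$-unimodular copy of the full-dimensional polytope $X$, it is itself full-dimensional, so its vertices $v_i$ affinely span $\RR^d$; then the hyperplanes $H+v_i$ are not all equal, and a translate $S+\delta$ with $\delta\in H$ cannot have all its vertices in $\strint(K)$ — indeed any candidate $\delta$ would have to lie in $\bigcap_i(K-v_i)\subset H$, but a boundary point of a lower-dimensional convex set is not an interior point of $K-v_i$ for at least one $i$ (this requires the small observation that if $\delta\in\strint(K-v_i)$ for all $i$ then $\delta\in\strint(K\textdiv S)$ relative to $\RR^d$, contradicting $\dim<d$). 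I would package this last observation as: $\bigcap_i \strint(K-v_i) = \strint\bigl(\bigcap_i(K-v_i)\bigr)$ when the $v_i$ span $\RR^d$ affinely, which is a standard fact about interiors of finite intersections of full-dimensional convex bodies (the intersection of interiors is open and is exactly the topological interior of the intersection).

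The main obstacle is the mild subtlety in the second bullet of the chain above: it is \emph{not} true for arbitrary convex sets that $\strint(C_1\cap C_2)=\strint(C_1)\cap\strint(C_2)$, but it \emph{is} true when each $C_i$ is full-dimensional (which holds here since each $K-v_i$ is a translate of the convex body $K$, hence full-dimensional). So the real content is just: for full-dimensional convex bodies, the interior of a finite intersection is the intersection of the interiors; granting that, the lemma is immediate from $\strint(K)-v_i=\strint(K-v_i)$ and the translation identity $K\textdiv(S+\delta)=(K\textdiv S)-\delta$. I expect the write-up to be short once that fact is cited or proved in one line.
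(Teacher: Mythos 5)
Your proof is correct, and its second half takes a genuinely different (and cleaner) route than the paper's. The forward direction coincides with the paper: place small balls around the vertices $v_i+\delta$ inside $\strint(K)$ and intersect their translates to get an open neighbourhood inside $K\text{\textdiv}S$. For the converse, the paper picks a point of the full-dimensional set $K\text{\textdiv}S$ avoiding the shifted boundaries $\partial K - v_i$ via a measure-theoretic argument (a $d$-dimensional simplex inside $K\text{\textdiv}S$ has positive volume, while the union of the $\partial K - v_i$ has volume zero), whereas you simply take any point $w\in\strint(K\text{\textdiv}S)$ and use that the topological interior of a finite intersection equals the intersection of the interiors, so $w\in\strint(K-v_i)$ for every $i$ and $S+w\subset\strint(K)$ by convexity of $\strint(K)$. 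Your route avoids the volume argument entirely and is shorter; the paper's route is what it is, but nothing is lost by yours. One small correction to your own hedging: the identity $\strint\mleft(\bigcap_i C_i\mright)=\bigcap_i\strint(C_i)$ for \emph{topological} interiors of finitely many subsets of $\RR^d$ is unconditional (both inclusions are immediate from the definition of interior); the failure you have in mind concerns \emph{relative} interiors, which is irrelevant here since each $K-v_i$ is a full-dimensional translate of $K$ and the paper works with genuine interiors in this section. Consequently your second, hyperplane-based paragraph for the converse is redundant; the first argument already closes both directions.
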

\begin{proof}
  Suppose there is a $\ZZ$-unimodular copy $S$ of $X$ and $\delta \in \RR^d$ such that $S + \delta$ is contained in the interior of $K$.
  Since $(K\text{\textdiv}(S+\delta)) + \delta = K\text{\textdiv}S$, we may replace $S$ by its translate $S+\delta$, so that $S \subset K$.
  We need to show that $\dim(K\text{\textdiv}S) = d$.
  Consider small open neighbourhoods $B_1, \ldots, B_n$ around the vertices $v_1, \ldots, v_n$ of $S$.
  Choose $B_i$ small enough such that it is contained in the interior of $K$.
  Move these neighbourhoods to the origin, i.e., consider $B_i-v_i$.
  The intersection $\bigcap_{i=1}^n(B_i - v_i)$ is an open neighbourhood of the origin which is contained in $K\text{\textdiv}S$, and thus $\dim(K\text{\textdiv}S)=d$.
  
  For the reverse direction, suppose $S$ is a $\ZZ$-unimodular copy of $X$ with $\dim(K\text{\textdiv}S) = d$.
  Like before, we may replace $S$ by $S+\delta$ for some $\delta \in \RR^d$ such that (after replacing) $S \subset K$.
  Denote the vertices of $S$ by $v_1, \ldots, v_n$.
  We claim there is an element $w$ in $K\text{\textdiv}S$ which is not contained in the translates $\partial K - v_i$ of the boundary of $K$ for all $i=1, \ldots, n$.
  In other words, we claim that there is
  \[
    w \in (K\text{\textdiv}S) \setminus \left( \bigcup_{i=1}^n \partial K - v_i \right) = \left( \bigcap_{i=1}^n (K-v_i) \right) \setminus \left( \bigcup_{i=1}^n \partial K - v_i \right)
  \]
  where the equality follows by \Cref{lem_translations}.
  Before proving the claim, let us see how it implies the statement:
  $w + v_i \in K \setminus \partial K$, and thus $\conv(v_1, \ldots, v_n) + w \subset K \setminus \partial K$, i.e. $S + w$ is contained in the interior of $K$.
  
  It remains to show the claim.
  Suppose by contradiction $(K\text{\textdiv}S) \setminus \left( \bigcup_{i=1}^n \partial K - v_i \right)$ is empty, i.e. $K\text{\textdiv}S$ is contained in the union of the shifted boundaries $\partial K - v_i$ for $i = 1, \ldots, n$.
  By our assumption $\dim(K\text{\textdiv}S)=d$, there are linearly independent vectors $w_1, \ldots, w_d \in \RR^d$ such that the $d$-dimensional simplex $\Delta \coloneqq \conv(\mathbf{0}, w_1, \ldots, w_d)$ is contained in $K\text{\textdiv}S$.
  We obtain a contradiction if we apply the $d$-dimensional Euclidean volume to the inclusion $\Delta \subset \bigcup_{i=1}^n (\partial K - v_i)$:
  \[
    0 < \vol(\conv(v_1, \ldots, v_n)) \le \vol\left( \bigcup_{i=1}^n \partial K - v_i\right) \le \sum_{i=1}^n \vol(\partial K - v_i) = 0 \text{.} \qedhere
  \]
\end{proof}

\begin{corollary}\label{lem_R_X_free_with_Tran}
    $K$ is $\RR$-$X$-free if and only if $\dim(K\text{\textdiv}S) < d$ for all $\ZZ$-unimodular copies $S$ of $X$.
\end{corollary}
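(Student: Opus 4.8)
The plan is to deduce this corollary directly from \Cref{lem:int-full-dim} together with the translation-equivariance of the Minkowski difference. Recall that $K$ is $\RR$-$X$-free means that the interior of $K$ (here we use full-dimensionality of $K$, so relative interior equals interior) contains no $\RR$-unimodular copy of $X$. Every $\RR$-unimodular copy of $X$ has the form $T(X)$ where $T$ is the composition of a $\ZZ$-unimodular transformation with a translation by some $\delta \in \RR^d$; writing $S \coloneqq A(X)$ for the $\ZZ$-unimodular copy underlying $T$, this means $T(X) = S + \delta$. Hence ``$K$ contains no $\RR$-unimodular copy of $X$ in its interior'' is precisely the statement that for every $\ZZ$-unimodular copy $S$ of $X$ and every $\delta \in \RR^d$, the translate $S + \delta$ is \emph{not} contained in $\strint(K)$.

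First I would record the contrapositive reformulation: $K$ is \emph{not} $\RR$-$X$-free if and only if there exists a $\ZZ$-unimodular copy $S$ of $X$ and some $\delta \in \RR^d$ with $S + \delta \subset \strint(K)$. Now apply \Cref{lem:int-full-dim}: the existence of such a $\delta$ for a \emph{fixed} $S$ is equivalent to $\dim(K \text{\textdiv} S) = d$. (The cited lemma is stated for a single $\ZZ$-unimodular copy $S$, and it is exactly the equivalence ``$\RR$-translate of $S$ lands in $\strint(K)$'' $\iff$ ``$\dim(K\text{\textdiv}S) = d$''.) Quantifying over all $\ZZ$-unimodular copies $S$ of $X$, we get: $K$ is not $\RR$-$X$-free $\iff$ $\dim(K\text{\textdiv}S) = d$ for some $\ZZ$-unimodular copy $S$ of $X$. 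Negating both sides yields the claim: $K$ is $\RR$-$X$-free $\iff$ $\dim(K\text{\textdiv}S) < d$ for all $\ZZ$-unimodular copies $S$ of $X$ (noting that $\dim(K\text{\textdiv}S) < d$ is understood to include the possibility $K\text{\textdiv}S = \emptyset$, i.e.\ $K$ contains no translate of $S$ at all).

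The only point requiring a small amount of care is the reduction from ``$\RR$-unimodular copies of $X$'' to ``$\ZZ$-unimodular copies of $X$, up to translation'': this is just the observation that the group of $\RR$-unimodular transformations is generated by $\GL_d(\ZZ)$ together with all real translations, so every $\RR$-unimodular copy is an $\RR$-translate of a $\ZZ$-unimodular copy. There is no genuine obstacle here — the corollary is essentially a repackaging of \Cref{lem:int-full-dim} with the quantifier over $S$ made explicit and the convention on $\dim(\emptyset)$ spelled out. The slightly delicate bookkeeping is making sure that the full-dimensionality hypotheses on both $K$ and $X$ (needed so that $\strint$ may replace $\relint$, and so that \Cref{lem:int-full-dim} applies) are in force; these were already fixed at the start of the subsection.
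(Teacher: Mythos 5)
Your argument is correct and is essentially the paper's own proof: the paper also deduces the corollary by passing to the contrapositive and applying \Cref{lem:int-full-dim}, with the reduction from $\RR$-unimodular copies to $\RR$-translates of $\ZZ$-unimodular copies left implicit. Your extra bookkeeping (the quantifier over $S$, the convention $\dim(\emptyset)<d$, and the full-dimensionality hypotheses) is fine and just makes explicit what the paper treats as straightforward.
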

\begin{proof}
	  The contrapositive, i.e., $K \subset \RR^d$ is not $\RR$-$X$-free if and only if there is some affine $\ZZ$-unimodular copy $S$ of $X$ with $\dim(K\text{\textdiv}S) = d$, straightforwardly follows from \Cref{lem:int-full-dim}.
\end{proof}

Fix an inclusion-maximal $\RR$-$X$-free convex body $K \subset \RR^d$.
Recall that we want to show that $K$ is a polytope.
As a first step, we approximated $K$ by a polytope $P \subset \RR^d$ which contains $K$ such that up to real translations $K$ and $P$ contain the same $\ZZ$-unimodular copies of $X$.
\begin{lemma}\label{lem_1st_approx}
  For a $d$-dimensional convex body $K \subset \RR^d$, there exists a $d$-dimensional convex polytope $P \subset \RR^d$ which contains $K$ such that $\TT_K(X) = \TT_P(X)$.
\end{lemma}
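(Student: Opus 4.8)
The plan is to squeeze $K$ between itself and a slight enlargement $K+\varepsilon_0 B^d$ that still contains no new $\RR$-translation class of $\RR$-unimodular copies of $X$, and then to take for $P$ an outer polytopal approximation of $K$ lying in that enlargement. Concretely, I would first record that the assignment $C \mapsto \TT_C(X)$ is monotone under inclusion of convex bodies: if $C_1 \subseteq C_2$ then any $\RR$-unimodular copy of $X$ contained in $C_1$ is also contained in $C_2$, so $\TT_{C_1}(X) \subseteq \TT_{C_2}(X)$. Granting the existence of some $\varepsilon_0>0$ with $\TT_{K+\varepsilon_0 B^d}(X)=\TT_K(X)$, choose (by standard outer polytopal approximation of convex bodies, see, e.g.,~\cite[Theorem~1.8.16]{Schneider} and its proof) a convex polytope $P$ with $K\subseteq P\subseteq K+\varepsilon_0 B^d$; then monotonicity gives $\TT_K(X)\subseteq\TT_P(X)\subseteq\TT_{K+\varepsilon_0B^d}(X)=\TT_K(X)$, forcing equality, and $P$ is $d$-dimensional because it contains the $d$-dimensional body $K$.

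It remains to produce $\varepsilon_0$. By \Cref{lem_TT_K_X_sim_finite} applied to the convex body $K+B^d$, the set $\TT_{K+B^d}(X)$ is finite. For each class in $\TT_{K+B^d}(X)\setminus\TT_K(X)$, fix a representative $\ZZ$-unimodular copy $S$ of $X$; since $K$ contains no $\RR$-translate of $S$, \Cref{cor_enlarge_translation} (applied with the $d$-dimensional convex body $S$ in the role of $X$, and $A=\RR$) provides some $\varepsilon_S>0$, which we may take $\le 1$, such that $K+\varepsilon_S B^d$ still contains no $\RR$-translate of $S$. Let $\varepsilon_0$ be the minimum of these finitely many $\varepsilon_S$ (and $\varepsilon_0=1$ if $\TT_{K+B^d}(X)=\TT_K(X)$ already). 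Then $0<\varepsilon_0\le 1$, so $K+\varepsilon_0 B^d\subseteq K+B^d$ and hence $\TT_{K+\varepsilon_0 B^d}(X)\subseteq\TT_{K+B^d}(X)$ by monotonicity; on the other hand $K+\varepsilon_0 B^d\subseteq K+\varepsilon_S B^d$ for every relevant $S$, so none of the classes in $\TT_{K+B^d}(X)\setminus\TT_K(X)$ survive. Combined with $\TT_K(X)\subseteq\TT_{K+\varepsilon_0 B^d}(X)$, this yields $\TT_{K+\varepsilon_0 B^d}(X)=\TT_K(X)$.

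The step I expect to be the main obstacle — or at least the one requiring care — is precisely the legitimacy of this minimum: a priori there are infinitely many $\ZZ$-unimodular copies of $X$, and even infinitely many $\RR$-translation classes among them, so one cannot simply minimise $\varepsilon_S$ over all ``bad'' copies. The resolution is that only copies already fitting (up to $\RR$-translation) inside the mildly enlarged body $K+B^d$ can occur in $K+\varepsilon B^d$ for $\varepsilon\le 1$, and there are finitely many such classes by \Cref{lem_TT_K_X_sim_finite}; this is what makes the problem finite. A minor point to verify is that \Cref{cor_enlarge_translation} really does apply to $S$ in place of $X$: this is fine, since $S$ is a $\ZZ$-unimodular copy of the full-dimensional polytope $X$ and hence itself a $d$-dimensional convex body, and the $\RR$-translates of $S$ are exactly the $\RR$-unimodular copies of $X$ of one fixed $\GL_d(\ZZ)$-type.
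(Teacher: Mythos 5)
Your proof is correct and follows essentially the same route as the paper: both arguments use \Cref{cor_enlarge_translation} on each of the finitely many ``bad'' translation classes (finiteness coming from \Cref{lem_TT_K_X_sim_finite}), take the minimum $\varepsilon$, and then invoke the outer polytopal approximation \Cref{lem_polytopal_approx} together with monotonicity of $\TT_{(\cdot)}(X)$. The only cosmetic difference is that the paper bounds the bad classes inside a large cube $[-N,N]^d$ containing $K$, whereas you use $K+B^d$; this changes nothing of substance.
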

\Cref{lem_1st_approx} follows from the following well-known result (see also~\cite[Theorem~1.8.16]{Schneider}).
\begin{lemma}\label{lem_polytopal_approx}
  If $K \subset \RR^d$ is a $d$-dimensional convex body and $\varepsilon>0$, then there exists a polytope $P \subset \RR^d$ such that $K \subset P \subset K + \varepsilon B^d$.
\end{lemma}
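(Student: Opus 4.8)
The statement to prove is \Cref{lem_polytopal_approx}: for a $d$-dimensional convex body $K \subset \RR^d$ and $\varepsilon > 0$, there is a polytope $P$ with $K \subset P \subset K + \varepsilon B^d$. The plan is to exploit compactness of $K$ directly, without invoking the cited \cite[Theorem~1.8.16]{Schneider}, although that reference gives it immediately. The key idea is that the family of open balls $\{ \strint(x + \varepsilon B^d) : x \in K \}$ forms an open cover of the compact set $K$, so finitely many centres $x_1, \dots, x_m \in K$ suffice to cover $K$; then $K \subset \bigcup_{i=1}^m (x_i + \varepsilon B^d)$. This containment is not yet polytopal, so one more step is needed.

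First I would set $P \coloneqq \conv(x_1 + \varepsilon B_\infty^d, \dots, x_m + \varepsilon B_\infty^d)$ — no, more cleanly, I would instead take a finite $\varepsilon/2$-net: choose finitely many points $x_1, \dots, x_m \in K$ so that every point of $K$ is within Euclidean distance $\varepsilon/2$ of some $x_i$, which is possible by compactness. Set $P \coloneqq \conv(x_1, \dots, x_m) + \tfrac{\varepsilon}{2} B^d_\infty$, rescaled so that $\tfrac{\varepsilon}{2}B^d_\infty \subset \tfrac{\varepsilon}{2}\cdot\sqrt d\, B^d$; to keep constants simple, just use a small cube of circumradius $<\varepsilon/2$. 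Then $P$ is a polytope (Minkowski sum of a polytope and a cube). For the inclusion $K \subset P$: given $y \in K$, pick $x_i$ with $|y - x_i| \le \varepsilon/2$; then $y = x_i + (y - x_i)$ and $y - x_i$ lies in the chosen small cube, so $y \in x_i + (\text{cube}) \subset \conv(x_1,\dots,x_m) + (\text{cube}) = P$. For the inclusion $P \subset K + \varepsilon B^d$: since each $x_i \in K$, we have $\conv(x_1, \dots, x_m) \subset K$ by convexity of $K$, and the cube has diameter $< \varepsilon$, so $P = \conv(x_1,\dots,x_m) + (\text{cube}) \subset K + \varepsilon B^d$. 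This closes the argument.

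The main (and really only) obstacle is a purely bookkeeping one: choosing the Minkowski-summand cube small enough that its circumradius is strictly less than $\varepsilon/2$ so that $K \subset P \subset K + \varepsilon B^d$ holds with the stated $\varepsilon$ rather than some constant multiple of it; this is handled by replacing $\tfrac\varepsilon2 B_\infty^d$ with $\tfrac{\varepsilon}{2\sqrt d} B_\infty^d$, whose circumradius is exactly $\varepsilon/2$, and then taking a corresponding $\tfrac{\varepsilon}{2\sqrt d}$-net. Since the paper already cites \cite[Theorem~1.8.16]{Schneider} for essentially this statement (and uses it verbatim in the proof of \Cref{lem:flt_coincides_with_free}), the cleanest route is simply to cite that result; but the self-contained compactness argument above is short enough that one could include it in a sentence or two, and I would present it that way to keep the section self-contained. \Cref{lem_1st_approx} then follows by applying this with any $\varepsilon$ small enough that $K + \varepsilon B^d$ contains no new $\ZZ$-unimodular copies of $X$ up to translation beyond those already accounted for in $\TT_K(X)$ — which is exactly the content guaranteed by \Cref{lem:enlarge} applied to the finitely many copies in $\TT_K(X)$ — though strictly speaking the monotonicity $\TT_K(X) \subseteq \TT_P(X)$ is automatic from $K \subset P$, and the reverse inclusion for small $\varepsilon$ is what \Cref{lem:enlarge} (together with finiteness from \Cref{lem_TT_K_X_sim_finite}) provides.
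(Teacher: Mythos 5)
Your argument is correct and is essentially the paper's own proof: the paper also covers the compact body $K$ by finitely many small cubes $x+\alpha B_\infty^d$ with $\alpha B_\infty^d\subset\varepsilon B^d$ and takes their convex hull, which coincides with your $\conv(x_1,\dots,x_m)+\text{(cube)}$ since $\conv(x_1,\dots,x_m)+C=\conv(x_1+C,\dots,x_m+C)$. The only cosmetic difference is your explicit choice of net radius $\varepsilon/(2\sqrt d)$ versus the paper's appeal to equivalence of norms, so no further changes are needed.
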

\begin{proof}
Recall the (closed) unit ball $B_\infty^d$ with respect to the infinity norm $| \cdot |_\infty$ is a polytope, namely $B^d_\infty = [-1,1]^d$.
  Since any two norms on $\RR^d$ are equivalent, there exists $\alpha>0$ such that $\alpha B_\infty^d \subset \varepsilon B^d$.
  
  Since the collection $\setcond{ x + \alpha B_\infty^d}{x \in K}$ covers the compact set $K$, already a finite subset of balls suffices to cover $K$.
  The polytope $P$ obtained by taking the convex hull of these finitely many $\infty$-balls with radius $\alpha$ satisfies the statement, i.e., $K \subset P \subset K+\varepsilon B^d$.
\end{proof}
\begin{proof}[Proof of \Cref{lem_1st_approx}]
  Choose $N>0$ large enough such that $K$ is contained in the interior of $[-N,N]^d$.
  Then $\TT_K(X) \subset \TT_{[-N,N]^d}(X)$ and this inclusion is strict in general.
  Our strategy is to cut $[-N,N]^d$ with further half-spaces so that the resulting polytope $P$ satisfies $\TT_K(X) = \TT_P(X)$.
  
  Consider a $\ZZ$-unimodular copy $Q$ of $X$ such that $[-N,N]^d$ contains an $\RR$-translate of $Q$ but $K$ doesn't, i.e. $[Q] \in \TT_{[-N,N]^d}(X) \setminus \TT_K(X)$.
  By \Cref{cor_enlarge_translation}, there exists $\varepsilon_Q > 0$ such that $K + \varepsilon_Q B^d$ does not contain an $\RR$-translate of $Q$.
  Let $\varepsilon>0$ be the minimum of the $\varepsilon_Q$ over all $Q$ with $[Q] \in \TT_{[-N,N]^d}(X) \setminus \TT_K(X)$ (note $\TT_{[-N,N]^d}(X)$ is finite by \Cref{lem_TT_K_X_sim_finite}).
  If necessary reduce $\varepsilon>0$ such that $K + \varepsilon B^d \subset [-N,N]^d$.
  By \Cref{lem_polytopal_approx}, there exists a polytope $P \subset \RR^d$ such that $K \subset P \subset K + \varepsilon B^d$.
  By our choice of $\varepsilon$, we have that $\TT_{K + \varepsilon B^d}(X) = \TT_K(X)$, and by the monotonicity of $\TT_K(X)$ with respect to $K$, this polytope satisfies $\TT_K(X) = \TT_P(X)$.
\end{proof}

\begin{remark}
  In \Cref{lem_1st_approx}, the polytope $P$, which contains $K$,  isn't necessarily $\RR$-$X$-free in general (see \Cref{fig_enlarge_not_R_X}).
  \begin{figure}[!ht]
    \begin{tikzpicture}
      \fill[fill=gray!25,opacity=.5] (-.1,-.1) -- (-.1,1.15) -- (.82,.82) -- (1.15,-.1) -- cycle;
      \draw (-.1,-.1) -- (-.1,1.15) -- (.82,.82) -- (1.15,-.1) -- cycle;
      
      \draw[very thin,dashed] (-1.4,1) -- (1.4,1);
      \draw[very thin,dashed] (1,-1.4) -- (1,1.4);
      \draw[very thin,dashed] (-1.4,1.4) -- (1.4,-1.4);
    
      \draw[very thin] (0,0) ++(0:1) arc (0:90:1);
      \node[right] at (1.3,0) {$\scriptstyle K + \varepsilon B^2$};
      \draw (0,0) -- (0,1);
      \draw (0,0) -- (1,0);

      \draw[very thin] (0,0) ++(0:1.25) arc (0:90:1.25);
      \draw[very thin] (0,1) ++(90:.25) arc (90:180:.25);
      \draw[very thin] (1,0) ++(0:.25) arc (0:-90:.25);
      \draw[very thin] (0,0) ++ (180:.25) arc (180:270:.25);
      \draw[very thin] (-.25,0) -- (-.25,1);
      \draw[very thin] (0,-.25) -- (1,-.25);
      
      \fill[fill=gray!10,opacity=.5] (-1,-1) -- (0,-1) -- (-1,0) -- cycle;
      \draw[very thin] (-1,-1) -- (0,-1) -- (-1,0) -- cycle;
      
      \draw (-2.25,-.75) node[ above] {$\scriptstyle X+\begin{psmallmatrix}-1\\-1\end{psmallmatrix}$} edge[-latex,bend right] (-.75,-.75);
      \draw (-1.35,-1.35) node [left] {$\scriptstyle[-N,N]^2$} -- (1.35,-1.35) -- (1.35,1.35) -- (-1.35,1.35) -- cycle;
      \foreach \x in {-1,0,1} \foreach \y in {-1,0,1} \fill (\x,\y) circle (.75pt);
      \fill (0,0) circle (1pt) node[above right] {$\scriptstyle0$};
    \end{tikzpicture}
    \caption{Illustration of \Cref{lem_1st_approx}. Here, $K$ is a quarter of a disc and $X=\conv(\mathbf{0}, e_1, e_2) \subset \RR^2$ the standard simplex. The polytope $P$ might not be $\RR$-X-free.\label{fig_enlarge_not_R_X}
}
  \end{figure}
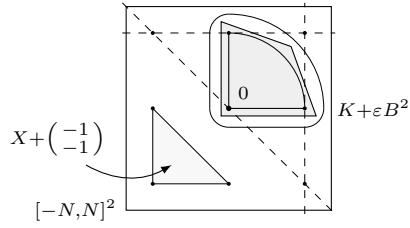
  By intersecting $P$ with further half-spaces (e.g., the half-spaces indicated by the dashed lines), we get a new polytope which is $\RR$-$X$-free.
  The proof of \Cref{prop_R_incl_max} will show this is always true.
\end{remark}

The proof of \Cref{prop_R_incl_max} will make use of the following theorem, whose proof we postpone until \Cref{sec_intersec_conv_bodies}.

\begin{theorem}\label{thm_finite_supp_hyper}
    Let $K_1, \ldots, K_n \subset \RR^d$ be convex bodies such that $\dim(K) < d$ where $K \coloneqq K_1 \cap \ldots \cap K_n$.
    Suppose $x \in K$ such that $x$ is contained in the boundary of every $K_i$ for $i=1, \ldots, n$.
    Then there are (nonempty) finite collections of closed half-spaces $\{ H^+(i,j) \}_{j \in J_i}$ for $i = 1, \ldots, n$ such that the boundary of $H^+(i,j)$ is a supporting hyperplane to $K_i$ at $x$, and
    \[
        \dim \left( \bigcap_{j \in J_1} H^+(1,j) \cap \ldots \cap \bigcap_{j \in J_n} H^+(n,j) \right) < d \text{.}
    \]
\end{theorem}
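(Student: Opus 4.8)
The plan is to reduce the problem to a statement about the recession cones of the $K_i$ at $x$ and then use the hypothesis $\dim(K) < d$ to produce a ``witnessing'' normal vector. After translating so that $x = \mathbf{0}$, write $C_i$ for the tangent cone of $K_i$ at the origin, i.e.\ $C_i = \closure{\bigcup_{\lambda > 0} \lambda K_i}$; each $C_i$ is a closed convex cone, and since the origin lies on the boundary of $K_i$, the cone $C_i$ is contained in some closed half-space through the origin (equivalently, $C_i \neq \RR^d$). The key local observation is that $\bigcap_i C_i$ is the tangent cone at the origin of $K = \bigcap_i K_i$, and since $\dim(K) < d$ this intersection of cones is contained in a hyperplane through the origin, so $\bigcap_i C_i$ spans a proper subspace $W \subsetneq \RR^d$. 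Now the supporting half-spaces of $K_i$ at the origin are exactly the half-spaces $H^+_u = \{ y : \langle u, y\rangle \le 0\}$ with $u$ in the dual cone $C_i^\vee = \{ u : \langle u, y \rangle \le 0 \ \forall y \in C_i\}$, and $C_i^\vee \neq \{0\}$ precisely because $C_i \neq \RR^d$. So we must choose, for each $i$, a finite set of vectors $u_{i,j} \in C_i^\vee \setminus \{0\}$ such that $\bigcap_{i,j} H^+_{u_{i,j}}$ is not full-dimensional, i.e.\ such that the cone generated by all the chosen $u_{i,j}$ contains a line, equivalently some nonzero vector $w$ lies in the interior-free situation $w$ and $-w$ are both in $\cone(\{u_{i,j}\})$.

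The main step is therefore a \emph{cone-duality} argument: from $\dim\big(\bigcap_i C_i\big) < d$ I want to deduce that $\sum_i C_i^\vee$ is not pointed, i.e.\ contains a line. This is precisely the polar statement to the obvious fact that $\big(\bigcap_i C_i\big)^\vee = \closure{\sum_i C_i^\vee}$: the left side being contained in a hyperplane $W$ means its dual contains the line $W^\perp$, hence $\closure{\sum_i C_i^\vee} \supseteq \{0\} \neq W^\perp \ni \pm w$ for some $w \neq 0$. Taking closures is the annoying technical point (sums of closed cones need not be closed), but I can sidestep it: pick $w \in W^\perp \setminus\{0\}$; then both $w$ and $-w$ lie in $\closure{\sum_i C_i^\vee}$, and by a Carath\'eodory-type / approximation argument each is a limit of finite nonnegative combinations of vectors drawn from the $C_i^\vee$. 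Then I take a finite collection of the $C_i^\vee$-vectors appearing in these approximations together with enough vectors to span: choosing the $u_{i,j}$ so that $\cone(\{u_{i,j}\})$ contains vectors arbitrarily close to both $w$ and $-w$ forces (after possibly adding one more generator of each $C_i^\vee$ to keep the collections nonempty) the cone $\cone(\{u_{i,j}\})$ to be non-pointed, hence $\bigcap_{i,j} H^+_{u_{i,j}} = \big(\cone(\{u_{i,j}\})\big)^\vee$ to be contained in a hyperplane and so not full-dimensional.

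I expect the genuine obstacle to be exactly the non-closedness of $\sum_i C_i^\vee$ and the need to control the finiteness of the collections $J_i$: one cannot literally write $\pm w \in \sum_i C_i^\vee$, so the argument must be phrased either via a limiting/compactness argument on the sphere (normalise the combinations, extract convergent subsequences, and observe that the ``direction multiset'' stabilises since each $C_i^\vee$ is finitely generated when $K_i$ is a polytope — but here the $K_i$ are only convex bodies, so one must instead use that a non-pointed closed convex cone already contains a $2$-dimensional flat spanned by finitely many of its elements) or via an appeal to the fact that if $\closure{\sum_i C_i^\vee}$ is non-pointed then some finite subfamily already has a non-pointed sum-closure, which follows from a separation/compactness argument on $S^{d-1}$. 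Once that finiteness is secured, verifying that the resulting half-space intersection has dimension $< d$ is immediate from cone duality, and the requirement that each $J_i$ be nonempty is handled by throwing in, for each $i$, any single nonzero element of $C_i^\vee$ (which exists since $C_i \neq \RR^d$), noting this only enlarges $\cone(\{u_{i,j}\})$ and hence can only shrink the dual.
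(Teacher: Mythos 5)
Your overall strategy is the same as the paper's (pass to support cones $C_i = S_{K_i}(x)$ and normal cones $C_i^\vee = N_{K_i}(x)$, use $(\bigcap_i C_i)^\vee = \closure{\sum_i C_i^\vee}$, and reduce to exhibiting finitely many elements of the normal cones whose conical hull is non-pointed), but the proposal does not actually close the one point where the theorem is hard: the possible non-closedness of $\sum_i N_{K_i}(x)$. You correctly identify this as ``the genuine obstacle'', but neither of your two suggested fixes is an argument. The first (approximate $\pm w$ by finite nonnegative combinations and choose the $u_{i,j}$ so that $\cone(\{u_{i,j}\})$ contains vectors arbitrarily close to $w$ and $-w$) cannot work as stated: the approximating combinations use different generators at each stage, and a \emph{fixed} finite collection generates a closed polyhedral cone, so ``arbitrarily close to $\pm w$'' would already force $\pm w \in \cone(\{u_{i,j}\}) \subseteq \sum_i C_i^\vee$ -- which is exactly what you cannot assume. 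The second fix asserts that if $\closure{\sum_i C_i^\vee}$ is non-pointed then some finitely generated subcone of $\sum_i C_i^\vee$ is already non-pointed, claiming this ``follows from a separation/compactness argument on $S^{d-1}$''. That assertion is equivalent, in this setting, to showing that $\sum_i C_i^\vee$ itself contains a line whenever its closure does, and this is precisely the non-trivial input the paper imports from Waksman--Epelman (\Cref{thm_sum_not_closed}, extended to $n$ summands in \Cref{cor_sum_not_closed}): if the sum is not closed, there is a line with one ray in some $N_{K_i}(x)$ and the opposite ray in the sum of the others; if the sum is closed one argues directly. Without this (or an equivalent substitute), your plan has a gap at its central step.

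There is also a smaller but genuine error earlier on: the claim that $\bigcap_i C_i$ \emph{is} the tangent cone of $K$ at $x$ is false in general; one only has $S_K(x) \subseteq \bigcap_i S_{K_i}(x)$, and the inclusion can be strict (e.g.\ two discs in $\RR^2$ touching the origin from opposite sides: the tangent cone of the intersection is $\{\mathbf{0}\}$, while the intersection of the tangent cones is a line). So $\dim(K)<d$ does not by itself give $\dim(\bigcap_i C_i)<d$ via that identity. The conclusion you need is nevertheless true, but it requires a separate argument, which the paper supplies: if $\bigcap_i S_{K_i}(x)$ were full-dimensional it would contain a full-dimensional simplex, whose barycentric shrinking lies in the interior of every $S_{K_i}(x)$ and hence in $\bigcap_i \RR_{>0}(K_i - x)$, contradicting $\dim(K)<d$. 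This step is fixable, but as written your justification would not survive.
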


\begin{proof}[Proof of \Cref{prop_R_incl_max}]
  The idea is to construct an $\RR$-$X$-free polytope $P$ which contains $K$.
  Since $K$ is inclusion-maximal with respect to this property, it follows that $K = P$ is a polytope.
  \Cref{lem_1st_approx} yields a first approximation $P'$ of this polytope $P$ which satisfies $\TT_K(X) = \TT_{P'}(X)$.
  However, $P'$ might not be $\RR$-$X$-free in general (see \Cref{fig_enlarge_not_R_X}).
  We claim that $P'$ can be cut with further half-spaces such that the resulting polytope still contains $K$ and is $\RR$-$X$-free.
  We prove this claim by induction on the number $r$ of equivalence classes $[Q] \in \TT_{P'}(X)$ with $\dim(K\text{\textdiv}Q) = d$.
  By \Cref{lem_TT_K_X_sim_finite}, the number $r$ is finite.
  
  In the base case, i.e. $r=0$, we have $\dim(P\text{\textdiv}Q) < d$ for all $[Q] \in \TT_P(X)$, and thus by \Cref{lem_R_X_free_with_Tran}, the polytope $P'$ is already $\RR$-$X$-free.

  Consider the case $r \ge 1$.
  Let $Q$ be a $\ZZ$-unimodular copy of $X$ with $\dim(P'\text{\textdiv}Q) = d$.
  Note that by construction $[Q] \in \TT_K(X)$.
  We want to intersect $P'$ with finitely many half-spaces, so that the resulting polytope $P''$ contains $K$ and $\dim(P''\text{\textdiv}Q) < d$.
  By \Cref{lem_R_X_free_with_Tran}, $\dim(K\text{\textdiv}Q) < d$.
  Let $\delta \in K\text{\textdiv}Q$.
  If $v_1, \ldots, v_n$ are the vertices of $Q + \delta$, then, by \Cref{lem_translations}, $K\text{\textdiv}(Q+\delta) = (K - v_1) \cap \ldots \cap (K - v_n)$.
  If the origin $\mathbf{0}$ is in the interior of $K - v_i$, then this convex body does not contribute to the dimension drop and we omit it.
  By abuse of notation, let us assume that the origin $\mathbf{0}$ is contained in the boundary of $K - v_i$ for $i = 1, \ldots, n$.
  By \Cref{thm_finite_supp_hyper}, there exist (nonempty) finite collections of closed half-spaces $\{H^+(i,j)\}_{j \in J_i}$ for $i = 1, \ldots, n$ such that the boundary $H(i,j)$ of $H^+(i,j)$ is a supporting hyperplane to $K - v_i$ at $\mathbf{0}$ and the dimension of the cone $C \coloneqq \bigcap_{i=1}^n\bigcap_{j \in J_i} H^+(i,j)$ is at most $d-1$.
  We define a new polytope:
  \[
    P'' \coloneqq P' \cap \bigcap_{j \in J_1} (H^+(1,j) + v_1) \cap \ldots \cap \bigcap_{j \in J_n} (H^+(n,j) + v_n) \text{.}
  \]
  Since $H(i,j)$ is a supporting hyperplane to $K-v_i$ at $\mathbf{0}$, the affine hyperplane $H(i,j) + v_i$ is a supporting hyperplane to $K$ at $v_i$, and thus $P''$ contains $K$ as well.
  Note that $Q + \delta$ is contained in $P''$, so that
  \[
    P''\text{\textdiv}(Q + \delta) = (P'' - v_1) \cap \ldots \cap (P'' - v_n) \subset C \text{.}
  \]
  Since $P''\text{\textdiv}Q$ is contained in $C$ and $C$ is not full-dimensional, it follows that $\dim(P''\text{\textdiv}Q) < d$.
  As the number of $[Q'] \in \TT_{P''}(X)$ with $\dim(P''\text{\textdiv}Q') = d$ is at most $r-1$, it follows by the induction hypothesis that we can intersect $P''$ with finitely many further half-spaces such that the resulting polytope $P$ contains $K$ and is $\RR$-$X$-free.
  This completes the proof.  
\end{proof}


\subsection{Intersection of convex bodies}\label{sec_intersec_conv_bodies}

In this section, we prove Theorem~\ref{thm_finite_supp_hyper} which played a key role in the proof of \Cref{prop_R_incl_max}.
The proof will be given at the end of the section, after some preliminary work.
The key idea is to reduce the proof of Theorem~\ref{thm_finite_supp_hyper} to the study of convex cones.
More precisely, we consider the tangent cone to the individual convex bodies at the given common point.
Recall the \emph{support cone} (sometimes also called \emph{tangent cone} or \emph{projection cone}) of a convex body $K \subset \RR^d$ at a point $x$ in $K$:
\[
  S_K(x) = \overline{\RR_{>0}(K-x)} \text{.}
\]
The dual notion is the \emph{normal cone} to $K$ at $x$:
\[
  N_K(x) \coloneqq \left\{ \ell \in (\RR^d)^* \colon \ell \mleft( x' - x \mright) \ge 0 \; \text{for any $x' \in K$} \right\} \text{.}
\]
It is a well-known fact that the support cone is a closed convex cone whose dual cone is the normal cone, i.e. $S_K(x) = N_K(x)^\vee$.
Furthermore, note that the dimension of the convex body $K$ coincides with the dimension of its support cone $S_K(x)$ at any point $x$ in $K$.

In Theorem~\ref{thm_finite_supp_hyper}, we study an intersection of convex bodies whose dimension is strictly less than the ambient dimension, and by what we have just said, it suffices to consider the involved tangent cones.
Note that the linear forms defining the half spaces of Theorem~\ref{thm_finite_supp_hyper} are elements in the normal cones to the individual convex bodies.
A key step in the proof of Theorem~\ref{thm_finite_supp_hyper} will be to understand how the tangent cone to the intersection of the convex bodies relates to the individual tangent cones.
To this end, we recall the following well-known statement which relates the dual of an intersection of closed convex cones to the individual dual cones.

\begin{proposition}\label{prop_dual_intersec_cones}
  Let $C_1, \ldots, C_n \subset \RR^d$ be closed convex cones. Then we have
  \[
    \left( C_1 \cap \ldots \cap C_n \right)^\vee = \overline{C_1^\vee + \cdots + C_n^\vee} \text{.}
  \]
\end{proposition}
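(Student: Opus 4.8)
The plan is to deduce the identity from the bipolar (biduality) theorem for convex cones, together with two elementary observations: that the zero functional lies in every dual cone, and that a finite sum of convex cones is again a convex cone (so that $\overline{C_1^\vee + \cdots + C_n^\vee}$ is a closed convex cone). I would not pass through any separation or compactness argument directly; all of that is packaged inside the biduality theorem $C^{\vee\vee} = C$ for closed convex cones, and, more generally, $C^{\vee\vee} = \overline{C}$ for an arbitrary convex cone.

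First I would establish the elementary computation
\[
    \left( C_1^\vee + \cdots + C_n^\vee \right)^\vee = C_1^{\vee\vee} \cap \cdots \cap C_n^{\vee\vee} \text{.}
\]
For ``$\subseteq$'', given $\ell$ in the left-hand side and a fixed index $i$, evaluate $\ell$ on $c_i + 0 + \cdots + 0$ with $c_i \in C_i^\vee$ (permissible since $0 \in C_j^\vee$ for every $j$) to get $\ell(c_i) \ge 0$, i.e.\ $\ell \in C_i^{\vee\vee}$. For ``$\supseteq$'', add the defining inequalities $\ell(c_i) \ge 0$ over $i = 1, \ldots, n$. Since each $C_i$ is a closed convex cone, the biduality theorem gives $C_i^{\vee\vee} = C_i$, so the display becomes
\[
    \left( C_1^\vee + \cdots + C_n^\vee \right)^\vee = C_1 \cap \cdots \cap C_n \text{.}
\]

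Finally I would dualise both sides once more. The right-hand side becomes $(C_1 \cap \cdots \cap C_n)^\vee$, which is exactly the left-hand side of the proposition. For the left-hand side, note that $C_1^\vee + \cdots + C_n^\vee$ is a convex cone (a finite sum of convex cones), so by the biduality theorem its bidual equals its closure; thus the left-hand side becomes $\overline{C_1^\vee + \cdots + C_n^\vee}$, completing the proof. One could instead prove the case $n = 2$ and induct, using $\overline{\,\overline{A}+B\,} = \overline{A+B}$ to collapse the intermediate closures, but the direct computation above makes the induction unnecessary. I do not anticipate a genuine obstacle here; the only subtlety is to invoke the bipolar theorem in the form ``the bidual of a convex cone equals its closure'', which is precisely what forces the closure onto the right-hand side of the statement and is the sole place where closedness is used.
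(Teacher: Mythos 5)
Your proof is correct and complete: the duality computation $\left( C_1^\vee + \cdots + C_n^\vee \right)^\vee = C_1^{\vee\vee} \cap \cdots \cap C_n^{\vee\vee}$ is verified correctly in both directions, and invoking the bipolar theorem in the two forms $C^{\vee\vee}=C$ for closed convex cones and $C^{\vee\vee}=\overline{C}$ for an arbitrary convex cone yields exactly the stated identity, with the closure appearing for precisely the right reason. Note that the paper does not prove this proposition at all — it is quoted as a well-known fact — so there is no argument to compare against; your biduality route is the standard proof and fills in the citation appropriately.
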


The following example shows that in general it is necessary to take the closure in Proposition~\ref{prop_dual_intersec_cones}:

\begin{example}\label{ex_sum_not_closed}
  Let $C_1 = \{ (x,y,z) \in \RR^3 \colon x^2 + y^2 \le z^2, z \ge 0 \}$ and $C_2 = \RR_{\ge0} (1,0,-1)$ (see Figure~\ref{fig_sum_not_closed}).
  \begin{figure}[!ht]
    \begin{tikzpicture}
      \begin{scope}[canvas is zx plane at y=1]
        \draw[dashed] (0,0) ++(90:1) arc (90:300:1);
        \draw (0,0) ++(-60:1) arc (-60:90:1);
      \end{scope}
      \begin{scope}[canvas is zx plane at y=.4]
        \draw[dashed] (0,0) ++(90:.4) arc (90:300:.4);
        \draw (0,0) ++(-60:.4) arc (-60:90:.4);
      \end{scope}
      \begin{scope}[canvas is zx plane at y=1.5]
        \draw[dashed] (0,0) ++(90:1.5) arc (90:300:1.5);
        \draw (0,0) ++(-60:1.5) arc (-60:90:1.5);
      \end{scope}
      \fill (0,0) circle (2pt);
      \draw (0,0,0) -- (2,2,0) node[midway,right] {$C_1$};
      \draw (0,0,0) -- (-1.6,2,1.2);
      \draw[very thick] (0,0,0) -- (1,-1,0) node[midway, left]{$C_2$};
    \end{tikzpicture}
    \caption{Illustration of the cones in Example~\ref{ex_sum_not_closed}.\label{fig_sum_not_closed}
}
  \end{figure}
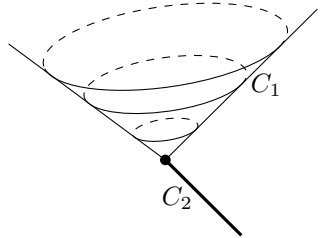
  Then $x_t \coloneqq (-t,1+\frac1t,\sqrt{t^2 + (1+\frac1t)^2}) \in C_1$ and $y_t \coloneqq (t,0,-t) \in C_2$.
  It is straightforward to verify that $\lim_{t\to\infty}(x_t + y_t) = (0,1,0)$ is contained in the closure of the sum of cones $C_1 + C_2$, but not in the sum.
\end{example}

Observe also that in this example, the line $\RR (1,0,-1)$ has one half-ray $\rho=\RR_{>0}(-1,0,1)$ contained in $C_1$ and its other half-ray $-\rho$ contained in the other cone $C_2$.
Waksman and Epelman~\cite{Waksman_Epelman} observed that such a line exists whenever the sum of two closed convex cones isn't closed:

\begin{theorem}[{\cite[Theorem on p.~95]{Waksman_Epelman}}]\label{thm_sum_not_closed}
  Suppose $d \ge 3$. Let $C_1, C_2 \subset \RR^d$ be two closed convex cones.
  If the sum $C_1 + C_2$ is not closed, then there exists a straight line $L = \rho + (-\rho)$ where $\rho = \RR_{>0} x$ for some $\mathbf{0} \neq x \in L$ such that $\rho \subset C_1$ and $-\rho \subset C_2$.
\end{theorem}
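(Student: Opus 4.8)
The plan is to prove Theorem~\ref{thm_sum_not_closed} by induction on the dimension $d$, using the fact that failure of closedness localises to a lower-dimensional face, and ultimately reducing to a $3$-dimensional or planar configuration where the line $L$ can be produced directly. First I would set up the situation: suppose $z \in \overline{C_1 + C_2} \setminus (C_1 + C_2)$, so there are sequences $x_t \in C_1$, $y_t \in C_2$ with $x_t + y_t \to z$. Since the sum is not closed, neither $x_t$ nor $y_t$ can stay bounded (if both were bounded we could pass to convergent subsequences and $z$ would lie in the sum, as $C_1,C_2$ are closed). So, after rescaling and passing to a subsequence, I would extract unit vectors $u_t = x_t/|x_t| \to u$ and $v_t = y_t/|y_t| \to v$ with $u \in C_1$, $v \in C_2$, and $|x_t|, |y_t| \to \infty$. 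The key observation is that comparing norms forces $|x_t|/|y_t| \to 1$ and hence $u + v = \mathbf 0$, i.e. $v = -u$. Thus the ray $\rho := \RR_{>0} u$ lies in $C_1$ and $-\rho$ lies in $C_2$, which is precisely the line $L$ we want — \emph{provided} $u \neq \mathbf 0$.

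The technical heart of the argument is therefore ruling out the degenerate case $u = v = \mathbf 0$, which cannot happen since $u_t, v_t$ are unit vectors; but more subtly, the naive norm comparison only gives $|x_t| - |y_t|$ bounded, not $|x_t|/|y_t| \to 1$, unless one argues carefully. Concretely, from $|x_t + y_t| \le |z| + 1$ for large $t$ and the triangle inequality we get $\bigl| |x_t| - |y_t| \bigr| \le |x_t + y_t| \le |z|+1$, so the ratio $|x_t|/|y_t| \to 1$ because both tend to infinity; then writing $x_t + y_t = |x_t| u_t + |y_t| v_t = |y_t|\bigl( \tfrac{|x_t|}{|y_t|} u_t + v_t \bigr)$ and noting the left side stays bounded while $|y_t| \to \infty$ forces $\tfrac{|x_t|}{|y_t|} u_t + v_t \to \mathbf 0$, hence $u + v = \mathbf 0$. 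This already gives the result without any dimension induction, so in fact the cleanest route avoids induction entirely: the hypothesis $d \ge 3$ is used only to guarantee that a genuine line (a one-dimensional subspace) can coexist inside the two cones without forcing a contradiction — but the construction above produces the line regardless, so the statement holds and the $d\ge 3$ hypothesis is merely where the conclusion becomes \emph{useful} in the application.

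The one subtlety I would be careful about is whether $\rho \subset C_1$ genuinely follows from $u_t \in C_1$, $u_t \to u$: this is immediate because $C_1$ is a \emph{closed} cone, so $u \in C_1$ and then $\RR_{>0} u \subset C_1$ by the cone property; likewise $-u = v \in C_2$ gives $-\rho \subset C_2$. One should also check $u \neq \mathbf 0$, which holds since $|u_t| = 1$ for all $t$ and the unit sphere is closed. The main obstacle, then, is not the existence of the line but making sure the limiting vector is nonzero and that the rescaling is done with the correct normalisation so that $u + v = \mathbf 0$ rather than merely $u, v$ being antipodal up to scaling — which is handled precisely by the ratio argument above.

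I would remark finally that for the application in the proof of Theorem~\ref{thm_finite_supp_hyper} one only needs the contrapositive form: if no such line exists, then $C_1 + C_2$ is closed, and hence by Proposition~\ref{prop_dual_intersec_cones} the dual of the intersection is exactly $C_1^\vee + C_2^\vee$ without taking closure. The argument sketched above delivers exactly this dichotomy, so I would present it in the form: \emph{either} the sum is closed, \emph{or} the extracted limit vector $u$ yields the desired line $L = \rho \cup (-\rho) \cup \{\mathbf 0\}$ with $\rho \subset C_1$, $-\rho \subset C_2$.
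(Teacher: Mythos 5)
Your argument is correct, and there is in fact no in-paper proof to compare it with: the paper imports \Cref{thm_sum_not_closed} verbatim from Waksman--Epelman, so what you have written is a complete, self-contained substitute. Your route is the standard compactness/normalisation argument; it amounts to proving the contrapositive of the classical criterion that $C_1\cap(-C_2)=\set{\mathbf{0}}$ forces $C_1+C_2$ to be closed, and, as you observe, it needs neither induction nor the hypothesis $d\ge3$ (for $d\le2$ the statement is vacuous anyway, since planar closed convex cones are polyhedral and their sum is always closed, as the paper's remark following the theorem notes). Two small points of precision. First, your parenthetical justification of ``neither $x_t$ nor $y_t$ can stay bounded'' only treats the case where both are bounded; what you actually need is that neither has a bounded subsequence, and the one-line fix is that since $x_t+y_t$ converges, boundedness of a subsequence of the $x_t$ forces boundedness of the corresponding $y_t$, after which closedness of $C_1$ and $C_2$ puts $z$ in the sum, a contradiction. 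Second, the ratio step is fine as written: $\bigl|\,|x_t|-|y_t|\,\bigr|\le|x_t+y_t|$ stays bounded while both norms diverge, so $|x_t|/|y_t|\to1$, and then $(x_t+y_t)/|y_t|\to\mathbf{0}$ yields $u+v=\mathbf{0}$ with $|u|=|v|=1$; an equally common normalisation is to divide both sequences by $\max\set{|x_t|,|y_t|}$, which avoids the ratio discussion altogether. Either way the limit $u$ is a nonzero vector with $\RR_{>0}u\subset C_1$ and $\RR_{>0}(-u)\subset C_2$, which is exactly the line $L$ the theorem demands, and your closing remark about the contrapositive form matches precisely how the result is deployed in the proof of \Cref{thm_finite_supp_hyper}.
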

\begin{remark}
  Note that if the ambient dimension $d$ is less than or equal to $2$, then every closed convex cone is polyhedral.
  It is a well-known fact that the sum of two polyhedral cones is always closed.
  Indeed, if $x_1, \ldots, x_r, y_1, \ldots, y_s \in \RR^d$ (any dimension $d$), then $\cone(x_1, \ldots, x_r) + \cone(y_1, \ldots, y_s) = \cone(x_1, \ldots, x_r, y_1, \ldots, y_s)$, and thus the sum is closed.
  In particular, Theorem~\ref{thm_sum_not_closed} is an empty statement for $d\le2$, and Waksman and Epelman exclude these dimensions from their statement.
\end{remark}

\begin{corollary}\label{cor_sum_not_closed}
  Suppose $d \ge 3$. Let $C_1, \ldots, C_n \subset \RR^d$ be closed convex cones.
  If the sum $C_1 + \cdots + C_n$ is not closed, then there exists a straight line $L = \rho + (-\rho)$ where $\rho = \RR_{>0}x$ for some $\mathbf{0} \neq x \in L$ such that $\rho \subset C_i$ and $-\rho \subset C_1 + \cdots + \widehat{C_i} + \cdots + C_n$ for some index $i = 1, \ldots, n$.
\end{corollary}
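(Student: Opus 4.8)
The plan is to argue by induction on $n \ge 2$, using Theorem~\ref{thm_sum_not_closed} both as the base case and as the engine of the inductive step. For $n = 2$ the assertion is exactly Theorem~\ref{thm_sum_not_closed}: there the deleted partial sum $C_1 + \cdots + \widehat{C_i} + \cdots + C_n$ consists of the single remaining cone, so ``$\rho \subset C_1$ and $-\rho \subset C_2$'' is the conclusion with $i = 1$. So suppose $n \ge 3$ and that the corollary holds for every family of $n - 1$ closed convex cones in $\RR^d$. Write $D \coloneqq C_1 + \cdots + C_{n-1}$, a sum of $n - 1 \ge 2$ closed convex cones, so that $C_1 + \cdots + C_n = D + C_n$.

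First I would handle the case where $D$ is not closed. Applying the induction hypothesis to $C_1, \ldots, C_{n-1}$ produces a line $L = \rho + (-\rho)$ with $\rho = \RR_{>0}x$ for some $\mathbf{0} \ne x \in L$, and an index $i \in \{1, \ldots, n-1\}$ with $\rho \subset C_i$ and $-\rho \subset C_1 + \cdots + \widehat{C_i} + \cdots + C_{n-1}$. Since every nonempty convex cone contains the origin, $A \subseteq A + C_n$ for any set $A \subset \RR^d$, and hence $-\rho$ also lies in $C_1 + \cdots + \widehat{C_i} + \cdots + C_n$; thus the same line $L$ and index $i$ witness the conclusion for the $n$-cone family.

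It remains to treat the case where $D$ is closed. Then $C_1 + \cdots + C_n = D + C_n$ is a non-closed sum of the two closed convex cones $D$ and $C_n$, so Theorem~\ref{thm_sum_not_closed}, applied with the roles $C_1 = D$ and $C_2 = C_n$, yields a line $L = \sigma + (-\sigma)$ with $\sigma = \RR_{>0}x$ for some $\mathbf{0} \ne x \in L$ such that $\sigma \subset D$ and $-\sigma \subset C_n$. Setting $\rho \coloneqq -\sigma$ (so that $L = \rho + (-\rho)$ and $-\rho = \sigma$) gives $\rho \subset C_n$ and $-\rho \subset D = C_1 + \cdots + C_{n-1}$, which is precisely the deleted partial sum $C_1 + \cdots + \widehat{C_i} + \cdots + C_n$ for $i = n$. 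This is the desired conclusion, and the induction is complete.

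I do not expect any genuine difficulty here: the two things to keep track of are the asymmetric convention in Theorem~\ref{thm_sum_not_closed} (handled by the harmless interchange $\rho \leftrightarrow -\rho$ in the last case) and the observation that closed convex cones contain $\mathbf{0}$, which is exactly what makes the deleted partial sums nest correctly when passing from $n - 1$ to $n$ summands.
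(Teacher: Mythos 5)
Your proof is correct and follows essentially the same route as the paper: induction on $n$ with Theorem~\ref{thm_sum_not_closed} as the base case, splitting on whether the partial sum of $n-1$ of the cones is closed and using that the deleted partial sums nest because closed convex cones contain the origin. The only cosmetic difference is that you peel off $C_n$ (and so must swap $\rho\leftrightarrow-\rho$ in the closed case) whereas the paper peels off $C_1$; this changes nothing of substance.
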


Here, the notation $C_1 + \cdots + \widehat{C_i} + \cdots + C_n$ means that we omit the $i$-th summand.

\begin{proof}
  We do induction on $n$.
  The base case $n = 2$ is Theorem~\ref{thm_sum_not_closed}.
  Suppose $n>2$. If $C_2 + \cdots + C_n$ is not closed, by the induction hypothesis, there is a straight line $L = \rho + (-\rho)$ where $\rho = \RR_{>0}x$ for some $\mathbf{0} \neq x \in L$ such that $\rho \subset C_i$ and $-\rho \subset C_2 + \cdots + \widehat{C_i} + \cdots + C_n$ for some $i \in \{ 2, \ldots, n\}$.
  The statement then follows, since $C_2 + \cdots + \widehat{C_i} + \cdots + C_n \subset C_1 + C_2 + \cdots + \widehat{C_i} + \cdots + C_n$.
  If instead $C \coloneqq C_2 + \cdots + C_n$ is closed, it is a closed convex cone, and by Theorem~\ref{thm_sum_not_closed}, there exists a straight line $L = \rho + (-\rho)$ where $\rho = \RR_{>0}x$ for some $\mathbf{0} \neq x \in L$ such that $\rho \subset C_1$ and $-\rho \subset C_2 + \cdots + C_n$.
\end{proof}

The following statement will be used in the proof of Theorem~\ref{thm_finite_supp_hyper}.

\begin{lemma}\label{lem_half_spaces_same_dim}
  If $C \subset \RR^d$ is a closed convex cone, then there are finitely many linear forms $\ell_1, \ldots, \ell_n \in C^\vee$ such that
  \[
    \dim\left( \{ x \in \RR^d \colon \ell_1(x) \ge0\} \cap \ldots \cap \{ x \in \RR^d \colon \ell_n(x) \ge0\} \right) = \dim(C) \text{.}
  \]
\end{lemma}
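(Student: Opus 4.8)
The plan is to reduce to the full-dimensional case and then build the hyperplanes from a finite generating set of the dual cone. First I would dispose of the degenerate situation: if $\dim(C) < d$, then $C$ is contained in a hyperplane $H$ through the origin, so there is a nonzero $\ell_0 \in (\RR^d)^*$ with $H = \{x : \ell_0(x) = 0\}$, and both $\pm\ell_0$ lie in $C^\vee$ (since $\ell_0$ and $-\ell_0$ vanish on $C$). Then $\{\ell_0(x) \ge 0\} \cap \{-\ell_0(x) \ge 0\} = H$ has dimension $d-1$. If $\dim(C)$ is even smaller, one simply iterates this, picking a basis $\ell_1,\dots,\ell_k$ of the annihilator $(\mathrm{span}\,C)^\perp$ (so each $\pm\ell_i \in C^\vee$) and noting that $\bigcap_i(\{\ell_i \ge 0\}\cap\{-\ell_i\ge 0\}) = \mathrm{span}(C)$, which has dimension $\dim(C)$. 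So we may assume $\dim(C) = d$.

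Now suppose $\dim(C) = d$, i.e.\ $C$ is full-dimensional. The key observation is that $C = C^{\vee\vee} = \bigcap_{\ell \in C^\vee}\{x : \ell(x) \ge 0\}$, and the intersection on the right is over a full-dimensional ($d$-dimensional) set of linear forms; we want to replace it by a finite sub-intersection without dropping the dimension. Since $C$ is full-dimensional, its interior is nonempty, so pick $x_0 \in \mathrm{int}(C)$. Then $\ell(x_0) > 0$ for every $\ell \in C^\vee \setminus \{0\}$. Intersect $C^\vee$ with the affine hyperplane $\{\ell : \ell(x_0) = 1\}$: this yields a compact convex set $B$ (compact because $C^\vee$ contains no line — its dual $C$ being full-dimensional — and $\ell(x_0)=1$ is a bounded slice of a pointed cone), and $C^\vee = \RR_{\ge 0}\cdot B$. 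Cover $B$ by finitely many points in a way that controls the approximation of $C$.

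The concrete finite choice: since $\mathrm{int}(C) \ne \emptyset$ and $x_0 \in \mathrm{int}(C)$, there is $\varrho > 0$ with $x_0 + \varrho B^d \subseteq C$. It then suffices to find finitely many $\ell_1,\dots,\ell_n \in C^\vee$ such that $P \coloneqq \bigcap_{i=1}^n \{x : \ell_i(x) \ge 0\}$ still contains a $d$-dimensional set; and in fact one can arrange that $P$ is close enough to $C$ that it still contains, say, a small ball around $x_0$. By compactness of $B$, choose $\ell_1, \dots, \ell_n \in B$ whose convex hull approximates $B$ to within $\varepsilon$ in Hausdorff distance (a standard finite $\varepsilon$-net argument, using that $\{\ell : \ell(x_0) = 1\} \cong \RR^{d-1}$ and $B$ is compact). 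For $x_0 \in \mathrm{int}(C)$ and $\varepsilon$ small, one checks $\ell_i(x_0) > 0$ for all $i$, hence $x_0$ lies in the interior of $P = \bigcap_i\{\ell_i \ge 0\}$, so $\dim(P) = d = \dim(C)$. That is the required statement. An even cleaner route avoiding $\varepsilon$-nets: take $\ell_1, \dots, \ell_d$ to be any basis of $(\RR^d)^*$ chosen from $C^\vee$ — such a basis exists because $C^\vee$ is full-dimensional (its dual $C$ being pointed, as $C$ full-dimensional forces $C^\vee$ pointed... wait, that's the converse) — so more carefully: since $C$ is full-dimensional, $C^\vee$ is a pointed cone with nonempty interior in $(\RR^d)^*$ only if $C$ contains no line, which need not hold; so I would stick with the interior-point argument, which works regardless: pick $x_0 \in \mathrm{int}(C)$, and since $C^\vee \setminus\{0\}$ maps into the open halfspace $\{\ell(x_0) > 0\}$, pick any $\ell_1, \dots, \ell_n \in C^\vee$ spanning $(\RR^d)^*$ as a vector space (possible since $C^\vee$ spans, because $C$ full-dimensional $\Rightarrow$ $C^\vee$ has no subspace... hmm) — the safe statement is: $\mathrm{span}(C^\vee) = (\RR^d)^*$ iff $C$ contains no line; but if $C$ contains a line $\RR v$ then $C = \RR v \oplus C'$ and we handle that factor separately exactly as in the degenerate case above, adding $\pm$ the annihilator of $v$. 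The main obstacle, then, is purely bookkeeping: combining the "lineality space" contribution (handled by antipodal pairs $\pm \ell$) with the "pointed part" contribution (handled by an interior point and finitely many spanning forms). Once the splitting $C = L \oplus C_0$ into lineality space $L$ and a pointed cone $C_0$ in $L^\perp$ is in place, the finite list is: a basis of $L^\perp \cap$ (something) — concretely, $\pm e_i^*$ for a basis of $L^\perp$... I would in the end simply write: let $L$ be the lineality space of $C$, write $C = L \oplus C_0$ with $C_0 \subseteq L^\perp$ pointed of dimension $\dim(C) - \dim(L)$; take $\ell_1^\pm, \dots$ to be $\pm$ the coordinate forms of a complement basis (these kill $L$, lie in $C^\vee$) giving the factor $L$ back, and take finitely many $m_1,\dots,m_k \in C_0^\vee \subseteq L^\perp{}^*$ spanning $(L^\perp)^*$ and nonnegative at a relative interior point of $C_0$, giving a full-dimensional-in-$L^\perp$ intersection; the product is $\dim(L) + \dim(C_0) = \dim(C)$-dimensional. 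I expect the write-up of this last combination to be the only delicate point; everything else is routine convex duality.
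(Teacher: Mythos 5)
Your opening paragraph already contains the entire proof, and it is essentially the paper's own argument: pick a basis $\ell_1,\dots,\ell_k$ of the annihilator of $\lspan(C)$; then $\pm\ell_i\in C^\vee$, and the intersection $\bigcap_i\bigl(\{\ell_i\ge0\}\cap\{-\ell_i\ge0\}\bigr)$ equals $\lspan(C)$, which has dimension $\dim(C)$. (The paper phrases this by extending a basis of $\lspan(C)$ to a basis of $\RR^d$ and taking the $\pm$ of the last $d-r$ dual coordinate forms.) Note moreover that the full-dimensional case you then set out to treat needs no work at all: every $\ell\in C^\vee$ satisfies $C\subset\{\ell\ge0\}$, so for \emph{any} finite choice of forms in $C^\vee$ the intersection contains $C$ and hence is automatically $d$-dimensional. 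There is no need for interior points, $\varepsilon$-nets, spanning sets of forms, or any discussion of whether $C^\vee$ spans $(\RR^d)^*$; the only content of the lemma is cutting the intersection down to $\lspan(C)$ when $\dim(C)<d$, which your first paragraph already does.

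The construction you say you would ``in the end simply write'', based on the lineality space $L$ rather than on $\lspan(C)$, is however incorrect as stated. If a form $\ell$ merely annihilates $L$, it is not true that $\pm\ell\in C^\vee$; for both signs to lie in $C^\vee$ the form must vanish on all of $C$, i.e.\ on $\lspan(C)$. For instance, for the half-plane $C=\setcond{(x,y)\in\RR^2}{y\ge0}$ one has $L=\RR e_1$, the coordinate form dual to the complement basis vector $e_2$ is $e_2^*$, and $-e_2^*\notin C^\vee$. Furthermore, even formally, intersecting the $\pm$-pairs of a basis of the annihilator of $L$ yields exactly $L$, and the additional half-spaces $\{m_j\ge0\}$ with $m_j\in C_0^\vee$ (extended by zero on $L$) all contain $L$, so the resulting set is $L$ itself, of dimension $\dim(L)$ rather than $\dim(L)+\dim(C_0)$; the claimed ``product'' structure does not materialise. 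The relevant subspace for the $\pm$-pair trick is $\lspan(C)$, not the lineality space, so the right course is to discard the lineality-space bookkeeping and keep your first paragraph, together with the trivial remark above for $\dim(C)=d$.
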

\begin{proof}
  Let $e_1, \ldots, e_r \in C$ be a basis of the linear span $\lspan(C)$ of $C$.
  We extend these linearly independent vectors to a basis of $\RR^d$, say $e_1, \ldots, e_r, e_{r+1}, \ldots, e_d$.
  Let $f_1, \ldots, f_d$ be the dual basis of $(\RR^d)^*$.
  Then we have $\pm f_{r+1}, \ldots, \pm f_d \in C^\vee$, and thus
  \[
    \dim \left( \bigcap_{i=r+1}^d \{ x \in \RR^d \colon f_i(x) \ge0 \} \cap \bigcap_{i=r+1}^d \{ x \in \RR^d \colon -f_i(x) \ge0 \} \right) = \dim(C) \text{.}
  \]
\end{proof}

We are now ready to prove Theorem~\ref{thm_finite_supp_hyper}.

\begin{proof}[Proof of Theorem~\ref{thm_finite_supp_hyper}]
  Note $\dim(K) = \dim(S_K(x))$.
  Furthermore, the set of supporting hyperplanes of the tangent cone $S_{K_i}(x)$ at $x$ coincides with the set of supporting hyperplanes of $K_i$ at $x$.
  We continue by investigating the tangent cones $S_K(x), S_{K_1}(x), \ldots, S_{K_n}(x)$ and their relations.
  
  The following equality is straightforward to verify:
  \[
    \RR_{>0}(K - x) = \RR_{>0}\left( \left(K_1 \cap \ldots \cap K_n \right) - x \right) = \RR_{>0}(K_1 - x) \cap \ldots \cap \RR_{>0}(K_n - x).
  \]
  Since the dimension of the closure of a convex set coincides with the dimension of the original set, we get
  \begin{equation}
    \label{eq_1}
    \dim(S_K(x)) = \dim \left( \RR_{>0}(K-x) \right) = \dim \left( \RR_{>0}(K_1 - x) \cap \ldots \cap \RR_{>0} (K_n-x) \right) < d.
  \end{equation}
  We claim that $\dim(S_{K_1}(x) \cap \ldots \cap S_{K_n}(x)) <d$ as well.
  Assume towards a contradiction that the dimension of the intersection of the support cones were $d$, i.e., the intersection of the support cones is full-dimensional.
  Then the intersection of the support cones contains an affine basis of $\RR^d$, and thus a full-dimensional simplex $\Delta$.
  Let $y$ be the barycentre of $\Delta$.
  Let $\Delta' \subset \Delta$ be the simplex obtained by shrinking $\Delta$ with respect to its barycentre, e.g. $\Delta' = \frac12(\Delta - y) + y$.
  Then the smaller simplex $\Delta'$ is contained in the interior of every tangent cone $S_{K_i}(x)$ for $i = 1, \ldots, n$, and thus
  \[
    \Delta' \subset \RR_{>0}(K_1-x) \cap \ldots \cap \RR_{>0}(K_n-x) \text{,}
  \]
  which is a contradiction to inequality~\eqref{eq_1}.
  
  Hence $\dim(S_{K_1}(x) \cap \ldots \cap S_{K_n}(x)) < d$.
  By Proposition~\ref{prop_dual_intersec_cones}, we have
  \[
    (S_{K_1}(x) \cap \ldots \cap S_{K_n}(x))^\vee = \overline{N_{K_1}(x) + \cdots + N_{K_n}(x)} \text{.}
  \]
  We distinguish two cases, namely whether $N_{K_1}(x) + \cdots + N_{K_n}(x)$ is closed or not.
  
  Suppose $N_{K_1}(x) + \cdots + N_{K_n}(x)$ is closed.
  By Lemma~\ref{lem_half_spaces_same_dim}, there exist linear forms $\ell_1, \ldots, \ell_r \in (S_{K_1}(x) \cap \ldots \cap S_{K_n}(x))^\vee$ such that
  \begin{equation}
    \label{eq_2}
    \dim(\{ y \in \RR^d \colon \ell_1(y) \ge0 \} \cap \ldots \cap \{ y \in \RR^d \colon \ell_n(y) \ge0 \} ) = \dim (S_{K_1}(x) \cap \ldots \cap S_{K_n}(x)) < d \text{.}
  \end{equation}
  Since $(S_{K_1}(x) \cap \ldots \cap S_{K_n}(x))^\vee = N_{K_1}(x) + \cdots + N_{K_n}(x)$, every linear form $\ell_j$ can be expresses as a sum $\ell_j = \ell_{1,j} + \cdots + \ell_{n,j}$ for some linear forms $\ell_{i,j} \in N_{K_i}(x)$.
  We define $H^+(i,j) = \{ y \in \RR^d \colon \ell_{i,j}(y) \ge0\}$.
  Then the intersection of all these closed half-spaces is a polyhedral cone $C$ whose dual cone is given by $C^\vee = \cone(\{ \ell_{i,j} \colon i=1, \ldots, n; j = 1, \ldots r \})$.
  The dual cone $C^\vee$ contains the cone $D \coloneqq \cone(\ell_1, \ldots, \ell_r) \subset (\RR^d)^*$, and thus $C \subset D^\vee$.
  Since, by Equation~\eqref{eq_2}, $\dim(D^\vee) < d$, we are done.
  
  Suppose now $N_{K_1}(x) + \cdots + N_{K_n}(x)$ is not closed.
  Then by Corollary~\ref{cor_sum_not_closed}, there exists a straight line $L = \rho + (-\rho) \subset (\RR^d)^*$ where $\rho = \RR_{>0}\ell$ for some $\mathbf{0} \neq \ell \in L$ such that $\rho \subset N_{K_i}(x)$ and $-\rho \subset N_{K_1}(x) + \cdots + \widehat{N_{K_i}(x)} + \cdots + N_{K_n}(x)$ for some index $i=1, \ldots, n$.
  There exist $\ell_j \in N_{K_j}(x)$ for $j \neq i$ such that $-\ell = \ell_1 + \cdots + \widehat{\ell_i} + \cdots + \ell_n$.
  We define $H^+_i \coloneqq \{ y \in \RR^d \colon \ell(y) \ge 0\}$ and $H^+_j \coloneqq \{ y \in \RR^d \colon \ell_j(y) \ge0\}$ for $j \neq i$.
  Let $C$ be the intersection of these closed half-spaces.
  Then the dual cone is given by $C^\vee = \cone( \ell, \ell_1, \ldots, \widehat{\ell_i}, \ldots, \ell_n)$ which contains the straight line $L$.
  Thus $C \subset L^\vee$ where $\dim(L^\vee) = d-1$.
\end{proof}

\begin{remark}
    Note the proof of Theorem~\ref{thm_finite_supp_hyper} shows that the sets $\set{H^+(i,j)}_{j \in J_i}$ for $i=1, \ldots, n$ can be chosen such that $|J_i|\le 2$.
\end{remark}


\section{Preliminary observations in 1 and 2 dimensions}\label{sec:1-2-dim}

Let us begin our quest of determining $\flatness_d^A(\Delta_d)$ for $d = 1,2$ and $A \in \{ \ZZ, \RR \}$.
We first settle the one-dimensional case.
By Remark~\ref{rem:basic-props}, the flatness constants of bounded sets $X \subset \RR$ whose convex hull is full-dimensional are completely characterised by the flatness constants of closed intervals $I = [x,y] \subset \RR$.

Recall that the floor of a real number $x$, $\lfloor x \rfloor$, is the largest integer which is less than or equal to $x$.
Similarly, the ceiling of a real number $x$, $\lceil x \rceil$, is the smallest integer that is greater than or equal to $x$.

\begin{theorem}\label{thm:flt-dim1}
  Let $I = [x,y] \subset \RR$, with $x \le y$.
  Set $\delta \coloneqq \floor{x}+\ceil{y}$.
  Then
  \[
    \flatness_1^\ZZ(I) = \begin{cases}
      \max\{ \delta - 2x, 1 + 2y - \delta \} & \text{if} \; \ceil{x} - x \ge y - \floor{y},\\
      \max\{ 2y - \delta, 1 + \delta -2x \} & \text{otherwise.}
    \end{cases}
  \]
\end{theorem}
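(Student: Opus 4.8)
The plan is to make dimension $1$ completely explicit and then optimise a piecewise-linear function of a single real parameter. In dimension $1$ a convex body is a closed bounded interval $[a,b]$ (or a point), with $\width([a,b]) = b-a$, so $\flatness_1^\ZZ(I)$ is by definition the supremum of $b-a$ over all intervals $[a,b]$ containing no $\ZZ$-unimodular copy of $I$ (the points contribute only width $0$). Since $\GL_1(\ZZ) = \{\pm 1\}$, every $\ZZ$-unimodular transformation of $\RR$ is $t \mapsto \pm t + k$ with $k \in \ZZ$, so the $\ZZ$-unimodular copies of $I = [x,y]$ are exactly the translation copies $[x+k,\,y+k]$ and the reflection copies $[-y+k,\,-x+k]$, $k \in \ZZ$, all of length $\ell := y-x$.

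Next I would turn "$[a,b]$ contains a copy" into an inequality. The interval $[a,b]$ contains some translation copy iff there is an integer $k$ with $a-x \le k \le b-y$, i.e.\ iff $\ceil{a-x} \le b-y$, and it contains some reflection copy iff $\ceil{a+y} \le b+x$. Putting $w = b-a$ and
\[
  \rho_{\mathrm{t}}(a) := \ceil{a-x}-(a-x) \in [0,1),\qquad \rho_{\mathrm{r}}(a) := \ceil{a+y}-(a+y) \in [0,1),
\]
the condition that $[a,b]$ contains no translation copy is $\ceil{a-x} > b-y$, which rearranges to $w < \ell + \rho_{\mathrm{t}}(a)$; the absence of a reflection copy is likewise $w < \ell + \rho_{\mathrm{r}}(a)$. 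Hence $[a,a+w]$ contains no $\ZZ$-unimodular copy of $I$ iff $w < \ell + \min\{\rho_{\mathrm{t}}(a),\rho_{\mathrm{r}}(a)\}$, and therefore
\[
  \flatness_1^\ZZ(I) \;=\; \ell + \sup_{a \in \RR}\min\{\rho_{\mathrm{t}}(a),\rho_{\mathrm{r}}(a)\}.
\]

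It remains to compute the supremum. Both $\rho_{\mathrm{t}}$ and $\rho_{\mathrm{r}}$ are $1$-periodic sawtooths: each is affine of slope $-1$ except for an upward jump to a value approaching $1$ at each point of its jump set $x+\ZZ$ (resp.\ $-y+\ZZ$). Between consecutive points of $(x+\ZZ)\cup(-y+\ZZ)$ both functions are affine of slope $-1$, so $\min\{\rho_{\mathrm{t}},\rho_{\mathrm{r}}\}$ is affine decreasing there, and its supremum over $\RR$ is a one-sided limit just to the right of a point of $(x+\ZZ)\cup(-y+\ZZ)$. A short computation of these limits gives: to the right of a point of $x+\ZZ$ the value of $\min$ is $\ceil{x+y}-(x+y)$; to the right of a point of $-y+\ZZ$ it is $(x+y)-\floor{x+y}$; and if $x+y \in \ZZ$ the two jump sets coincide, both functions jump simultaneously, and the limit is $1$. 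Collecting these cases,
\[
  \flatness_1^\ZZ(I) \;=\; (y-x) + \max\{\,1-\{x+y\},\ \{x+y\}\,\},\qquad \{x+y\} := (x+y)-\floor{x+y},
\]
and expanding $\ell$ and $\{x+y\}$ through $\floor{x},\ceil{x},\floor{y},\ceil{y}$ and splitting according to whether $\ceil{x}-x \ge y-\floor{y}$ (equivalently, whether adding the fractional parts of $x$ and $y$ produces a carry) rewrites this identity as the stated closed form in $\delta = \floor{x}+\ceil{y}$.

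The one genuinely delicate point is the bookkeeping near the jumps: since the sawtooths are only semicontinuous there, one must keep the distinction between one-sided limits and actual values, and the degenerate configurations ($x+y \in \ZZ$, an endpoint lying in $\ZZ$, or $x=y$ so that $I$ is a point) must be treated on their own before everything is folded into the two-branch formula. The argument uses no machinery beyond the definitions, so I expect the proof to be short, the only real risk being a sign or floor/ceiling slip in the final case split.
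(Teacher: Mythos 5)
Your route is genuinely different from the paper's. The paper totally orders the family of copies $(I+\ZZ)\cup(-I+\ZZ)$, shows that the inclusion-maximal $\ZZ$-$I$-free bodies are convex hulls of two successive copies, and reads off the two candidate widths from the position of the reflected copy $-I+\delta$ between $I$ and $I+1$; you instead work directly from the definition, encode ``$[a,b]$ contains a translated (resp.\ reflected) copy'' by the ceiling inequalities $\ceil{a-x}\le b-y$ and $\ceil{a+y}\le b+x$, and maximise the minimum of two unit-period sawtooths. That part of your argument is correct, is more elementary (it needs neither \Cref{lem:flt_coincides_with_free} nor any maximality considerations), and it does yield the valid intermediate identity $\flatness_1^\ZZ(I)=(y-x)+\max\mleft\{\ceil{x+y}-(x+y),\ (x+y)-\floor{x+y}\mright\}$, where the maximum is to be read as $1$ when $x+y\in\ZZ$.

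The genuine gap is in the last step, where you assert that this identity ``rewrites as the stated closed form'' after splitting on $\ceil{x}-x\ge y-\floor{y}$. The case split under which your identity transcribes into the two displayed branches is $x-\floor{x}\le\ceil{y}-y$ (equivalently $x+y\le\delta$), and this is \emph{not} equivalent to the printed condition $\ceil{x}-x\ge y-\floor{y}$ (nor to your ``carry'' gloss) precisely when exactly one endpoint of $I$ is an integer; in those boundary cases your formula and the printed formula give different numbers, so the rewriting you defer to ``bookkeeping'' cannot be carried out. Concretely, for $I=[0,\tfrac12]$ your identity gives $\tfrac12+\tfrac12=1$, which is indeed the correct value (the copies are the intervals $[\tfrac m2,\tfrac{m+1}2]$, $m\in\ZZ$, which tile $\RR$, so every interval of width at least $1$ contains a copy), whereas the printed condition places $I$ in the second branch and the displayed formula returns $\max\{2y-\delta,\,1+\delta-2x\}=2$; similarly $I=[\tfrac12,1]$ lands in the first branch and the displayed formula returns $2$ instead of $1$. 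So as a proof of the theorem exactly as printed, your argument cannot be completed; what it does prove is the theorem with the branch condition replaced by $x-\floor{x}\le\ceil{y}-y$ (the tie $x+y=\delta$ giving the same value in either branch). Note that the paper's own proof silently uses the same purported equivalence of the two conditions, which fails at integral endpoints, so at these boundary cases your computation, not the printed formula, gives the correct value; but you must make this discrepancy explicit rather than fold it away.
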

\begin{proof}
  Let $\Im$ be the set of all transformed intervals under $\ZZ$-unimodular transformations, i.e., $\Im\coloneqq \setcond{T(I)}{T \; \text{a $\ZZ$-unimodular transformation}}$.
  Note that $\Im$ comes equipped with a total ordering, namely $[x,y] < [x',y']$ if $x < x'$.
  In this proof, we will simply write ``$I$-free'' for ``$\ZZ$-$I$-free''.
  
    We call two intervals $J < J'$ in $\Im$ \novel{successive} if for any interval $J'' \in \Im$ such that $J \le J'' \le J'$ it follows that either $J=J''$ or $J'=J''$.
    It is straightforward to show that the inclusion-maximal $I$-free convex bodies are exactly the convex hulls of unions of two successive intervals in $\Im$.
  
  It remains to determine the structure of $\Im$ with respect to its total order.
  As $\Im$ is the union of $I + \ZZ$ and $-I + \ZZ$, there can be at most one translate of $-I$ between $I$ and $I+1$, say $I \le -I + \delta \le I+1$.
  To determine the translation factor $\delta$, we distinguish two cases:
  
  Suppose $\ceil{x} - x \ge y - \floor{y}$ (or equivalently $x - \floor{x} \le \ceil{y} - y$).
  Set $\delta \coloneqq \ceil{y} + \floor{x}$.
  With the above it is straightforward to verify that $I \le -I+\delta < I+1$ where $I = -I+\delta$ if and only if both $x,y$ are integers.
  Then up to translation by integers, every $I$-free convex body is contained in $I \cup (-I + \delta) = [x, -x+\delta]$ or $(-I+\delta) \cup (I+1) = [-y+\delta,y+1]$ which are also $I$-free.
  Hence, $\flatness_1^\ZZ(I) = \max\{ \delta - 2x, 1+2y-\delta \}$.
  
  If $\ceil{x}-x < y - \floor{y}$, replace $I$ by $-I = [a,b] = [-y,-x]$.
  Note that $\ceil{a}-a = \ceil{-y}+y = y - \floor{y} > \ceil{x} - x = b - \floor{b}$, and we are back in the previous case.
  Clearly $\flatness_1^\ZZ(I) = \flatness_1^\ZZ(-I)$.
\end{proof}

\begin{remark}
  Note that in the situation of Theorem~\ref{thm:flt-dim1}, we have $\flatness_1^\RR(I) = y-x$.
  Indeed, the maximal $I$-free convex bodies are exactly the translates of $I$.
  
  Furthermore, Theorem~\ref{thm:flt-dim1} shows that in general $\flatness_1^\ZZ(n X) \neq n \flatness_1^\ZZ(X)$ ($n \in \NN$).
  Indeed, using Theorem~\ref{thm:flt-dim1}, we immediately get that $\flatness_1^\ZZ([0,\frac43]) = 3$ while $2 \cdot \flatness_1^\ZZ([0,\frac23])=4$.
  This answers the question in~\cite{AHN} whether $\flatness_d^\ZZ(\cdot)$ is linear with respect to positive dilations.
\end{remark}

This gives a complete answer in $1$ dimension for full-dimensional $X \subset \RR$.
Let us now turn to $2$ dimensions.
Recall from Section~\ref{sec:prelim} our general strategy: considering inclusion-maximal $A$-$\Delta_2$-free closed convex sets $C$, we obtain an upper bound for $\flatness_d^A(\Delta_2)$.
Furthermore, recall that since $\Delta_2$ is full-dimensional, it suffices to consider full-dimensional inclusion-maximal $A$-$\Delta_2$-free closed convex sets $C$ (see Lemma~\ref{lem:full-dim}).
Here, we will classify the unbounded $C$.
Their widths will turn out to be strictly smaller than the maximal width of the bounded $C$'s.
Hence, the maximal width of the bounded $C$'s will be then equal to the respective flatness constant.

\begin{proposition}\label{prop:unbounded}
	Let $A \in \{\RR, \ZZ\}$.
	Then up to $A$-unimodular transformations there exists exactly one unbounded inclusion-maximal $A$-$\Delta_2$-free closed convex set $C \subset \RR^2$ of dimension $2$, namely
	\begin{itemize}
	\item
    	if $A = \ZZ$, then $C = [-1,1] \times \RR$; and
	\item
    	if $A = \RR$, then $C = [0,1] \times \RR$.
	\end{itemize}
	In particular, the width is $2$ in the first and $1$ in the second case.
\end{proposition}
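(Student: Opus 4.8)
The plan is to classify unbounded, full-dimensional, inclusion-maximal $A$-$\Delta_2$-free closed convex sets $C \subset \RR^2$ by first pinning down the recession cone of $C$. Since $C$ is unbounded and convex of dimension $2$, its recession cone is nontrivial; I would argue it must be a line. Indeed, if the recession cone were $2$-dimensional (a pointed $2$-cone or a halfplane or all of $\RR^2$), then $C$ would contain arbitrarily large homothets of $\Delta_2$ in its interior, contradicting $A$-$\Delta_2$-freeness (note that containing an $A$-unimodular copy of $\Delta_2$ in the interior is what is forbidden, and a scaled copy of $\Delta_2$ with large scaling factor certainly admits an $\RR$-unimodular — indeed $\ZZ$-unimodular — sub-copy of $\Delta_2$ once the scaling exceeds a bound). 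Hence the recession cone is a single ray or a full line; but a convex set whose recession cone is a single ray and which is $\RR$-$X$-free can be shown not to be inclusion-maximal — one can thicken it in the missing recession direction — so the recession cone is a line $\RR v$.

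Next I would show $v$ must have rational slope. If $v$ had irrational slope, then $C$, being $2$-dimensional and invariant under translation by $\RR v$, contains a strip $\{p + tv + su : t \in \RR, s \in [0,\varepsilon]\}$ for some transversal direction $u$ and some $\varepsilon > 0$. A strip of positive width in an irrational direction has infinite width in every integral direction, so $\width_w(C) = \infty$ for all $w \in (\ZZ^2)^* \setminus \{0\}$; more to the point, such a strip contains an $A$-unimodular copy of $\Delta_2$ in its interior once $\varepsilon$ is fixed — because a $2$-dimensional strip of any positive width, being unbounded along an irrational line, eventually contains a translate (hence a $\ZZ$-unimodular copy, hence an $\RR$-unimodular copy) of any bounded set in its interior. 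This is precisely the argument alluded to in \Cref{rem:full-dim}. So $v$ spans a rational line, and after an $A$-unimodular change of coordinates we may assume $v = e_2$, i.e. $C$ is invariant under integer (indeed real) vertical translations, so $C = I \times \RR$ for some closed interval $I = [a,b] \subset \RR$ (here $C$ is $2$-dimensional forces $a < b$; $C$ closed forces $I$ closed; if $I$ were a halfline $C$ would have a $2$-dimensional recession cone, already excluded).

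It then remains to determine which intervals $I$ make $C = I \times \RR$ inclusion-maximal $A$-$\Delta_2$-free, and this reduces to a one-dimensional computation about when a vertical strip of width $b - a$ contains an $A$-unimodular copy of $\Delta_2$ in its interior. A copy $T(\Delta_2)$ is contained in the open strip $\strint(C) = (a,b) \times \RR$ iff the image of $\Delta_2$ under the composition of $T$ with the projection to the first coordinate lands in $(a,b)$; the projection of a $\ZZ$-unimodular copy of $\Delta_2$ to the first coordinate is, up to the translation part, the interval $[0, \max(|c_1|,|c_2|,|c_1-c_2|)]$ or similar for the relevant row $(c_1, c_2) \in \ZZ^2$ of the linear part — in any case the shortest possible horizontal extent of a $\ZZ$-unimodular copy of $\Delta_2$ is $1$ (attained when the projecting linear form is primitive and ``reads off'' a unit). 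So: for $A = \RR$, $\strint(C)$ contains an $\RR$-unimodular copy of $\Delta_2$ iff $b - a > 1$; hence $C$ is $\RR$-$\Delta_2$-free iff $b - a \le 1$, and inclusion-maximality forces $b - a = 1$, giving $C = [0,1] \times \RR$ up to $\RR$-unimodular transformation, of width $1$. For $A = \ZZ$ the translation part is integral, so one also needs a lattice point of $\strint(C)$ to serve as the image of the origin; the strip $(a,b) \times \RR$ contains an integer point iff it contains an integer first coordinate, and one checks the optimal $\ZZ$-$\Delta_2$-free maximal strip is $[-1,1] \times \RR$ (width $2$): a narrower strip can be translated/enlarged, a wider one contains $\conv(\mathbf 0, e_1, e_2) + (-1,0)$ or a lattice-translated $\ZZ$-unimodular copy in its interior. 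I expect the main obstacle to be the careful bookkeeping in this last reduction — precisely controlling the minimal horizontal width of a $\ZZ$-unimodular copy of $\Delta_2$ and verifying both maximality and the non-existence of interior copies at the threshold — together with rigorously ruling out the single-ray recession cone case (showing such a set is never inclusion-maximal, which needs \Cref{cor:incl-max-contain-X} or a direct thickening argument).
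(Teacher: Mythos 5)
Your overall plan (recession cone, rationality, reduce to vertical strips, one-dimensional endgame) parallels the paper's, but two load-bearing steps are not justified. The serious one is the rationality step: your reason — that a $2$-dimensional strip of \emph{any} positive width in an irrational direction ``eventually contains a translate (hence a $\ZZ$-unimodular copy) of any bounded set in its interior'' — is false. A strip of Euclidean width $\varepsilon$ contains no translate of $\Delta_2$ once $\varepsilon$ is below the minimal width of $\Delta_2$ (which is $1/\sqrt{2}$), however long the strip is; so your ``translate, hence unimodular copy'' deduction gives nothing precisely in the regime that matters, namely thin irrational strips, which a priori could be inclusion-maximal and free. The correct argument, and the one \Cref{rem:full-dim} actually points to, is lattice-theoretic: since the slope $m$ is irrational, $\ZZ+m\ZZ$ is dense (Kronecker), so the open strip contains infinitely many lattice points, not all collinear; three non-collinear ones span a lattice triangle inside the interior, and triangulating it into empty (hence unimodular) lattice triangles yields a $\ZZ$-unimodular copy of $\Delta_2$ in the interior. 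Your write-up never produces these lattice points, which is the key idea of this step in the paper.

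The second gap is the elimination of the single-ray recession cone, which your argument genuinely needs in order to write $C=I\times\RR$. ``One can thicken it in the missing recession direction'' is not a proof: it is not evident that $C+(-\rho)$ (or any other enlargement) is again $A$-$\Delta_2$-free, and \Cref{cor:incl-max-contain-X}, which you invoke as a fallback, is stated for convex \emph{bodies} and does not apply to unbounded sets. The claim is true, but an honest proof essentially shows first that a free set whose recession cone contains a (rational, say vertical) ray has all horizontal slices of length at most $1$ (resp.\ with at most one interior lattice column), hence is contained in a vertical strip — i.e., it reproduces the substance of the main argument. The paper sidesteps the dichotomy entirely: it never upgrades the tail cone to a line, but only uses that it lies on a rational line, projects $C$ to the $x$-axis, observes that over every point of the projection there is a vertical ray of $C$, bounds the length of the projected interval ($\le 2$ for $A=\ZZ$, $\le 1$ for $A=\RR$), and then concludes $C$ equals the standard strip from inclusion-maximality, since that strip is free and contains $C$. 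Reorganizing your proof along these lines removes both problematic steps; your endgame (minimal horizontal extent of a unimodular copy of $\Delta_2$ is $1$, plus integrality of the translation when $A=\ZZ$) agrees with the paper and is fine.
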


For the proof of the previous statement, we need to recall the definition of the \emph{tail cone} (or \emph{recession cone}) of a closed convex set $A \subset \RR^d$:
\[
    \mathrm{tail}(A) \coloneqq \setcond{v \in \RR^d}{x+\lambda v \in A \, \text{for all} \, x \in A \, \text{and} \, \lambda\ge0} \text{.}
\]

\begin{proof}
	Since $C \subset \RR^2$ is an unbounded closed convex set, it follows by~\cite[Theorem~8.4]{Rockafellar} that $\mathrm{tail}(C) \neq \{\mathbf{0}\}$.
	Then $\mathrm{tail}(C)$ is a $1$-dimensional closed convex cone.
	Hence $\mathrm{tail}(C)$ lies on a line $y = m\cdot x$ for a real number $m \in \RR$.
	We claim $m \in \QQ$ is rational.
	
	Assume towards a contradiction that $m \in \RR \setminus \QQ$ were irrational.
	Since $C$ is full-dimensional, it contains a small affine ball $v + \varepsilon B^2$ for $v \in \RR^2$ and $\varepsilon >0$.
	Indeed, by approximating $v$ with a rational point and decreasing $\varepsilon$ (if necessary), we may assume that $v \in \QQ^2$.
	Then $v + \varepsilon B^2 + \mathrm{tail}(C)$ is contained in $C$ where $v + \mathrm{tail}(C)$ lies on an affine line parallel to $y = m \cdot x$, say $\set{y = m \cdot x + c} \subset \RR^2$ for $c \in \RR$.
	Note $(v + \mathrm{tail}(C)) \cap \QQ^2 = \set{v}$ as otherwise $m$ would be a rational number (a contradiction).
	By Kronecker's theorem~\cite{Kronecker84, Kronecker85} (see also~\cite[Theorem~438]{HardyWright} or~\cite[Theorem~1]{HlawkaEtAl}), for every $\delta, N > 0$ there exist integers $x_\delta, y_\delta \in \ZZ$ with $|x_\delta| \ge N$ (where both $x_\delta >0$ and $x_\delta <0$ can be chosen) such that $\mleft| m \cdot x_\delta + c - y_\delta \mright|<\delta$, i.e., $(x_\delta, y_\delta)$ comes arbitrarily close to $v + \mathrm{tail}(C)$.

	Let $x, y, z \in \ZZ^2$ be three lattice points that were chosen so that $y$ lies strictly closer to $v+\mathrm{tail}(C)$ than $x$ while $z$ lies closer to $v+\mathrm{tail}(C)$ than $y$.
	We may assume that $x,y,z$ lie in $v+\varepsilon B^2 + \mathrm{tail}(C)$, and thus in the interior of $C$.
	It follows that $\conv(x,y,z)$ is a lattice triangle contained in the interior of $C$.
	Triangulating this triangle into empty simplices yields a $\ZZ$-unimodular copy of $\Delta_2$ that is contained in the interior of $C$.
	A contradiction.
	Note that this solves both cases $A = \RR$ and $A = \ZZ$.

	Hence $m \in \QQ$ is rational, and thus up to a $\ZZ$-unimodular transformation, we have that $v + \mathrm{tail}(C)$ lies on the line given by $x = c$ for some $c \in \RR$.
	Consider the projection $\pi \colon \RR^2 \to \RR; (a,b) \mapsto a$.
	Since $C$ is convex, the closure of the image $\pi(C)$ is an interval, say $I = [r,s]$ for $r, s\in \RR$.
	Note the tail cone $\mathrm{tail}(C)$ ensures that for any $u$ in $I$ there exists an affine ray $w + \mathrm{tail}(C) \subset C$ that projects down to $u$.
	
	Suppose $A = \ZZ$.
	Then $s-r\le2$, as otherwise $I$ would contain two integers in its interior which would imply that $C$ contains a $\ZZ$-unimodular copy of $\Delta_2$ in its interior.
	It straightforwardly follows that (up to a translation by a lattice point) $C = [-1,1]\times \RR$.
	
	Suppose $A = \RR$.
	Then $s-r \le 1$, as otherwise $I$ would contain an $\RR$-translate of the interval $[0,1]$ in its interior which would imply that $C$ contains an $\RR$-unimodular copy of $\Delta_2$ in its interior.
	Then it is easy to see that (up to a real translation) $C = [0,1]\times \RR$.
\end{proof}
It remains to study the bounded cases.
The remaining sections of this manuscript will be concerned with this study.


\section{\texorpdfstring{The $\ZZ$-flatness constant of $\Delta_2$}{The Z-flatness constant of Delta-2}}\label{sec:Z_flatness_dim2}

Let $K \subset \RR^d$ be a convex body and let $X \subset \RR^d$ be an arbitrary bounded set.
By \Cref{prop:A_Z_X_poly_K_poly}, inclusion-maximal $\ZZ$-$X$-free bodies are polytopes (a stronger version of \Cref{thm_incl_max} for the case $A=\ZZ$).
The following definition is the key to characterising inclusion-maximal $\ZZ$-$\Delta_d$-free polytopes.
\begin{definition}
    A facet $F$ of a full-dimensional polytope $P \subset \RR^d$ is said to be $\ZZ$-$\Delta_d$-\emph{locked} if there exists a $\ZZ$-unimodular copy $T$ of $\Delta_d$ contained in $P$ such that $T\cap \relint(F) \neq \emptyset$ and $V(T)\setminus \relint(F) \subset \strint(P)$, where $V(T)$ denotes the set of vertices of $T$.
    Notice that $T$ gives rise to lattice points in the relative interior of $F$, namely $V(T)\cap\relint(F)$.
    Such lattice points are called \emph{locking points}.
\end{definition}

We will say simply \emph{locked} instead of $\ZZ$-$\Delta_d$-locked wherever it is clear from the context that we are discussing $\ZZ$-$\Delta_d$-flatness.
See \Cref{fig:locked_facets} for an illustration of the concepts of ``locked facet'' and ``locking point''.
\begin{figure}[!ht]
    \begin{tikzpicture}

        \fill[very thick,draw=blue,fill=blue!30] (-1,1.25) -- (2,.5) -- (2,-.75)-- (-1,-1.125) -- cycle;

        \draw (-1,.2) node [left] {$F_1$};
        \draw (1,1.1) node [left] {$F_2$};
        \draw (2,.1) node [right] {$F_3$};
        \draw (.5,-1) node [below] {$F_4$};

        \draw[red, dashed, thick] (-1,1)--(-1,0)--(0,0)--cycle;
        \draw[red, thick, dashed] (0,1)--(1,0)--(0,0)--cycle;
        \draw[red, thick, dashed] (0,-1)--(1,0)--(0,0)--cycle; 

        \foreach \y in {-1,...,1}
            \foreach \x in {-1,...,2}
                \fill (\x,\y) circle (2pt);
    \end{tikzpicture}
    \caption{Facets $F_1, F_2$ and $F_4$ of this $\ZZ$-$\Delta_2$-free polygon are locked, while $F_3$ is not. Dashed in red are $\ZZ$-unimodular copies of $\Delta_2$ that are locking the respective facets.
    Note the polygon is indeed $\ZZ$-$\Delta_2$-free since all interior lattice points are collinear.\label{fig:locked_facets}
}
\end{figure}
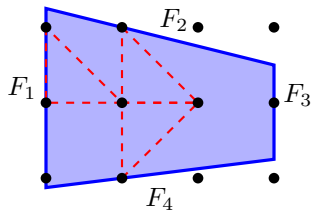
The definition of locked facet is crafted so that if a point is added beyond any locked facet, the resulting polytope is no longer a $\ZZ$-$\Delta_d$-free polytope.
Recall that a point $x$ is \emph{beyond} the facet $F$ of a full-dimensional polytope $P$ if it lies in the half-space which is defined by the supporting hyperplane of $F$ and which does not contain $\strint(P)$.
Furthermore, $x$ is \emph{beneath} $F$ if it lies in the same half-space as $P$.

The following proposition shows that being inclusion-maximal among $\ZZ$-$\Delta_d$-free polytopes is equivalent to all facets being locked. 
\begin{proposition}\label{prop:locked_incl_max_delta2_free}
    Let $P \subset \RR^d$ be a $\ZZ$-$\Delta_d$-free polytope.
    Then, $P$ is inclusion-maximal if and only if all its facets are locked.
\end{proposition}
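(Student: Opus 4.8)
The plan is to prove the two implications separately, with the easier direction being ``all facets locked $\Rightarrow$ inclusion-maximal'' and the harder direction being the contrapositive of its converse. For the first direction, suppose every facet of $P$ is locked but $P$ is not inclusion-maximal; then there is a $\ZZ$-$\Delta_d$-free polytope $P' \supsetneq P$, and since $P$ is a polytope, we may pick a point $x \in P' \setminus P$ that is beyond some facet $F$ of $P$ (and beneath all others). Because $F$ is locked, there is a $\ZZ$-unimodular copy $T$ of $\Delta_d$ inside $P$ with $T \cap \relint(F) \neq \emptyset$ and all vertices of $T$ not on $\relint(F)$ lying in $\strint(P) \subseteq \strint(P')$. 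I would then argue that replacing the locking point(s) $V(T)\cap\relint(F)$ — which lie on $\partial P$ but in $\strint(P')$ since $x$ was added beyond $F$ — by a slightly perturbed simplex (pushing the locking vertices a tiny bit into the half-space containing $x$) produces a $\ZZ$-unimodular copy of $\Delta_d$ in $\strint(P')$. The technical point is that ``beyond $F$'' makes a full neighbourhood of the relative interior of $F$ (intersected with the $F$-side) available inside $P'$; one uses convexity of $P'$ together with $x \in P'$ to get an open set on the far side of $F$ contained in $\strint(P')$, and then applies \Cref{lem:enlarge} or a direct perturbation argument to move $T$ off $\partial P$ while staying in $\strint(P')$. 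This contradicts $P'$ being $\ZZ$-$\Delta_d$-free.

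For the converse, suppose $P$ is inclusion-maximal $\ZZ$-$\Delta_d$-free but some facet $F$ is not locked. The goal is to add a point beyond $F$ and obtain a strictly larger $\ZZ$-$\Delta_d$-free polytope, contradicting maximality. Pick $x$ slightly beyond $F$ and set $P' = \conv(P \cup \{x\})$. I need to show $P'$ is $\ZZ$-$\Delta_d$-free, i.e. $\strint(P')$ contains no $\ZZ$-unimodular copy of $\Delta_d$. Since $P$ itself is $\ZZ$-$\Delta_d$-free, any such copy $T \subset \strint(P')$ must use the newly available region — in particular $T$ cannot be contained in $\strint(P)$, so at least one vertex of $T$ lies in $\strint(P') \setminus \strint(P)$, which (for $x$ close enough to $F$) is a thin sliver adjacent to $\relint(F)$. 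By continuity/compactness, as $x \to F$ this forces a limiting $\ZZ$-unimodular copy $T_0$ of $\Delta_d$ with $T_0 \subseteq P$, $T_0 \cap \relint(F) \neq \emptyset$, and $V(T_0) \setminus \relint(F) \subset \strint(P)$ — precisely the locking configuration. Here the discreteness of $\ZZ^d$ (only finitely many $\ZZ$-unimodular copies up to the relevant translations, in the spirit of \Cref{lem_TT_K_X_sim_finite}) is what lets one pass to a limit with an honest lattice simplex rather than just a convex body. This contradicts the assumption that $F$ is not locked.

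The main obstacle I anticipate is the converse direction's limiting argument: one must be careful that the ``sliver'' beyond $F$ shrinks to $\relint(F)$ (not to a boundary point of $F$ or to a lower face), so that the limiting simplex genuinely meets the \emph{relative} interior of $F$ and not just $F$; and one must ensure that vertices of the limiting simplex that are not on $\relint(F)$ end up in the \emph{open} interior of $P$ rather than on $\partial P$, which is where the definition of ``locked'' is delicate. Choosing $x$ along the inner normal direction to $F$ and tracking which vertices of the approximating simplices converge to $\relint(F)$ versus which stay bounded away from $F$ handles this, using that a $\ZZ$-unimodular copy of $\Delta_d$ has a definite minimal ``thickness'' so it cannot degenerate into $F$. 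A clean way to organise the whole argument is to first establish, as a lemma, that for $x$ beyond $F$ and sufficiently close to $F$, every $\ZZ$-unimodular copy of $\Delta_d$ in $\strint(\conv(P\cup\{x\}))$ that is not already in $\strint(P)$ yields (after translating slightly away from $x$) a witness to $F$ being locked; the proposition then follows immediately in both directions.
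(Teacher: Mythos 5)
Your two-direction structure matches the paper's, and both directions can be made to work, but two points deserve comment. First, in the direction ``all facets locked $\Rightarrow$ maximal'', the perturbation you propose is unnecessary and, as literally stated, incorrect: a $\ZZ$-unimodular copy of $\Delta_d$ has its vertices in $\ZZ^d$, so ``pushing the locking vertices a tiny bit'' off the lattice no longer yields a $\ZZ$-unimodular copy. Fortunately your own preceding observation already finishes the argument: once $x$ is beyond $F$, the whole of $\relint(F)$ lies in $\strint(\conv(P,x))$, so every vertex of the locking simplex $T$ lies in the open convex set $\strint(\conv(P,x))$ (those in $\relint(F)$ for this reason, the others because $\strint(P)\subset\strint(\conv(P,x))$), whence $T$ itself lies in the interior --- which is exactly the paper's one-line proof. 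Delete the perturbation and this direction is complete.

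For the converse your route is genuinely different from the paper's. The paper pushes the non-locked facet $F$ outward so slightly that no new lattice points are captured and then verifies directly, by a two-case analysis on whether a copy $T\subset P$ meets $\relint(F)$, that the enlarged polytope is still $\ZZ$-$\Delta_d$-free, contradicting maximality. You instead argue along a sequence $x_n$ beyond $F$ tending to $F$ (best taken on a fixed outer normal ray through a point of $\relint(F)$, so that the polytopes $\conv(P,x_n)$ decrease to $P$): if none of them is free, finiteness of the $\ZZ$-unimodular copies of $\Delta_d$ in a bounded region lets you fix one lattice simplex $T_0$ contained in $\strint(\conv(P,x_n))$ for all $n$, hence $T_0\subset\bigcap_n\conv(P,x_n)=P$; and since the supporting hyperplanes of all facets other than $F$ still support $\conv(P,x_n)$, $T_0$ can meet $\partial P$ only inside $\relint(F)$, while its vertices off $\relint(F)$ must lie in $\strint(P)$ --- the locking configuration, contradicting the hypothesis. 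This is valid and, for $A=\ZZ$, cleaner than you feared: no degeneration or genuine limit occurs because the candidate simplices form a finite set of honest lattice simplices, so the delicate ``tracking'' you anticipate reduces to the single observation about which parts of $\partial P$ remain on the boundary after enlarging beyond $F$; make that observation explicit in a full write-up. The paper's version buys freedom from sequences and from the finiteness lemma; yours replaces the case analysis by that one boundary observation and gets $T_0\subset P$ for free from the nested intersection rather than from a ``no new lattice points'' choice.
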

\begin{proof}
    Suppose that all facets of $P$ are locked.
    Let $F$ be a facet of $P$ and $x \in \RR^d \setminus P$ be a point beyond $F$.
    Since $F$ is locked, there exists a $\ZZ$-unimodular copy $T$ of $\Delta_d$ contained in $P$ such that $T\cap \relint(F) \neq\emptyset$ and $V(T)\setminus \relint(F) \subset \strint(P)$.
    Let $Q=\conv(P,x)$.
    The relative interior of the facet $F$ lies in the interior of Q, and thus so does $T$. Hence $P$ is inclusion-maximal with respect to the property of being $\ZZ$-$\Delta_d$-free.
    
    We prove the reverse implication by verifying the contrapositive, i.e., if there exists a facet of $P$ that is not locked, then $P$ isn't inclusion-maximal with respect to the property of being $\ZZ$-$\Delta_d$-free.
    Let $F$ be a facet of $P$ that is not locked and let $Q$ be the polytope obtained from $P$ by moving the facet $F$ outwards by a small amount, such that $P \subsetneq Q$ but no new lattice points are captured, that is, $P \cap \ZZ^d = Q \cap \ZZ^d$.
    In particular, the set of $\ZZ$-unimodular copies of $\Delta_d$ which are contained in $P$ coincides with the set of such copies that are contained in $Q$.
    Note however that any lattice point in the relative interior of the facet $F$ of $P$ are in the interior of $Q$.
    From the assumption that $F$ is not locked we will now deduce that $Q$ is also  a $\ZZ$-$\Delta_d$-free polytope, and thus $P$ is not inclusion-maximal. 
    
    Let $H$ be the supporting hyperplane that defines the facet $F$ of $P$, and let $H_{\ge0}$ (resp.~$H_{>0}$) be the closed half-space (resp.~open half-space) with boundary equal to $H$ such that $P \subset H_{\ge0}$.
    Notice $Q \cap H_{\ge0} = P$.
    It remains to show that any $\ZZ$-unimodular copy $T$ of $\Delta_d$ that is contained in $Q$ isn't contained in the interior of $Q$.
    Recall from above that $T$ is also contained in $P$.
    We distinguish two cases:
    \begin{itemize}
        \item 
            If $T \cap \relint(F) = \emptyset$, then $T \subset Q \cap H_{>0} = P\setminus F$.
            Since $Q \cap H_{\ge0} = P$ and $T \not \subset \strint(P)$, it straightforwardly follows that $T \not \subset \strint(Q)$.
        \item
            If $T \cap \relint(F) \neq \emptyset$, then $V(T) \setminus \relint(F) \not \subset \strint(P)$, i.e., there is another facet $F'$ of $P$ that contains a vertex of $T$.
        Since $F'$ is contained in a facet of $Q$, it follows that $T \not\subset \strint(Q)$.\qedhere
    \end{itemize}
\end{proof}

We now focus on dimension $d=2$, where our goal is to show the following theorem.
\begin{theorem}[Case $A=\ZZ$ of \Cref{thm:main}]\label{theorem:flt_Z_D2}
$\flatness_2^\ZZ(\Delta_2) = \frac{10}{3}$.
\end{theorem}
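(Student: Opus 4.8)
The plan is to combine the general reduction machinery from \Cref{sec:prelim} with the structural result \Cref{prop:locked_incl_max_delta2_free}. By \Cref{lem:incl-max}, \Cref{prop:unbounded}, and \Cref{lem:full-dim}, the constant $\flatness_2^\ZZ(\Delta_2)$ equals the supremum of $\width(P)$ over full-dimensional inclusion-maximal $\ZZ$-$\Delta_2$-free polygons $P$ (the unbounded maximal case contributes only width $2$, which is below the target). So everything reduces to: among all polygons $P$ all of whose facets are locked (in the sense of the preceding definition), the maximal width is $\tfrac{10}{3}$, achieved by the triangle $T_0 = \conv(-2e_1+2e_2,\ \tfrac43 e_1+\tfrac13 e_2,\ -\tfrac13 e_1-\tfrac43 e_2)$.

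For the lower bound I would simply exhibit $T_0$: check it is $\ZZ$-$\Delta_2$-free (its only interior lattice point is the origin, hence it contains no $\ZZ$-unimodular triangle of area $\tfrac12$ in its interior — a $\ZZ$-unimodular copy of $\Delta_2$ has three vertices in $\ZZ^2$ and positive area, so needs at least two affinely independent lattice points available in the interior), verify each of its three edges is locked by writing down an explicit locking copy of $\Delta_2$, and compute $\width(T_0) = \tfrac{10}{3}$ by checking the width along the relevant primitive directions (a finite, if slightly tedious, computation; the width-minimising functional can be bounded using that $T_0$ contains the origin and has the given vertices). This establishes $\flatness_2^\ZZ(\Delta_2) \ge \tfrac{10}{3}$, and combined with \Cref{prop:equal-flt-case} gives the sharpness claim from the introduction.

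For the upper bound — the substantive part — the strategy is a finite case analysis of inclusion-maximal $\ZZ$-$\Delta_2$-free polygons $P$ driven by the set $\Lambda$ of interior lattice points of $P$. Since $P$ is $\ZZ$-$\Delta_2$-free, $\Lambda$ cannot contain two affinely independent lattice points together with a third forming an empty unimodular triangle strictly inside $P$; a short argument shows $\Lambda$ must be contained in a line, i.e. $P$ is a hollow-like polygon whose interior lattice points are collinear (possibly $|\Lambda|\le 1$). One then normalises by a $\ZZ$-unimodular transformation so that $\Lambda \subset \{x_2 = 0\}$, and distinguishes cases on $|\Lambda| \in \{0,1,\ge 2\}$ and on how many lattice points $P$ has on that horizontal line inside its interior. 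In each case, the locked-facet condition from \Cref{prop:locked_incl_max_delta2_free} forces strong constraints: each edge must carry a lattice point in its relative interior that is the apex of a unimodular triangle with the other two vertices pushed into $\strint(P)$, which pins down the edges' supporting lines up to finitely many discrete choices and bounded continuous parameters. Maximising $\width$ over this bounded family — using that width is attained along one of finitely many primitive functionals determined by the combinatorics — reduces to a small optimisation, whose maximum is $\tfrac{10}{3}$, uniquely attained at $T_0$.

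The main obstacle I anticipate is controlling the case analysis so it is genuinely finite and exhaustive: one must argue that an inclusion-maximal $\ZZ$-$\Delta_2$-free polygon has only boundedly many vertices and that, after normalisation, only finitely many combinatorial types of edge-configurations satisfy the locked condition for all facets simultaneously — in particular ruling out, for each candidate edge slope, all but finitely many lattice apex positions, and then checking that the remaining one- or two-parameter families never exceed width $\tfrac{10}{3}$. The locked-facet criterion is exactly the tool that makes this tractable: it converts "inclusion-maximal" into a local, per-edge, lattice-point condition, so the proof becomes a bookkeeping exercise over a finite list of normalised configurations (the kind of enumeration the authors note was assisted by Magma/polymake/SymPy), with the width computation in each configuration being elementary linear algebra.
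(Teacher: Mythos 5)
Your route is essentially the paper's own: reduce via \Cref{lem:flt_coincides_with_free}, \Cref{lem:incl-max}, \Cref{lem:full-dim} and \Cref{prop:unbounded} to full-dimensional inclusion-maximal $\ZZ$-$\Delta_2$-free polygons (\Cref{prop:A_Z_X_poly_K_poly}), replace inclusion-maximality by the locked-facet criterion (\Cref{prop:locked_incl_max_delta2_free}), use collinearity of the interior lattice points to normalise, and finish with a finite, partly computer-assisted optimisation. The paper organises this exactly as you propose: exactly one interior lattice point gives width at most $3$ (\Cref{prop:one_interior_point}), and at least two interior points give width at most $\frac{10}{3}$ with a unique maximiser (\Cref{thm:2-or-more-int-pts}), the latter case splitting into triangles and quadrilaterals circumscribed around a rectangle or a cross-polygon, treated with a Hurkens-style parametrisation. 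The genuinely hard content---showing that width $>3$ leaves only finitely many admissible locking configurations and carrying out the optimisations---is what you defer, and it is the bulk of \Cref{sec:Z_flatness_dim2}.

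There is, however, a concrete error in your lower-bound verification: $T_0=\conv(-2e_1+2e_2,\frac43 e_1+\frac13 e_2,-\frac13 e_1-\frac43 e_2)$ does \emph{not} have the origin as its only interior lattice point; its interior lattice points are $(0,0)$ and $(-1,1)$ (it is a unimodular image of the maximiser in \Cref{thm:2-or-more-int-pts}, which has exactly two). Freeness still holds, but for the correct reason: the interior lattice points are collinear, so no $\ZZ$-unimodular triangle can have all three (non-collinear) vertices in $\strint(T_0)$---not because there is at most one such point. The miscount would also sink your locking check: the edges of $T_0$ on the lines $x+2y=2$ and $2x+y=-2$ each contain exactly one lattice point in their relative interior, so a locking copy of $\Delta_2$ for them needs \emph{two} vertices in $\strint(T_0)$; with a single interior point these facets could not be locked, and indeed by \Cref{prop:one_interior_point} a maximal free polygon with one interior lattice point has width at most $3<\frac{10}{3}$, so your picture of the maximiser is internally inconsistent. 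Relatedly, your formulation of the locked condition (one lattice ``apex'' in $\relint(F)$, two vertices interior) covers only half of the definition: the locking triangle may instead meet $\relint(F)$ in an edge, i.e.\ two locking points with a single interior vertex, and this configuration is indispensable both in the one-interior-point case and for the third facet of $T_0$. Finally, note the lower bound needs no maximality at all: by \Cref{lem:flt_coincides_with_free} it suffices that $T_0$ is $\ZZ$-$\Delta_2$-free of width $\frac{10}{3}$.
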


By \Cref{lem:incl-max}, to prove the theorem it is enough to show that any inclusion-maximal $\ZZ$-$\Delta_2$-free convex set has width at most $\frac{10}{3}$ and to provide an example of a $\ZZ$-$\Delta_2$-free polygon of that width.
Unbounded full-dimensional inclusion-maximal $\ZZ$-$\Delta_2$-free convex sets were studied in \Cref{prop:unbounded} and have width $1$.
Thus we are left with determining the maximum width of inclusion-maximal $\ZZ$-$\Delta_2$-free convex bodies, which are polygons by \Cref{thm_incl_max}.
The rest of the section is devoted to proving that this width is $\frac{10}{3}$. 

Note \Cref{prop:locked_incl_max_delta2_free} guarantees that any inclusion-maximal $\ZZ$-$\Delta_2$-free polygon contains at least one interior lattice point, since otherwise it is impossible for its facets to be locked. 
The following proposition deals with (inclusion-)maximal $\ZZ$-$\Delta_2$-free polygons containing exactly one interior lattice point.
The case of polygons whose interior contains at least two lattice points will be treated afterwards. 

\begin{proposition}\label{prop:one_interior_point}
    If $P\subset \RR^2$ is a maximal $\ZZ$-$\Delta_2$-free polygon with $|\strint(P)\cap\ZZ^2|=1$, then $\width(P)\le 3$.
\end{proposition}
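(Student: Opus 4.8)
The plan is to use the fact that a maximal $\ZZ$-$\Delta_2$-free polygon $P$ has all facets locked (\Cref{prop:locked_incl_max_delta2_free}), together with the normalisation that $\strint(P)\cap\ZZ^2 = \{\mathbf{0}\}$. I would first apply a $\GL_2(\ZZ)$ transformation fixing the origin to put $P$ into a convenient position. Since every facet of $P$ is locked, every facet $F$ contains a locking point $p_F \in \relint(F)\cap\ZZ^2$, and there is a $\ZZ$-unimodular copy $T_F$ of $\Delta_2$ inside $P$ with one vertex at $p_F$ and its other two vertices in $\strint(P)\cap\ZZ^2 = \{\mathbf 0\}$. But a copy of $\Delta_2$ has three distinct vertices, so having \emph{two} of them forced into the singleton $\{\mathbf 0\}$ is impossible — hence the locking simplex $T_F$ must actually use two lattice points on the boundary of $P$ (on $\relint(F)$, or on $F$ together with adjacent facets) and only one interior lattice point, namely $\mathbf 0$. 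So I would reinterpret the locked condition here: each locked facet $F$ carries lattice points $q_1, q_2 \in F$ with $\conv(q_1, q_2, \mathbf 0)$ a unimodular triangle contained in $P$ and meeting $\relint(F)$. This pins down the lattice geometry of the boundary of $P$ rather tightly.

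Next I would extract constraints on the supporting line of each facet. If $F$ is a locked facet with defining inequality $\langle u_F, x\rangle \le h_F$ for a primitive $u_F \in (\ZZ^2)^*$, then the presence of the unimodular triangle $\conv(q_1, q_2, \mathbf 0) \subset P$ with $q_1, q_2 \in F$ forces $h_F = \langle u_F, q_1\rangle = \langle u_F, q_2 \rangle$ to be a specific small positive integer (the lattice distance from $\mathbf 0$ to the line of $F$ is exactly $h_F$, and unimodularity of the triangle forces this to equal $1$ in many cases, or to be controlled). Meanwhile $\mathbf 0$ being the \emph{only} interior lattice point means that no facet line can be at lattice distance $\ge 2$ from $\mathbf 0$ on the "far" side in a way that creates room for a second interior lattice point; combined with $P$ being $\ZZ$-$\Delta_2$-free (so it cannot contain a unimodular triangle in its interior, equivalently all its interior lattice points — here just $\mathbf 0$ — impose that any lattice triangle inside $P$ must touch $\partial P$), I get that $\mathbf 0$ lies at lattice distance exactly $1$ from every facet, i.e. $P \subseteq \{x : \langle u_F, x\rangle \le 1\ \forall F\}$ but also each facet is "tight." This is essentially the statement that $P$ is contained in the polar-type region determined by its facet normals, and I would enumerate the possible facet normal configurations: since $\mathbf 0$ is the unique interior lattice point and $P$ is $2$-dimensional, the facet normals $u_F$ together with the lattice-width-$1$ condition force $P$ to be one of finitely many combinatorial types (analogous to the classification of reflexive-like polygons with one interior point, but without requiring vertices to be lattice points).

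From there, for each combinatorial type I would bound the width directly. The width of $P$ is $\min_{u \in (\ZZ^2)^*\setminus\{0\}} \width_u(P)$, and it suffices to exhibit \emph{one} direction $u$ with $\width_u(P) \le 3$ for each type. The natural candidates are the facet normals $u_F$ themselves: if a facet $F$ has normal $u_F$ with $\langle u_F, \cdot\rangle \le 1$ on $P$, then on the opposite side $\langle u_F, \cdot\rangle \ge -c_F$ where $-c_F$ is the minimum, and I must show $1 + c_F \le 3$, i.e. $c_F \le 2$. The value $c_F$ is controlled by the locked facet (or pair of locked facets) on the far side of $P$ relative to $u_F$: their locking simplices again force the relevant lattice points to be within bounded lattice distance, giving $c_F \le 2$. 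The main obstacle, and where I expect the real work to be, is the bookkeeping when $P$ has many facets (a priori up to a bounded number, say at most $6$ since it has one interior lattice point) — ruling out long "thin" configurations where no single facet normal gives a good width bound and one must instead use a cleverly chosen primitive direction. I would handle this by a case distinction on the number of vertices/facets of $P$, treating the triangle and quadrilateral cases explicitly (they should give width exactly up to $3$, with the extreme realised by something like $\conv(-e_1-e_2, 2e_1-e_2, -e_1+2e_2)$ scaled appropriately) and using a monotonicity/convexity argument to reduce polygons with more facets to these. Throughout, \Cref{lem:enlarge} and \Cref{cor:incl-max-contain-X} guarantee the locking simplex actually exists, which is what makes all the lattice-distance constraints bite.
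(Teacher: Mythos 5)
Your starting point is the right one (\Cref{prop:locked_incl_max_delta2_free} plus the uniqueness of the interior lattice point), but you then weaken the key structural fact. By the definition of a locked facet, every vertex of the locking triangle that is \emph{not} in $\relint(F)$ must lie in $\strint(P)$; since $\strint(P)$ contains a single lattice point, exactly two vertices of the locking triangle lie in $\relint(F)$ and the third is the unique interior point. Your ``reinterpretation'' only asks for two lattice points on the closed facet $F$ (possibly at vertices of $P$ or shared with adjacent facets), which is not what the definition gives and is strictly weaker: the paper's argument hinges on every facet containing \emph{two lattice points in its relative interior}, because after normalising one locking triangle to $\conv(\mathbf{0},e_1,e_2)$ (facet on $\{x+y=1\}$, interior point $\mathbf{0}$) and excluding the explicit regions $C_1,C_2,C_3$ and their reflections, only five candidate lattice points remain off that facet, and the two-points-per-facet requirement then forces $P=\conv(-e_1+2e_2,-e_1-e_2,2e_1-e_2)$, of width $3$. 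With your weaker condition this enumeration does not close up.

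The larger gap is that the quantitative core of the proposition is never carried out. Your claim that for a facet normal $u_F$ one gets $c_F\le 2$ ``controlled by the locking simplices on the far side'' is unjustified, and it is false facet-by-facet: the far facets are at lattice distance $1$ from the origin with respect to \emph{their own} normals, which says nothing about their extent in the direction $-u_F$ (a thin trapezoid with facet lines $x=-1$, $y=\pm 1$, $x+by=1$ for large $|b|$ has all facet lines at lattice distance $1$ from $\mathbf{0}$ but $c_F$ arbitrarily large for $u_F=e_1^*$; there the width bound comes from a different direction). You acknowledge exactly this obstruction (``long thin configurations'') and defer it, together with the unproven assertions that there are only finitely many combinatorial types and at most six facets; but that deferred bookkeeping \emph{is} the proof of the proposition. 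As it stands the proposal is a plausible plan whose decisive steps — the reduction to finitely many configurations and the verification that each has width at most $3$ — are asserted rather than proved, whereas the paper completes them in a few lines precisely because it retains the stronger relative-interior form of the locking condition.
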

\begin{proof}
    Let $P$ be an inclusion-maximal $\ZZ$-$\Delta_2$-free polygon with exactly one interior lattice point.
    By \Cref{prop:locked_incl_max_delta2_free}, each facet $F$ of $P$ is locked.
    Since there's a unique lattice point in the interior of $P$, for each facet $F$ there is a $\ZZ$-unimodular copy $T$ of $\Delta_2$ contained in $P$ such that two vertices of $T$ are contained in the relative interior of $F$ and the third vertex of $T$ is in the interior of $P$.
    Up to an appropriate unimodular transformation, we may assume that there is a facet for which $T = \conv\mleft(\mathbf{0},e_1,e_2\mright)$, and that $\mathbf{0}$ is the interior lattice point of $P$.
    By using the fact that $\mathbf{0}$ is the only interior lattice point of $P$, we can conclude that $P$ is disjoint from the following regions (see \Cref{fig:one_int_lattice_pt}):
    \begin{itemize}
    \item[$C_1$]
    	The affine cone $(-e_1+e_2)+\cone(-e_1,-e_1+e_2)$ minus its apex $-e_1+e_2$ (otherwise $-e_1+e_2$ would be contained in the interior of $P$.
        Note that we include the open half-rays of the affine cone into $C_1$ because $\mathbf{0}$ is in the interior of $P$ and $e_2$ is in the relative interior of $F$);
    \item[$C_2$]
    	The affine cone $-e_1 + \cone(-e_1,-e_1-e_2)$ minus its apex $-e_1$ (otherwise $-e_1$ would be contained in the interior of $P$);
    \item[$C_3$]
    	The affine cone $(-e_1-e_2) + \cone(-2e_1-e_2,-e_1-2e_2)$ minus its apex $(-e_1-e_2)$ (otherwise $-e_1-e_2$ would be contained in the interior of $P$).
	\end{itemize}

    Let $C'_i$ be the region obtained from $C_i$ by reflecting along the line $\RR(e_1+e_2)$.
    Notice $C_3=C'_3$.
    \begin{figure}[!ht]
        \begin{tikzpicture}
            \draw[dashed] (-2.5,3.5) -- (3.5,-2.5);
            \foreach \x in {0,.5,1} \draw[dashed] (-2-\x,2-\x) -- (2-\x,-2-\x);
    
            \fill[very thick,draw=blue,fill=blue!50] (0,0) -- (1,0) -- (0,1) -- cycle;
            \draw[thick,densely dotted] (0,0) circle (4pt);
            
            \fill[draw=red!25,fill=red!25] (-1,1) -- (-3,3) -- (-3,1) -- cycle;
            \draw[red,thick] (-1,1) -- (-3,3);
            \draw[red,thick] (-1,1) -- (-3,1);
            \node at (-2,1.5) {$C_1$};
            
            \fill[draw=red!25,fill=red!25] (1,-1) -- (3,-3) -- (1,-3) -- cycle;
            \draw[red,thick] (1,-1) -- (3,-3);
            \draw[red,thick] (1,-1) -- (1,-3);
            \node at (1.5,-2) {$C'_1$};
    
            \fill[draw=red!25,fill=red!25] (-1,0) -- (-3,0) -- (-3,-2) -- cycle;
            \draw[red,thick] (-1,0) -- (-3,0);
            \draw[red,thick] (-1,0) -- (-3,-2);
            \node at (-2,-.4) {$C_2$};
    
            \fill[draw=red!25,fill=red!25] (0,-1) -- (0,-3) -- (-2,-3) -- cycle;
            \draw[red,thick] (0,-1) -- (0,-3);
            \draw[red,thick] (0,-1) -- (-2,-3);
            \node at (-.4,-2) {$C'_2$};
            
            \fill[draw=red!25,fill=red!25] (-3,-2) -- (-1,-1) -- (-2,-3) -- cycle;
            \draw[red,thick] (-1,-1) -- (-3,-2);
            \draw[red,thick] (-1,-1) -- (-2,-3);
            \node at (-2,-2) {$C_3$};
    
            \foreach \s [count=\n from 0] in {2,1,0,-1}
                \foreach \num in {0,...,\n}
                    \fill (-1+\num,-1+\n-\num) circle (2pt);
        \end{tikzpicture}
        \caption{The regions $C_i$ and $C'_i$ which are disjoint from $P$.\label{fig:one_int_lattice_pt}
}
    \end{figure}
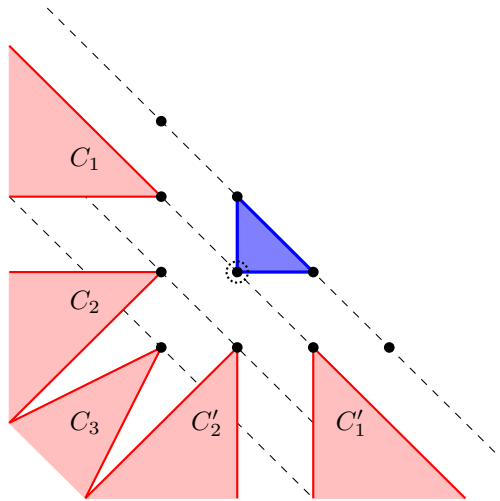
    Since $\mathbf{0}$ is the only interior lattice point, it follows from \Cref{prop:locked_incl_max_delta2_free} that every facet of $P$ contains two lattice points in its relative interior.
    We already know that $\set{x+y=1}$ cuts out a facet of $P$.
    For the remaining facets, the only candidates are lattice points disjoint from the regions $C_i$, $C'_i$ and contained in the open half-plane $\set{x+y<1}$, i.e., $-e_1 + e_2,  -e_1, -e_1 - e_2,  -e_2, e_1 - e_2$, those drawn in \Cref{fig:one_int_lattice_pt}.
    It is easy to see that the possibility for $P$ to have two other facets each containing two of these lattice points in the relative interior is if $P=\conv( -e_1 + 2e_2, -e_1 - e_2,  2e_1 - e_2)$, which has width equal to $3$.
\end{proof}

Now suppose the polygon $P$ contains at least two interior lattice points.
Clearly these interior lattice points are collinear, since any set of non-collinear points contains a triangle, and any lattice triangle can be triangulated into unimodular ones, and thus in particular contains a unimodular triangle.

The following theorem shows that polygons with at least two interior lattice points can have larger width than $3$ and the maximum width is achieved by a triangle with exactly two interior lattice points.

\begin{theorem}\label{thm:2-or-more-int-pts}
    If $P\subset \RR^2$ is a $\ZZ$-$\Delta_2$-free polygon with $|\strint(P)\cap\ZZ^2|\geq 2$, then $\width(P)\le \frac{10}3$.
    Equality is only achieved (up to $\ZZ$-unimodular transformations) by $\conv\mleft(\frac13 e_1+\frac53 e_2, -\frac43 e_1-\frac53 e_2,2e_1\mright)$, which contains exactly $2$ interior lattice points.
\end{theorem}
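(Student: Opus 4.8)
\emph{Reduction.} By \Cref{lem:incl-max}, $P$ is contained in an inclusion-maximal $\ZZ$-$\Delta_2$-free closed convex set $\widetilde P$, which still has at least two interior lattice points. If $\widetilde P$ is unbounded, then by \Cref{prop:unbounded} it is $\ZZ$-unimodularly equivalent to $[-1,1]\times\RR$, so $\width(P)\le\width(\widetilde P)=2<\tfrac{10}3$. Otherwise $\widetilde P$ is a polygon by \Cref{prop:A_Z_X_poly_K_poly}, and since width is monotone under inclusion we may assume $P=\widetilde P$; then \Cref{prop:locked_incl_max_delta2_free} shows every facet of $P$ is locked. As $P$ is $\ZZ$-$\Delta_2$-free, its interior lattice points are collinear, so after a $\ZZ$-unimodular transformation they are exactly $(0,0),(1,0),\dots,(k,0)$ for some $k\ge 1$; consequently $(-1,0)$ and $(k+1,0)$ lie outside $\strint(P)$, and so does every lattice point on the lines $y=\pm 1$, since such a point spans a unimodular triangle together with $(0,0)$ and $(1,0)$.

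\emph{Bounding the number of interior lattice points.} Write $\ell(y)$ for the length of the slice $P\cap(\RR\times\{y\})$; this is a concave function of $y$. From the normalisation $\ell(0)>k$, whereas $\ell(\pm1)\le1$, because whenever $\pm1$ lies strictly between $y_{\min}$ and $y_{\max}$ the relative interior of the corresponding slice lies in $\strint(P)$, and a slice of length exceeding $1$ would contain an excluded lattice point. Comparing the slope of $\ell$ on $[0,1]$ with its slope on $[1,y_{\max}]$ via concavity gives $y_{\max}\le\tfrac{\ell(0)}{\ell(0)-\ell(1)}\le\tfrac{k}{k-1}$ for $k\ge2$, and symmetrically $-y_{\min}\le\tfrac{k}{k-1}$; hence $\width(P)\le\width_{e_2^*}(P)\le\tfrac{2k}{k-1}\le3<\tfrac{10}3$ as soon as $k\ge3$. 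The same estimate yields $P\subset\RR\times(-2,2)$ when $k=2$. It remains to treat $k\in\{1,2\}$.

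\emph{Finite analysis and the maximiser.} For $k\in\{1,2\}$ we exploit lockedness facet by facet: a locked facet $F$ is locked by a unimodular triangle $T\subset P$ meeting $\relint(F)$ and having its vertices off $\relint(F)$ among $(0,0),\dots,(k,0)$, and unimodularity of $T$ forces the lattice point(s) of $T$ on the supporting line of $F$ to have height in $\{-1,0,1\}$. Combined with the slice estimates, the exclusion of interior lattice points on the lines $y=\pm1$, and convexity, this confines $P$ to an explicit bounded region and restricts each facet line to a finite list of lines through lattice points in that region. Enumerating the finitely many resulting inclusion-maximal polygons — whose vertices need not be lattice points — and computing their widths (with computer assistance) shows that the largest width occurring is $\tfrac{10}3$, attained only by $P_0\coloneqq\conv\mleft(\tfrac13e_1+\tfrac53e_2,\,-\tfrac43e_1-\tfrac53e_2,\,2e_1\mright)$, which has interior lattice points $(0,0),(1,0)$. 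For $P_0$ one has, for every primitive $(p,q)\ne\mathbf0$,
\[
    \width_{(p,q)^*}(P_0)\ \ge\ \tfrac53\max\{\,|p+2q|,\ |p-q|,\ |2p+q|\,\}\ \ge\ \tfrac{10}3,
\]
and the right-hand maximum equals $2$ exactly for $(p,q)\in\{\pm(1,0),\pm(0,1),\pm(1,-1)\}$ and is at least $3$ otherwise; for these three directions the lower bound is attained and the two extreme points of $P_0$ are two of its three vertices, so $\width(P_0)=\tfrac{10}3$ and every proper convex subset of $P_0$ has strictly smaller width. This forces any $\ZZ$-$\Delta_2$-free polygon with at least two interior lattice points and width $\tfrac{10}3$ to equal $P_0$ up to a $\ZZ$-unimodular transformation.

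\emph{Main obstacle.} The crux is the finite analysis for $k\in\{1,2\}$: turning ``every facet is locked'' into a bounded search region together with a finite list of candidate facet lines — in particular controlling how far $P$ may extend when $k=1$, where the slice estimate provides no a priori vertical bound — requires a somewhat intricate case distinction, and it is here that the computational tools are needed.
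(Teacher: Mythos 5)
Your reduction to an inclusion-maximal polygon with all facets locked, the normalisation of the collinear interior lattice points, the concavity estimate disposing of $k\ge 3$ (and bounding the $k=2$ case vertically), and the verification that $\width(P_0)=\frac{10}{3}$ together with the three-vertex argument showing no proper convex subset of $P_0$ has width $\frac{10}{3}$ are all sound and parallel to the paper. The gap is exactly the step you flag as the crux: you assert that lockedness ``restricts each facet line to a finite list of lines through lattice points'' and that one can then enumerate \emph{finitely many} inclusion-maximal polygons. This is false, and no finite enumeration exists: a locked facet is only required to meet a locking lattice point in its relative interior with the remaining vertices of the locking copy of $\Delta_2$ in $\strint(P)$, so the facet line can pivot continuously about its locking point. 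Hence the inclusion-maximal $\ZZ$-$\Delta_2$-free polygons of width greater than $3$ form positive-dimensional families --- in the paper, triangles circumscribed around a triple of locking points are parametrised by $(\lambda,\mu,\nu)$ ranging over a polytope $Q\subset[0,1]^3$, and the quadrilateral cases by four real parameters --- and the maximiser $P_0$ has non-lattice vertices precisely because it arises as the optimum of a continuous optimisation, not as an item of a finite list.

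What must replace your enumeration, and what the paper actually does, is: (i) determine the possible locking points (two at height $1$, eight at height $-1$) and prove (\Cref{lem:more_than_two_points}) that a polygon of width greater than $3$ contains at most two of the height $-1$ points, so $P$ is a triangle or a quadrilateral circumscribed around a rectangle or a cross-polygon; (ii) for triangles, parametrise the circumscribed triangles in the style of Hurkens and maximise the resulting piecewise-rational width bound over the polytope of admissible parameters (computer-assisted, via its regions of linearity), obtaining the bound $\frac{10}{3}$ and the unique optimum corresponding to $P_0$; (iii) for quadrilaterals, an area identity (rectangle case) and monotonicity/partial-derivative arguments (cross-polygon case) give width at most $3$, respectively strictly less than $\frac{10}{3}$. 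Without a substitute for this continuous optimisation your cases $k\in\{1,2\}$ are not proved; in particular the case $k=1$, where as you note you have no a priori vertical bound, is entirely open in your write-up, so the theorem's inequality and the uniqueness statement both remain unestablished.
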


In order to prove \Cref{thm:2-or-more-int-pts}, it suffices to study inclusion-maximal $\ZZ$-$\Delta_2$-free polygons which have at least two interior lattice points.
Since in \Cref{prop:one_interior_point} we have already found a $\ZZ$-$\Delta_2$-free triangle with width equal to $3$, to find the polygons of largest width we can restrict our study to those whose width is greater than $3$.
The strategy for the proof is to use \Cref{prop:locked_incl_max_delta2_free} and to distinguish polygons according to their locking lattice points.

Thus from now on we let $P$ be a maximal $\ZZ$-$\Delta_2$-free polygon with at least two interior lattice points and width strictly greater than $3$.
Up to an affine unimodular transformation, we may assume that 
\begin{itemize}
    \item
    	the interior of $P$ contains $\mathbf{0}$ and $e_1$,
    \item 
    	any lattice point which does not lie on the horizontal axis is not in the interior of $P$, and
    \item
    	$P$ contains a point $p_0=xe_1+ye_2$ with $0<x<1$ and $y>\frac32$.
\end{itemize}
Indeed, there is a $\ZZ$-unimodular transformation that maps the lattice segment comprised by the interior lattice points of $P$ (recall they are collinear) onto a lattice segment lying on the $x$-axis.
Since this lattice segment has lattice length at least one, we can assume that both $\mathbf{0}$ and $e_1$ are contained in it.
To simplify notation, let us use the same symbol $P$ for the transformed polygon.
Since the width of $P$ is larger than $3$, it cannot be contained in the strip $\set{-\frac32\le y \le \frac32}$.
After possibly flipping along the $x$-axis, this shows that $P$ contains a point $p_0$ whose $y$-coordinate is larger than $\frac32$.
Note the triangle with vertices $\mathbf{0}$, $e_1$ and $p_0$ intersects the line $\set{y=1}$ in a segment of length less than $1$ which does not contain lattice points (as otherwise such a lattice point would be in the interior of $P$).
After applying an appropriate horizontal shearing this segment lies in the strip $\set{0\leq x <1}$, and thus so does $p_0$.
Recall that a linear unimodular transformation $\varphi \colon \ZZ^2 \to \ZZ^2$ of the form
\[
	\varphi\mleft(\lambda_1 b_1 + \lambda_2 b_2\mright) = \mleft( \lambda_1+k\lambda_2\mright)b_1 + \lambda_2 b_2 \text{,}
\]
for a lattice basis $b_1,b_2$ of $\ZZ^2$ and an integer $k \in \ZZ$ is called a \emph{shearing along the line $\RR b_1$}.

Suppose $P$ has been transformed to satisfy the conditions above.
Let us determine the possible lattice points that can lock a facet of $P$.
No lattice point on the $x$-axis can lock a facet, since they are collinear with the interior lattice points of $P$.
Since the triangle $T \coloneqq \conv(\mathbf{0}, e_1, p_0)$ is contained in $P$, it follows that the affine locally open rays $e_2 + \RR_{<0}e_1$ and $e_1+e_2+\RR_{>0}e_1$ are disjoint from $P$.
Hence the only lattice points in the upper half-plane $\set{y>0}$ that can be contained in $P$ are $e_2$ and $e_1+e_2$ (any other lattice point in $\set{y>0}$ forces $e_2$ or $e_1+e_2$ to be in the interior of $P$).
Next we determine which lattice points in the lower half-space $\set{y<0}$ can be contained in $P$.
Since $p_0 \in P$, every lattice point in $\{y<0\}$ which is in the upper closed half-space given by the line going through the two points $e_2$, $e_1+\frac32 e_2$ is disjoint from $P$ (otherwise $e_2$ is in the interior of $P$).
Using the symmetry induced by reflecting about the vertical line $\set{x=\frac12}$, it follows that $P\cap\{y<0\}$ is also disjoint from the upper closed half-space given by the line through the two points $\frac32 e_2$, $e_1+e_2$.
Note the remaining lattice points at height $y=-1$ could be contained in $P$: $-3e_1-e_2, -2e_1-e_2,-e_1-e_2, -e_2, e_1-e_2, 2e_1-e_2, 3e_1-e_2$ or  $4e_1-e_2$.
Let $q_k \coloneqq ke_1 - e_2$ for $k \in \set{-3, -2, -1}$.
Since $q_k$ cannot be an interior lattice point of $P$, it follows that $P$ is disjoint from the region $C_k$ which is defined to be $\sigma_k \setminus \set{ q_k}$ where $\sigma_k$ is the affine cone having apex at $q_k$ and supporting lines going through $q_k$ and $e_1$ or $e_1+\frac32 e_2$ respectively.
For $q_0 = -e_2$, the respective region is $C_0 = \sigma_0 \setminus \set{q_0}$ where $\sigma_0$ is the affine cone having apex at $q_0$ and supporting lines the $y$-axis and the line going through $-e_2$ and $e_1$ (see \Cref{fig:possible-locking-pts}).
\begin{figure}[!ht]
    \begin{tikzpicture}[scale=0.5]	
    	\fill[draw=red!25,fill=red!25] (-2,0) -- (-5,-1.5) -- (-3,-1) -- (-4.14,-1.71) -- (-2,-1) -- (-3.5, -2.25) -- (-1,-1) -- (-5,-6) -- (0,-1) -- (0,-6.5) -- (-5.5,-6.5) -- (-5.5,0) -- cycle;
    	\draw[red,thick] (-2,0) -- (-5,-1.5);
    	\draw[red,thick] (-5,-1.5) -- (-3,-1);
    	\draw[red,thick] (-3,-1) -- (-4.14,-1.71);
    	\draw[red,thick] (-4.14,-1.71) -- (-2,-1);
    	\draw[red,thick] (-2,-1) -- (-3.5, -2.25);
    	\draw[red,thick] (-3.5, -2.25) -- (-1,-1);
    	\draw[red,thick] (-1,-1) -- (-5,-6);
    	\draw[red,thick] (-5,-6) -- (0,-1);
    	\draw[red,thick] (0,-1) -- (0,-6.5);
		
		\draw[red,thick,dashed] (-2,-1) -- (-5.5,-2.161);
		\draw[red,thick,dashed] (-2,-1) -- (-5.5,-3.92);
		
		\path (-5.5,-3.9) -- (-5.5,-2.2) node[midway,above right] {$\scriptstyle C_{-2}$};

    	\begin{scope}[xshift=1cm, xscale=-1]
    		\fill[draw=red!25,fill=red!25] (-2,0) -- (-5,-1.5) -- (-3,-1) -- (-4.14,-1.71) -- (-2,-1) -- (-3.5, -2.25) -- (-1,-1) -- (-5,-6) -- (0,-1) -- (0,-6.5) -- (-5.5,-6.5) -- (-5.5,0) -- cycle;
        	\draw[red,thick] (-2,0) -- (-5,-1.5);
        	\draw[red,thick] (-5,-1.5) -- (-3,-1);
        	\draw[red,thick] (-3,-1) -- (-4.14,-1.71);
        	\draw[red,thick] (-4.14,-1.71) -- (-2,-1);
        	\draw[red,thick] (-2,-1) -- (-3.5, -2.25);
        	\draw[red,thick] (-3.5, -2.25) -- (-1,-1);
        	\draw[red,thick] (-1,-1) -- (-5,-6);
        	\draw[red,thick] (-5,-6) -- (0,-1);
        	\draw[red,thick] (0,-1) -- (0,-6.5);
    	\end{scope}
    	
    	\draw[very thin,dashed] (-2,-1) -- (1,0);
    	\draw[very thin,dashed] (-2,-1) -- (1,1.5);

    	\draw[dashed] (-5.5,0) -- (6.5,0);

    	\foreach \x in {-2, ...,3}
    		\fill[fill=black] (\x,0) circle (2pt);

		\foreach \x in {-3,...,4}
			\fill[fill=blue] (\x,-1) circle (2pt);

    	\foreach \y in {-2, ...,-3}
    		\foreach \x in {-3,...,4}
    			\fill[fill=black] (\x,\y) circle (2pt);

    	\fill[fill=black] (0,1) circle (2pt);
    	\fill[fill=black] (1,1) circle (2pt);

    	\fill[fill=black] (1,1.5) circle (2pt);
    	\fill[fill=black] (0,1.5) circle (2pt);
    \end{tikzpicture}
    \caption{Possible locking points in the lower half-plane in blue.
    	$P$ is disjoint from the red regions as otherwise it would pick up interior lattice points away from the $x$-axis.\label{fig:possible-locking-pts}
}
\end{figure}
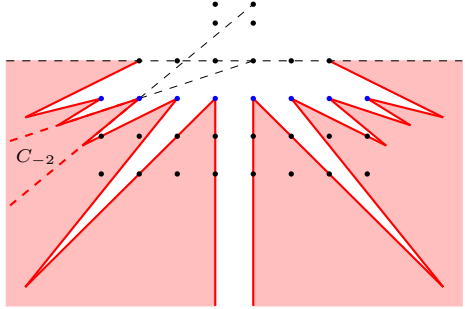
By using the symmetry induced by reflecting about the vertical axis $\set{x=\frac12}$, it straightforwardly follows that the possible lattice points that can be contained in $P$ and that lie in the lower half-plan $\set{y<0}$ can only be those from above at height $y=-1$.

These ten lattice points (two at height $y=1$ and eight at height $y=-1$; shown in red in \Cref{fig:possibilities_P'}) are therefore the only lattice points away from the $x$-axis which can be contained in $P$, and thus are the possible locking points.
Next we show that the assumption $\width(P)>3$ implies that at most four of them can be in $P$, more precisely, at most two of the points at height $y=-1$ can be in $P$.
Recall our (additional) assumption that the polygon $P$ is inclusion-maximal $\ZZ$-$\Delta_2$-free.

\begin{lemma}\label{lem:more_than_two_points}
	If $P\cap \{y=-1\}$ contains at least $3$ lattice points, then $\width(P)\le3$.
\end{lemma}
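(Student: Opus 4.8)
The plan is to upgrade the hypothesis from a statement about three lattice points to a statement about a facet, and then to trap $P$ inside a triangle whose lattice width is at most $3$.

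First I would observe that $P\cap\{y=-1\}$ is the intersection of the convex body $P$ with a line, hence a segment, and since it contains at least three lattice points it contains three consecutive ones $me_1-e_2,(m+1)e_1-e_2,(m+2)e_1-e_2$. The middle point lies in the relative interior of this segment but is off the $x$-axis, so it cannot be an interior lattice point of $P$; hence it lies on $\partial P$, and as a convex body whose boundary meets the relative interior of a chord contains that chord in its boundary, the segment $P\cap\{y=-1\}$ is a facet $F$ of $P$. In particular $P\subseteq\{y\ge-1\}$. Write $F=[(\alpha,-1),(\beta,-1)]$ and $w\coloneqq\beta-\alpha$; since $\alpha\le m$ and $\beta\ge m+2$ we have $w\ge 2$.

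Next I would exploit that $T=\conv(\mathbf 0,e_1,p_0)\subseteq P$ forces $P\cap\{y=1\}\subseteq[e_2,e_1+e_2]$, as established just before the lemma. Combining this with $P\subseteq\{y\ge-1\}$, convexity, and a short segment-crossing computation shows that every $(x,y)\in P$ with $y>1$ satisfies $y\le\frac{w+1}{w-1}$ and lies in the triangle $\tilde Q\coloneqq\conv\bigl((\alpha,-1),(\beta,-1),(-\tfrac{\alpha}{w-1},\tfrac{w+1}{w-1})\bigr)$, whose two non-horizontal edges pass through $e_2$ and $e_1+e_2$ respectively: a point of $P$ above height $1$ lying left of the edge through $(\alpha,-1)$ and $e_2$, joined to $(\alpha,-1)$, would meet $\{y=1\}$ at a point with negative first coordinate, contradicting the pinch, and symmetrically on the right. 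Now $\width_{e_2^*}(\tilde Q)=\frac{w+1}{w-1}+1=\frac{2w}{w-1}$, which is $\le 3$ whenever $w\ge 3$; and if $w<3$, taking $u=e_1^*+ce_2^*$ with $2c$ the even one among the integers $m,m+1\in[\alpha,\beta-1]$, one checks that $u$ takes a value at the apex lying between its values at the two base vertices, so $\width_u(\tilde Q)=\beta-\alpha=w<3$. Thus $\width(\tilde Q)\le 3$ in every case, and the lemma reduces to showing $P\subseteq\tilde Q$.

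It remains to rule out points of $P$ below height $1$ that lie left of the edge $[(\alpha,-1),e_2]$ or right of the edge $[(\beta,-1),e_1+e_2]$ of $\tilde Q$. Here I would invoke that $P$ has no interior lattice point off the $x$-axis — equivalently, that $P$ avoids the cones $C_k$ (and their mirror images) recalled above, which already confine the lattice points of $F$ to $\{(k,-1):-3\le k\le 4\}$: a point of $P$ below $y=1$ far enough to the left, taken together with the facet $F$ and the point $p_0$ (whose second coordinate exceeds $\tfrac32$), forces a lattice point such as $-e_1+e_2$ into $\strint P$, a contradiction, and the reflection $x\mapsto 1-x$ handles far-right points. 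Making this precise is a finite check: up to that reflection there are only three positions for the triple of consecutive lattice points on $F$, and in each the constraints $P\cap\{y=1\}\subseteq[e_2,e_1+e_2]$, $\mathbf 0,e_1\in\strint P$, $F\subseteq\partial P$, and avoidance of the relevant cones $C_k$ confine $P$ to $\tilde Q$. I expect this last step — controlling the two sides of $P$ below the facet-level, especially in the borderline regime where $w$ is close to $2$ and $\tilde Q$ is nearly of $e_2^*$-width $4$ — to be the main obstacle; the part of $P$ above height $1$ is pinned down cleanly by the unit-segment pinch at $\{y=1\}$.
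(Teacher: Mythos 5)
Your handling of the part of $P$ above height $1$ is correct, and it even settles the case $w\ge 3$ completely: the pinch $P\cap\{y=1\}\subseteq[e_2,e_1+e_2]$ together with $(\alpha,-1),(\beta,-1)\in P$ traps $P\cap\{y>1\}$ in the downward wedge with apex $\bigl(-\tfrac{\alpha}{w-1},\tfrac{w+1}{w-1}\bigr)$, so $\width_{e_2^*}(P)\le\tfrac{w+1}{w-1}+1\le 3$. Your width bound for $\tilde Q$ in the range $2\le w<3$ (the functional $e_1^*+ce_2^*$ with $2c$ the even integer among $m,m+1$, which works because $\alpha\le 2c\le\beta-1$) is also correct. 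But the step you yourself flag as the main obstacle is a genuine gap, and it is not merely an unexecuted finite check: the containment $P\subseteq\tilde Q$ simply does not follow from the constraints you list. Take $P=\conv\bigl((-1,-1),(1,-1),(\tfrac32,0),(\tfrac12,\tfrac95)\bigr)$: it is $\ZZ$-$\Delta_2$-free, its only interior lattice points are $\mathbf 0$ and $e_1$, it contains $p_0=(\tfrac12,\tfrac95)$ with $0<x<1$, $y>\tfrac32$, its intersection with $\{y=-1\}$ is the facet $F=[(-1,-1),(1,-1)]$ containing three lattice points, it meets $\{y=1\}$ inside $[e_2,e_1+e_2]$, and it avoids all the cones $C_k$ and their mirror images (which lie below $y=-1$ apart from their deleted apexes); yet it contains $(\tfrac32,0)$, which lies outside $\tilde Q=\conv\bigl((-1,-1),(1,-1),(1,3)\bigr)$. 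So no case analysis based only on those constraints can confine $P$ to $\tilde Q$, and in the regime $2\le w<3$ your chosen functional's width over $P$ is not controlled by its width over $\tilde Q$.

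What is missing is the inclusion-maximality of $P$, which your argument never uses. The paper's proof invokes it through \Cref{prop:locked_incl_max_delta2_free}: every facet of $P$ other than $F$ must be locked, and by the analysis preceding the lemma the only available locking points off the line $\{y=-1\}$ are $e_2$ and $e_1+e_2$; hence $P$ is a triangle whose two slanted facets contain $e_2$ and $e_1+e_2$ in their relative interiors, i.e.\ $P$ coincides with your $\tilde Q$. (The paper then concludes with a similar-triangles argument and the slope observation; your two-case width computation for $\tilde Q$ would serve equally well at that point.) To repair your proof you must either bring in lockedness to force $P=\tilde Q$ — which is essentially the paper's route — or produce a genuinely different bound on $\width_{e_1^*+ce_2^*}(P)$ that controls the sideways bulge of $P$ at heights in $(-1,1)$, which your listed constraints do not do.
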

In the following, let us denote the basis of $(\ZZ^2)^*$ dual to $e_1, e_2$ by $e_1^*, e_2^*$.
\begin{proof}
    Suppose $P$ contains at least $3$ lattice points whose $y$-coordinate equals $-1$.
    Then $P$ is contained in the half-space $\{y\ge-1\}$, since any point outside this half-space would force the middle lattice point to be in the interior of $P$, which is not allowed.
    In particular, there is a facet $F$ supported by $\mleft\{y=-1\mright\}$.
    All other facets of $P$ are locked (see \Cref{prop:locked_incl_max_delta2_free}), and since they cannot be locked by points on  $\mleft\{y=-1\mright\}$, they can only be locked by $e_2$ and $e_1+e_2$.
    Thus $P$ is a triangle with one facet $F$, one facet through $e_2$ and another through $e_1+e_2$.
    The latter two facets intersect in the vertex $p_0$ from above.
    
    If $P$ is contained within the strip $\set{-1\le y \le 2}$, then $\width(P)\le3$.
    If not, we have $p_0 \in \set{y>2}$.
    Let us consider the facet $F$ of length $b$ as the base of the triangle $P$.
    With respect to this base, $P$ has height $h>3$.
    The triangle $\conv(e_2, e_1+e_2, p_0)$ is similar to $P$ and has base of length $1$ and height $h-2$.
    From the assumption $h>3$ and the equality (compare with the intercept theorem)
    \[
	    \frac{h-2}1=\frac hb\text{,}
    \]
    we obtain $b<3$.
    Note $b$ is the length of the facet $F$.
    Since $p_0$ lies in $\set{0<x<1}$ and the facets of $P$ that intersect at $p_0$ pass through $e_2$ and $e_1+e_2$ respectively, these facets must have positive and negative slope respectively.
    This means that the width of $P$ with respect to the linear form $e_1^*$ is $b$, the length of the facet $F$, and thus is less than $3$.
    Hence $P$ has width at most $3$.
\end{proof}

We have narrowed down the possible lattice points of $P$ that lie off the $x$-axis: up to two consecutive points from the set $\{-3e_1-e_2, -2e_1-e_2,-e_1-e_2, -e_2, e_1-e_2, 2e_1-e_2, 3e_1-e_2, 4e_1-e_2\}$, and points from $\{e_2, e_1+e_2\}$ (see the red points in \Cref{fig:possibilities_P'}). 
Since $P$ is a full dimensional polygon, it has at least three facets, and since each of these facets are locked, at least three points of the above are contained in $P$.
Further, we have observed that we can pick at most four of the above points (see \Cref{lem:more_than_two_points}), and thus the polygon $P$ is either a triangle (containing one lattice point on $y=-1$ and two on $y=1$, or vice versa) or a quadrilateral (containing two lattice points at $y=1$ and two at $y=-1$). 

In the following sections we treat the two cases separately.
In fact, we further subdivide the case where $P$ is a quadrilateral into two subcases.
The four facets of a quadrilateral $P$ are locked by both lattice points $e_2$ and $e_1+e_2$ on $y=1$ and by two consecutive lattice points on $y=-1$.
Since four is the maximum number of locking points the polygons under consideration can have, the relative interior of each facet of a quadrilateral $P$ contains exactly one locking point.
The convex hull $P'$ of the four locking points intersect the $x$-axis in a line segment $S = P'\cap\{y=0\}$ of length $1$, which implies that $S$ contains a lattice point.
Thus $P'$ can either be unimodularly equivalent to a rectangle, if the endpoints of $S$ are lattice points, or to a cross-polygon, if the endpoints of $S$ are not lattice points, and therefore $S$ contains exactly one lattice point in its interior.
Recall that a cross-polygon is $\ZZ$-unimodularly equivalent to $\conv(\pm e_1, \pm e_2)$.
Furthermore, the rectangle will always be $\ZZ$-unimodularly equivalent to $\conv(\pm e_2,e_1\pm e_2)$.
\Cref{fig:possibilities_P'} illustrates the possibilities for the convex hull $P'$ of the locking points.
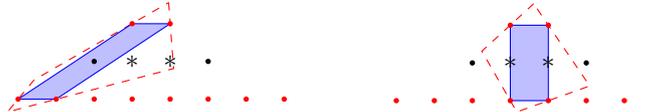
\begin{figure}[!ht]
    \centering
    \begin{tikzpicture}[scale=.5]
        \draw[red, dashed] (0.96, 1.56)  -- (1.09, -0.19) --(-3.28,-1.34) --(-2.58,-.49) -- cycle;  
            
        \fill[draw=blue,fill=blue!25] (1, 1) -- (0,1) -- (-3, -1) -- (-2, -1) -- (1, 1);    
        
        \fill[red] (0,1) circle (2pt);
        \fill[red] (1,1) circle (2pt);
        \fill (-1,0) circle (2pt);
        \node at (0,0) {$*$};
        \node at (1,0) {$*$};
        \fill (2,0) circle (2pt);
        \foreach \x in {-3,...,4} \fill[red] (\x,-1) circle (2pt);
    \end{tikzpicture}\qquad\qquad
    \begin{tikzpicture}[scale=.5]
        \draw[red, dashed] (0.615031,1.57022)  -- (-.75, .31) --(.17,-1.3) --(2.09,-.61) -- cycle;  
        
        \fill[draw=blue,fill=blue!25] (1, 1) -- (0, 1) -- (0, -1) -- (1, -1) -- (1, 1);    
        
        \fill[red] (0,1) circle (2pt);
        \fill[red] (1,1) circle (2pt);
        \fill (-1,0) circle (2pt);
        \node at (0,0) {$*$};
        \node at (1,0) {$*$};
        \fill (2,0) circle (2pt);
        \foreach \x in {-3,...,4} \fill[red] (\x,-1) circle (2pt);
    \end{tikzpicture}
    \caption{The two lattice points required to be in the interior of $P$ are denoted by ``$*$''.
        The red lattice points are the possible locking points of $P$.
        Possible convex hulls $P'$ of locking points (when $P$ is a quadrilateral) are drawn in blue.
        The left $P'$ is unimodularly equivalent to a cross-polygon while the right $P'$ is unimodularly equivalent to a rectangle.
        The red dashed polygons are examples of quadrilaterals circumscribed around $P'$.\label{fig:possibilities_P'}
}
\end{figure}
In what follows, we will often say that ``the polygon $P$ is circumscribed around $P'$''.
The precise definition is the following.
\begin{definition}
    Let $P, P' \subset \RR^2$ be polygons.
    We say that $P$ is \novel{circumscribed} around $P'$ if each vertex of $P'$ is contained in a facet of $P$, and each facet of $P$ contains a vertex of $P'$.
\end{definition}


\subsection{Triangles}\label{sec:triangles}
We first consider the case where $P$ is a triangle.
Since all facets of $P$ are locked, there are at least three locking points.
Thus, given three lattice points $A, B$ and $C$, two on the line $y=1$ and one on the line $y=-1$, or vice versa, we want to show that $\frac{10}{3}$ is an upper bound for the width of any $\ZZ$-$\Delta_2$-free triangle $P$ with facets locked by $A$, $B$ and $C$.
We do not require that the points $A,B$ and $C$ are the only locking points: a facet of $P$ might contain another lattice point in its interior. 

There are 22 possible triples $(A,B,C)$: $8$ have the two lattice points on the line $y=1$ and one on $y=-1$, while $14$ have one lattice point on $y=1$ and two consecutive ones on $y=-1$.
Reflecting along the line $x=\frac12$ yields the same result, so only $11$ triples need to be checked.
In \Cref{table:triangles} (below), we record these $11$ triples.
The cases 8--11 are not admissible: $P$ is assumed to contain a point $p_0$ in $\set{y>\frac32}$, and thus two points of the triple $(A,B,C)$ need to lock the two facets through this point. However, in cases 8--11 this is impossible while also guaranteeing that $\mathbf{0}$ and $e_1$ are in the interior of $P$.
For the remaining cases, we relax our assumptions and only require that the facets of $P$ are locked by the triple $(A,B,C)$, $P$ contains $\mathbf{0}, e_1$ and it does not contain any lattice points away from the $x$-axis in its interior. That is, we forget the requirement that $P$ contains a point $p_0=xe_1+ye_2$ with $0<x<1$ and $y>\frac32$, and we allow $\mathbf{0}, e_1$ to lie in the boundary of $P$. 
Note this relaxation is possible as long as $\frac{10}3$ is an upper bound for any $P$ satisfying the relaxed conditions. Our computations show that this is the case. This relaxation comes with two advantages, namely 1) the constraints on the polygon $P$ are simplified; 2) it allows further symmetry (unimodular transformations fixing the $x$-axis, such as reflection about or shearing along the $x$-axis) resulting into a reduction of cases.
In particular, the second point allows us to treat the following pairs of cases as being equivalent: $1 \sim 7$, $2 \sim 6$, and $3 \sim 5$.
Finally, because of this relaxation, \Cref{table:triangles} includes upper bounds that are smaller than $3$: in these cases, the largest width in the relaxed conditions was only achieved by a triangle $P$ with $e_1$ on the boundary.

In order to bound the width of polygons whose facets are locked by a fixed triple $(A,B,C)$, we employ a computer assisted strategy together with an approach that Hurkens has used to compute the classical flatness constant in $2$ dimensions~\cite{Hurkens}.
Let $X, Y$ and $Z$ be the vertices of the triangle $P$.
We consider $A, B, C, X,Y,Z$ as row vectors and write:
\[
    \renewcommand{\arraystretch}{1.3}
    \begin{bmatrix}A\\ B\\ C\end{bmatrix} = \begin{bmatrix}
         0 & \lambda & \bar{\lambda}\\
         \bar{\mu} & 0 & \mu \\
         \nu & \bar{\nu} & 0
    \end{bmatrix} \begin{bmatrix}X\\ Y\\ Z\end{bmatrix}
\]
for some $\lambda, \mu, \nu \in [0,1]$, with $\bar{\lambda}+\lambda=\bar{\mu}+\mu=\bar{\nu}+\nu = 1$.
Inverting the matrix, we obtain
\[
    \renewcommand{\arraystretch}{1.3}
    \begin{bmatrix}X\\ Y\\ Z\end{bmatrix} = \frac1{\lambda \mu \nu + \bar{\lambda} \bar{\mu} \bar{\nu}} \begin{bmatrix}
         -\mu \bar{\nu} & \bar{\lambda}\bar{\nu} & \lambda\mu \\
         \mu\nu &  -\bar{\lambda}\nu& \bar{\lambda}\bar{\mu} \\
         \bar{\mu}\bar{\nu} & \lambda\nu & -\lambda\bar{\mu}
    \end{bmatrix} \begin{bmatrix}A\\ B\\ C\end{bmatrix} \text{.}
\]
Since $A, B$ and $C$ are fixed, these are formulas for $X, Y$ and $Z$ in terms of the parameters $\lambda$, $\mu$, and $\nu$.
In fact, the coordinates of the pairwise differences of the vertices $X$, $Y$ and $Z$ are rational functions with a \emph{linear} numerator and denominator equal to $\lambda \mu \nu + \bar{\lambda} \bar{\mu} \bar{\nu}$:
\begin{equation}\label{eq:differences}
    \renewcommand{\arraystretch}{1.3}
    \begin{bmatrix}X-Y \\ Y-Z \\ Z-X\end{bmatrix} = \frac{1}{\lambda \mu \nu + \bar{\lambda} \bar{\mu} \bar{\nu}} \begin{bmatrix}
         -\mu & \bar{\lambda} & -1+\lambda+\mu \\
         -1+\mu+\nu &  -\nu& \bar{\mu} \\
         \bar{\nu} & -1+\lambda+\nu & -\lambda
    \end{bmatrix} \begin{bmatrix}A \\ B\\ C\end{bmatrix} \text{.}
\end{equation}
Thus the slopes of the facets of $P$ are rational functions with linear numerator and linear denominator in terms of the parameters $\lambda$, $\mu$, and $\nu$.
The conditions $\mathbf{0}, e_1 \in P$ and that the interior $\strint(P)$ of $P$ is disjoint from the lattice points off the $x$-axis constrain the possible slopes.
In terms of the parameters $\lambda$, $\mu$, and $\nu$, these constraints are linear.
We thus obtain a polytope $Q \subset [0,1]^3$ of admissible $\lambda$, $\mu$, and $\nu$.

Next, we express the width of $P$ in a chosen direction in terms of the parameters $\lambda$, $\mu$, and $\nu$.
Clearly, for a fixed direction, the width can be achieved on any pair of vertices depending on $\lambda$, $\mu$, and $\nu$.
Wherever it is achieved by the same two vertices, the width is a linear function (in $\lambda$, $\mu$, and $\nu$) divided by $\delta:=\lambda \mu \nu + \bar{\lambda} \bar{\mu} \bar{\nu}$. 
Our strategy includes choosing directions ``ad hoc'' such that:
\begin{enumerate}[label=\arabic*)]
\item
    The width in one such direction is achieved on the same pair of vertices for all parameters $\lambda, \mu, \nu \in Q$, so that the width of $P$ accepts an upper bound of the form
    \[
    	\frac{\min \set{\ell_1(\lambda, \mu, \nu), \dots, \ell_r(\lambda, \mu, \nu)}}{\delta} \qquad \text{where the $\ell_i$'s are linear forms in terms of $\lambda$, $\mu$, and $\nu$.}
    \]
\item
    The maximum over $Q$ of this function is at most $\frac{10}{3}$.
    The actual computations are carried out with polymake~\cite{polymake} and Mathematica~\cite{Mathematica}.
\end{enumerate}

For the cases where the upper bound of $\frac{10}3$ is achieved, we determine the respective extremal points $(\lambda, \mu, \nu) \in Q$ and show that these parameters correspond to triangles $P$ which are unimodularly equivalent to $\conv\mleft(\frac13 e_1 +\frac53 e_2, -\frac43 e_1 - \frac53 e_2, 2e_1\mright)$.
\begin{table}[!ht]
    \begin{tabular}{ccccc}
    \toprule
    & $\conv(\mathbf{0},e_1,A,B,C)$ & width & \parbox{2.5cm}{\centering width directions} & vertices of maximiser\\
    \midrule
    1 & \parbox{2.6cm}{\begin{tikzpicture}[scale=.5]
        \fill[draw=blue,fill=blue!25] (0, -1) -- (1, 0) -- (1, 1) -- (0, 1) -- (0, -1);
        \fill (0,1) circle (2pt);
        \fill (1,1) circle (2pt);
        \fill (-1,0) circle (2pt);
        \node at (0,0) {$*$};
        \node at (1,0) {$*$};
        \foreach \x in {-3,...,2} \fill (\x,-1) circle (2pt);
    \end{tikzpicture}} & $\leq \frac{10}3$& $e_1^*, e_2^*, e_1^*-e_2^*$ &
    $\frac13\begin{bmatrix}
    	-4 & 1 & 6 \\
		-5 & 5 & 0
    \end{bmatrix}$\\
    \midrule
    2 & \parbox{2.6cm}{\begin{tikzpicture}[scale=.5]
        \fill[draw=blue,fill=blue!25] (1, 0) -- (1, 1)-- (0, 1) -- (-1, -1) -- (1, 0);
        \fill (0,1) circle (2pt);
        \fill (1,1) circle (2pt);
        \fill (-1,0) circle (2pt);
        \node at (0,0) {$*$};
        \node at (1,0) {$*$};
        \foreach \x in {-3,...,2} \fill (\x,-1) circle (2pt);
    \end{tikzpicture}}& $\leq \frac2{\sqrt7-2}$ & $e_1^*, e_2^*, e_1^*-e_2^*$ & 
    $\scriptstyle\frac1{3\sqrt{7}}\begin{bmatrix}
        \scriptstyle-6-3\sqrt{7} & \scriptstyle8+\sqrt{7} & \scriptstyle1+2\sqrt{7}\\
        \scriptstyle-9 & \scriptstyle-2-\sqrt{7} & \scriptstyle5+4\sqrt{7}
    \end{bmatrix}$\\
    \midrule
    3 & \parbox{2.6cm}{\begin{tikzpicture}[scale=.5]
        \fill[draw=blue,fill=blue!25] (0, 1) -- (-2, -1)  -- (1, 0) -- (1, 1) -- (0, 1);
        \fill (0,1) circle (2pt);
        \fill (1,1) circle (2pt);
        \fill (-1,0) circle (2pt);
        \node at (0,0) {$*$};
        \node at (1,0) {$*$};
        \foreach \x in {-3,...,2} \fill (\x,-1) circle (2pt);
    \end{tikzpicture}} & $< \frac{10}{3}$ & $e_2^*,e_1^*-e_2^*$ &
    $\frac13\begin{bmatrix}
    	-12 & 3 & 3\\
		-5 & 5 & 0
    \end{bmatrix}$\\
    \midrule
    4 & \parbox{2.6cm}{\begin{tikzpicture}[scale=.5]
        \fill[draw=blue,fill=blue!25]  ( 1,  0) -- (-3, -1) -- ( 0,  1) -- ( 1,  1)-- cycle;
        \fill (0,1) circle (2pt);
        \fill (1,1) circle (2pt);
        \fill (-1,0) circle (2pt);
        \node at (0,0) {$*$};
        \node at (1,0) {$*$};
        \foreach \x in {-3,...,2} \fill (\x,-1) circle (2pt);
    \end{tikzpicture}}& $<3$ & $e_2^*$ &
    $\frac12\begin{bmatrix}
    	-10 & 2 & 2\\
		-3 & 3 & 0
    \end{bmatrix}$\\
    \midrule
    \multicolumn{5}{c}{Equivalent cases}\\
    \multicolumn{5}{c}{
	5 \parbox{3cm}{\centering \begin{tikzpicture}[scale=.5]
        \fill[draw=blue,fill=blue!25]   ( 1,  0) -- (-2, -1) -- (-3, -1) --  ( 1,  1)-- cycle;
        \fill (0,1) circle (2pt);
        \fill (1,1) circle (2pt);
        \fill (-1,0) circle (2pt);
        \node at (0,0) {$*$};
        \node at (1,0) {$*$};
        \foreach \x in {-3,...,2} \fill (\x,-1) circle (2pt);
    \end{tikzpicture}\\
    equivalent to case 3}\qquad
    6 \parbox{3cm}{\centering \begin{tikzpicture}[scale=.5]
        \fill[draw=blue,fill=blue!25] (1, 1) -- (-2, -1) -- (-1, -1) -- (1, 0) -- (1, 1);
        \fill (0,1) circle (2pt);
        \fill (1,1) circle (2pt);
        \fill (-1,0) circle (2pt);
        \node at (0,0) {$*$};
        \node at (1,0) {$*$};
        \foreach \x in {-3,...,2} \fill (\x,-1) circle (2pt);
    \end{tikzpicture}\\
    equivalent to case 2}\qquad
    7 \parbox{3cm}{\centering \begin{tikzpicture}[scale=.5]
        \fill[draw=blue,fill=blue!25] (1, 0) -- (1, 1) -- (-1, -1) -- (0, -1) -- (1, 0);
        \fill (0,1) circle (2pt);
        \fill (1,1) circle (2pt);
        \fill (-1,0) circle (2pt);
        \node at (0,0) {$*$};
        \node at (1,0) {$*$};
        \foreach \x in {-3,...,2} \fill (\x,-1) circle (2pt);
    \end{tikzpicture}\\
    equivalent to case 1}\vspace*{1ex}
    }\\
    \midrule
    \multicolumn{5}{c}{Not admissible triples $(A,B,C)$}\\[1ex]
    \multicolumn{5}{l}{8 \parbox{2.6cm}{\begin{tikzpicture}[scale=.5]
        \fill[draw=blue,fill=blue!25] (0, 0) -- (1, 1)  -- (1, -1) -- (0,-1) -- (0, 0);
        \fill (0,1) circle (2pt);
        \fill (1,1) circle (2pt);
        \fill (-1,0) circle (2pt);
        \node at (0,0) {$*$};
        \node at (1,0) {$*$};
        \foreach \x in {-3,...,2} \fill (\x,-1) circle (2pt);
    \end{tikzpicture}}\qquad
    9 \parbox{2.6cm}{\begin{tikzpicture}[scale=.5]
        \fill[draw=blue,fill=blue!25] (1, 0) -- (0, 1) -- (-1, -1) -- (0, -1) -- (1, 0);
        \fill (0,1) circle (2pt);
        \fill (1,1) circle (2pt);
        \fill (-1,0) circle (2pt);
        \node at (0,0) {$*$};
        \node at (1,0) {$*$};
        \foreach \x in {-3,...,2} \fill (\x,-1) circle (2pt);
    \end{tikzpicture}}\qquad
    10 \parbox{2.6cm}{\begin{tikzpicture}[scale=.5]
        \fill[draw=blue,fill=blue!25] (0, 1) -- (-2, -1) -- (-1, -1) -- (1, 0) -- (0, 1);
        \fill (0,1) circle (2pt);
        \fill (1,1) circle (2pt);
        \fill (-1,0) circle (2pt);
        \node at (0,0) {$*$};
        \node at (1,0) {$*$};
        \foreach \x in {-3,...,2} \fill (\x,-1) circle (2pt);
    \end{tikzpicture}}\qquad
    11 \parbox{2.6cm}{\begin{tikzpicture}[scale=.5]
        \fill[draw=blue,fill=blue!25]   ( 1,  0) -- (-2, -1) -- (-3, -1) --  ( 0,  1)-- cycle;
        \fill (0,1) circle (2pt);
        \fill (1,1) circle (2pt);
        \fill (-1,0) circle (2pt);
        \node at (0,0) {$*$};
        \node at (1,0) {$*$};
        \foreach \x in {-3,...,2} \fill (\x,-1) circle (2pt);
    \end{tikzpicture}}}\vspace*{1ex}\\
    \bottomrule
    \end{tabular}
    \caption{Up to symmetry, all possible triples of locking points $(A, B, C)$; we list the largest possible width a triangle circumscribed around them can have, the directions in which this width is achieved, and the vertices of the triangle of largest width.\label{table:triangles}
}
\end{table}

To illustrate our strategy, let us work out the details for one choice of locking points, namely $(A,B,C) = (e_1+e_2, -e_2, e_2)$ (case $1$ in \Cref{table:triangles}). 
The other cases 2--4 work similarly.
By~\eqref{eq:differences}, we have
\[
    X-Y = \mleft(-\frac{\mu}{\delta}, \frac{-2+2\lambda}{\delta}\mright), \qquad
    Y-Z = \mleft( \frac{-1+\mu+\nu}{\delta}, \frac{2\nu}{\delta}\mright), \qquad
    Z-X = \mleft(\frac{1-\nu}{\delta},\frac{2-2\lambda-2\nu}{\delta}\mright).
\]
\begin{figure}[!ht]
    \centering
    \begin{tikzpicture}
        \draw[red] (2,0) -- (1/3, 5/3) -- (-4/3,-5/3) -- cycle;
        \node at (2.3,.1) {$Z$};
        \node at (.45,1.81) {$Y$};
        \node at (-1.5,-1.7) {$X$};
        \fill[draw=blue,fill=blue!25] (0, -1) -- (1,0) -- (1, 1) -- (0, 1) -- (0, -1);        

		\node[above right] at (1,1) {$A$};
		\node[below right] at (0,-1) {$B$};
		\node[above left] at (0,1) {$C$};

        \fill (0,1) circle (2pt);
        \fill (1,1) circle (2pt);
        \fill (-1,0) circle (2pt);
        \fill (2,0) circle (2pt);
        \node at (0,0) {$*$};
        \node at (1,0) {$*$};
        \foreach \x in {-3,...,4} \fill (\x,-1) circle (2pt);
    \end{tikzpicture}
    \caption{A triangle $P$ (in red) with locking points $(A,B,C) = \mleft(e_1+e_2,-e_2,e_2\mright)$. This is the unique $\ZZ$-$\Delta_2$-free triangle of width $\frac{10}{3}$.\label{fig:triangle_case1}
}
\end{figure}
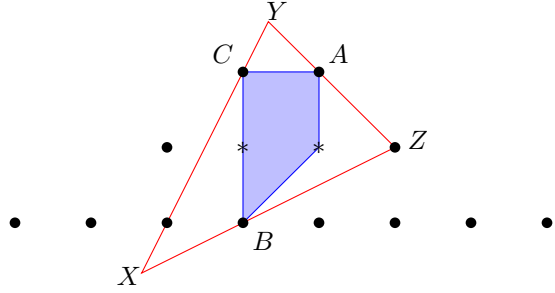

First we determine the polytope $Q$ of admissible parameters.
The slopes $m_{XY}$, $m_{YZ}$ and $m_{ZX}$ of the facets of $P$ through $\set{X,Y}$, $\set{Y,Z}$ and $\set{Z,X}$ respectively can be expressed in terms of $\lambda$, $\mu$, and $\nu$: 
\[
    m_{XY} = \frac{2-2\lambda}{\mu}, \qquad m_{YZ} = \frac{2\nu}{-1+\mu+\nu}, \qquad m_{ZX} = \frac{2-2\lambda-2\nu}{1-\nu}\text{.}
\]

The position of the vertices $X$, $Y$, and $Z$ of $P$ is constrained by the two assumptions that $P$ contains $\mathbf{0}, e_1$ and that no other lattice point away from the $x$-axis is in the interior of $P$.
Since $\mathbf{0} \in P$, we have $m_{XY}\ge0$, while $e_1 \in P$ yields $m_{YZ}\le0$ and $m_{ZX}\le1$.
Since $e_1-e_2$ is not in the interior of $P$, we have $m_{ZX}\ge0$.
Similarly, since $-e_1-e_2$ is not in the interior of $P$, we have $m_{XY}\ge2$. In a similar way, any further lattice point off the $x$-axis would give us other constraints. Many will be redundant, but some might further restrict the polytope $Q$ of admissible parameters. However, we do not need all constraints and we can stop once we have enough to obtain an upper bound not exceeding $\frac{10}{3}$. In this case, the following constraints on the slopes are enough:
\[
    m_{XY}\ge 2, \qquad m_{YZ}\le 0, \qquad 0 \le m_{ZX} \le 1 \text{.}
\]

Arithmetic manipulation of these inequalities yields constraints on the parameters $\lambda$, $\mu$, and $\nu$ which define the polytope of admissible parameters 
\[
    Q = \setcond{(\lambda, \mu, \nu) \in [0,1]^3}{1-\lambda-\mu \geq 0, 1-\mu-\nu \geq 0, 1-\lambda-\nu \geq 0, -1+2\lambda+\nu \geq 0} \text{.}
\]

We now determine the widths of $P$ in the directions $e^*_1$, $e^*_2$, and $e^*_1-e^*_2$.
On $Q$, these are achieved at $Z-X$, $Y-X$, and $Z-Y$ respectively:
\begin{align*}
    \width_{e^*_1}(P) &= e^*_1( Z-X ) = \frac{1-\nu}\delta\\
    \width_{e^*_2}(P) &= e^*_2( Y-X ) = \frac{2-2\lambda}\delta\\
    \width_{e^*_1-e^*_2}(P) &= (e^*_1-e^*_2)( Z-Y ) = \frac{1-\mu+\nu}\delta \text{.}
\end{align*}

We thus obtain
\begin{align*}
    \width(P) &\leq \min\set{\width_{e^*_1}(P), \width_{e^*_2}(P),  \width_{e^*_1-e^*_2}(P)}\\
    &= \frac{\min\set{1-\nu, 2-2\lambda, 1-\mu+\nu}}\delta\eqqcolon \frac{f(\lambda,\mu,\nu)}\delta \text{.} 
\end{align*}

We denote the numerator of the last expression $f(\lambda, \mu, \nu)$.
To show that any admissible triangle $P$ has width at most $\frac{10}{3}$, it suffices to verify
\[
    \max_{(\lambda, \mu, \nu) \in Q} f(\lambda, \mu, \nu) \leq \frac{10}3\text{.}
\]
To do so, we note that $f(\lambda, \mu, \nu)$ is a tropical polynomial, and using polymake, we calculate its \emph{regions of linearity}, which when intersected with $Q$ gives polytopes $Q_i$ over which $f$ coincides with a linear function $f_i$ (for further details on tropical geometry, we refer to~\cite{ETC}).
Using Mathematica~\cite{Mathematica}, for each $i$, we compute the maximum of the rational function $\frac{f_i}\delta$ over the region $Q_i$.
In this way, we verify that in this case there is exactly one point in $Q$ at which the maximum $\frac{10}3$ is achieved, namely at $(\lambda, \mu, \nu)=\mleft(\frac25, \frac15,\frac45\mright)$.
For these values, the corresponding triangle $P$ is exactly the triangle depicted in \Cref{fig:triangle_case1}, which will turn out to be the only $\ZZ$-$\Delta_2$-free polygon of width $\frac{10}3$, as stated in \Cref{prop:locked_incl_max_delta2_free}.


\subsection{Quadrilateral circumscribed around a rectangle}
Next, we consider the case where $P$ is a quadrilateral and the convex hull of its locking points $P'$ is $\ZZ$-unimodularly equivalent to a lattice rectangle with area equal to two.
Then, we can assume that $P' = \conv(\pm e_2, e_1 \pm e_2)$.

First, observe that the vertices of $P$ are in the vertical strip $0 < x < 1$ or in the horizontal strip $-1 < y < 1$.
Indeed, any point strictly outside of both strips forces one of the locking points to be in the interior of $P$, a contradiction.
Furthermore, a vertex on the boundary of a strip forces two facets to coincide, and thus $P$ to be a triangle, a case which was dealt with already in \Cref{sec:triangles}.
Thus, one vertex of $P$ lies in each of the four connected components of the union of the two strips minus the rectangle $P'$.

Note two vertices of $P$ in, say, the horizontal strip, one on each side of $P'$, completely determine $P$.
Let us denote those two vertices by $-\kappa e_1+\lambda e_2$ and $(\mu+1)e_1+\nu e_2$, with $\kappa, \mu >0$ and $-1<\lambda, \nu<1$.
Then the lines supporting the edges of $P$ are 
\[
    y=\frac{1-\lambda}\kappa x+1, \qquad y=-\frac{1+\lambda}\kappa x-1,\qquad y=-\frac{1-\nu}\mu x +\frac{1-\nu}\mu +1, \qquad y=\frac{1+\nu}\mu x - \frac{1+\nu}\mu-1
\]
and thus the two remaining vertices of $P$ are given by
\begin{equation}\label{eq:rect-top-bottom-vertex}
	\begin{aligned}
    &\mleft(\frac{(1-\nu)\kappa}{(1-\nu)\kappa + (1-\lambda)\mu}, \frac{(1-\nu)(1-\lambda)}{(1-\nu)\kappa + (1-\lambda)\mu}+1\mright) \qquad \text{and}\\
    &\mleft(\frac{(1+\nu)\kappa}{(1+\nu)\kappa+(1+\lambda)\mu},\frac{-(1+\nu)(1+\lambda)}{(1+\nu)\kappa+(1+\lambda)\mu}-1\mright).
    \end{aligned}
\end{equation}
Clearly the width in the horizontal direction is $\width_h = \kappa+\mu+1$, while from the previous formulae we obtain the width in the vertical direction: $\width_v = 2 +\frac{(1+\lambda)(1+\nu)}{(1+\lambda)\mu+(1+\nu)\kappa} + \frac{(1-\lambda)(1-\nu)}{(1-\lambda)\mu+(1-\nu)\kappa}$.

We first show if $\kappa, \mu$ and $\lambda$ are fixed, the maximum vertical width is attained for $\nu=\lambda$.
To that end compute the partial derivative of $\width_v$ with respect to $\nu$:
\[
    \frac{\partial}{\partial \nu}\width_v=-\frac{4\kappa\mu\cdot (\lambda-\nu)\cdot(\kappa(\lambda\nu-1)+\mu(\lambda^2-1))}{((1-\nu)\kappa+(1-\lambda)\mu)^2\cdot((1+\nu)\kappa + (1+\lambda)\mu)^2} \text{.}
\]
It is straightforward to verify that on our domain $\kappa,\mu>0$ and $-1<\lambda,\nu<1$ this partial derivative only vanishes at $\nu=\lambda$.
It is easy to check that this is indeed a maximum. 

We can thus focus on the case where $P$ has two horizontally aligned vertices $(\kappa,\lambda)$ and $(\mu,\lambda)$.
From Formulae~\eqref{eq:rect-top-bottom-vertex}, it readily follows that the top and bottom vertices are vertically aligned (see \Cref{fig:kite}).
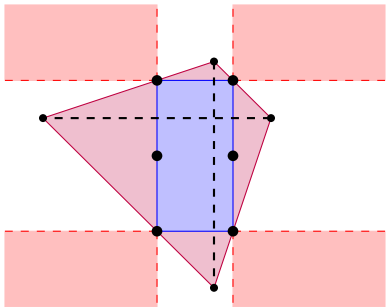
\begin{figure}[!ht]
    \centering
    \begin{tikzpicture}
            \fill[draw=purple,fill=purple!25] (.75,-1.75)-- (-1.5,.5)  -- (.75,1.25) -- (1.5,.5)  -- cycle;

            \fill[draw=blue,fill=blue!25] (0,1) -- ( 1,  1) -- (1, -1) -- ( 0, -1) -- cycle;
            \draw[thick,dashed] (1.5,.5)  -- (-1.5,.5) ;
            \draw[thick,dashed] (.75,1.25)  -- (.75,-1.75);
        
            \fill[draw=red!25,fill=red!25] (-2,1) --  (0,1) -- (0, 2) -- (-2,2) -- cycle;
            \draw[dashed,red] (0,1) -- (0,2);
            \draw[dashed,red] (0,1) -- (-2,1);

            \fill[draw=red!25,fill=red!25] (-2,-1) --  (0,-1) -- (0, -2) -- (-2,-2) -- cycle;
            \draw[dashed,red] (0,-1) -- (0,-2);
            \draw[dashed,red] (0,-1) -- (-2,-1);

            \fill[draw=red!25,fill=red!25] (3,-1) --  (1,-1) -- (1, -2) -- (3,-2) -- cycle;
            \draw[dashed,red] (1,-1) -- (1,-2);
            \draw[dashed,red] (1,-1) -- (3,-1);
            
            \fill[draw=red!25,fill=red!25] (3,1) --  (1,1) -- (1, 2) -- (3,2) -- cycle;
            \draw[dashed,red] (1,1) -- (1,2);
            \draw[dashed,red] (1,1) -- (3,1);            
                 
            \fill (0,1) circle (2pt);
            \fill (1,1) circle (2pt);
            \fill (0,0) circle (2pt);
            \fill (1,0) circle (2pt);
            \fill (0,-1) circle (2pt);
            \fill (1,-1) circle (2pt);
            
            \fill (-1.5,.5) circle (1.5pt);
            \fill (1.5,.5) circle (1.5pt);
            \fill (.75,1.25) circle (1.5pt);
            \fill (.75,-1.75) circle (1.5pt);
            
        \end{tikzpicture}
    \caption{The maximum width in the vertical direction is achieved when the vertices of the circumscribed quadrilateral are horizontally and vertically aligned.\label{fig:kite}
}
\end{figure}
Let $\zeta+1$ be the $y$-coordinate of the top vertex of $P$ and $-\xi-1$ be that of the bottom vertex.
We calculate the area $A$ of $P$ in two different ways.
Since the diagonals of $P$ are orthogonal, we have $A=\frac{(\kappa+\mu+1)(\zeta+\xi+2)}{2}$ where $\kappa+\mu+1$ respectively $\zeta+\xi+2$ are the lengths of the horizontal and vertical diagonal.
On the other hand, $P$ can be decomposed into the union of $P'$ and four triangles, each sharing an edge with $P'$ and a vertex with $P$.
The sum of the areas of these pieces gives $A= 2 + \kappa+\mu+ \frac{\zeta+\xi}{2}$.
These two expressions for the area $A$ of $P$ yield the equation $(\kappa+\mu)(\zeta+\xi)=2$.
Thus if $\kappa+\mu>2$ then $\zeta+\xi <1$.
We conclude by observing that $\kappa+\mu>2$ is equivalent to the horizontal width being greater than $3$ and $\zeta+\xi>1$ is equivalent to the vertical width being greater than $3$.
Since these conditions cannot happen at the same time, the width of $P$ is at most $3$.


\subsection{Quadrilateral circumscribed around a cross-polygon}
Let $\Diamond_2 \subset \RR^2$ be the $2$-dimensional cross-polygon, i.e., $\Diamond_2=\conv(\pm e_1, \pm e_2) \subset \RR^2$.
Here we are going to bound the width of inclusion-maximal $\ZZ$-$\Delta_2$-free quadrilaterals $P$ circumscribed around $\Diamond_2$.
As above, it suffices to consider such quadrilaterals whose width is greater than $3$.
We begin with some preliminary observations.

Switching to the 2-dimensional lattice generated by $f_1 \coloneqq (1,1)$ and $f_2 \coloneqq (1,-1)$ results in more manageable equations for the widths of $P$.
Let us denote the basis dual to the basis $f_1, f_2$ of the new lattice by $f_1^*,f_2^*$.
Explicitly, $f_1^* = \frac12(e_1^* + e_2^*)$ and $f_2^* = \frac12(e_1^* - e_2^*)$.
In this lattice, the cross-polygon which $P$ is circumscribed around has vertices $\pm f_1$ and $\pm f_2$.

Notice that $P$ has a vertex in each of the four following regions: $\pm\setcond{(x,y) \in \RR^2}{-1 < x < 1,\, y>1}$ and $\pm\setcond{(x,y) \in \RR^2}{x>1,\, -1 < y < 1}$ which we will refer to as the top, bottom, right, and left regions, respectively. 
We label the vertices of $P$ in the top and bottom regions as $Z = (\kappa,\lambda)$, $W = (\mu,\nu)$, respectively, where $-1 < \kappa,\mu < 1$ and $\lambda,-\nu > 1$.
The vertices in the right and left region are labeled $Y$, $X$ respectively.
Note that $P$ is completely determined by the parameters $\kappa, \lambda, \mu, \nu$ defining $Z$ and $W$.

By \Cref{prop:one_interior_point}, $P$ contains at least two interior lattice points since its width is assumed to be greater than $3$.
Clearly, $\mathbf{0}$ is one interior lattice point of $P$.
The second interior point could be either $\pm(2,0)$ or $\pm(0,2)$.
By symmetry, we may assume without loss of generality that $(0,2)$ is the other interior point.
This implies $\lambda > 2$ and that the top vertex $Z$ of $P$ is extremal in the sense that the width with respect to the directions $f_1^*$ and $f_2^*$, i.e., $\width_{f_i^*}(P)$, is attained at $Z$.
\begin{figure}[!ht]
    \centering
    \begin{tikzpicture}[scale=0.75]
     
 		\clip (-2.5,-3.5) rectangle (2.5,4.2);

        \fill[draw=gray!25,fill=gray!50] (0,2) -- (-1,3) -- (-1,4.5) -- (1,4.5) -- (1,3) -- cycle;
        \draw[dashed,gray] (0,2) -- (-1,3);
        \draw[dashed,gray] (0,2) -- (1,3);
        
        \fill[fill=gray!15] (-1,-1) -- (1,-1) -- (1,-3.5) -- (-1,-3.5) -- cycle;
        \draw[dashed,gray] (-1,-1) -- (1,-3);
        \draw[dashed,gray] (1,-1) -- (-1,-3);
        
        \node at (0,-1.4) {$A$};
        \node at (0.6,-2) {$C$};
        \node at (0,-2.6) {$B$};
        \node at (-0.6,-2) {$D$};
        
        \fill[draw=red!25,fill=red!25] (-2.5,1) -- (-1,1) -- (-1, 4.5) -- (-2.5,4.5) -- cycle;
        \draw[dashed,red] (-1,1) -- (-1,4.5);
        \draw[dashed,red] (-1,1) -- (-2.5,1);

        \fill[draw=red!25,fill=red!25] (2.5,1) -- (1,1) -- (1,4.5) -- (2.5,4.5) -- cycle;
        \draw[dashed,red] (1,1) -- (1,4.5);
        \draw[dashed,red] (1,1) -- (2.5,1);

        \fill[draw=red!25,fill=red!25] (-2.5,-1) --  (-1,-1) -- (-1, -3.5) -- (-2.5,-3.5) -- cycle;
        \draw[dashed,red] (-1,-1) -- (-1,-3.5);
        \draw[dashed,red] (-1,-1) -- (-2.5,-1);

        \fill[draw=red!25,fill=red!25] (2.5,-1) -- (1,-1) -- (1,-3.5) -- (2.5,-3.5) -- cycle;
        \draw[dashed,red] (1,-1) -- (1,-3.5);
        \draw[dashed,red] (1,-1) -- (2.5,-1);
        
        \fill[draw=blue,fill=blue!25] (1,1) -- (1,-1) -- (-1,-1) -- (-1,1) -- cycle;
        
        \fill (-2,4) circle (2pt);
        \fill (0,4) circle (2pt);
        \fill (2,4) circle (2pt);
        
        \fill (-1,3) circle (2pt);
        \fill (1,3) circle (2pt);
        
        \fill (-2,2) circle (2pt);
        \node at (0,2) {$*$};
        \fill (2,2) circle (2pt);
        
        \fill (-1,1) circle (2pt);
        \fill (1,1) circle (2pt);
        
        \fill (2,0) circle (2pt);
        \node at (0,0) {$*$};
        \fill (-2,0) circle (2pt);
        
        \fill (-1,-1) circle (2pt);
        \fill (1,-1) circle (2pt);
        
        \fill (-2,-2) circle (2pt);
        \fill (0,-2) circle (2pt);
        \fill (2,-2) circle (2pt);
        
        \fill (-1,-3) circle (2pt);
        \fill (1,-3) circle (2pt);
        
    \end{tikzpicture} \qquad \qquad
    \begin{tikzpicture}[scale=0.75]
     
 		\clip (-2.5,-3.5) rectangle (2.5,4.2);

        \fill[draw=gray!25,fill=gray!50] (0,2) -- (-1,3) -- (-1,4.5) -- (1,4.5) -- (1,3) -- cycle;
        \draw[dashed,gray] (0,2) -- (-1,3);
        \draw[dashed,gray] (0,2) -- (1,3);
        
        \fill[fill=gray!15] (1,-1) -- (0,-2) -- (1,-3) -- cycle;
        \draw[dashed,gray] (1,-1) -- (0,-2) -- (1,-3);
        
        \fill[draw=red!25,fill=red!25] (-3,-1) -- (-1,1) -- (-1, 4.5) -- (-3,4.5) -- cycle;
        \draw[dashed,red] (-1,1) -- (-1,4.5);
        \draw[dashed,red] (-1,1) -- (-3,-1);

        \fill[draw=red!25,fill=red!25] (3,-1) -- (1,1) -- (1,4.5) -- (3,4.5) -- cycle;
        \draw[dashed,red] (1,1) -- (1,4.5);
        \draw[dashed,red] (1,1) -- (3,-1);

        \fill[draw=red!25,fill=red!25] (-2.5,-1) --  (-1,-1) -- (-1, -3.5) -- (-2.5,-3.5) -- cycle;
        \draw[dashed,red] (-1,-1) -- (-1,-3.5);
        \draw[dashed,red] (-1,-1) -- (-2.5,-1);

        \fill[draw=red!25,fill=red!25] (2.5,-1) -- (1,-1) -- (1,-3.5) -- (2.5,-3.5) -- cycle;
        \draw[dashed,red] (1,-1) -- (1,-3.5);
        \draw[dashed,red] (1,-1) -- (2.5,-1);
        
        \fill[draw=blue,fill=blue!25] (1,1) -- (1,-1) -- (-1,-1) -- (-1,1) -- cycle;
                
        \draw (1/4,4) -- (-11/6,-1);
        \draw (1/4,4) -- (3/2,-1);
        
        \draw (1-1/4,-2.5) -- (-1.61403,-0.473672);
        \draw (1-1/4,-2.5) -- (6/5,1/5);
        
        \draw[very thin,draw=gray!120] (1/4-1, 4 - 20/3-1) -- (1/4 + 2+1, 6 - 20/3+1);
        \node at (-2.2,0.25) {$\scriptstyle H_1$};
        \draw[very thin,draw=gray!120] (1/4+1, 4 - 20/3-1) -- (1/4 - 3, 7 - 20/3);
        \node at (2,-1.5) {$\scriptstyle H_2$};
        
        \draw[very thin,draw=gray!120,dashed] (1/4,4) -- (1/4,4-20/3);
        \draw[very thin,draw=gray!120,dashed] (1,4-1/4-17/3) -- (1/4,4-1/4-17/3);

        \fill (1/4,4) circle (3pt) node[right] {$\scriptstyle Z$};
        
        \fill (1/4,4-20/3) circle (3pt) node[below] {$\scriptstyle Z'$};
        
        \fill (1,4-1/4-17/3) circle (3pt) node[right] {$\scriptstyle J$};
        
        \fill (-11/6,-1) circle (3pt) node[below] {$\scriptstyle V_1$};
        
        \fill (3/2,-1) circle (3pt) node[above right=-.75mm] {$\scriptstyle V_2$};
        
        \fill (1-1/4,-2.5) circle (3pt) node[below right=-.5mm] {$\scriptstyle W$};
        
        \fill (1/4,4-1/4-17/3) circle (3pt) node[above] {$\scriptstyle \widetilde W$};
        
        \fill (-2,4) circle (2pt);
        \fill (0,4) circle (2pt);
        \fill (2,4) circle (2pt);
        
        \fill (-1,3) circle (2pt);
        \fill (1,3) circle (2pt);
        
        \fill (-2,2) circle (2pt);
        \node at (0,2) {$*$};
        \fill (2,2) circle (2pt);
        
        \fill (-1,1) circle (2pt);
        \fill (1,1) circle (2pt);
        
        \fill (2,0) circle (2pt);
        \node at (0,0) {$*$};
        \fill (-2,0) circle (2pt);
        
        \fill (-1,-1) circle (2pt);
        \fill (1,-1) circle (2pt);
        
        \fill (-2,-2) circle (2pt);
        \fill (0,-2) circle (2pt);
        \fill (2,-2) circle (2pt);
        
        \fill (-1,-3) circle (2pt);
        \fill (1,-3) circle (2pt);
        
    \end{tikzpicture}
    \caption{(left) The cross-polygon in the transformed lattice.
    The top vertex $Z$ belongs to the dark grey region.
    The bottom vertex $W$ belongs to one of the four light grey subregions, labelled $A,B,C,D$.
    (right) The case where $W$ is in subregion $C$.\label{fig:cross_poly_triangle_lattice}
}
\end{figure}
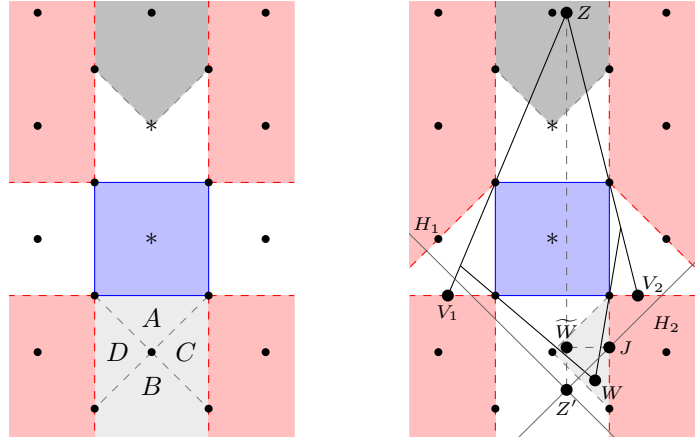
We can express the left and right vertices of $P$ in terms of the parameters $\kappa, \lambda, \mu, \nu$ as follows:
\[
    X = -f_2 - \frac{2}{\frac{\lambda-1}{\kappa+1} - \frac{\nu+1}{\mu+1}} \left(1,\frac{\lambda-1}{\kappa+1}\right),\qquad
    Y = f_2 + \frac{2}{\frac{\nu+1}{\mu-1} - \frac{\lambda-1}{\kappa-1}} \left(1, \frac{\nu+1}{\mu-1} \right).
\]
The width with respect to the horizontal functional $f^*_1+f^*_2$ is achieved at $X$ and $Y$ and can thus be expressed as follows. 
\[
    w_0 \coloneqq \width_{f^*_1+f^*_2}(P) = \mleft(f^*_1+f^*_2\mright)( Y - X) = 2 + \frac{2}{\frac{\nu+1}{\mu-1} - \frac{\lambda-1}{\kappa-1}} + \frac{2}{\frac{\lambda-1}{\kappa+1} - \frac{\nu+1}{\mu+1}}.
\]

We are now ready to prove the following.

\begin{proposition}
    Let $P$ be a maximal $\ZZ$-$\Delta_2$-free quadrilateral circumscribed around a cross-polygon.
    Then $\width(P) < \frac{10}3$.
\end{proposition}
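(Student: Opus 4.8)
The plan is to carry the explicit parametrisation just set up through a case analysis on the position of the bottom vertex $W$. Recall that $P$ is determined by its top vertex $Z=(\kappa,\lambda)$ and bottom vertex $W=(\mu,\nu)$ via the displayed formulas for the side vertices $X,Y$; that $\lambda>2$ because $(0,2)$ is an interior lattice point; and that we may assume $\width(P)>3$. An upper bound for $\width(P)$ is $\min\set{\width_{f_1^*}(P),\,\width_{f_2^*}(P),\,w_0}$, where the first two are attained at the extremal vertex $Z$ and $w_0=\width_{f_1^*+f_2^*}(P)$ is the displayed rational function of $\kappa,\lambda,\mu,\nu$ (I would enlarge this list by $\width_{f_1^*-f_2^*}(P)$ should a subcase require it).

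First I would write down the constraints coming from $\ZZ$-$\Delta_2$-freeness: the interior of $P$ may contain no lattice point other than $\mathbf{0}$ and $(0,2)$, so — $\mathbf{0}$ and $(0,2)$ already lying inside and the vertices $\pm f_1,\pm f_2$ of the inscribed cross-polygon lying on $\partial P$ — each neighbouring lattice point forces $P$ to avoid an explicit cone with that point as apex; these are the red regions of \Cref{fig:cross_poly_triangle_lattice}. Expressed as inequalities on $\kappa,\lambda,\mu,\nu$ and on the four edge-slopes (which are affine-linear in these parameters), the exclusions cut out the region of admissible parameters.

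The case split is according to which of the four components $A,B,C,D$ of the bottom region — obtained by cutting it with the two lines through the excluded lattice point $(0,-2)$ parallel to the lower edges of the cross-polygon — contains $W$. The reflection $x\mapsto-x$ identifies $D$ with $C$, and the cases $W\in A$, $W\in B$ are quick: there the exclusions pin $Z$ and $W$ together in one of the directions $f_1^*,f_2^*$, so that the corresponding width is already $\le3$, contradicting $\width(P)>3$. This leaves the case $W\in C$, depicted on the right of \Cref{fig:cross_poly_triangle_lattice}: I would use the relevant excluded lattice points to bound the slopes of the edges $ZX,ZY,WX,WY$, introduce the foot $Z'$ of $Z$ together with the lines $H_1,H_2$ of slope $\pm1$ through $Z'$ (translates of the lower edges of the cross-polygon), show that $P$ is contained in the explicit region $\widehat P$ cut out by $H_1,H_2$ and two supporting lines at $Z$, and finally bound $\min\set{\width_{f_1^*},\width_{f_2^*},w_0}$ on $\widehat P$ by a quantity $<\tfrac{10}{3}$ — the value $\tfrac{10}{3}$ being approached only in the degenerate limit where $P$ collapses to a triangle, a configuration already handled in \Cref{sec:triangles}.

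The main obstacle is exactly this last case: in contrast to \Cref{sec:triangles}, where $\tfrac{10}{3}$ is attained, here a \emph{strict} inequality is needed, so the over-approximation $P\subseteq\widehat P$ and the ensuing optimisation must be arranged so that the supremum $\tfrac{10}{3}$ is reached only by configurations which are not genuine quadrilaterals. Everything else reduces to elementary manipulations of the slope inequalities, which can be verified by hand or, for the bookkeeping-heavy subcases, with polymake and Mathematica exactly as in \Cref{sec:triangles}.
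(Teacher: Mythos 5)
Your handling of cases $A$ and $B$ is where the argument has a genuine gap. In case $A$ the bottom vertex $W$ is \emph{not} extremal for either of the directions $f_1^*$, $f_2^*$: those widths are attained at $Z$ together with $X$ or $Y$, so nothing ``pins $Z$ and $W$ together'', and the claimed bound $\le 3$ is in fact false for admissible case-$A$ configurations. Indeed, the width-$\frac{10}{3}$ maximiser of case~1 of \Cref{table:triangles} is circumscribed around a unimodular cross-polygon, with two of the cross-polygon's vertices lying in the relative interior of one edge; replacing that edge by two edges pivoting about those two vertices and meeting at a new vertex slightly beyond it produces $\ZZ$-$\Delta_2$-free quadrilaterals circumscribed around the cross-polygon whose bottom vertex lies in region $A$ and whose widths $w_0,w_1,w_2$ are all arbitrarily close to $\frac{10}{3}$. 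So case $A$ cannot be discharged by a width-$3$ contradiction; the paper instead observes that moving $W$ upwards to the line $y=-1$ strictly increases all three widths, so that $P$ is dominated by a degenerate triangle circumscribed around $f_1,f_2,-f_2$, and then quotes the $\frac{10}{3}$ bound for case~1 of \Cref{sec:triangles} -- which yields only the strict inequality $\width(P)<\frac{10}{3}$, never $\le 3$. Case $B$ is also not a one-line pin: there $w_1,w_2$ are indeed attained at $Z$ and $W$, but that only gives $\min\{w_1,w_2\}\le\frac{\lambda-\nu}{2}$, which the exclusions do not bound by $3$; the bound $3$ emerges only after playing this off against $w_0$ (maximised along $\kappa=\mu$, where $w_0=2+\frac{4}{\lambda-\nu-2}$), the two expressions balancing at $\lambda-\nu=6$.

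Your outline for case $C$ -- the point $Z'=Z-\bigl(0,\frac{20}{3}\bigr)$, the lines $H_1,H_2$ of slopes $\mp1$ through it, containment of $P$ in an explicit over-approximation, and an optimisation giving a strict bound -- is in the spirit of the paper's argument, which assumes $w_0,w_1,w_2\ge\frac{10}{3}$, derives $\lambda\le4$ and $\kappa\ge0$, and then bounds $w_0$ by the horizontal width of an auxiliary quadrilateral with bottom vertex $\bigl(\kappa,\lambda-\kappa-\frac{17}{3}\bigr)$, namely $2+\frac{4}{\kappa+11/3}\le 2+\frac{12}{11}<\frac{10}{3}$, using monotonicity of $w_0$ in $\mu$ and $\nu$. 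So that part of the plan is plausible, but as written the proof is incomplete until cases $A$ and $B$ are repaired as above.
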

\begin{proof}
    We have four cases depending on which subregion the bottom vertex $W$ lies in; these subregions are denoted by $A$, $B$, $C$ and $D$ in the left part of \Cref{fig:cross_poly_triangle_lattice}.
    Since the setup is symmetric about the $y$-axis, it suffices to consider the cases $A$, $B$, and $C$ as case $D$ is equivalent to case $C$.
    
    Let's start with case $A$.
    It turns out that it is enough to find the largest possible width of $P$ with respect to the directions $f_1^*$, $f_2^*$, and $f_1^*+f_2^*$.
    Notice that the bottom vertex $W$ of $P$ is not extremal with respect to any of those three width directions.
    Moving $W$ upwards increases all three of those widths.
    Thus, we may move $W$ to the line $y=-1$, in which case $P$ degenerates to a triangle.
    We can consider this triangle to be circumscribed around the \emph{three} points $f_1$, $f_2$, and $-f_2$, and thus it can be regarded as circumscribed around case 1 from \Cref{table:triangles} (one needs to apply a shearing to arrive at case 1), which has width less than or equal to $\frac{10}3$ with respect to those same three width directions.
    Hence, $\width(P) < \frac{10}3$.
    
    Let's now deal with case $B$.
    Here, the widths with respect to $f_1^*$ and $f_2^*$ are achieved at the vertices $Z$ and $W$:
    \[
    	w_1 \coloneqq \width_{f_1^*}(P) = f_1^*(Z - W) = \frac{\kappa-\mu}2 + \frac{\lambda-\nu}2\text {,}
    \]
    \[
        w_2 \coloneqq \width_{f_2^*}(P) = f_2^*\mleft(W - Z\mright) = \frac{\mu-\kappa}2 + \frac{\lambda-\nu}2 \text{.}
    \]
    The partial derivative of $w_1$ with respect to $\mu$ is
    \begin{equation}\label{eq:cross-poly_partial_mu}
    	\frac{\partial w_0}{\partial \mu}=\frac{8(\kappa-\mu)(\nu+1)(\lambda-1)\mleft((\lambda-1)(\kappa\mu-1)-(\nu+1)\mleft(\kappa^2+1\mright)\mright)}{\mleft(\mleft(\frac{\lambda-1}{\kappa+1}(\mu+1)-(\nu+1)\mright)\mleft(\nu+1-\frac{\lambda-1}{\kappa-1}(\mu-1)\mright)\mleft(\kappa^2-1\mright)\mright)^2} \text{.}
    \end{equation}
    Within our constraints $-1 < \kappa,\mu < 1$, $\lambda > 2$, and $\nu < -1$, the above expression vanishes if and only if $\kappa=\mu$; it is positive for $\mu<\kappa$ and negative for $\mu>\kappa$.
    Thus, $w_0$ is maximal along $\kappa=\mu$, where it is equal to $2+ \frac{4}{\lambda-\nu-2}$.
    By looking at the first summands of $w_1$ and $w_2$, it can be seen that $\min\{w_1, w_2\}$ is also maximal along $\kappa=\mu$.
    We obtain $\width(P) \leq \min\{2+ \frac{4}{\lambda-\nu-2}, \frac{\lambda-\nu}2\}$.
    Since $2 + \frac{4}{\lambda-\nu-2}$ decreases when $\frac{\lambda-\nu}2$ increases, and vice versa, the maximum is achieved when those expressions coincide, which occurs at $\nu=\lambda-6$. Hence, $\width(P) \leq 3$.
    
    Finally, we deal with case $C$; see the right part of \Cref{fig:cross_poly_triangle_lattice}.
	The width with respect to $f_1^*$ is achieved at $Z$ and $X$, while the width with respect to $f_2^*$ is achieved at $W$ and $Z$.
    Assume towards a contradiction that the widths with respect to the directions $f_1^*+f_2^*$, $f_1^*$, and $f_2^*$ are greater than or equal to $\frac{10}3$, i.e., $w_0, w_1, w_2 \geq \frac{10}{3}$. 
    Let $Z' \coloneqq Z-\mleft(0,\frac{20}{3}\mright)$ so that
    \[
    	f_1^*\mleft(Z-Z'\mright) = f_1^*\mleft(0,\frac{20}3\mright) = \frac{10}3 \qquad \text{and} \qquad f_2^*\mleft(Z'-Z\mright) = f_2^*\mleft(-\mleft(0,\frac{20}3\mright)\mright) = \frac{10}3 \text{.}
    \]
    
    Since $w_1$ (resp.~$w_2$) is assumed to be at least $\frac{10}3$, there is a point of $P$ below the line $H_1$ (resp.~$H_2$) passing through $Z'$ with slope $-1$ (resp.~$1$).
    In particular, vertex $X$ is below the line $H_1$ and vertex $W$ is below the line $H_2$, since they are the vertices maximising the width along the respective directions.
    
    We will use the following inequalities:
    \begin{equation}\label{eq:cross-poly_inequalities}
		\lambda \leq 4 \qquad \text{and} \qquad \kappa \ge 0\text{.}
    \end{equation}
    
    We first prove $\lambda \leq 4$.
	An upper bound for $w_0$ is given by the width with respect to $f_1^*+f_2^*$ of the triangle $T$ circumscribed around the cross-polygon with vertex $Z$ and opposite edge supported by the line $y=-1$. The bottom vertices of $T$ are $V_1 \coloneqq (-1-\frac{2(\kappa+1)}{\lambda-1}, -1)$ and $V_2 \coloneqq (1-\frac{2(\kappa-1)}{\lambda-1}, -1)$.
    Thus, $\width_{f_1^*+f_2^*}(T)= 2+\frac4{\lambda-1}$.
    Since by assumption $w_0 \geq\frac{10}{3}$, so is this larger width.
    We obtain $\lambda \leq 4$.
    
    To prove that $\kappa \ge 0$, consider the bottom-left vertex $V_1$ of the triangle $T$ defined in the previous paragraph.
    Since the vertex $X$ of $P$ lies below $H_1$, so does $V_1$, and thus $-2 - \frac{2(\kappa+1)}{\lambda-1} \leq \kappa+\lambda-\frac{20}3$, which is equivalent to $\frac{20}3 + \kappa - \frac{17}{3}\lambda + \kappa\lambda + \lambda^2 \geq 0$.
    Consider the left side of the previous inequality as a family of functions $f_\kappa \colon [2,4] \to \RR$ on the closed interval $[2,4]$ for parameters $\kappa \in [-1,1]$.
    Observe that $\kappa \in [-1,1]$ is admissible if and only if there exists $\lambda \in [2,4]$ such that $f_\kappa(\lambda)\ge0$ (we include the case $\lambda=2$).
    Then $\kappa \in [-1,1]$ is admissible if the maximum of $f_\kappa$ on $[2,4]$ is non-negative.
    It is straightforward to show
    \[
    	\max_{\lambda \in [2,4]} f_\kappa(\lambda) = \begin{cases}
			5 \kappa & \kappa > -\frac13\\
			3 \kappa -\frac23 & \text{otherwise}
		\end{cases} \qquad \text{at} \; \lambda=\begin{cases}
			4 & \kappa> -\frac13\\
			2 & \text{otherwise}
		\end{cases}
    \]
    Hence, $\kappa\ge0$ are the only admissible parameters.
        
    Next, we aim to bound $w_0$ from above by considering a different quadrilateral $\widetilde{P}$ circumscribed around the cross-polygon, with top vertex $Z$ and bottom vertex $\widetilde{W} \coloneqq (\kappa, \lambda-\kappa-\frac{17}{3})$ (note that the inequalities from \eqref{eq:cross-poly_inequalities} guarantee that $\lambda-\kappa-\frac{17}3 < -1$).
    It is straightforward to compute that $\widetilde{w}_0 \coloneqq \width_{f_1^* + f_2^*}(\widetilde{P}) = 2 + \frac{4}{\kappa + \frac{11}{3}}$.
    Since $\kappa \geq 0$, we get that $\widetilde{w}_0 \leq 2+\frac{12}{11} < \frac{10}{3}$.
    
    To reach a contradiction, we show that $w_0 \leq \widetilde{w}_0$.
    Consider the partial derivatives of $w_0$ with respect to $\mu$ and $\nu$, the coordinates of the vertex $W$.
    The first was already computed in \Cref{eq:cross-poly_partial_mu}, and we observed that the maximum is achieved along $\mu = \kappa$.
    The second is computed here.
    
    \begin{equation}\label{eq:cross-poly_partial_nu}
        \frac{\partial w_0}{\partial \nu} = \frac{2}{(1-\mu)\mleft(\frac{\nu+1}{\mu-1} - \frac{\lambda-1}{\kappa-1}\mright)^2} + \frac{2}{(1+\mu)\mleft(\frac{\lambda-1}{\kappa+1} - \frac{\nu+1}{\mu+1}\mright)^2}
    \end{equation}
    
    Since $-1 < \mu < 1$, \Cref{eq:cross-poly_partial_nu} shows that $\frac{\partial w_0}{\partial \nu} > 0$.
    Now, consider the intersection point $J$ of the lines $H_2$ and $\set{x=1}$.
    It's straightforward to compute $J = (1,\lambda-\kappa-\frac{17}{3})$.
    The point $J$ has largest $y$-coordinate of all points in $C$ on or below $H_2$ (see \Cref{fig:cross_poly_triangle_lattice}).
    Thus, $\nu \leq \lambda - \kappa - \frac{17}{3}$.
    So, moving $W$ horizontally to the line $x=\kappa$ and then vertically to the line $y=\lambda-\kappa-\frac{17}{3}$ increases the width $w_0$, and thus $w_0 \leq \widetilde{w}_0$.
    Combined with $\widetilde{w}_0 < \frac{10}{3}$, we get that $\width(P) \leq w_0 < \frac{10}{3}$, a contradiction.
\end{proof}

This concludes the proof of \Cref{theorem:flt_Z_D2}.


\section{\texorpdfstring{$\RR$-Generalised Flatness Constant of $\Delta_2$}{R-Generalised Flatness Constant of Delta2}}\label{sec:r_flatness_d2}
This section focuses on the $\RR$-flatness constant. 
The first goal is to prove the following theorem (case $A=\RR$ of \Cref{thm:main}).

\begin{theorem}[Case $A=\RR$ of \Cref{thm:main}]\label{theorem:flt_R_D2_is_2}
    $\flatness_2^\RR(\Delta_2) = 2$.
\end{theorem}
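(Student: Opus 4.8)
The plan is to establish the two inequalities separately. For the lower bound $\flatness_2^\RR(\Delta_2) \ge 2$, I would exhibit an explicit $\RR$-$\Delta_2$-free convex body of width $2$ (in fact of width arbitrarily close to $2$ suffices, but exactness is better). The natural candidates named already in the introduction are the cross-polygon $\conv(\pm e_1, \pm e_2)$ and the triangle $\conv(e_1, e_2, -e_1-e_2)$, both lattice polygons of lattice width exactly $2$. So the first step is to verify that, say, $\conv(\pm e_1, \pm e_2)$ contains no $\RR$-unimodular copy of $\Delta_2$ in its interior. By \Cref{lem_R_X_free_with_Tran}, this amounts to checking that $\dim(K \divideontimes S) < 2$ for every $\ZZ$-unimodular copy $S$ of $\Delta_2$; using \Cref{lem_translations} this is a finite check once one bounds the relevant lattice points via $K + [-1,0]^2$ (finitely many copies up to translation by \Cref{lem_TT_K_X_sim_finite}). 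Computing $\width$ of the cross-polygon gives exactly $2$ (e.g.\ along $e_1^*$ or $e_2^*$), completing the lower bound.

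For the upper bound $\flatness_2^\RR(\Delta_2) \le 2$, I would invoke the general framework of \Cref{sec:prelim}. By inequality~\eqref{ineq:upper_lower_bounds} it suffices to bound the width of every inclusion-maximal $\RR$-$\Delta_2$-free closed convex set. By \Cref{prop:unbounded}, the unbounded ones are (up to $\RR$-unimodular transformation) the strip $[0,1]\times\RR$, of width $1 < 2$; and by \Cref{lem:full-dim} we may restrict to full-dimensional ones. So the crux is to show that every inclusion-maximal $\RR$-$\Delta_2$-free convex body $K \subset \RR^2$ has $\width(K) \le 2$. By \Cref{prop_R_incl_max} such a $K$ is a polygon, and by \Cref{cor:incl-max-contain-X} it contains an $\RR$-unimodular copy of $\Delta_2$; after an $\RR$-unimodular transformation we may assume $\Delta_2 = \conv(\mathbf{0}, e_1, e_2) \subset K$, with $\Delta_2$ not sitting in the interior (by $\RR$-$\Delta_2$-freeness, using \Cref{lem:int-full-dim}, $K\divideontimes\Delta_2$ is at most $1$-dimensional). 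The plan is then to extract strong geometric constraints: $K$ cannot contain in its interior any $\RR$-translate of $\Delta_2$, hence (since translates of $\Delta_2$ and of $-\Delta_2$ tile cleverly) $K$ cannot contain in its interior any of a controlled collection of unit right triangles, which forces $K$ to avoid certain open regions (analogous to the cones $C_i$ in the proof of \Cref{prop:one_interior_point}). Pinning down which edges of $\Delta_2$ lie on $\partial K$ and combining the resulting half-plane constraints, one argues that $K$ lies in a region of width at most $2$; the extremal configurations should recover the cross-polygon and the triangle $\conv(e_1,e_2,-e_1-e_2)$.

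The main obstacle I anticipate is the upper-bound argument: unlike the $A=\ZZ$ case, there is no "locked facet" combinatorics pinning facets to lattice points, and the introduction explicitly warns there are \emph{infinite families} of inclusion-maximal $\RR$-$\Delta_2$-free polygons, so one cannot enumerate finitely many polygons. Instead one needs a genuinely continuous/variational argument: parametrize $K$ (or rather the position of $\Delta_2 \subset K$ and the supporting hyperplanes of $K$ through the vertices and edge-midpoints of the various $\RR$-copies of $\Delta_2$ that $K$ contains on its boundary) and bound $\width$ by a supremum of explicit functions of these parameters, showing the bound $2$ holds throughout. The delicate point is correctly identifying \emph{all} the translate-copies of $\pm\Delta_2$ whose interiors $K$ must avoid — this is what converts a soft maximality statement into hard linear inequalities on the supporting hyperplanes — and then checking the resulting optimization never exceeds $2$. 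A possible route that sidesteps some casework: use \Cref{lem_R_X_free_with_Tran} to say $K \divideontimes S$ is a segment or point for every $S$, and translate this into the statement that the "inner width" of $K$ in enough directions is too small for a unit triangle to fit strictly inside, from which a packing/covering estimate in the plane yields $\width(K)\le 2$ directly.
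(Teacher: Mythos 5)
Your reduction skeleton and lower bound match the paper: reduce via \Cref{lem:flt_coincides_with_free}, \Cref{lem:incl-max}, \Cref{lem:full-dim} and \Cref{prop:unbounded} to bounding the width of bounded (hence, by \Cref{thm_incl_max}, polygonal) $\RR$-$\Delta_2$-free sets, and certify $\flatness_2^\RR(\Delta_2)\ge 2$ with the cross-polygon $\conv(\pm e_1,\pm e_2)$, whose $\RR$-$\Delta_2$-freeness is indeed a finite check via \Cref{lem_TT_K_X_sim_finite}, \Cref{lem_translations} and \Cref{lem_R_X_free_with_Tran}. However, there is a genuine gap in the upper bound, which is the heart of the theorem: your plan stops at ``one argues that $K$ lies in a region of width at most $2$,'' and neither of the routes you sketch supplies the mechanism that makes this work. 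Knowing that $\dim(K\text{\textdiv}S)\le 1$ for every $\ZZ$-unimodular copy $S$ of $\Delta_2$, or recording half-plane constraints at the copies of $\pm\Delta_2$ touching $\partial K$, does not by itself yield a uniform width bound: as you yourself note, there are infinite families of inclusion-maximal $\RR$-$\Delta_2$-free polygons, so these contact constraints do not pin down finitely many extremal configurations, and the proposed ``packing/covering estimate'' is never formulated as a checkable statement. In effect the hard quantitative step is restated rather than proved.

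The paper closes exactly this gap with a different quantitative device, the \emph{rational diameter} $l(K)$ (the largest dilate of an $\RR$-unimodular copy of $[0,1]$ inside $K$): normalising the diameter to be the horizontal segment $[-a,1+a]$, one proves $\width(P)\le 2l$ (\Cref{lemma:width_leq_2l}) and that for $l>1$ the interior of $P$ must avoid the segments $[-a,a]+(b,\pm1)$, forcing $\width(P)\le 2$ whenever $a\ge\frac12$ (\Cref{lemma:l_geq_2_implies_width_leq_2}). In the remaining regime $0<a<\frac12$ and $\width(P)>2$, one takes a highest point $(r,s)$ of $P$, shows the three explicit cones $C_1,C_2,C_3$ are disjoint from $\strint(P)$ (Claims~\ref{claim:cone12_forbid} and~\ref{claim:cone3_forbid}), and then runs a moving-vertex argument over the family of quadrilaterals $Q_x$ to produce an $\RR$-translate of $\conv(\mathbf{0},-e_1,e_2)$ with two-dimensional inner parallel body inside $P$, contradicting \Cref{lem_R_X_free_with_Tran}. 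Some such intermediate quantitative invariant and a continuity/monotonicity argument (here \Cref{claim:vertex_on_right} and the extremal quadrilateral $Q_{\bar{x}}$) are what your plan is missing; without them the claim that the constraints ``force $K$ into a region of width at most $2$'' is an assertion of the theorem, not a proof of it. Your remark that the extremal cases ``should recover'' the cross-polygon and $\conv(e_1,e_2,-e_1-e_2)$ is plausible but likewise unsupported by the outlined steps.
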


It is straightforward to verify that the cross-polygon $\Diamond_2\coloneqq \conv(\pm e_1, \pm e_2)$ is $\RR$-$\Delta_2$-free and has width $2$, and hence $\flatness_2^\RR(\Delta_2) \geq 2$.
To prove \Cref{theorem:flt_R_D2_is_2}, we thus need to bound the $\RR$-flatness constant of $\Delta_2$ from above by $2$.
By Lemmas~\ref{lem:full-dim} and~\ref{lem:incl-max}, it suffices to bound the lattice width of inclusion-maximal $\RR$-$\Delta_2$-free closed convex sets $C$ by $2$.
By \Cref{prop:unbounded}, if $C$ is unbounded, its lattice width is bounded by $1$.
Hence, it remains to study the width of the bounded $C$'s which, by \Cref{thm_incl_max}, are polygons.
Our strategy is to show that any polygon $P \subset \RR^2$ with width greater than 2 is not $\RR$-$\Delta_2$-free.
A key ingredient in the proof is the notion of rational diameter:

\begin{definition}
    Let $K \subset \RR^d$ be a convex body.
    The \novel{rational diameter of $K$} is the largest dilation of an $\RR$-unimodular copy of the unit segment $[0,1]$ which is contained in $K$, i.e.
    \[
        l(K) \coloneqq \max\setcond{l \in \RR_{\geq 0}}{lS \subseteq K \; \text{for some $\RR$-unimodular copy $S$ of $[0,1]$}}.
    \]
\end{definition}

\begin{notation}
    In what follows, we will always assume that the rational diameter of $P$ is achieved with a horizontal line segment.
    This we can do without loss of generality, because, were this not the case, we could apply an $\RR$-unimodular transformation mapping the rational diameter into a horizontal segment.
    We thus use the following shorthand notation for horizontal line segments $S \subset \RR^d$: $[x,y] \coloneqq \conv(x e_1, y e_1)$.
\end{notation}

The following lemma shows that for polygons the width is bounded from above by twice the rational diameter, i.e., $\width(P) \le 2l$ where $l$ is the rational diameter of $P$.

\begin{lemma}\label{lemma:width_leq_2l}
    Let $P \subset \RR^2$ be a polygon with rational diameter $l=1+2a > 0$ (that is, $a > -\frac12$), achieved with $S = [-a,1+a] \subseteq P$.
    Then $P \subset \setcond{(x,y) \in \RR^2}{-l \leq y \leq l}$.
\end{lemma}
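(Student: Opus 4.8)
I would argue by contradiction: assuming $P$ contains a point outside the strip $\setcond{(x,y) \in \RR^2}{-l \le y \le l}$, I would construct inside $P$ a dilation by some factor $h > l$ of an $\RR$-unimodular copy of $[0,1]$, contradicting the maximality built into the definition of the rational diameter $l = l(P)$.

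First I would reduce to a normal form. The reflection $(x,y) \mapsto (x,-y)$ is an $\RR$-unimodular transformation fixing $S$ and the strip, so we may assume there is a point $p = (p_1,p_2) \in P$ with $p_2 > l$. Since $p_2 > l > 0$, the point $p$ is off the $x$-axis, so with $v_L \coloneqq (-a,0)$, $v_R \coloneqq (1+a,0)$ the set $T \coloneqq \conv(v_L, v_R, p) \subseteq P$ is a genuine triangle whose base $S$ has length $l$ at height $0$ and whose apex is at height $p_2 > l$. I would then record the elementary description of the horizontal cross-sections $T \cap \set{y = h}$ for $h \in [0,p_2]$: each is an interval $[c(h), c(h)+\ell(h)] \times \set{h}$ with $\ell(h) = l(1 - h/p_2) > 0$ for $h < p_2$ (the exact formula for $c(h)$ is routine and unimportant beyond the fact that the bottom cross-section has $x$-range $[-a, 1+a]$).

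The heart of the argument is a pigeonhole step at a well-chosen height. For $h \in (0,p_2)$ and $q \in \ZZ$, a segment of the form $\setcond{(x_0,0) + t\cdot h(q,1)}{t \in [0,1]}$ lies in $T$ by convexity as soon as its two endpoints do, i.e.\ as soon as $x_0 \in [-a,1+a]$ and $x_0 + qh \in [c(h), c(h)+\ell(h)]$. As $x_0$ and the target point range over their intervals, $qh$ is free to take any value in an interval of length $\ell(h) + l$, so an admissible integer $q$ exists whenever $\ell(h) + l \ge h$. Since $(q,1)$ is a primitive lattice vector, such a segment is the dilation by $h$ of an $\RR$-unimodular copy of $[0,1]$; hence it suffices to find $h$ with $l < h < p_2$ and $\ell(h) + l \ge h$. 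Substituting $\ell(h) = l(1 - h/p_2)$, the latter condition reads $h \le \frac{2lp_2}{p_2+l}$, and a short check shows $l < \frac{2lp_2}{p_2+l} \le p_2$ precisely because $p_2 > l$; any $h$ strictly between $l$ and $\frac{2lp_2}{p_2+l}$ then works, yielding the contradiction $l(P) \ge h > l$.

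The main obstacle is the bookkeeping in this last step: $h$ must be chosen to balance two competing demands — it must exceed $l$ so the new segment is genuinely longer than the rational diameter, yet be small enough that $T$ stays wide enough between heights $0$ and $h$ for an integer-slope segment to span the full vertical distance $h$. The inequality $p_2 > l$ is exactly what makes the allowed window for $h$ nonempty, so the estimate is tight; everything else (the cross-section formula, the reflection reduction, the primitivity of $(q,1)$, and the observation that the same contradiction rules out points with $y < -l$ as well) is routine.
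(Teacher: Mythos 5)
Your argument is correct: the reduction by reflection, the cross-section formula $\ell(h)=l(1-h/p_2)$, the pigeonhole criterion $\ell(h)+l\ge h$ for finding an integer $q$ with both endpoints of the segment in direction $h(q,1)$ inside the triangle, and the window $l<h\le \frac{2lp_2}{p_2+l}$ all check out, and since $(q,1)$ is primitive this produces an $h$-dilate of an $\RR$-unimodular copy of $[0,1]$ inside $P$ with $h>l$, contradicting the definition of the rational diameter. The paper proves the lemma by a closely related but differently packaged argument: it works at height exactly $l$, where the translates $S_b=S+(lb,l)$ tile the line $\set{y=l}$ (spacing $l$ equals the length of $S$), and shows no point of that line can lie in $\strint(P)$, since otherwise the lattice-length-$l$ segment from the base to that point (direction $(b,1)$, obtained by a shearing) could be extended slightly while staying in $P$; the same holds at height $-l$, and containment in the strip follows. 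So the underlying mechanism is identical — segments in primitive directions $(q,1)$ anchored on $S$, with the length-$l$ base giving exactly enough horizontal freedom to hit an integer displacement — but you trade the paper's "extend a segment that ends at an interior point" step for an explicit quantitative pigeonhole at an intermediate height strictly between $l$ and $p_2$, using the positive cross-section length as slack. What your version buys is that it never mentions interiors and contradicts the maximality of $l$ outright, which also makes the final passage from "the lines $y=\pm l$ are avoided" to "$P$ lies in the strip" (left implicit in the paper) unnecessary; the paper's version is shorter and identifies the sharp height $l$ at which the covering is exact.
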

\begin{proof}
    The affine line $\{ y = l \}$ is covered by segments $S_b \coloneqq S + (lb,l)$ for $b \in \ZZ$.
    First, notice that we must have $(x,l) \not\in \strint(P)$ for all $(x,l) \in S_0$, as otherwise the line segment with end points $(x,0)$ and $(x,l)$ could be extended upwards and still be contained in $P$, contradicting that $l$ is maximal.
    Now, the same argument holds for the points in the segments $S_b$ for any $b \in \ZZ$ by applying an appropriate unimodular transformation (a shearing).
    Since the segments $S_b$ cover the horizontal line at height $l$, all points $(x,l)$ with $x \in \RR$ must be disjoint from $\strint(P)$.
    Applying a reflection about the $x$-axis gives us that all $(x,-l)$ with $x \in \RR$ must be disjoint from $\strint(P)$.
    Thus, $P \subset \setcond{(x,y) \in \RR^2}{-l \leq y \leq l}$ as desired.
\end{proof}

Recall that for the proof of \Cref{theorem:flt_R_D2_is_2} we are only interested in polygons of width strictly larger than $2$. \Cref{lemma:width_leq_2l} shows that we then only need to consider polygons with rational diameter $l> 1$.
The following lemma will allow us to also bound the rational diameter from above.

\begin{lemma}\label{lemma:l_geq_2_implies_width_leq_2}
    Let $P \subset \RR^2$ be an $\RR$-$\Delta_2$-free polygon with rational diameter $l= 1+2a>1$ (that is, $a>0$), achieved with $S = [-a,1+a] \subseteq P$.
    Then $\strint(P)$ is disjoint from the segments $[-a, a] + (b, \pm1)$, for all $b \in \ZZ$.
    In particular, if $a \geq \frac12$ then $P \subset \setcond{(x,y) \in \RR^2}{-1 \leq y \leq 1}$ and $\width(P) \leq 2$.
\end{lemma}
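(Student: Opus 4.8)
The plan is to argue exactly as in the proof of Lemma~\ref{lemma:width_leq_2l}, but now using that the copies of $\Delta_2$ are $2$-dimensional, so that a single interior point (rather than a whole vertical segment) already yields a contradiction. First I would set up the picture: by \Cref{notation}, the rational diameter is realised by the horizontal segment $S = [-a, 1+a] \subseteq P$ with $l = 1+2a > 1$. Consider the segment $S' \coloneqq [-a, a] + (0,1)$ lying at height $1$; it has length $2a = l - 1$. Suppose towards a contradiction that $\strint(P)$ meets the relative interior of $S'$, say $(x,1) \in \strint(P)$ with $-a < x < a$. Then the point $(x,1)$ together with the two endpoints $(-a,0)$ and $(1+a,0)$ of $S$ spans a triangle $T = \conv\mleft((-a,0),(1+a,0),(x,1)\mright)$, which is contained in $P$ by convexity. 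This triangle has base $S$ of length $l$ on the line $\{y=0\}$ and apex at height $1$, so it has (normalised) volume $l > 1$.

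The key step is then to observe that $T$ contains an $\RR$-unimodular copy of $\Delta_2$ in its interior, which contradicts $P$ being $\RR$-$\Delta_2$-free. For this I would use the standard fact that any triangle of normalised volume strictly greater than $1$ contains a translate of some $\ZZ$-unimodular copy of $\Delta_2$ in its interior; more carefully, since the base $S$ has length $l>1$, it contains an open unit subinterval $(t, t+1)$ not meeting $\ZZ$ (in fact any horizontal translate works, and we may choose $t \notin \ZZ$), and the sub-triangle with that base and the same apex direction has normalised volume $1$, sitting strictly inside $T$; a small shrink of it towards its barycentre lands in $\strint(T) \subseteq \strint(P)$ and is an $\RR$-unimodular (indeed $\RR$-translate of a $\ZZ$-unimodular) copy of $\Delta_2$. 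This contradicts $\RR$-$\Delta_2$-freeness, so $\strint(P)$ is disjoint from $\relint(S')$. Applying the appropriate shearing (which fixes $\{y=0\}$ pointwise up to translation and preserves $S$, sending $S'$ to $[-a,a]+(b,1)$) gives the same conclusion for every $b \in \ZZ$; reflecting about the $x$-axis handles height $-1$. Hence $\strint(P)$ is disjoint from all the segments $[-a,a]+(b,\pm 1)$.

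For the ``in particular'' clause, suppose $a \geq \tfrac12$. Then each segment $[-a,a]+(b,1)$ has length $2a \geq 1$, so the union $\bigcup_{b\in\ZZ}\mleft([-a,a]+(b,1)\mright)$ covers the entire line $\{y=1\}$ (consecutive segments overlap or abut). Since $\strint(P)$ is disjoint from all of them, no interior point of $P$ has $y$-coordinate $1$; a convexity argument (if $P$ had interior points with $y>1$ and with $y\le 1$, the segment joining them would hit $\{y=1\}$ in an interior point of $P$) shows $\strint(P) \subseteq \{y < 1\}$, hence $P \subseteq \{y \le 1\}$. Symmetrically $P \subseteq \{y \ge -1\}$, so $P \subseteq \setcond{(x,y)\in\RR^2}{-1\le y\le 1}$ and therefore $\width(P) \le \width_{e_2^*}(P) \le 2$.

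The main obstacle I anticipate is the passage from ``$T$ has normalised volume $>1$'' to ``$T$ contains an $\RR$-unimodular copy of $\Delta_2$ in its interior'': one must be a little careful that the copy lands in the \emph{open} triangle (not just the closed one), which is why I would first pass to a strictly smaller concentric copy, and one must make sure the copy is genuinely an $\RR$-unimodular image of $\Delta_2$ and not merely some lattice triangle — here it suffices that a horizontal unit segment with a non-integer endpoint, together with a vertically-displaced apex at the correct position, forms an $\RR$-translate of $\conv(\mathbf 0, e_1, e_2)$ or a shear thereof. All of this is elementary but needs to be stated precisely rather than waved through.
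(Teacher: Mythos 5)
Your overall setup (reduce to $b=0$ by a shearing, reflect for height $-1$, and for the covering/strip argument in the case $a\ge\tfrac12$) matches the paper, but the key step --- producing an $\RR$-unimodular copy of $\Delta_2$ inside $\strint(P)$ --- has a genuine gap. First, the ``standard fact'' you invoke is false: a triangle of normalised volume $>1$ need not contain any $\RR$-unimodular copy of $\Delta_2$ (e.g.\ $\conv\mleft((0,0),(N,0),(0,\tfrac2N)\mright)$ has normalised volume $2$ but lies in a strip of height $\tfrac2N$, while every unimodular triangle has vertical extent at least $1$). Second, your ``more careful'' construction does not work as stated: an $\RR$-unimodular triangle with a horizontal unit base has height exactly $1$, so if it is contained in $T=\conv\mleft((-a,0),(1+a,0),(x,1)\mright)$ its apex must be the single point of $T$ at height $1$, namely $(x,1)$; hence it cannot ``sit strictly inside $T$'', and in fact it is forced to be exactly the paper's triangle $\conv\mleft((x,0),(x+1,0),(x,1)\mright)$ (possibly sheared), whose base may lie on $\partial P$. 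Third, the proposed fix --- shrinking towards the barycentre --- destroys unimodularity: a dilate of a unimodular triangle is not an $\RR$-unimodular copy of $\Delta_2$, so landing the shrunk triangle in $\strint(P)$ contradicts nothing.

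What is actually needed (and what the paper does) is a \emph{translation} argument rather than a shrinking argument: since the apex $(x,1)$ lies in $\strint(P)$ and the base $[x,x+1]\times\{0\}$ lies strictly inside the relative interior of $S$ (because $-a<x$ and $x+1<1+a$), the triangle $\Delta=\conv\mleft((x,0),(x+1,0),(x+b,1)\mright)$ can be moved slightly upwards and sideways inside $P$; formally $\dim(P\text{\textdiv}\Delta)=2$, so by \Cref{lem_R_X_free_with_Tran} (equivalently \Cref{lem:int-full-dim}) an $\RR$-translate of $\Delta$ lies in $\strint(P)$, contradicting $\RR$-$\Delta_2$-freeness. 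A direct version of this: all three vertices of $\Delta+(\delta_1,\delta_2)$ lie in $\strint(P)$ for suitable small $\delta_2>0$, $\delta_1$, using $\strint(T)\subset\strint(P)$ for the base vertices and the interiority of the apex. Two smaller points: the requirement that the unit base avoid $\ZZ$ is both internally inconsistent and irrelevant, since $\RR$-unimodular copies allow arbitrary real translations; and you only rule out the \emph{relative interiors} of the segments $[-a,a]+(b,\pm1)$, whereas the lemma (and your covering claim when $a=\tfrac12$, where consecutive open segments merely abut) needs the closed segments --- this requires the extra endpoint argument (an endpoint in the open set $\strint(P)$ would force nearby relative-interior points into $\strint(P)$), as in the paper's proof.
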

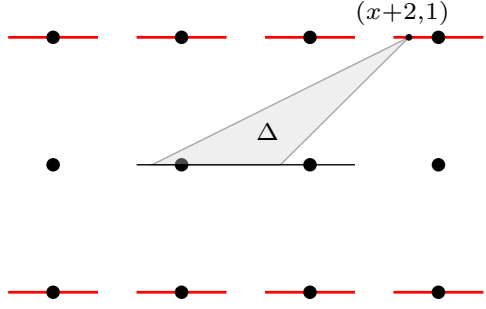
\begin{figure}[ht!]
     \centering
     \scalebox{1.3}{
     \begin{tikzpicture}[scale=1.3]
       \pgfmathsetmacro{\A}{.35};
       \pgfmathsetmacro{\Z}{-.23};
        \draw[thin] (-\A,0) -- (1+\A,0);
        \foreach \x in {-1, ..., 2}{
        \foreach \y in {-1, 1}{
        \draw[red, thick]  (\x-\A,\y) -- (\x+\A, \y);}}
        \foreach \x in {-1, ..., 2}{
        \foreach \y in {-1, ..., 1}{
        \fill (\x,\y) circle (1.5pt);}}
        \fill (\Z+2,1) circle (.75pt);
        \draw (\Z+1.95, 1) node[ above] {$\scriptstyle (x+2,1)$};
        \draw[fill=gray!35,opacity=.35] (\Z+2, 1) -- (\Z,0) -- (\Z+1,0) -- cycle;
        \draw (.67,.1) node[ above] {$\scriptstyle \Delta$};
    \end{tikzpicture}}
    \caption{Forbidden segments from \Cref{lemma:l_geq_2_implies_width_leq_2}.\label{fig:forbidden_segments}
}
\end{figure}
\begin{proof}
    Consider the point $(x+b,1)$, for some $-a < x < a$ and $b \in \ZZ$.
    The convex hull of this point and the points $(x,0), (x+1,0) \in P$ is an $\RR$-unimodular copy $\Delta$ of $\Delta_2$.
    If the point $(x+b,1)$ were contained in the interior of $P$, the Minkowski difference $P\text{\textdiv}\Delta$ would be $2$-dimensional.
    By \Cref{lem_R_X_free_with_Tran}, this contradicts $P$ being $\RR$-$\Delta_2$-free.
    Therefore, $(x+b,1) \not\in \strint(P)$, for all $x+b \in \strint([-a,a]) + \ZZ$.
    The same argument holds for all points $(x+b,-1)$.
    Thus, $\strint(P)$ is disjoint from the interiors of all segments $[-a,a] + (b,\pm1)$.
    
    In fact, $\strint(P)$ is disjoint from not just the interior, but from the whole segment $[-a,a] + (b,\pm1)$ for all $b \in \ZZ$: suppose otherwise that one of the endpoints is contained in the interior of $P$.
    Then, there exists an open ball around that endpoint which is contained in the interior of $P$.
    This yields a contradiction, as the interiors of the segments must be disjoint from $\strint(P)$.
    
    Suppose $l \geq 2$, that is, $a \geq \frac12$.
    Then the interior of $P$ is disjoint from all points $(x+b, \pm1)$ with $-\frac12 \leq x \leq \frac12$ and $b \in \ZZ$, which cover the entire horizontal lines at height $1$ and $-1$.
    Thus $P$ must be contained in the strip $\setcond{(x,y) \in \RR^2}{-1 \leq y \leq 1}$, which has width $2$.
\end{proof}

We are now ready to prove \Cref{theorem:flt_R_D2_is_2}.
We use the bounds on the lattice diameter from the previous lemmas and the lower bound on the width to show that certain areas of the plane are disjoint from the polygon $P$.
Eventually this allows us to bound the polygon so tightly that we reach a contradiction. 

\begin{proof}[Proof of \Cref{theorem:flt_R_D2_is_2}]
    Let $P \subset \RR^2$ be an $\RR$-$\Delta_2$-free polygon, with rational diameter $l$ and assume towards a contradiction that $\width(P) > 2$.
    We may assume without loss of generality that the rational diameter is achieved with $S = [-a,1+a]$, where $l = 1+2a$.
    By \Cref{lemma:width_leq_2l}, we have $P \subset \setcond{(x,y) \in \RR^2}{-l \leq y \leq l}$, and thus $2l \geq  \width(P) > 2$, that is, $l>1$, i.e. $a > 0$.
    By \Cref{lemma:l_geq_2_implies_width_leq_2}, $\strint(P)$ is disjoint from all segments $[-a,a] + (b,\pm1)$ with $b \in \ZZ$, and, since $\width(P) >2$, it follows that $a < \frac12$.
    
    Since $\width(P) >2$, there exists a point $(r,s) \in P$ with either $1 < s \leq l$ or $-l \leq s < 1$.
    Due to the symmetry about the $x$-axis, we may assume that $1 < s \leq l$.
    We may also assume that $(r,s)$ is the point of $P$ with largest $y$-coordinate.
    Furthermore, we may assume that $a \leq r \leq 1-a$; otherwise, we apply a shearing so that the $x$-coordinate of our point would satisfy this condition.
  \begin{figure}[ht!]
       \centering
       \scalebox{1.3}{
       \begin{tikzpicture}[scale=1.3]
       \pgfmathsetmacro{\R}{.55}
       \pgfmathsetmacro{\S}{1.1}
       \pgfmathsetmacro{\A}{.25}

        \draw[thin] (-\A,0) -- (1+\A,0);
        \fill (\R, \S) circle (1pt);
        \draw (\R-.5,\S+.1) node[ above] {$\scriptstyle (r,s)$} edge[-latex,bend left] (\R,\S);
        \fill (\R,\S-1-2*\A) circle (1pt);
        \node[above] at (.5, -.475) {$\scriptstyle (r,s-l)$}; 
        \draw[thin] (\A, 1) -- (.5, .5+1/\A/4);
        \draw[thin] (1-\A, 1) -- (.5, .5+1/\A/4);
        \draw[thin, dashed, ->>-] (-\A, 0) -- (\R, \S)  ;
        \draw[thin, dashed, ->-] (\R, \S) -- (1+\A, 0);
        \draw[red, thick, ->>-]  (-\R-2*\A,-\S) -- (-\A, 0) ;
        \draw[red, thick, ->-]   (-1-2*\A+\R,\S) -- (-\A,0);
        \fill[fill=red!35,opacity=.25] (-\A, 0) -- (-\R-2*\A,-\S) -- (-\R-2.8*\A,-\S) -- (-\R-2.8*\A,\S) -- (-1-2*\A+\R,\S) -- cycle;
        \node[above] at (-.95, .35) {$\scriptstyle C_1$};
        \draw[red, thick, ->-] (1+\A, 0) -- (2+2*\A -\R,-\S);
        \draw[red, thick, ->>-] (1+\A,0) -- (1+2*\A+\R,\S);
        \fill[fill=red!35,opacity=.25] (1+\A, 0) -- (2+2*\A -\R,-\S) -- (2+3.1*\A -\R,-\S) -- (2+3.1*\A -\R,\S) -- (1+2*\A+\R,\S) -- cycle;
        \node[above] at (1.95, .35) {$\scriptstyle C_2$};
        \draw[red, thick] (\R,\S-1-2*\A) -- (2.7*\R-1.7*\A-1.7, 2.7*\S-2.7-5.4*\A);
        \draw[red, thick] (\R,\S-1-2*\A) -- (2.7*\R+1.7*\A, 2.7*\S-2.7-5.4*\A);
        \fill[fill=red!35,opacity=.25] (\R,\S-1-2*\A) -- (2.7*\R-1.7*\A-1.7, 2.7*\S-2.7-5.4*\A) --  (2.7*\R+1.7*\A, 2.7*\S-2.7-5.4*\A) -- cycle;
        \node[above] at (.45, -1) {$\scriptstyle C_3$};
        \foreach \x in {-1, ..., 2}{
        \foreach \y in {-1, 1}{
        \draw[red, thick]  (\x-\A,\y) -- (\x+\A, \y);}}
        \foreach \x in {-1, ..., 2}{
            \foreach \y in {-1, ..., 1}{
                \fill (\x,\y) circle (1.5pt);
			}
		}
        \fill (\R,\S-1-2*\A) circle (1pt);
  \end{tikzpicture}}
        \caption{Regions $C_1$, $C_2$, $C_3$ from Claims~\ref{claim:cone12_forbid} and~\ref{claim:cone3_forbid}.\label{fig:forbidden_regions_tikx}
}
    \end{figure}
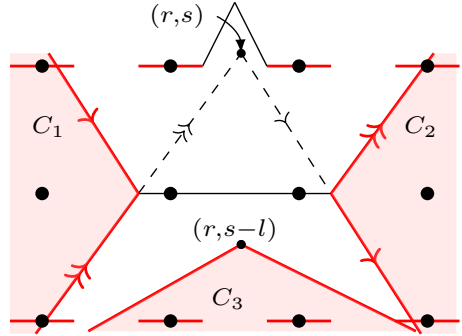
    Let $L_1$ be the line through the points $(r,s)$ and $(1+a,0)$, and $L_2$ the line through $(r,s)$ and $(-a, 0)$.
    We define affine pointed cones $C_1$ and $C_2$ with apex in $(-a, 0)$ respectively in $(1+a, 0)$, and rays bounding them above and below, parallel to $L_1$ and $L_2$ for $C_1$, respectively $L_2$ and $L_1$ for $C_2$.
    Precisely, $C_1 \coloneqq (-a,0) + \cone((r-1-a,s), (-r-a,-s))$ and $C_2 \coloneqq (1+a,0) + \cone((r+a,s), (-r+1+a,-s))$.
    \begin{claim}\label{claim:cone12_forbid}
        The regions $C_1$ and $C_2$ are disjoint from $\strint(P)$.
    \end{claim}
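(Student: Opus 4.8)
The plan is to prove the statement directly by contradiction: if some point of $\strint(P)$ lay in $C_1$ or in $C_2$, then $P$ would contain a horizontal line segment strictly longer than $l$, contradicting that $l$ is the rational diameter of $P$ and, by our standing normalisation, is attained by a horizontal segment. Note that, apart from $S=[-a,1+a]\subseteq P$ and $(r,s)\in P$, the argument will use nothing about $\RR$-$\Delta_2$-freeness; it is purely a statement about where the topmost point $(r,s)$ of $P$ can sit relative to the rational diameter. Two preliminary observations set things up. First, by convexity $P$ contains the triangle $T=\conv((-a,0),(1+a,0),(r,s))$, hence in particular the edge $[(1+a,0),(r,s)]$ lying on $L_1$ and the edge $[(-a,0),(r,s)]$ lying on $L_2$. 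Second, each $C_i$ is a full-dimensional closed cone — its two generators are linearly independent, the relevant $2\times2$ determinant being $\pm sl\neq0$ — so $\mathrm{int}(C_i)$ is dense in $C_i$; since $\strint(P)$ is open, it therefore suffices to rule out a point $q=(q_1,q_2)\in\mathrm{int}(C_i)\cap\strint(P)$, for which all the linear inequalities cutting out $C_i$ are \emph{strict}.

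For $C_1$ I would distinguish the cases $q_2\ge0$ and $q_2<0$. If $q_2\ge0$, then $q_2\le s$ because $(r,s)$ is the point of $P$ of largest $y$-coordinate; at height $q_2$ the cone $C_1$ lies strictly to the left of its upper bounding ray (the ray from the apex $(-a,0)$ parallel to $L_1$), which crosses height $q_2$ at $x=-a+\tfrac{q_2}{s}(r-1-a)$, while the edge $[(1+a,0),(r,s)]\subseteq P$ crosses height $q_2$ at $x=(1+a)+\tfrac{q_2}{s}(r-1-a)$. The horizontal segment of $P$ between these two points then has length $>(1+a)-(-a)=l$: the two bounding rays are parallel, so the $\tfrac{q_2}{s}(r-1-a)$ terms cancel and exactly the constant $l$ survives. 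If instead $q_2<0$, then $q$ lies strictly to the left of the part of the line $L_2$ below its point $(-a,0)$, i.e.\ $q_1<-a+\tfrac{q_2}{s}(r+a)$; a one-line computation then shows the segment $[q,(r,s)]\subseteq P$ meets the $x$-axis at a point $(x_0,0)$ with $x_0<-a$, so, together with $(1+a,0)\in P$, the horizontal segment from $(x_0,0)$ to $(1+a,0)$ lies in $P$ and has length $(1+a)-x_0>l$. The cone $C_2$ is treated by the mirror argument, with $[(-a,0),(r,s)]$ in place of $[(1+a,0),(r,s)]$ and the left endpoint of $S$ in place of the right one: for $q_2\ge0$ one obtains a horizontal segment of $P$ running rightward from $[(-a,0),(r,s)]$ past the upper bounding ray of $C_2$ (the one parallel to $L_2$), again of length $>l$; for $q_2<0$ the segment $[q,(r,s)]$ now meets the $x$-axis to the right of $(1+a,0)$, giving a horizontal segment of $P$ from $(-a,0)$ of length $>l$. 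In every case this contradicts that the rational diameter of $P$ equals $l$.

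I do not expect any conceptual obstacle; the difficulty is entirely bookkeeping. One must correctly identify which of the two rays bounds $C_i$ on which side — and above versus below the apex (one endpoint of $S$) — and then verify the elementary coordinate computations, the crucial one being the cancellation of the $\tfrac{q_2}{s}$-terms that makes the emerging horizontal segment have length exactly $l$ in the boundary case (hence strictly more once $q$ is taken in the interior of the cone). Handling the four sub-cases ($C_1$ or $C_2$; $q_2\ge0$ or $q_2<0$) cleanly, and making sure all the inequalities come out strict — which is the reason for passing to $\mathrm{int}(C_i)$ at the outset — is the only place where care is genuinely required.
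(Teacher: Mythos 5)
Your proof is correct, and its skeleton is the paper's: assume a point of $P$ lies in the interior of $C_1$ (or $C_2$) and contradict the maximality of the rational diameter, treating negative heights via the chord from the bad point to $(r,s)$ hitting the $x$-axis outside $S$, and non-negative heights via a horizontal segment reaching the chord $[(1+a,0),(r,s)]$ on $L_1$, whose length exceeds $l$ because the bounding ray of $C_1$ is parallel to $L_1$. The one genuine difference is at heights $y\ge 1$: the paper switches to a different argument there, deducing that the lattice point $(0,1)$ would land in $\strint(P)$ and hence that a translate of $\conv(\mathbf{0},e_1,e_2)$ sits in the interior, i.e.\ it invokes $\RR$-$\Delta_2$-freeness, whereas you observe that the horizontal-segment argument works uniformly for all $0\le q_2\le s$ (the point of $L_1$ at height $q_2$ lies on the chord $[(1+a,0),(r,s)]\subset P$ precisely because $q_2\le s$, which your standing assumption that $(r,s)$ is the topmost point of $P$ guarantees). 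This buys a slightly cleaner statement — the claim holds for any polygon with horizontal rational diameter $S$ and topmost point $(r,s)$, with no reference to $\Delta_2$-freeness — at the cost of nothing; your reduction to $\mathrm{int}(C_i)$ via full-dimensionality of the cones, and the strictness bookkeeping, are all in order.
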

    \begin{proof}[Proof of \Cref{claim:cone12_forbid}]
        The arguments for the affine cones $C_1$ and $C_2$ are the same, so we conduct the proof for $C_1$.
        
        Since the rational diameter is achieved at $S\coloneqq [-a, 1+a]$, no other point on the horizontal axis can be in $P$.
        If a point $(x,y)$ in the interior of $C_1$ with $y<0$ were in $P$, the segment connecting it to $(r,s)$ would also be in $P$, which is a contradiction as this segment intersects the horizontal axis outside of $S$.
 
        Suppose now that a point $(x,y)$ in the interior of $C_1$ with $0<y<1$ were in $P$.
        The horizontal segment $S'$ with an endpoint in $(x,y)$ and one in $L_1$ would then also be in $P$.
        Since $S'$ intersects the boundary ray of $C_1$ parallel to $L_1$ in an interior point, $S'$ strictly contains a segment congruent to $S$.
        Since $S'$ has lattice length strictly larger than $l$, we obtain a contradiction to $l$ being the rational diameter of $P$.
        
        Finally, no point  $(x,y) \in C_1$ with $y\geq 1$ can be in $P$ because it would force $P$ to contain the lattice point $(0,1)$ in its interior, and thus a small translate of $\conv((0,0), (0,1), (1,0,))$ would be contained in the interior of $P$, a contradiction to the hypothesis that $P$ is $\RR$-$\Delta_2$-free.

        These three cases together show that no point in the interior of $C_1$ can be in $P$, which is equivalent to our claim that $C_1$ is disjoint from the interior of $P$.
    \end{proof}
    
    Let $C_3 \coloneqq (r,s-l) + \cone((r-1-a,s-l), (r+a,s-l))$ be the affine cone with apex in $(r, s-l)$, and two boundary rays lying on the two lines passing through $(r, s-l)$ and the two endpoints of $S$ respectively.
    \begin{claim}\label{claim:cone3_forbid}
        The region $C_3$ is disjoint from $\strint(P)$.
    \end{claim}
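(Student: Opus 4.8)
The plan is to argue by contradiction: suppose a point $q = (x_q, y_q)$ lies in $C_3 \cap \strint(P)$ and derive a contradiction, either with the fact that the rational diameter of $P$ equals $l$, or with $P$ being $\RR$-$\Delta_2$-free. First I would record the basic constraints on $q$. Both generators of $C_3$ have second coordinate $s-l \le 0$, so $y_q \le s-l$; and $y_q \ge -l$ by \Cref{lemma:width_leq_2l}. If $s = l$ then $C_3$ degenerates to the horizontal line $\{y=0\}$ and the claim is just the (already established) statement $P \cap \{y=0\} = S$; so I assume $s < l$, which gives $y_q \le s-l < 0$. The facts I would keep in play throughout are: $(r,s) \in P$ is the highest point of $P$; $S = [-a,1+a] \subseteq P$ with $\relint(S) \subseteq \strint(P)$ and $P \cap \{y=0\} = S$ (both from the proof of \Cref{claim:cone12_forbid}); $P \subseteq \{-l \le y \le l\}$; $\strint(P)$ is disjoint from $[-a,a]+(b,\pm1)$ for all $b \in \ZZ$ by \Cref{lemma:l_geq_2_implies_width_leq_2}; $P$ is $\RR$-$\Delta_2$-free; and, most importantly, the rational diameter of $P$ equals $l$, so any segment contained in $P$ whose direction is a positive multiple of a primitive integer vector has "$\ell$-length" at most $l$. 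By convexity, $\conv\bigl(S \cup \{(r,s), q\}\bigr) \subseteq P$, and in particular the triangle $T := \conv\bigl(S \cup \{(r,s)\}\bigr)$ lies in $P$.

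The heart of the argument is a case split on the position of $q$, closely paralleling the three cases of \Cref{claim:cone12_forbid}. The cleanest sub-case is $x_q = r$: then the segment $\bigl[(r,s), q\bigr] \subseteq P$ is vertical of length $s - y_q \ge l$; if $y_q < s-l$ this already exceeds $l$, and if $y_q = s-l$ then $q$ is the apex $(r,s-l) \in \strint(P)$, so $q - (0,\delta)$ still lies in $\strint(P)$ for small $\delta > 0$ and the vertical segment from $(r,s)$ to it has length $> l$; either way the rational diameter is exceeded. For $x_q \ne r$, after using the left--right symmetry of the data about the line $\{x = \tfrac12\}$ to assume $x_q < r$, the cone membership $q \in C_3$ becomes $\ell_-(y_q) \le x_q < r$, where $\ell_-(y) = (1+a) + \tfrac{(r-1-a)y}{s-l}$ is the boundary line of $C_3$ through the right endpoint $(1+a,0)$ of $S$ and the point $(r,s-l)$ (which sits vertically below $(r,s)$ at distance $l$). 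I would then distinguish $-1 < y_q \le s-l$ and $-l \le y_q \le -1$. In the first range, using convexity of $P$ (hence concavity/convexity of its upper/lower boundary functions), the explicit edges of $T$, and the cone bound on $x_q$, I would exhibit a horizontal chord of $P$ — at a height just above $y_q$, or on the line $y=0$ — of length strictly greater than $l$, the analogue of the "$0 < y < 1$" case of \Cref{claim:cone12_forbid}. In the second range, $q$ sits at height $\le -1$ inside $\strint(P)$; combined with convexity and the forbidden segments $[-a,a]+(b,\pm1)$, this forces either a lattice point $(b,\pm1)$ (or $(0,-1)$) into $\strint(P)$, whence a small translate of a unimodular copy of $\Delta_2$ lies in $\strint(P)$, or directly three points of $\strint(P)$ forming such a copy — contradicting $\RR$-$\Delta_2$-freeness, as in the "$y \ge 1$" case of \Cref{claim:cone12_forbid}. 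Every case yields a contradiction, so $C_3 \cap \strint(P) = \emptyset$.

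I expect the main obstacle to be the case $x_q \ne r$ with $y_q$ close to $s-l$. There the forbidden object must be extracted delicately, because — unlike in \Cref{claim:cone12_forbid}, where the opening angle of $C_1$ was chosen precisely so that the relevant horizontal chord has $\ell$-length exactly $l$ on the cone's boundary and strictly more in its interior — the boundary lines of $C_3$ are \emph{not} parallel to the edges of $T$. Making the estimates close will require combining the cone constraint on $x_q$ with the already-established forbidden regions $C_1$, $C_2$ (which squeeze $P$ between them away from height $0$) and applying the rational-diameter bound to a suitable non-horizontal, rational-direction chord through $q$; the crux is verifying that such a chord genuinely lies in $P$ and has $\ell$-length $> l$. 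A secondary, more routine point is the bookkeeping for degenerate boundary situations ($x_q$ on $\partial C_3$, $q$ lying on a forbidden segment, $s = l$), which are handled by small perturbations inside $\strint(P)$.
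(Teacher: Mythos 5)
Your proposal contains a genuine gap, and you have flagged it yourself: the case of a point of $\strint(C_3)\cap\strint(P)$ with $x_q\neq r$ (especially near the apex) is exactly where your plan stalls, and the horizontal-chord / lattice-point arguments you sketch there do not go through. A horizontal chord through such a point need not have length $>l$: the cross-sections of $\conv\bigl(q,(-a,0),(1+a,0)\bigr)$ at heights between $y_q$ and $0$ all have length at most $l$, and at heights $\le -1$ the relevant cross-section has length at most $l-1=2a<1$, so for $a<\tfrac14$ it can sit entirely inside a gap between the forbidden segments $[-a,a]+(b,-1)$ and captures no lattice point; so neither branch of your case split produces the contradiction as described. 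The missing idea is much simpler and is what the paper does: first note that the apex $(r,s-l)$ cannot lie in $\strint(P)$ (your $x_q=r$ sub-case: a point slightly below it together with $(r,s)$ gives a vertical segment of length $>l$, beating the rational diameter); then observe that for \emph{any} $p\in\strint(C_3)\cap P$, convexity gives $\conv\bigl(p,(-a,0),(1+a,0)\bigr)\subseteq P$, and since the boundary rays of $C_3$ lie on the lines through the apex and the two endpoints of $S$, this triangle contains the apex in its interior, forcing $(r,s-l)\in\strint(P)$ -- contradiction. This one convexity observation replaces your entire case analysis on $y_q$ and needs no estimates, no use of $C_1$, $C_2$, and no appeal to $\RR$-$\Delta_2$-freeness beyond the rational diameter.

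Two smaller points. Your reduction of the degenerate case $s=l$ to ``$P\cap\{y=0\}=S$'' is a non sequitur: that equality does not rule out points of the relative interior of $S$ lying in $\strint(P)$, so it does not by itself give $C_3\cap\strint(P)=\emptyset$ when $C_3$ degenerates to the line $\{y=0\}$ (what the vertical-segment argument actually yields there is only that $(r,0)\notin\strint(P)$). And in the second range ($-l\le y_q\le -1$) the assertion that a point of $\strint(P)$ at that height ``forces'' a lattice point $(b,\pm1)$ or a unimodular copy of $\Delta_2$ into $\strint(P)$ is not substantiated and, as the length estimate above shows, is false in general; so that branch cannot be repaired as stated and should also be routed through the apex argument.
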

    \begin{proof}[Proof of \Cref{claim:cone3_forbid}]
        Observe that the point $q \coloneqq (r, s-1-2a)$ cannot be in $\strint(P)$, because if any point $q'$ vertically below $q$ were in $P$, the segment with endpoints $q'$ and $(r,s)$ would have lattice length strictly larger than $S$, a contradiction. 
        
        If any point $p \in \strint(C_3)$ were also in $P$, the triangle $\conv(p, (-a,0), (1+a, 0))$ would be in $P$.
        However, this triangle contains $q$ in its interior, a contradiction. 
    \end{proof}

    Recall $s$ is assumed to have the largest $y$-coordinate of all points of $P$.
    Let $L_5$ be the line parallel to the $x$-axis and going through the point $(0,s-2)$.
    Since $P$ has width greater than $2$, $P$ intersects $L_5$ in a line segment $[a, b]$ of positive length which by the previous claims is not in $\strint(C_i)$ for $i=1,2,3$.
    By symmetry, we may assume $[a,b]$ lies between $C_1$ and $C_3$.
    Furthermore, suppose $a$ is to the left of $b$.

    The key idea of the proof is to consider the family of quadrilaterals (see also \Cref{fig:3lines_Qx})
    \[
    	Q_x\coloneqq \conv((-a,0), (1+a,0), (r,s), (x,s-2)) \qquad \text{for $(x, s-2)\in \RR^2$ between $C_1$ and $C_3$.}
    \]
    Note $x$ is the only free parameter of this family.
    We think of the elements of this family to be quadrilaterals where the bottom vertex can move.
    Indeed, $Q_{x'}$ is obtained from $Q_x$ via a piecewise linear transformation $\psi_{x'-x} \colon \RR^2 \to \RR^2$ which is given as follows: below the $x$-axis, apply the shearing which maps $(x, s-2)$ to $(x', s-2)$; above, apply the identity.
    We want to show that every $Q_x$ of this family contains an $\RR$-translation of $\widetilde{\Delta} \coloneqq \conv(\mathbf{0}, -e_1, e_2)$.
    We do this in two stages: 1) show if $Q_{x'}$ is obtained from $Q_x$ by moving the bottom vertex further to the left and $Q_x$ contains an $\RR$-translation of $\Tilde{\Delta}$, then $Q_{x'}$ does so as well; 2) show that the quadrilateral $Q_{\bar{x}}$ for the largest possible $\bar{x}$ contains an $\RR$-translation of $\Tilde{\Delta}$.
    
    Granting this statement for a moment, let us complete the proof of \Cref{theorem:flt_R_D2_is_2}.
    There is an $x \in \RR$ such that $Q_x = \conv((-a,0),(1+a,0),(r,s),b)$.
    Suppose $\Delta$ is an $\RR$-translation of $\Tilde{\Delta}$ that is contained in $Q_x$.
	Our goal is to show that $P_x\text{\textdiv}\Delta$ is $2$-dimensional where $P_x$ is the pentagon $\conv(Q_x, a)$.
	
    Consider the horizontal line segment $B$ which forms the base of $\Delta$.
    Note that $B$ lies below the $x$-axis: otherwise, the vertex $v$ of $\Delta$ which is a translation of $e_2$ would have $y$-coordinate at least $1$, which forces its $x$-coordinate to be less than $1-a$; this in turn forces the vertex of $\Delta$ corresponding to $-e_1$ to have $x$-coordinate less than $-a$, which is a contradiction since no such point lies in $Q_x$.
	Thus, if $\Delta$ intersects the boundary of the pentagon $P_x$, then it does it in a single point that is contained in the interior of the respective facet of $P_x$.
	Since the vertex of $\Delta$ corresponding to $-e_1$ lies in the interior of $P_x$, it follows that at most two vertices of $\Delta$ lie on the boundary of $P_x$.
	With \Cref{lem_translations} it's straightforward to show that $P_x\text{\textdiv}\Delta$ is $2$-dimensional.
	By \Cref{lem_R_X_free_with_Tran}, it follows that $P_x$ isn't $\RR$-$\Delta_2$-free.
	A contradiction to the assumption $\width(P)>2$.
    Therefore, an $\RR$-$\Delta_2$-free polygon $P$ has $\width(P) \leq 2$.
    Thus, $\flatness_2^\RR(\Delta_2) = 2$.
    
    It remains to prove the two claims from above.
    \begin{claim}\label{claim:vertex_on_right}
        Suppose $Q_x$ contains a translation $\Delta$ of $\widetilde{\Delta}$.
        Then $Q_{x'}$ also contains a translation $\Delta'$ of $\widetilde{\Delta}$, for $x' \leq x$ and $(x',s-2) \not\in C_i$ for all $i=1,2,3$.
    \end{claim}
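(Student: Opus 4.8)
The plan is to translate the statement into two scalar inequalities by exploiting that $Q_{x''}$ splits along the $x$-axis into an $x''$-independent upper part and a lower part that depends on $x''$ only through a shearing. Write $\Delta = \widetilde{\Delta} + (w_1,h)$. Since $\widetilde{\Delta}$ has Euclidean height $1$ while the lower triangle of any $Q_{x''}$ has height $2-s<1$, $\Delta$ cannot lie below the $x$-axis; and if it lay entirely above the axis it would be contained in the common upper part and there would be nothing to prove; so we may assume $-1<h\leq0$, in which case the top vertex of $\Delta$ is $(w_1,h+1)$, lying directly above the right endpoint $(w_1,h)$ of the base, with $0<h+1\leq1<s$. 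Decompose $Q_{x''}=U\cup D_{x''}$ with $U:=\conv((-a,0),(1+a,0),(r,s))=Q_{x''}\cap\{y\geq0\}$ (independent of $x''$) and $D_{x''}:=\conv((-a,0),(1+a,0),(x'',s-2))=Q_{x''}\cap\{y\leq0\}$. For $v\in[s-2,0]$ describe $D_{x''}\cap\{y=v\}$ by its endpoints $\lambda_{x''}(v)=-a+\tfrac{v}{s-2}(x''+a)$ and $\rho_{x''}(v)=1+a+\tfrac{v}{s-2}(x''-1-a)$; for $v'\in[0,s]$ describe $U\cap\{y=v'\}$ by $L(v')=-a+\tfrac{v'}{s}(r+a)$ and $R(v')=1+a+\tfrac{v'}{s}(r-1-a)$. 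The point is that $\rho_{x''}(v)-\lambda_{x''}(v)=l\bigl(1-\tfrac{v}{s-2}\bigr)$ is independent of $x''$; in particular a horizontal unit segment fits into $D_{x''}\cap\{y=v\}$ exactly when $v\geq q_{\min}:=(s-2)\tfrac{l-1}{l}$.

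In this language, a translate $\widetilde{\Delta}+(p,q)$ with $q\leq0$ lies in $Q_{x''}$ iff $[p-1,p]\subseteq[\lambda_{x''}(q),\rho_{x''}(q)]$ and $L(q+1)\leq p\leq R(q+1)$, and the hypothesis $\Delta\subseteq Q_x$ gives all of this at $(p,q)=(w_1,h)$: $h\geq q_{\min}$, $\lambda_x(h)+1\leq w_1\leq\rho_x(h)$, and $L(h+1)\leq w_1\leq R(h+1)$. I will build $\Delta'$ keeping the base at the same height $h$. Since $x'\leq x$ and $h\leq0$, we have $\lambda_{x'}(h)\leq\lambda_x(h)$ and $\rho_{x'}(h)\leq\rho_x(h)$ --- the slices of $D_{x'}$ sit weakly left of those of $D_x$ --- so the constraint $p-1\geq\lambda(h)$ only relaxes, whereas the constraint $p\leq\rho(h)$, which is what couples the base to the top vertex through $L(h+1)\leq p$, is the one that could fail. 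The crux is thus the single inequality $\rho_{x'}(h)\geq L(h+1)$, and this is exactly where $(x',s-2)\notin C_1$ is used: intersecting the cone $C_1=(-a,0)+\cone((r-1-a,s),(-r-a,-s))$ with the line $\{y=s-2\}$ shows $(x',s-2)\notin C_1$ is equivalent to $x'>-a-\tfrac{2-s}{s}(r+a)$, i.e. $1+a-x'<l+\tfrac{2-s}{s}(r+a)$. Substituting this bound into $\rho_{x'}(h)=1+a-\tfrac{-h}{2-s}(1+a-x')$ (the case $1+a-x'<0$ being trivial, since then $\rho_{x'}(h)>1+a\geq L(h+1)$) and using $h\geq q_{\min}$ together with $r+a\leq1<s$ gives $\rho_{x'}(h)-L(h+1)\geq l\bigl(1+\tfrac{h}{2-s}\bigr)-\tfrac{r+a}{s}\geq1-\tfrac{r+a}{s}>0$.

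With that inequality in hand the construction is a short case split on the sign of $\lambda_{x'}(h)+1-L(h+1)$. If it is nonnegative, take $p:=\lambda_{x'}(h)+1$: then $L(h+1)\leq p\leq\lambda_x(h)+1\leq R(h+1)$ places the top vertex in $U$, and $[p-1,p]=[\lambda_{x'}(h),\lambda_{x'}(h)+1]\subseteq[\lambda_{x'}(h),\rho_{x'}(h)]$ because that slice has length $\geq1$ (as $h\geq q_{\min}$). If it is negative, take $p:=L(h+1)$: then $p\in[L(h+1),R(h+1)]$ trivially, while $[p-1,p]=[L(h+1)-1,L(h+1)]\subseteq[\lambda_{x'}(h),\rho_{x'}(h)]$ since $L(h+1)-1>\lambda_{x'}(h)$ by the case hypothesis and $L(h+1)\leq\rho_{x'}(h)$ by the inequality above. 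In either case $\Delta':=\widetilde{\Delta}+(p,h)$ has its top vertex in $U$ and its base in $D_{x'}$, so $\Delta'\subseteq U\cup D_{x'}=Q_{x'}$ by convexity, proving the claim. The main obstacle is isolating and proving $\rho_{x'}(h)\geq L(h+1)$ and seeing that $C_1$ is precisely the forbidden region that forces it; the rest --- the affine endpoint formulas, the value of $q_{\min}$, and the final case split --- is routine. (Here $C_2$ would only intervene in the left-right reflected situation, and $C_3\cap\{y=s-2\}=\emptyset$ because $l<2$, so no hypothesis beyond $(x',s-2)\notin C_1$ is actually needed.)
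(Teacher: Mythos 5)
Your proof is correct, and it takes a genuinely different route from the paper's. The paper keeps $\Delta$ rigid: it sets $\Delta'\coloneqq\Delta+(x'-x,0)$, asserts that the shifted base lies in $Q_{x'}$ (appealing to the shearing $\psi_{x'-x}$), and then shows the shifted apex stays right of $L_2$ using that $L_2$ has slope at least $1$ together with avoidance of $C_1$. You instead fix only the height $h$ of the base, encode containment in $Q_{x''}$ through the slice endpoints $\lambda_{x''}(h),\rho_{x''}(h),L(h+1),R(h+1)$, isolate the single inequality that can fail as the bottom vertex moves left, namely $\rho_{x'}(h)\ge L(h+1)$, derive it from $(x',s-2)\notin C_1$ (equivalently $x'>-a-\frac{2-s}{s}(r+a)$) together with $h\ge q_{\min}$ and $r+a\le 1<s$, and then place the new base either flush left in the lower slice or at $L(h+1)$. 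I checked the endpoint formulas and the chain $\rho_{x'}(h)-L(h+1)\ge l\bigl(1+\frac{h}{2-s}\bigr)-\frac{r+a}{s}\ge 1-\frac{r+a}{s}>0$; they are right. What your flexibility buys is robustness at exactly the paper's weakest step: the image of $B$ under $\psi_{x'-x}$ is $B$ shifted by $\frac{h}{s-2}(x'-x)$, not by $(x'-x)$, and the fully shifted base $B+(x'-x,0)$ can actually leave $Q_{x'}$ on the left (e.g.\ when $B$ hugs the left edge of $Q_x$ at height close to $0$); since you do not insist that $\Delta'$ be a rigid translate of $\Delta$, your argument does not depend on that claim. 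The paper's version is shorter and more visibly geometric; yours is longer but each inequality is explicit and checkable.

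Two small blemishes, neither of which affects the proof of the claim as stated. First, the parenthetical that $C_3\cap\{y=s-2\}=\emptyset$ because $l<2$ is backwards: since $l<2$, the apex $(r,s-l)$ of the downward-opening cone $C_3$ lies strictly above the line $y=s-2$, so (for $s<l$) the intersection is a nonempty segment whose left endpoint is precisely $(\bar{x},s-2)$; the hypothesis $(x',s-2)\notin C_3$ is indeed never active, but only because $x'\le x\le\bar{x}$ keeps the bottom vertex to the left of $C_3$. Second, the closing equality $U\cup D_{x'}=Q_{x'}$, and the slice formulas you use for $Q_x$ when extracting the hypotheses, tacitly use that the four defining points are in convex position; for the construction you only need $U\cup D_{x'}\subseteq Q_{x'}$ and convexity of $Q_{x'}$, and on the hypothesis side the admissibility of $x$ built into the definition of the family $Q_x$ supplies what is needed (the paper makes the same tacit use), so this is a matter of wording rather than substance.
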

    \begin{proof}[Proof of \Cref{claim:vertex_on_right}]
        Consider the horizontal line segment $B$ which forms the base of $\Delta$.
        With the same argument as above, it follows that $B$ lies below the $x$-axis.
        
        Let $x' \leq x$.
        Clearly, the image of $B$ under the transformation $\psi_{x'-x}$ is the segment $B' \coloneqq B + (x'-x,0)$, which is thus contained in $Q_{x'}$.
        In order to show that $\Delta' \coloneqq \Delta + (x'-x,0)$ is contained in $Q_{x'}$, it remains to prove that $v' \coloneqq v + (x'-x,0)$ is contained in $Q_{x'}$. 
        Note $v'$ and $v$ are both above the $x$-axis.
        Since $Q_x$ and $Q_{x'}$ coincide above the $x$ axis, $v$ lies in both.
        Since $v'$ lies to the left of $v$, it must also lie to the left of $L_1$. 
        Since $L_2$, the line through $(-a,0)$ and $(r,s)$, has slope greater than or equal to $1$, $v'$ must be to the right of it; otherwise, the left vertex of $B'$, which lies on the line through $v'$ with slope $1$, would also be to the left of $L_2$, and thus contained in $C_1$, a contradiction.
        Thus $\Delta'$ is contained in $Q_{x'}$.
    \end{proof}
    
    We now want to show that, letting $(\bar{x}, s-2)$ be the right-most point between $\strint(C_1)$ and $\strint(C_3)$, $Q_{\bar{x}}$ contains an $\RR$-translation of $\Tilde{\Delta}$.
    Explicitly we have $\bar{x}= \frac{r(s-2)+(1+a)(1-2a)}{s-1-2a}$.
    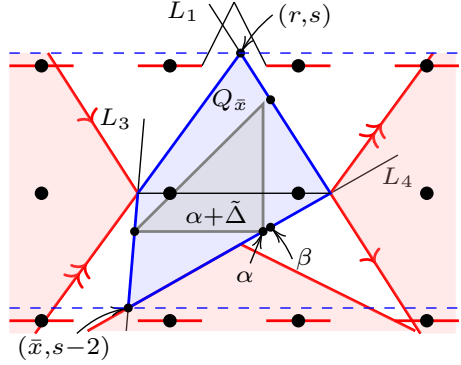
\begin{figure}[ht!]
        \centering
        \scalebox{1.3}{
        \begin{tikzpicture}[scale=1.3]
            \pgfmathsetmacro{\R}{.55}
            \pgfmathsetmacro{\S}{1.1}
            \pgfmathsetmacro{\x}{-.6}
            \pgfmathsetmacro{\A}{.25}
            \pgfmathsetmacro{\xb}{1+\A + (\R-1-\A)*(\S-2)/(\S-1-2*\A)}
            \pgfmathsetmacro{\t}{1/(\S-1-2*\A)}
            \pgfmathsetmacro{\u}{-(\S-2)/\S}
            \pgfmathsetmacro{\bx}{1+\A + (\R-1-\A)/(1+2*\A)}
            \pgfmathsetmacro{\by}{(\S-1-2*\A)/(1+2*\A)}
            \pgfmathsetmacro{\ax}{\xb + (1+\A-\xb)/(1+2*\A)}
            \pgfmathsetmacro{\ay}{((\S-2)*2*\A)/(1+2*\A)}
    
            \draw (2+2*\A -\R,-\S) -- (-0.35-0.35*\A +1.35*\R, 1.35*\S);
            \node[left] at (-0.33-0.35*\A +1.35*\R, 1.3*\S) {$\scriptstyle L_1$};
            \draw (2.7*\R-1.7*\A-1.7, 2.7*\S-2.7-5.4*\A) -- (1+\A+.3*\t*\R-.3*\t*\A-.3*\t, .3*\t*\S-.3*\t-.6*\t*\A);
            \draw (1+\A+.3*\t*\R-.3*\t*\A-.3*\t, .3*\t*\S-.3*\t-.6*\t*\A) node [below] {$\scriptstyle L_4$};
            \draw
            (.2*\A+1.2*\xb, 1.2*\S-2.4)--
            (-.65*\xb-1.65*\A, 1.3-.65*\S);
            \draw  (-.65*\xb-1.65*\A, 1.3-.65*\S) node [left] {$\scriptstyle L_3$};
    
            \draw[red, thick, ->>-]  (-\R-2*\A,-\S) -- (-\A, 0) ;
            \draw[red, thick, ->-]   (-1-2*\A+\R,\S) -- (-\A,0);
            \fill[fill=red!35,opacity=.25] (-\A, 0) -- (-\R-2*\A,-\S) -- (-\R-2.8*\A,-\S) -- (-\R-2.8*\A,\S) -- (-1-2*\A+\R,\S) -- cycle;
    
            \draw[red, thick, ->-] (1+\A, 0) -- (2+2*\A -\R,-\S);
            \draw[red, thick, ->>-] (1+\A,0) -- (1+2*\A+\R,\S);
            \fill[fill=red!35,opacity=.25] (1+\A, 0) -- (2+2*\A -\R,-\S) -- (2+3.1*\A -\R,-\S) -- (2+3.1*\A -\R,\S) -- (1+2*\A+\R,\S) -- cycle;
    
            \draw[red, thick] (\R,\S-1-2*\A) -- (2.7*\R-1.7*\A-1.7, 2.7*\S-2.7-5.4*\A);
            \draw[red, thick] (\R,\S-1-2*\A) -- (2.7*\R+1.7*\A, 2.7*\S-2.7-5.4*\A);
            \fill[fill=red!35,opacity=.25] (\R,\S-1-2*\A) -- (2.7*\R-1.7*\A-1.7, 2.7*\S-2.7-5.4*\A) --  (2.7*\R+1.7*\A, 2.7*\S-2.7-5.4*\A) -- cycle;
    
            \fill[fill=blue!35,opacity=.25] (\R, \S) -- (-\A, 0) -- (\xb, \S-2) -- (\A+1,0);
            \draw[thick, blue] (\R, \S) -- (-\A, 0) -- (\xb, \S-2) -- (\A+1,0) -- cycle;
            \draw (.47, .55) node[ above] {$\scriptstyle Q_{\bar{x}}$};
            \draw (\xb-.4,\S-2.2) edge[-{>[length=5,width=3]},bend left=15] (\xb,\S-2);
            \node[below]  at (\xb-.5,\S-2.12) {$\scriptstyle (\bar{x},s-2)$};
    
            \fill[fill=gray!35,opacity=.45] (\ax, \ay) -- (\ax-1, \ay) -- (\ax, \ay+1) -- cycle;
            \draw[thick, gray] (\ax, \ay) -- (\ax-1, \ay) -- (\ax, \ay+1) -- cycle;
            \draw (.36, -.35) node[ above] {$\scriptstyle \alpha+ \Tilde{\Delta}$};
    
            \draw[thin] (-\A,0) -- (1+\A,0);
            \fill (\R, \S) circle (1pt);
            \draw (\R+.25,\S+.3)  edge[-{>[length=5,width=3]},bend right=10] (\R,\S);
            \node[above] at (\R+.5,\S+.1) {$\scriptstyle (r,s)$};
    
            \draw[thin] (\A, 1) -- (.5, .5+1/\A/4);
            \draw[thin] (1-\A, 1) -- (.5, .5+1/\A/4);
            \fill (\xb,\S-2) circle (1pt);
            \fill (\bx,\by) circle (1pt); 
            \draw (\bx,\by) edge[bend left=10,{<[length=5,width=3]}-] (\bx+.17,\by-.25);
            \node[right] at (\bx+.1,\by-.25)  {$\scriptstyle \beta$};
            \fill (\bx,\by+1) circle (1pt);
            \fill (\ax,\ay) circle (1pt);
           \draw[{<[length=5,width=3]}-] (\ax,\ay) -- (\ax-.1,\ay-.25) node[xshift=-1.5, yshift=-4] {$\scriptstyle \alpha$};
            \fill (\ax-1,\ay) circle (1pt);
    
            \draw[thin, blue, dashed] (-\A-1, \S) -- (\A+2, \S);
            \draw[thin, blue, dashed] (-\A-1, \S-2) -- (\A+2, \S-2);
    
            \foreach \x in {-1, ..., 2}{
            \foreach \y in {-1, 1}{
            \draw[red, thick]  (\x-\A,\y) -- (\x+\A, \y);}}
            \foreach \x in {-1, ..., 2}{
            \foreach \y in {-1, ..., 1}{
            \fill (\x,\y) circle (1.5pt);}}
        \end{tikzpicture}}
        \caption{The quadrilateral $Q_{\bar{x}}$ and the lines $L_1,L_3,L_4$.\label{fig:3lines_Qx}
}
    \end{figure}
    Let $L_1, L_2, L_3, L_4$ be the lines defining the boundary of $Q_{\bar{x}}$: $L_1$ is the line through the points $(r,s)$ and $(1+a, 0)$, and the others are chosen to lie in  counterclockwise order along the boundary of $Q_{\bar{x}}$.
    Explicitly,
   \begin{align*}
        L_1 &= \setcond{(1+a, 0) + t_1 (r-1-a, s)}{t_1 \in \RR}, \\
        L_3 &= \setcond{(\bar{x}, s-2) + t_3 (-a-\bar{x}, 2-s)}{t_3 \in \RR},\\
        L_4 &= \setcond{(1+a, 0) + t_4 (r-1-a, s-1-2a)}{t_4 \in \RR} \\
            &= \setcond{(\bar{x}, s-2) + t_4' (1+a-\bar{x}, 2-s)}{t_4' \in \RR} \text{.}
    \end{align*}
    The lines $L_1 - e_2$ and $L_4$ intersect at $t_1=t_4=\frac{1}{1+2a}$ in the point $\beta=(\beta_1, \beta_2)=(1+a+ \frac{r-1-a}{1+2a}, \frac{s-1-2a}{1+2a})$. Both the points $\beta$ and $\beta + e_2$ are in $Q_{\bar{x}}$.
    The intersection between $L_4$ and $L_3 + e_1$, obtained at $t_3=t_4'=\frac{1}{1+2a}$, is the point $\alpha=(\alpha_1, \alpha_2)=(\bar{x}+\frac{1+a-\bar{x}}{1+2a},\frac{(s-2)2a}{1+2a})$; and both $\alpha$ and $\alpha - e_1$ are in $Q_{\bar{x}}$.
    Since $s>1$ and $0<a<\frac12$, we have $\beta_2 -\alpha_2 = \frac{(s-1)(1-2a)}{1+2a}>0$, or equivalently $\alpha_2 \leq \beta_2$.
    Hence $\alpha + \Tilde{\Delta}$ and $\beta + \Tilde{\Delta}$ are both contained in $Q_{\bar{x}}$ (recall $\Tilde{\Delta}\coloneqq \conv(\mathbf{0}, -e_1, e_2)$).
\end{proof}


\subsection{\texorpdfstring{Inclusion-maximal $\RR$-$\Delta_2$-free convex bodies in dimension $2$}{inclusion-maximal R-Delta-2-free convex bodies in dimension 2}}\label{sec_incl_max_dim_2}
In the previous section, we have established the maximum width of $\RR$-$\Delta_2$-free convex bodies.
We now devote our attention to inclusion-maximal $\RR$-$\Delta_2$-free convex bodies.
We have seen in \Cref{thm_incl_max} that inclusion-maximal $\RR$-$\Delta_2$-free convex bodies are in fact always polytopes.
It would be interesting to have a complete characterisation of these polytopes, in analogy to the classification of maximal hollow $2$-bodies of Hurkens~\cite{Hurkens}.
Note that hollow convex bodies are $\ZZ$-$\{\mathbf{0}\}$-free convex bodies.

Here, we will see that the situation for $\RR$-$\Delta_2$-free bodies is more intricate than for hollow ones. 
We construct infinite families of $\RR$-$\Delta_2$-free bodies but cannot classify all such bodies.
It might thus be of particular interest to investigate the special class of those inclusion-maximal $\RR$-$\Delta_2$-free polygons which achieve the maximum width $2$.
Of these, we only know two examples (up to $\RR$-unimodular equivalence): the cross-polygon $\conv(\pm e_1, \pm e_2)$ and the triangle $\conv(e_1, e_2, -e_1-e_2)$, see \Cref{fig:maximal_width_2}. 

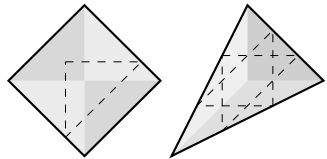
\begin{figure}[!ht]
    \centering
    \begin{tikzpicture}
    \fill[fill=gray!50,opacity=.6,very thin,dashed] (0,0) -- (1,0) -- (0,-1) -- cycle;
    \fill[fill=gray!50,opacity=.6,very thin,dashed] (0,0) -- (-1,0) -- (0,1) -- cycle;
    \fill[fill=gray!50,opacity=.3,very thin,dashed] (0,0) -- (-1,0) -- (0,-1) -- cycle;    
    \fill[fill=gray!50,opacity=.3,very thin,dashed] (0,0) -- (1,0) -- (0,1) -- cycle;
    \draw[very thin,dashed] (-.25,.25) -- (.75,.25) -- (-.25,-.75) -- cycle;
    \draw[thick] (0,1) -- (1,0) -- (0,-1) -- (-1,0)  -- cycle;
    \end{tikzpicture}
    \begin{tikzpicture}
    \fill[fill=gray!50,opacity=.75,very thin,dashed] (0,0) -- (1,0) -- (0,1) -- cycle;
    \fill[fill=gray!50,opacity=.5,very thin,dashed] (0,0) -- (1,0) -- (-1,-1) -- cycle;
    \fill[fill=gray!50,opacity=.25,very thin,dashed] (0,0) -- (0,1) -- (-1,-1) -- cycle;
    \draw[very thin,dashed] (1/3,-1/3) -- (1/3,2/3) -- (-2/3,-1/3) -- cycle;
    \draw[very thin,dashed] (-1/3,1/3) -- (2/3,1/3) -- (-1/3,-2/3) -- cycle;
    \draw[thick] (-1,-1) -- (1,0) -- (0,1)  -- cycle;
  \end{tikzpicture}
    \caption{Maximal $\RR$-$\Delta_2$-free polygons of width $2$, with inscribed $\RR$-unimodular triangles.\label{fig:maximal_width_2}
}
\end{figure}

To discuss examples of maximal $\RR$-$\Delta_2$-free polygons, we need the notion of \emph{locked} facet.
A facet $F$ of a polygon $P$ defines two half-planes, whose boundary is the affine line spanned by the facet $F$.
We say that a point $x$ is \emph{beyond} the facet $F$ of $P$ if it lies in the half-plane defined by $F$ which does not contain the interior $\strint(P)$, while $x$ is \emph{beneath} $F$ if it lies in the same half-plane as $P$.
\begin{definition}
    A facet $F$ of an $\RR$-$\Delta_2$-free polygon $P$ is \emph{locked} if for any $x \in \RR^2$ beyond $F$, an $\RR$-unimodular copy of $\Delta_2$ is contained in the interior of $\conv(P, x)$.
\end{definition}

Clearly all facets of an $\RR$-$\Delta_2$-free polygon $P$ are locked if and only if $P$ is maximal.
Indeed, any point outside of $P$ must be beyond at least one facet, and thus the polygon obtained by adding this point to $P$ would by the definition of locked facet contain an $\RR$-unimodular triangle in its interior.

The following characterisation of locked facets will play an important role in our study of maximal $\RR$-$\Delta_2$-free polygons.

\begin{proposition}\label{prop:char-RR-locked}
	Let $P \subset \RR^2$ be an $\RR$-$\Delta_2$-free polygon.
	Then a facet $F$ of $P$ is locked if and only if $P$ contains an $\RR$-unimodular copy $\Delta$ of $\Delta_2$ such that the face $\mathcal{F} \coloneqq F \cap \Delta$ of $\Delta$ is not empty, lies in the relative interior of $F$ and the face $\ell$ of $\Delta$ that is opposite to $\mathcal{F}$ satisfies $\dim(P\text{\textdiv}\ell)=2$.
\end{proposition}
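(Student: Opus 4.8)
The plan is to prove the two implications separately. The direction ``such a $\Delta$ exists $\Rightarrow$ $F$ is locked'' will be a short dimension count with Minkowski differences; the converse ``$F$ locked $\Rightarrow$ such a $\Delta$ exists'' will use a compactness argument and is where the real work lies. Since $P$ is full-dimensional (by \Cref{lem:full-dim} we may assume this), ``relative interior'' reads as ``interior'' in the definition of $\RR$-$\Delta_2$-free, and I will use freely: for a full-dimensional convex body $K$ and a $\ZZ$-unimodular copy $S$ of $\Delta_2$, $\dim(K\text{\textdiv}S)=2$ iff $\strint(K)$ contains an $\RR$-translate of $S$ (\Cref{lem:int-full-dim}); $P$ is $\RR$-$\Delta_2$-free iff $\dim(P\text{\textdiv}S)\le1$ for every such $S$ (\Cref{lem_R_X_free_with_Tran}); and $K\text{\textdiv}(S+\delta)=\bigcap_i(K-v_i-\delta)$ over the vertices $v_i$ of $S$ (\Cref{lem_translations}).

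For ``$\Leftarrow$'' I would argue as follows. Write $\Delta=S+\delta$ with $S$ a $\ZZ$-unimodular copy of $\Delta_2$, let $A\subseteq\{1,2,3\}$ index the vertices of $\Delta$ spanning $\mathcal F$ and $B:=\{1,2,3\}\setminus A$ those spanning $\ell$, and fix a point $x$ beyond $F$. Each vertex $v_i$, $i\in A$, lies in $\relint F$, and the local picture near a relative interior point of $F$ --- where $P$ occupies the closed half-plane bounded by $\aff F$ and the triangle $\conv(F\cup\{x\})$ the complementary side --- shows $v_i\in\strint(\conv(P,x))$; hence $\mathbf 0\in\strint\bigl(\bigcap_{i\in A}(\conv(P,x)-v_i)\bigr)$. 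On the other hand $\bigcap_{j\in B}(\conv(P,x)-v_j)\supseteq\bigcap_{j\in B}(P-v_j)=P\text{\textdiv}\ell$, which is $2$-dimensional by hypothesis --- automatic when $\ell$ is a vertex --- and contains $\mathbf 0$, so $\mathbf 0$ lies in the closure of its interior. Intersecting a small ball about $\mathbf 0$ inside the first set with $\strint(P\text{\textdiv}\ell)$ gives a nonempty open subset of $\bigcap_{k=1}^3(\conv(P,x)-v_k)=\conv(P,x)\text{\textdiv}\Delta$, whence $\dim(\conv(P,x)\text{\textdiv}S)=2$, so $\strint(\conv(P,x))$ contains an $\RR$-unimodular copy of $\Delta_2$. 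As $x$ was an arbitrary point beyond $F$, this shows $F$ is locked.

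For ``$\Rightarrow$'' I would write $P\subseteq H^-$ with bounding line $L=\aff F$ and outward unit normal $n$, fix $v_0\in\relint F$, and put $x_t:=v_0+tn$ for small $t>0$. For such $t$ one has $\conv(P,x_t)=P\cup T_t$, where $T_t:=\conv(F\cup\{x_t\})$ is a thin triangle glued to $P$ along the common facet $F$ (the other facets of $P$ persist because $v_0$ lies strictly on the inner side of every other facet line). Locking provides an $\RR$-unimodular copy $\Delta_t$ of $\Delta_2$ in $\strint(\conv(P,x_t))$; since $\conv(P,x_1)$ is bounded it contains only finitely many $\ZZ$-unimodular copies of $\Delta_2$, so after passing to a subsequence I may assume $\Delta_t=S+\delta_t$ with $S$ fixed, and then $\dim(\conv(P,x_t)\text{\textdiv}S)=2$. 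Expanding $\conv(P,x_t)\text{\textdiv}S=\bigcap_i\bigl((P-v_i)\cup(T_t-v_i)\bigr)$ into its $2^3$ cells indexed by the subset $A_t\subseteq\{1,2,3\}$ of vertices of $S$ required to land in $T_t$ rather than in $P$: the cell $A_t=\emptyset$ is $P\text{\textdiv}S$, at most $1$-dimensional since $P$ is $\RR$-$\Delta_2$-free, and the cell $A_t=\{1,2,3\}$ is $T_t\text{\textdiv}S$, empty for small $t$ because $\vol(T_t)\to0$; so some mixed cell is $2$-dimensional, and after a further subsequence $A_t=A$ is constant. Since an intersection of convex sets can be $2$-dimensional only if each factor is, the cell $\bigcap_{i\in A}(T_t-v_i)\cap\bigcap_{j\notin A}(P-v_j)$ forces $\dim\bigl(\bigcap_{j\notin A}(P-v_j)\bigr)=\dim(P\text{\textdiv}\ell_S)=2$, where $\ell_S$ is the face of $S$ spanned by the complementary vertices. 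Picking $\delta_t$ in the mixed cell and passing to a subsequential limit $\delta_t\to\delta$, the translate $\Delta:=S+\delta$ lies in $P$ (a limit of translates contained in $P\cup T_t\to P$), the vertices $v_i+\delta$, $i\in A$, lie on $L$ (they sit in $T_t$, within height $t$ of $L$ over $\relint F$), so $\mathcal F:=\Delta\cap F=\Delta\cap L$ is a nonempty face of $\Delta$, and the opposite face $\ell=\ell_S+\delta$ satisfies $\dim(P\text{\textdiv}\ell)=\dim(P\text{\textdiv}\ell_S)=2$. Thus $\Delta$ has all the asserted properties, except possibly $\mathcal F\subseteq\relint F$.

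The step I expect to be the main obstacle is exactly pinning $\mathcal F$ into the \emph{relative} interior of $F$: a priori the vertices pushed into $T_t$ could drift, as $t\to0$, to an endpoint of $F$. To rule this out I would use that $\Delta$ has one of only finitely many shapes, that $T_t$ meets $\aff F$ only along $F$ (so the drifting vertices project into $\relint F$ for every fixed $t$), and --- when $\mathcal F$ is an edge of $\Delta$ --- a sliding of $\Delta$ parallel to $F$, which keeps $\mathcal F$ inside $F$ and, using the $2$-dimensional freedom recorded by $\dim(P\text{\textdiv}\ell)=2$, keeps $\ell$ inside $P$, until $\mathcal F$ lands in $\relint F$. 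Everything else (persistence of the facets, the gluing identity $\conv(P,x_t)=P\cup T_t$, and the cell bookkeeping) is routine via \Cref{lem_translations}, \Cref{lem:int-full-dim}, and \Cref{lem_R_X_free_with_Tran}.
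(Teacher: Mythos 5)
Most of your argument is sound and runs close to the paper's: your ``$\Leftarrow$'' direction (a small ball around the origin coming from the vertices of $\mathcal F$ in $\strint(\conv(P,x))$, intersected with $\strint(P\text{\textdiv}\ell)$, showing $\dim(\conv(P,x)\text{\textdiv}\Delta)=2$) is essentially the paper's proof, and in ``$\Rightarrow$'' the extraction of a limit triangle $\Delta\subset P$ meeting $F$ via finiteness of translation classes and compactness is exactly the paper's route; your cell decomposition of $\conv(P,x_t)\text{\textdiv}S=\bigcap_i\bigl((P-v_i)\cup(T_t-v_i)\bigr)$ is a clean alternative to the paper's contradiction argument (which works locally at the vertices of $\ell$) for obtaining $\dim(P\text{\textdiv}\ell)=2$. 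The genuine gap is precisely the step you flag and then only sketch: forcing $\mathcal F$ into $\relint(F)$.

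Your proposed fix does not work as stated. To slide $\Delta$ inside $P$ parallel to $F$ you need a small segment of $P\text{\textdiv}\Delta$ emanating from the current translation in the direction along $F$ pointing away from the offending endpoint; since $P$ is $\RR$-$\Delta_2$-free, $P\text{\textdiv}\Delta$ is at most $1$-dimensional, and knowing only that $P\text{\textdiv}\ell$ is $2$-dimensional gives no control over its direction: the current position may be a boundary point of $P\text{\textdiv}\ell$ with the along-$F$ direction pointing out of it, while sliding the other way pushes $\mathcal F$ past the endpoint of $F$ and hence out of $P$. The paper spends the entire second half of its proof on exactly this point: it keeps the approximating polygons $P_n=\conv(P,x_n)$ in play, compares the support cones of $P$ and $P_n$ at the vertices of $\Delta$, and uses \Cref{prop:intersec-2d-cones} together with $\dim(P_n\text{\textdiv}\Delta)=2$ for all $n$ versus $\dim(P\text{\textdiv}\Delta)\le1$ to conclude that the ray $\RR_{\ge0}(b-a)$ meets $P\text{\textdiv}\Delta$ in a segment, which is what legitimises the slide; your limit-only argument has discarded exactly this information. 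Moreover, your sketch treats only the case ``$\mathcal F$ is an edge'': the case where $\mathcal F$ is a single vertex of $\Delta$ sitting at an endpoint of $F$ can occur (points of $T_t$ at positive height project into $\relint F$ for each fixed $t$, but their limits may still converge to an endpoint) and needs the same argument, and the degenerate possibility $\mathcal F=F$ (both endpoints hit), where no slide can help, must be excluded — the paper rules it out by noting that the endpoints of $F$ remain vertices of $P_n$. So the proposal is incomplete at its decisive step.
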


In the proof of \Cref{prop:char-RR-locked}, we use the following statement which can be straightforwardly checked.
\begin{proposition}\label{prop:intersec-2d-cones}
	Let $\sigma_1, \sigma_2 \subset \RR^2$ be two $2$-dimensional polyhedral cones.
	Then $\dim(\sigma_1 \cap \sigma_2) = 2$ if and only if one of the following conditions are satisfied:
	\begin{enumerate}
	\item
		$\sigma_1 \subset \sigma_2$ or $\sigma_2 \subset \sigma_1$; or
	\item
		there exists a ray $\rho$ of $\sigma_1$ that is contained in the interior of $\sigma_2$ and vice versa.
	\end{enumerate}
\end{proposition}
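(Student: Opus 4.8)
The plan is to pass to the unit circle $S^1\subset\RR^2$, where the claim becomes an elementary statement about circular arcs. For a $2$-dimensional closed convex cone $\sigma\subseteq\RR^2$ write $\Omega_\sigma\coloneqq\sigma\cap S^1$; this is a closed circular arc of angular length at most $\pi$, unless $\sigma=\RR^2$, in which case $\Omega_\sigma=S^1$. Since a convex cone is the positive hull of its unit vectors, $\sigma$ is recovered from $\Omega_\sigma$, one has $\sigma_1\subseteq\sigma_2$ if and only if $\Omega_{\sigma_1}\subseteq\Omega_{\sigma_2}$, the relative interior of $\Omega_\sigma$ in $S^1$ equals $\strint(\sigma)\cap S^1$, and the two rays bounding $\sigma$ (its extremal rays when $\sigma$ is pointed, the two halves of the boundary line when $\sigma$ is a half-plane) correspond to the two endpoints of $\Omega_\sigma$. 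The first reduction I would make is that $\dim(\sigma_1\cap\sigma_2)=2$ if and only if $\strint(\sigma_1)\cap\strint(\sigma_2)\neq\emptyset$: if the open cones were disjoint they could be separated by a line through the origin, forcing $\sigma_1\cap\sigma_2$ into that line, while the converse is clear. Since $\strint(\sigma_1)\cap\strint(\sigma_2)$ is itself a cone, it is non-empty if and only if it meets $S^1$, i.e.\ if and only if the relative interiors of $\Omega_{\sigma_1}$ and $\Omega_{\sigma_2}$ in $S^1$ intersect. Under this dictionary, condition~(1) of the proposition reads ``$\Omega_{\sigma_1}\subseteq\Omega_{\sigma_2}$ or $\Omega_{\sigma_2}\subseteq\Omega_{\sigma_1}$'' and condition~(2) reads ``each of $\Omega_{\sigma_1},\Omega_{\sigma_2}$ has an endpoint in the relative interior of the other''.

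So it remains to prove: for closed arcs $\Omega_1,\Omega_2\subseteq S^1$ of angular length at most $\pi$ (or equal to $S^1$), the relative interiors $\Omega_1^\circ,\Omega_2^\circ$ meet if and only if one arc contains the other or each has an endpoint in the relative interior of the other. If some $\Omega_i=S^1$ this is immediate, since then $\Omega_j\subseteq\Omega_i$ and $\Omega_j^\circ\subseteq\Omega_i^\circ=S^1$, so I may assume both arcs proper. For the ``if'' direction: if $\Omega_1\subseteq\Omega_2$, any point of $\Omega_1^\circ$ has points of $\Omega_1\subseteq\Omega_2$ on both sides of it, so it is not an endpoint of the proper arc $\Omega_2$ and hence lies in $\Omega_2^\circ$, giving $\Omega_1^\circ\cap\Omega_2^\circ=\Omega_1^\circ\neq\emptyset$; and if an endpoint $p$ of $\Omega_1$ lies in the open set $\Omega_2^\circ$, then a neighbourhood of $p$ lies in $\Omega_2^\circ$ and meets $\Omega_1^\circ$ at points of $\Omega_1$ just interior to the endpoint $p$, so again the open arcs meet.

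For the ``only if'' direction I would run a short case analysis on endpoints. Assume $\Omega_1^\circ\cap\Omega_2^\circ\neq\emptyset$ and that neither arc contains the other. Using that both arcs have length at most $\pi$, the overlap of the interiors forces $\Omega_1\cup\Omega_2$ to be a single arc; and if it were all of $S^1$ then the complementary open arc $S^1\setminus\Omega_1$, of length at least $\pi$, would be contained in $\Omega_2$, which forces both arcs to be semicircles and, being complementary, to have disjoint interiors --- contradicting the overlap. Hence $\Omega_1\cup\Omega_2$ is a proper arc $[\gamma,\delta]$ (read counterclockwise). Its left endpoint $\gamma$ is one of the two left endpoints of $\Omega_1,\Omega_2$ and its right endpoint $\delta$ is one of the two right endpoints; if $\gamma$ and $\delta$ both belonged to the same arc, that arc would have endpoints $\gamma,\delta$ and be contained in $[\gamma,\delta]$, hence equal $[\gamma,\delta]$ (the other arc with those endpoints has empty interior inside $[\gamma,\delta]$) and contain the other, which is excluded. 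So $\gamma$ is the left endpoint of one arc and $\delta$ the right endpoint of the other; reading the four endpoints along $[\gamma,\delta]$ and using that $\Omega_1\cup\Omega_2=[\gamma,\delta]$ leaves no gap while $\Omega_1^\circ\cap\Omega_2^\circ\neq\emptyset$ rules out a mere touching, the endpoints must interlace, so that each arc has an endpoint strictly between the endpoints of the other --- which is condition~(2). Translating back through the dictionary gives the proposition. The only genuinely delicate point is the exclusion of $\Omega_1\cup\Omega_2=S^1$, which is exactly where the hypothesis that a $2$-dimensional cone in the plane has opening angle at most $\pi$ enters; everything else is bookkeeping of arc endpoints.
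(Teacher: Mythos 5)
The paper offers no proof of this proposition at all --- it is stated as something ``which can be straightforwardly checked'' and then used in the proof of \Cref{prop:char-RR-locked} --- so the only question is whether your check is sound, and it is. Your two reductions are both correct: $\dim(\sigma_1\cap\sigma_2)=2$ iff $\strint(\sigma_1)\cap\strint(\sigma_2)\neq\emptyset$ (separation by a line, necessarily through the origin since both cones contain it), and the open cones meet iff the open arcs $\Omega_{\sigma_1}^\circ,\Omega_{\sigma_2}^\circ$ in $S^1$ meet. The arc-level statement is then handled correctly, including the one genuinely dangerous case $\Omega_{\sigma_1}\cup\Omega_{\sigma_2}=S^1$, which you rightly exclude using the convexity bound that each arc has length at most $\pi$ (forcing two complementary semicircles with disjoint interiors); the terse endpoint bookkeeping also closes the degenerate case where the two arcs share an endpoint, since then the common endpoint and the far endpoint of the union lie on the same arc and your containment exclusion applies. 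Two reading conventions deserve to be made explicit, because they are exactly the conventions under which the paper applies the proposition: a ``ray of $\sigma_1$'' must mean a boundary half-line (so a half-plane contributes the two halves of its boundary line, even though these are not faces of it), and ``contained in the interior of $\sigma_2$'' must be read away from the apex, since the origin never lies in $\strint(\sigma_2)$ unless $\sigma_2=\RR^2$. Your dictionary between endpoints of $\Omega_\sigma$ and boundary rays of $\sigma$ silently adopts both conventions, which is the right choice (with the strict face-theoretic reading the statement would fail, e.g.\ for two half-planes meeting in a quadrant), but saying so removes the only point where your translation could be questioned.
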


\begin{proof}[Proof of \Cref{prop:char-RR-locked}]
	Suppose the facet $F = [a, b]$ of $P$ is locked ($a, b \in \RR^2$).
	Let $\eta \in \RR^2$ be an outer facet normal of $F$.
	The points $x_n \coloneqq \frac12(a+b)+\frac1n\eta$ for $n \in \NN$ are beyond $F$.
	Hence for every $n \in \NN$ there exists an $\RR$-unimodular copy $S_n$ of $\Delta_2$ that is contained in the interior of $P_n \coloneqq \conv(P, x_n)$.
	Note $S_n$ is an $\RR$-translation of a $\ZZ$-unimodular copy of $\Delta_2$ that is contained in $P_1 + [0,1]^2$.
	Since $P_1+[0,1]^2$ contains only finitely many $\ZZ$-unimodular copies of $\Delta_2$, by restricting to an appropriate subsequence, we may assume that $S_n$ is an $\RR$-translation of exactly one fixed $\ZZ$-unimodular copy  $\widetilde{\Delta}$ of $\Delta_2$ for all $n$, i.e., $S_n = \widetilde{\Delta} + \delta_n$ for some $\delta_n \in \RR^2$.
	Since there exists a (large enough) natural number $N \in \NN$ such that $\delta_n$ is in the compact set $[-N,N]^2$ for all $n$, the sequence of $\delta_n$'s has a convergent subsequence.
	To simplify notation, we abuse notation and use the same notation $(\delta_n)_{n \in \NN}$, $(S_n)_{n \in \NN}$ for these subsequences.
	Then $\Delta\coloneqq \lim_{n \to \infty} S_n$ is contained in $P$ and intersects the facet $F$ in a non-empty face $\mathcal{F} \coloneqq F \cap \Delta$ of $\Delta$.

	Let $\ell$ be the face of $\Delta$ opposite to $\mathcal{F}$.
	Since an $\RR$-translation of $\Delta$ is in the interior of $P_n$ we have $\dim(P_n \text{\textdiv} \Delta) = 2$ (see \Cref{lem:int-full-dim}).
	We claim $\dim(P\text{\textdiv}\ell)=2$.
	Assume towards a contradiction $\dim(P\text{\textdiv}\ell)\le 1$.
	This is only possible if $\ell$ is an edge, say $[c_1, c_2]$ for $c_1, c_2 \in \RR^2$ (as otherwise $P \text{\textdiv}\ell$ is just a translation of $P$, and thus full-dimensional).
	By \Cref{lem_translations}, $P\text{\textdiv}[c_1,c_2] = (P-c_1) \cap (P-c_2)$.
	For sufficiently small $\varepsilon >0$, we have $(c_i + \varepsilon B^2) \cap P = (c_i + \varepsilon B^2) \cap P_n$ for $i=1,2$ and $n \in \NN$.
	Thus
	\[
		(P_n \text{\textdiv} \Delta) \cap \varepsilon B^2 = \bigcap_{v \in V(\Delta)} (P_n-v) \cap \varepsilon B^2 \subset \bigcap_{i\in\{1,2\}}(P-c_i) \cap \varepsilon B^2 = (P\text{\textdiv}\ell)\cap \varepsilon B^2\text{,}
	\]
	where $V(\Delta)$ denotes the set of vertices of $\Delta$.
	Hence $\dim(P_n \text{\textdiv}\Delta) \le 1$, a contradiction.
	
	If $\mathcal{F}$ is contained in the relative interior of $F$, the implication ``$\Rightarrow$'' follows.
	Suppose otherwise, i.e. $\mathcal{F}$ intersects $F$ in an endpoint.
	Note it is not possible that $\mathcal{F}$ intersects $F$ in both endpoints (since the end points $a, b$ of $F$ will remain vertices of $P_n$ for sufficiently large $n \in \NN$).
	Suppose $a \in \mathcal{F}$.
	In particular, $a$ is a vertex of $\Delta$.
	Let $a_1 \coloneqq a$ and $a_2$ be the two vertices of $\Delta$ that are not contained in the relative interior of $F$.
	Call $\Delta$'s third vertex $c$.
	There exists a \emph{fixed} $\varepsilon>0$ such that $(P_n-a_i) \cap \varepsilon B^2 = \sigma_{a_i}^{(n)} \cap \varepsilon B^2$ and $(P-a_i)\cap \varepsilon B^2 = \sigma_{a_i} \cap \varepsilon B^2$ for some polyhedral cones $\sigma_{a_i}^{(n)}, \sigma_{a_i} \subset \RR^2$ ($i=1,2$).
	However, around $c$ we might need to choose $\varepsilon_n >0$ depending on $n \in \NN$ such that $(P_n - c) \cap \varepsilon_n B^2 = \sigma_c \cap \varepsilon_n B^2$ for some polyhedral cone $\sigma_c \subset \RR^2$.
	In addition, $(P-c)\cap \varepsilon B^2 = \sigma_c'$ for a possibly different cone $\sigma_c' \subset \RR^2$ and we might need to decrease the $\varepsilon$ from above.
	Note if $\varepsilon_n$ needs to be adjusted with $n\in \NN$, then $\varepsilon_n \to 0$ as $n \to \infty$.
	Furthermore, $\sigma_{a_2}^{(n)} = \sigma_{a_2}$ doesn't change for sufficiently small $\varepsilon>0$.
	Indeed, only one ray of $\sigma_a^{(n)}$ changes, namely the ray $\rho_a^{(n)} \coloneqq \RR_{\ge0} (\frac12 (b-a) + \frac1n\eta)$.
	Since $\dim(P_n \text{\textdiv}\Delta) = 2$ for all $n \in \NN$ (where $(P_n \text{\textdiv} \Delta) \cap \varepsilon_n B^2 = \sigma_a^{(n)} \cap \sigma_{a_2} \cap \sigma_c \cap \varepsilon_n B^2$ for all sufficiently large $n \in \NN$) and $\dim(P\text{\textdiv}\Delta)\le 1$, it follows with \Cref{prop:intersec-2d-cones} that $\rho_a^{(n)}$ lies in the relative interior of $\sigma_{a_2} \cap \sigma_c$.
	Since $\dim(P\text{\textdiv}\Delta)\le 1$, one ray of $\sigma_{a_2} \cap \sigma_c'$ is $\rho_a = \RR_{\ge0}\frac12(b-a)$.
	Hence, $\rho_a \cap \varepsilon B^2 \subset P \text{\textdiv}\Delta$ for some sufficiently small $\varepsilon>0$, and thus we can move $\Delta$ within $P$ such that it intersects the facet $F$ in its relative interior.

	For the reverse implication suppose $x$ is strictly beyond the facet $F$ of $P$.
	Then the vertices of $\mathcal{F}$ lie in the interior of $\conv(P, x)$, and thus for sufficiently small $\varepsilon >0$, it follows that $(P\text{\textdiv}\ell) \cap \varepsilon B^2 \subset \conv(P,x) \text{\textdiv}\Delta$.
	That is, $\dim(\conv(P,x)\text{\textdiv}\Delta)=2$, i.e., an $\RR$-translation of $\Delta$ is contained in the interior of $\conv(P, x)$ by \Cref{lem:int-full-dim}.
\end{proof}

Certainly, the previous proof heavily relies on properties of $2$-dimensional geometry.
It would be interesting to know a characterisation of locked facets similar to \Cref{prop:char-RR-locked} in higher dimensions:
\begin{question}
	Find a characterisation of $\RR$-locked facets in $d$ dimensions for $d\ge 3$ similar to the one in \Cref{prop:char-RR-locked}.
\end{question}

The following statements will be useful to prove that facets of our candidate maximal polygons are locked and follow by \Cref{prop:char-RR-locked}.

\begin{corollary}\label{lem:locked_skew}
  Let $P$ be an $\RR$-$\Delta_2$-free polygon containing $\Delta = \conv(v_1, v_2, v_3)$, an $\RR$-unimodular copy of $\Delta_2$.
  Suppose that each vertex $v_i$ lies in the relative interior of a distinct facet $F_i$ of $P$.
  If the lines spanned by $F_2$ and $F_3$ meet at a point, then $F_1$ is locked.
  See \Cref{fig:locked_skew}.
\end{corollary}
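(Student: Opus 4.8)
The plan is to derive this directly from the characterisation of locked facets in \Cref{prop:char-RR-locked}, using the triangle $\Delta = \conv(v_1,v_2,v_3)$ as the witnessing $\RR$-unimodular copy of $\Delta_2$. Concretely, I will check that $\mathcal F_1 \coloneqq F_1 \cap \Delta$ is a non-empty face of $\Delta$ lying in $\relint(F_1)$, and that the opposite face $\ell$ of $\Delta$ satisfies $\dim(P\text{\textdiv}\ell)=2$; then \Cref{prop:char-RR-locked} gives that $F_1$ is locked.

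First I would identify $\mathcal F_1$. Since $\Delta \subseteq P$ and $F_1$ is a face of $P$, the set $\mathcal F_1 = F_1 \cap \Delta$ is a face of $\Delta$; it is non-empty because $v_1 \in \mathcal F_1$, and it is not all of $\Delta$ since $\dim(F_1)=1<2$. It cannot be an edge of $\Delta$: an edge through $v_1$ would force $v_2$ or $v_3$ onto $F_1$, but a point in $\relint(F_i)$ cannot lie on the distinct facet $F_1$ (the common face $F_1\cap F_i$ has dimension $<1$, so is empty or a single vertex of $F_i$), while the edge $[v_2,v_3]$ does not contain the vertex $v_1$ because $\Delta$ is non-degenerate. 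Hence $\mathcal F_1 = \{v_1\}$, which lies in $\relint(F_1)$ by hypothesis, and the face of $\Delta$ opposite to $\mathcal F_1$ is $\ell \coloneqq [v_2,v_3]$.

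Next I would compute $\dim(P\text{\textdiv}\ell)$. By \Cref{lem_translations}, $P\text{\textdiv}\ell = (P-v_2)\cap(P-v_3)$, and the origin lies in this set since $\ell \subseteq \Delta \subseteq P$. Because $v_i \in \relint(F_i)$, there is $\varepsilon>0$ with $(P-v_i)\cap\varepsilon B^2 = S_P(v_i)\cap\varepsilon B^2$ for $i=2,3$, where $S_P(v_i)$ is the support cone of $P$ at $v_i$ — a closed half-plane whose boundary is the line through the origin parallel to $F_i$. The hypothesis that the lines spanned by $F_2$ and $F_3$ meet at a point says exactly that these lines are non-parallel (being distinct edges of a convex polygon they are in any case distinct lines), so the boundary lines of $S_P(v_2)$ and $S_P(v_3)$ are distinct lines through the origin; by \Cref{prop:intersec-2d-cones} their intersection is then a $2$-dimensional cone. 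Hence $(P\text{\textdiv}\ell)\cap\varepsilon B^2$ is $2$-dimensional, i.e.\ $\dim(P\text{\textdiv}\ell)=2$, and \Cref{prop:char-RR-locked} applied with $\Delta$, $\mathcal F_1$, $\ell$ yields that $F_1$ is locked.

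I do not expect a real obstacle here; the proof is essentially an unwinding of \Cref{prop:char-RR-locked}. The only points needing a little care are the combinatorial book-keeping that rules out $\mathcal F_1$ being an edge of $\Delta$ (so that $\mathcal F_1=\{v_1\}$ and the opposite face is genuinely $[v_2,v_3]$), and the correct reading of ``the lines meet at a point'' as non-parallelism — precisely the condition that makes the two support half-planes $S_P(v_2)$, $S_P(v_3)$ overlap in dimension $2$ rather than collapse to their common boundary line.
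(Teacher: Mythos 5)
Your proof is correct and follows essentially the same route as the paper: both apply \Cref{prop:char-RR-locked} with $\mathcal F=\{v_1\}$ and opposite face $\ell=[v_2,v_3]$, reducing the claim to $\dim(P\text{\textdiv}\ell)=2$, which you obtain from the non-parallelism of $F_2$ and $F_3$ via the support half-planes at $v_2,v_3$ (and \Cref{lem_translations}). The only difference is that you spell out the local cone computation and the book-keeping showing $F_1\cap\Delta=\{v_1\}$, which the paper leaves implicit.
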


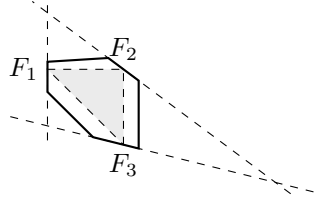
\begin{figure}[!ht]
    \centering
        \begin{tikzpicture}
        \fill[fill=gray!50,opacity=.3,very thin,dashed] (1,0) -- (1,1) -- (0,1) -- cycle;
        \draw[thin, dashed] (1,0) -- (1,1) -- (0,1) -- cycle;
        \draw[thick] (0,0.7)--(0,1.1)--(0.8,1.15)--(1.2,0.85)--(1.2,-0.05)--(0.6,0.1)--cycle;
        \draw (0,1) node[left] {$F_1$};
        \draw (1,1) node[above] {$F_2$};
        \draw (1,0) node[below] {$F_3$};
        \draw[thin, dashed] (0,1.85) -- (0, 0);
        \draw[thin, dashed] (3.2,-.65) -- (-.2, 1.9);
        \draw[thin, dashed] (3.5,-.625) -- (-.5, .375);
    \end{tikzpicture}
    \caption{Facets $F_1$, $F_2$, $F_3$ are locked by \Cref{lem:locked_skew}.\label{fig:locked_skew}
}
\end{figure}

\begin{proof}
	By the assumption, $\Delta$ is an $\RR$-unimodular copy of $\Delta_2$ that is contained in $P$ such that $F_1 \cap \Delta$ is a vertex of $\Delta$ that lies in the relative interior of $F_1$.
	Since the lines spanned by $F_2, F_3$ intersect in a point beneath $F_1$, it follows that $\dim(P\text{\textdiv}\conv(v_2, v_3)) = 2$.
	The statement follows by \Cref{prop:char-RR-locked}.
\end{proof}

\begin{remark}
	Note that in \Cref{lem:locked_skew}, the point where the two lines spanned by the facets $F_2$ and $F_3$ meet is beneath the facet $F_1$.
	It cannot be beyond the facet $F_1$, as otherwise $\dim(P\text{\textdiv}\Delta)=2$, and thus an $\RR$-translation of $\Delta$ is contained in the interior of $P$.
\end{remark}

\begin{corollary}\label{lem:locked_parallel}
    If $P$ is an $\RR$-$\Delta_2$-free polygon containing $\Delta=\conv(v_1, v_2, v_3)$, an $\RR$-unimodular copy of $\Delta_2$, with $v_1, v_2$ lying in the relative interior of two distinct parallel facets $F_1, F_2$ of $P$, while $v_3 \in \strint(P)$, then the two parallel facets are locked.
\end{corollary}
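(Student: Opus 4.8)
The plan is to deduce this directly from the characterisation of locked facets in \Cref{prop:char-RR-locked}, applied to the triangle $\Delta = \conv(v_1,v_2,v_3)$ already contained in $P$; it suffices to show that $F_1$ is locked, since interchanging the roles of $(v_1,F_1)$ and $(v_2,F_2)$ then yields the same conclusion for $F_2$. First I would check that $\mathcal F \coloneqq F_1 \cap \Delta$ is a single vertex of $\Delta$ lying in $\relint(F_1)$. Indeed $v_1 \in \relint(F_1)$ by hypothesis; $v_3 \in \strint(P)$, so $v_3 \notin F_1$; and $v_2$ lies in the relative interior of the facet $F_2$, which is distinct from (in fact parallel to, hence disjoint from) $F_1$, so $v_2 \notin F_1$. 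Thus $\mathcal F = \{v_1\}$, and the face of $\Delta$ opposite to $\mathcal F$ is the edge $\ell \coloneqq \conv(v_2,v_3)$.

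The only remaining hypothesis of \Cref{prop:char-RR-locked} to verify is that $\dim\!\big(P\text{\textdiv}\ell\big) = 2$. By \Cref{lem_translations}, $P\text{\textdiv}\ell = (P - v_2)\cap(P - v_3)$, and this set is nonempty, since $\conv(v_2,v_3)\subset P$ by convexity, so $\mathbf 0 \in P\text{\textdiv}\ell$. I would then argue locally at $\mathbf 0$. Since $v_3\in\strint(P)$, there is $\varepsilon_1 > 0$ with $\varepsilon_1 B^2 \subseteq P - v_3$. Since $v_2$ lies in the relative interior of an edge of the planar convex body $P$, there is $\varepsilon_2 > 0$ and a closed half-plane $H$ with $\mathbf 0$ on its bounding line such that $(P - v_2)\cap \varepsilon_2 B^2 = H \cap \varepsilon_2 B^2$. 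Putting $\varepsilon = \min(\varepsilon_1,\varepsilon_2)$ gives $(P\text{\textdiv}\ell)\cap \varepsilon B^2 = H\cap \varepsilon B^2$, a half-disc, which is $2$-dimensional; hence $\dim(P\text{\textdiv}\ell)=2$. Now \Cref{prop:char-RR-locked} shows that $F_1$ is locked, completing the argument.

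I do not anticipate a genuine obstacle: the statement is essentially a special case of \Cref{prop:char-RR-locked}, and, in contrast to \Cref{lem:locked_skew}, the dimension drop is produced not by any configuration of the facets but simply by the hypothesis $v_3\in\strint(P)$, which makes $P - v_3$ a full-dimensional neighbourhood of $\mathbf 0$; the parallelism of $F_1$ and $F_2$ is used only to guarantee that $v_2\notin F_1$ and $v_1\notin F_2$, so that $\mathcal F$ is a single vertex in each case. The one point calling for a little care is the elementary fact that a relative interior point of a facet of a planar convex body has a closed half-plane as its support cone, so that the local descriptions above hold with a uniform radius; this is standard and already implicit in the proof of \Cref{prop:char-RR-locked}.
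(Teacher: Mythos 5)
Your proposal is correct and follows essentially the same route as the paper: identify $\mathcal{F}=\{v_1\}$ as a vertex of $\Delta$ in $\relint(F_1)$, take $\ell=\conv(v_2,v_3)$, and note that $v_3\in\strint(P)$ forces $\dim(P\text{\textdiv}\ell)=2$, so \Cref{prop:char-RR-locked} applies (and symmetrically for $F_2$). Your extra local half-plane/ball argument just spells out the dimension claim the paper states without detail.
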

\begin{proof}
	We show that $F_1$ is locked (a similar argument works for $F_2$).
	Note $\Delta$ is an $\RR$-unimodular copy of $\Delta_2$ contained in $P$ such that $F_1 \cap \Delta$ is a vertex of $\Delta$ that is contained in the relative interior of $F_1$.
	Since $v_3 \in \strint(P)$, we have $\dim(P\text{\textdiv}\conv(v_2,v_3))=2$.
	The statement follows by \Cref{prop:char-RR-locked}.
\end{proof}

We now construct a family of maximal $\RR$-$\Delta_2$-free bodies consisting of all parallelograms circumscribed around a unit square $[0,1]^2$.
An $\RR$-unimodular copy of $[0,1]^2$ we call an \novel{$\RR$-unimodular parallelogram} (or simply \novel{unimodular parallelogram} if it is clear from the context that we consider $\RR$-unimodular copies).

\begin{proposition}
  If $P$ is a parallelogram such that the relative interior of each of its facets contains one vertex of a fixed unimodular parallelogram, see \Cref{fig_circumscribed_parallelograms}, then $P$ is a maximal $\RR$-$\Delta_2$-free convex set.
\end{proposition}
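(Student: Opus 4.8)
The plan is to reduce to the normalisation $\Pi=[0,1]^2$ and then argue in three stages: central symmetry, $\RR$-$\Delta_2$-freeness, and lockedness of all facets.

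First I would apply an $\RR$-unimodular transformation so that the fixed unimodular parallelogram becomes $\Pi=[0,1]^2$, with centre $c=(\tfrac12,\tfrac12)$, and show that $P$ is centrally symmetric about $c$. The four vertices of $\Pi$ lie in the relative interiors of the four distinct facets of $P$, which form two parallel pairs. The key observation is that two \emph{opposite} vertices of $\Pi$ must sit on one parallel pair: if two \emph{adjacent} vertices of $\Pi$ lay on parallel facets $G,G'$ of $P$, then $\Pi\subset P$ would force $G,G'$ to be the two lines spanned by the corresponding edge of $\Pi$, which then also contain the other two vertices of $\Pi$ — contradicting that each vertex lies in the relative interior of a \emph{distinct} facet. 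Hence $(0,0),(1,1)$ lie on one parallel pair and $(1,0),(0,1)$ on the other; since the midpoint of a segment joining a point of a facet to a point of the parallel facet lies on the midline between them, $c=\tfrac12((0,0)+(1,1))=\tfrac12((1,0)+(0,1))$ lies on both midlines of $P$, so $c$ is the centre of $P$.

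For $\RR$-$\Delta_2$-freeness I would use \Cref{lem_R_X_free_with_Tran} and \Cref{lem:int-full-dim}, reducing to: for every $\ZZ$-basis $\{u,v\}$, $\strint(P)$ contains no translate of $\conv(\mathbf{0},u,v)$. Change coordinates by the linear automorphism $L$ of $\RR^2$ sending the edge vectors of $P$ to $(2,0)$ and $(0,2)$, so $L(P)$ is a translate of $[-1,1]^2$; put $\Lambda:=L(\ZZ^2)$, $\alpha:=L(e_1)$, $\beta:=L(e_2)$. Using that the spread of a triple $\{0,a,b\}\subset\RR$ equals $\max(|a|,|b|,|a-b|)$, the condition becomes: \emph{for every $\Lambda$-basis $\{\bar u,\bar v\}$, not all of $\bar u,\bar v,\bar u-\bar v$ lie in $(-2,2)^2$}. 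Transporting the circumscription hypothesis through $L$, the half-vertices of $\Pi$ land in relative interiors of the facets of $[-1,1]^2$, which (after a sign normalisation) says $\xi_0:=\alpha+\beta=(2,2\varepsilon)$ and $\eta_0:=\alpha-\beta=(2\delta,2)$ with $\delta,\varepsilon\in(-1,1)$; from $\alpha=\tfrac12(\xi_0+\eta_0)$, $\beta=\tfrac12(\xi_0-\eta_0)$ one gets that $M:=\ZZ\xi_0+\ZZ\eta_0$ has index $2$ in $\Lambda$ and that $\xi_0,\eta_0$ are primitive in $\Lambda$.

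The technical heart — and the step I expect to be the main obstacle — is the resulting finiteness argument. Suppose some $\Lambda$-basis $\{\bar u,\bar v\}$ has $\bar u,\bar v,\bar u-\bar v\in(-2,2)^2$, and write $\xi_0=k\bar u+l\bar v$, $\eta_0=k'\bar u+l'\bar v$; then $kl'-lk'=\pm2$. The crucial point (the one that makes the proposition true rather than false) is that primitivity of $\xi_0$ and $\eta_0$ forces the parity congruences $l\equiv l'\pmod 2$ and $k\equiv k'\pmod 2$ — exactly the conditions for the vectors $\bar u=\pm\tfrac12(l'\xi_0-l\eta_0)$, $\bar v=\pm\tfrac12(k\eta_0-k'\xi_0)$ recovered from $\xi_0,\eta_0$ to lie in $\Lambda$ rather than only in $\tfrac12\Lambda$. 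Substituting these formulas into the memberships $\bar u,\bar v,\bar u-\bar v\in(-2,2)^2$ and using $|\delta|,|\varepsilon|<1$ gives $\big|\,|l|-|l'|\,\big|<2$, $\big|\,|k|-|k'|\,\big|<2$, $\big|\,|k+l|-|k'+l'|\,\big|<2$; together with the parities these become the equalities $|l|=|l'|$, $|k|=|k'|$, $|k+l|=|k'+l'|$. A short check of the four sign cases for $(k'/k,l'/l)\in\{\pm1\}^2$, using $kl'-lk'=\pm2\neq0$, shows these cannot all hold — the desired contradiction. Hence $P$ is $\RR$-$\Delta_2$-free. (The delicate part is the primitivity/parity bookkeeping; without it there are ``fake bases'' that appear to violate freeness.)

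Finally I would exhibit a locking triangle for each facet via \Cref{lem:locked_skew}. The four triangles $\conv((0,0),(1,0),(1,1))$, $\conv((1,0),(1,1),(0,1))$, $\conv((1,1),(0,1),(0,0))$, $\conv((0,1),(0,0),(1,0))$ are $\RR$-unimodular copies of $\Delta_2$ contained in $\Pi\subseteq P$; in each, the three vertices lie in the relative interiors of three distinct facets of $P$, two of which are adjacent, so the lines they span meet. By \Cref{lem:locked_skew} the remaining facet is then locked, and running over the four triangles shows every facet of $P$ is locked. Since any point of a closed convex set properly containing $P$ lies beyond some facet of $P$, lockedness puts an $\RR$-unimodular copy of $\Delta_2$ into the interior of that larger set; hence no $\RR$-$\Delta_2$-free convex set properly contains $P$, i.e.\ $P$ is an inclusion-maximal $\RR$-$\Delta_2$-free convex set (and a polytope, consistently with \Cref{thm_incl_max}).
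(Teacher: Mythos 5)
Your argument is correct, and its central part — the verification that $P$ is $\RR$-$\Delta_2$-free — takes a genuinely different route from the paper's. The paper stays in the original coordinates and argues via widths: if $v$ is a normal vector to the negative-slope pair of facets, the diagonal $\conv(\mathbf{0},e_1+e_2)$ realises the width of $P$ in direction $v$, so no translate of a primitive lattice segment $\conv(\mathbf{0},me_1+ne_2)$ with $m,n\ge 1$ and $m+n>2$ fits inside $P$; the other facet pair disposes of the opposite sign pattern, whence every $\RR$-unimodular copy of $\Delta_2$ inside $P$ is a translate of one of the triangles in the unit square, and none of these lies in $\strint(P)$. You instead normalise $P$ linearly to a square of side $2$, translate freeness into the assertion that no $\Lambda$-basis $\{\bar u,\bar v\}$ satisfies $\bar u,\bar v,\bar u-\bar v\in(-2,2)^2$, and exclude this by the index-$2$/parity computation with the diagonals $\xi_0=(2,2\varepsilon)$, $\eta_0=(2\delta,2)$. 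I checked the delicate steps: the parity congruences do hold (they follow even more directly from $\bar u,\bar v\in\Lambda=\ZZ\alpha+\ZZ\beta$, since $\bar u=\pm\tfrac12\bigl((l'-l)\alpha+(l'+l)\beta\bigr)$ and similarly for $\bar v$, so primitivity need not be invoked), the strict inequalities combined with the parities do give $|l|=|l'|$, $|k|=|k'|$, $|k+l|=|k'+l'|$, and the four sign cases are indeed incompatible with $kl'-lk'=\pm2$. Your computation is longer than the paper's width argument but makes the obstruction completely explicit. The remaining ingredients essentially coincide with the paper: the fact that diagonally opposite vertices of the inscribed parallelogram lie on parallel facets of $P$ (the paper reads it off the two strips, you get it from the adjacent-vertex contradiction; the central-symmetry conclusion you draw from it is never actually used later), and the maximality step is the paper's own, applying \Cref{lem:locked_skew} to unimodular triangles inscribed in the square — the paper uses the two triangles $\conv(\mathbf{0},e_1,e_2)$ and $\conv(\mathbf{0},e_1,e_1+e_2)$, while each of your four corner triangles locks the two parallel facets through its diagonal pair of vertices (not the facet through its third vertex, whose opposite pair is parallel), so running over the four triangles covers all facets just as well.
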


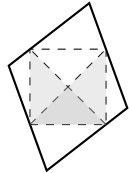
\begin{figure}[!ht]
    \centering
    \begin{tikzpicture}
      \draw[thin, dashed] (0,0)--(1,0)--(1,1)--(0,1)--cycle;
      \draw[thin, dashed] (0,0)--(1,1);
      \draw[thin, dashed] (1,0)--(0,1);
      \draw[thick] (-.28, .78) -- (.22, -.61)-- (1.28,.22)-- (.78,1.61)--cycle;
      \fill[fill=gray!50,opacity=.3,very thin,dashed] (0,0) -- (1,0) -- (1,1) -- cycle;
      \fill[fill=gray!50,opacity=.3,very thin,dashed] (0,0) -- (1,0) -- (0,1) -- cycle;
    \end{tikzpicture}
    \caption{A parallelogram circumscribed around a unit square.
    	Any such parallelogram is maximal $\RR$-$\Delta_2$-free, since each facet is locked.\label{fig_circumscribed_parallelograms}
}
\end{figure}

\begin{proof}
    We first show $P$ is $\RR$-$\Delta_2$-free.
    Up to an $\RR$-unimodular transformation, we can assume that the unimodular parallelogram is the standard unit square $Q=[0,1]^2$.
    Note $P$ is contained in the union of the vertical and horizontal strip containing $Q$, and two vertices of $P$ lie in each strip.
    Thus the parallel lines supporting facets of $P$ through $e_1$ and $e_2$ have positive slope, while those through $\mathbf{0}$ and $e_1 + e_2$ have negative slope.
    We call the parallel lines through $\mathbf{0}$ and $e_1 + e_2$ respectively $L$ and $L + e_1 + e_2$.

    Let $v=(v_1,v_2)\in \RR^2$ be a normal vector to $L$ (which is not necessarily rational).
    Since $L$ has negative slope, we can assume $v \in \RR^2_{>0}$.
    The lattice segment $\conv(\mathbf{0}, e_1 + e_2)$  has endpoints on both lines, and thus the width of $P$ with respect to $v$ is $\langle v, e_1+e_2 \rangle = v_1+v_2$.
    Any segment with width in direction $v$ larger than this can't be between the two lines, and hence doesn't lie in $P$.
 
 	Our goal is to show that every segment that is contained in $P$ and that is an $\RR$-translation of a primitive lattice segment is contained in $Q$.
	Hence, the only $\RR$-unimodular copies of $\Delta_2$ that are contained in $P$ are an $\RR$-translation of one of the unimodular triangles contained in $Q$.  
    For any primitive lattice segment $\ell$ there exists a $\ZZ$-translation such that $\ell \subset \RR^2_{\ge0}$ or $\ell \subset \RR_{\ge0} \times \RR_{\le0}$.
    Note a segment parallel to the primitive lattice segment  $\conv(\mathbf{0}, m e_1+ne_2)$, for integers $m, n$ with $\gcd(m,n)=1$, has width in direction $v$ equal to $mv_1+nv_2$.
    If $m,n \geq 1$ and $m+n>2$, the segment cannot be contained in $P$.
    We have thus excluded all primitive lattice segments that are contained in $\RR_{\ge0}^2$ (up to a $\ZZ$-translation) and that can't be moved in the square $Q$ via an $\RR$-translation.
    An analogous argument using the other pair of parallel facets disqualifies all primitive lattice segments that are contained in $\RR_{\ge0} \times \RR_{\le0}$ (up to a $\ZZ$-translation) and that can't be moved in the square $Q$ via an $\RR$-translation.
    
    The only segments which are contained in $P$ are therefore $\RR$-translations of the lattice segments contained in $Q$.
    It follows that the only $\RR$-unimodular copies of $\Delta_2$ contained in $P$ are those contained in $Q$.
    Since no translation of these is contained in the relative interior of $P$ (see \Cref{lem:int-full-dim}), $P$ is $\RR$-$\Delta_2$-free.
    
    We now wish to show that $P$ is maximal with this property.
    To do so, we apply \Cref{lem:locked_skew} for each facet of $P$: the square $Q$ contains four unimodular triangles, each of which has one vertex in the interior of three of the four facets of $P$.
    Consider $\conv(\mathbf{0}, e_1, e_2)$: this triangle shows that the facet containing $e_1$ is locked, because the facets of $P$ containing $\mathbf{0}$ and $e_2$ are consecutive and thus meet at a point beneath the facet containing $e_1$.
    Thus the conditions of \Cref{lem:locked_skew} are satisfied and the facet is locked.
    The same triangle shows that the facet of $P$ containing $e_2$ is locked, while applying the same arguments to the triangle $\conv(\mathbf{0}, e_1, e_1+e_2)$ shows that the remaining two facets are also locked.
    As we have remarked earlier, when all facets of an $\RR$-$\Delta_2$-free polygon are locked, the polygon is maximal.
\end{proof}

We thus have a family of quadrilaterals with plenty of structure that are maximal $\RR$-$\Delta_2$-free.
However, there are many examples of maximal $\RR$-$\Delta_2$-free quadrilaterals which we do not know how to characterise.
On the left of \Cref{fig:skew_maximal} we see one such example.
That this quadrilateral $Q$ is  $\RR$-$\Delta_2$-free can be seen by checking that any unimodular triangle which fits within a box circumscribed to $Q$ cannot be translated into $Q$ (see the git-repository mentioned above for a Magma script that automates this verification).
To prove that it is inclusion-maximal, we can again apply \Cref{lem:locked_skew} to each facet: in \Cref{fig:skew_maximal} the unimodular triangles corresponding to each locked facet are represented. 
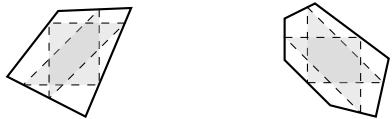
\begin{figure}[!ht]
    \centering
    \begin{tikzpicture}
        \fill[fill=gray!50,opacity=.3,very thin,dashed] (0,0) -- (1,0) -- (1,1) -- cycle;
        \draw[thin, dashed] (0,0) -- (1,0) -- (1,1) -- cycle;
        \fill[fill=gray!50,opacity=.3,very thin,dashed] (.34,-.18) -- (.34,.82) -- (1.34,.82) -- cycle;
        \draw[thin, dashed] (.34,-.18) -- (.34,.82) -- (1.34,.82) -- cycle;
        \draw[thick] (-.21,.11) -- (.46,.98)-- (1.42,1.02) --(.82,-.42) --cycle;
    \end{tikzpicture}
    \hspace{50pt}
    \begin{tikzpicture}
    \fill[fill=gray!50,opacity=.3,very thin,dashed] (.3,.4) -- (1.3,.4) -- (.3,1.4) -- cycle;
    \draw[thin, dashed] (.3,.4) -- (1.3,.4) -- (.3,1.4) -- cycle;
    \fill[fill=gray!50,opacity=.3,very thin,dashed] (1,0) -- (1,1) -- (0,1) -- cycle;
    \draw[thin, dashed] (1,0) -- (1,1) -- (0,1) -- cycle;
    \draw[thick] (0,0.7)--(0,1.25)--(0.4,1.45)--(1.37,0.72)--(1.2,-0.05)--(0.6,0.1)--cycle;
    \end{tikzpicture}
    \caption{The quadrilateral on the left with vertices $ (-0.21,0.11)$, $(0.46,0.98)$, $(1.42,1.02)$, $(0.82,-0.42)$ is maximal $\RR$-$\Delta_2$-free, as certified by the inscribed triangles.
    \\
    On the right is a maximal $\RR$-$\Delta_2$-free hexagon with vertices $(0,0.7)$, $(0,1.25)$, $(0.4,1.45)$, $(1.37,0.72)$, $(1.2,-0.05)$, $(0.6,0.1) $.
    The inscribed triangles certify that each facet is locked, relying on the fact that no two facets are parallel.\label{fig:skew_maximal}
}
\end{figure}
There are also examples of maximal $\RR$-$\Delta_2$-free polygons with more facets.
On the right side of \Cref{fig:skew_maximal} is one such example.
Again the fact that it is $\RR$-$\Delta_2$-free can be checked by testing all unimodular triangles which fit into an appropriate rectangle (see also the git-repository from above), while its maximality follows from applying \Cref{lem:locked_skew} to each facet, with respect to the triangles inscribed to the hexagon in \Cref{fig:skew_maximal}.

In \Cref{sec_incl_max_dim_2} we showed that maximal $\ZZ$-$\Delta_2$-free polygons have at most $4$ facets. It is natural to ask if there is also an upper bound on the facets of maximal $\RR$-$\Delta_2$-free polygons. In fact, Lovasz proved that maximal hollow convex bodies in any dimension $d$ have at most $2^d$ facets (see~\cite{Averkov} for a complete proof). This suggests the following questions.

\begin{question}
    Is there an upper bound on the number of facets of maximal $\RR$-$\Delta_d$-free polytopes in $\RR^d$?
    For maximal $\ZZ$-$\Delta_d$-free polytopes?
\end{question}

\newcommand{\etalchar}[1]{$^{#1}$}
\providecommand{\bysame}{\leavevmode\hbox to3em{\hrulefill}\thinspace}
\providecommand{\href}[2]{#2}

\end{document}